\documentclass[oneside, 12pt]{amsart}

\usepackage{amsmath, amssymb, amsthm, amsfonts}
\usepackage{newtxtext,newtxmath}
\usepackage[cal=euler,scr=rsfs]{mathalfa}
\usepackage[all]{xy}
\usepackage[a4paper, margin=1.1in]{geometry}
\usepackage{tikz}
\usetikzlibrary{arrows.meta}
\usetikzlibrary{decorations.markings}
\usepackage[backref=page]{hyperref}
\hypersetup{pdfborder={0 0 0}, colorlinks=true, urlcolor=blue, linkcolor=blue, citecolor=red}

\numberwithin{equation}{section}

\SelectTips{eu}{12}

\tikzset{myarrow/.style={postaction={decorate},decoration={markings,mark=at position #1 with {\arrow{latex}},}}}

\theoremstyle{plain}
\newtheorem{theorem}{Theorem}[section]
\newtheorem{proposition}[theorem]{Proposition}
\newtheorem{lemma}[theorem]{Lemma}
\newtheorem{corollary}[theorem]{Corollary}

\newtheorem{problem}[theorem]{Problem}

\theoremstyle{definition}
\newtheorem{definition}[theorem]{Definition}
\newtheorem{example}[theorem]{Example}

\theoremstyle{remark}
\newtheorem{remark}[theorem]{Remark}

\newcommand{\Z}{\mathbb{Z}}

\newcommand{\cSet}{\mathsf{cSet}}
\newcommand{\SymcSet}{\mathsf{SymcSet}}
\newcommand{\QSymcSet}{\mathsf{QSymcSet}}
\newcommand{\sd}{\operatorname{sd}}
\newcommand{\ex}{\operatorname{Ex}}

\title{Discrete homotopy groups of cubical sets}

\author{Daisuke Kishimoto}
\address{Faculty of Mathematics, Kyushu University, Fukuoka 819-0395, Japan}
\email{kishimoto@math.kyushu-u.ac.jp}

\author{Yichen Tong}
\address{Institute for Theoretical Sciences, Westlake Institute of Advanced Study, Westlake University, Hangzhou 310030, China}
\email{tongyichen@westlake.edu.cn}

\date{\today}

\subjclass[2020]{55Q05, 55U35, 18N40, 05C20}

\keywords{cubical set, symmetric cubical set, quasisymmetric cubical set, discrete homotopy group, Hurewicz theorem, directed graph}

\begin{document}

	\begin{abstract}
		We extend the notion of discrete homotopy groups of graphs to arbitrary cubical sets, and show that the discrete homotopy groups of quasisymmetric cubical sets are naturally isomorphic to the homotopy groups of their geometric realizations. Here, quasisymmetric cubical sets are cubical sets equipped with coordinate permutation symmetries that are compatible with faces and degeneracies, but not necessarily with connections. We give a purely combinatorial construction of the left adjoint of the forgetful functor from the category of quasisymmetric cubical sets to the category of cubical sets, and prove that the unit of this adjunction is an objectwise weak equivalence. As a consequence, we obtain a purely combinatorial description of the homotopy groups of the geometric realizations of arbitrary cubical sets. As an application, we establish the Hurewicz theorem for the discrete homotopy groups of quasisymmetric cubical sets.
	\end{abstract}
	\maketitle
	\section{Introduction}\label{Introduction}
	
	Discrete homotopy theory, introduced in \cite{BBdLL,BKLW}, is a combinatorial homotopy theory of graphs. It builds on earlier work of Atkin \cite{At1,At2} concerning algebraic representations of simplicial complexes, and later reformulated in \cite{BL} as a homotopy theory of graphs and simplicial complexes. This theory has found numerous applications, including in matroid theory, hyperplane arrangements, combinatorial time series analysis and topological data analysis. The fundamental invariants in discrete homotopy theory of graphs are the $A$-groups, named after Atkins, which we refer to as the \emph{discrete homotopy groups}.

	Cubical sets form a classical combinatorial model for the homotopy theory of spaces. A cubical set is a presheaf on the category $\Box$ of combinatorial cubes, whereas a simplicial set is a presheaf on the category $\Delta$ of combinatorial simplices. Cubical sets arise naturally in the study of discrete homotopy theory of graphs. Babson, Barcelo, de Longueville, and Laubenbacher \cite{BBdLL} associated a space $X_G$ to a graph $G$, and conjectured that the homotopy groups of $X_G$ are isomorphic to the discrete homotopy groups of $G$. Carranza and Kapulkin \cite{CK2} later constructed a cubical set $\mathrm{N}_1G$, called the \emph{$1$-nerve} of $G$, whose geometric realization coincides with the space $X_G$. They showed that the homotopy groups of the geometric realization of $\mathrm{N}_1G$ are naturally isomorphic to the discrete homotopy groups of $G$, thereby proving the conjecture. Their proof relies on applying an appropriate fibrant replacement to the cubical set $\mathrm{N}_1G$.

	The discrete homotopy groups of a graph $G$ admit an interpretation in terms of cubes in the cubical set $\mathrm{N}_1G$. It is therefore natural to expect that discrete homotopy theory can be extended from graphs to cubical sets, with the discrete homotopy theory of graphs embedded via the $1$-nerve construction. In this paper, we extend the notion of discrete homotopy groups of graphs to arbitrary cubical sets in a purely combinatorial manner, in contrast to the definition of the homotopy groups of cubical Kan complexes \cite{CK1}. Carranza and Kapulkin \cite{CK1} defined homotopy equivalences of cubical sets in close analogy with homotopy equivalences of topological spaces, and the discrete homotopy groups of cubical sets are invariant under homotopy equivalences.

	Every $n$-cube is the product of $n$ copies of the $1$-cube, and thus every cube carries an intrinsic symmetry arising from permutations of its coordinates. This symmetry is implicitly and frequently used in classical homotopy theory, for example in the Eckmann-Hilton argument. However, the category $\Box$ itself is not endowed with this symmetry. We define a \emph{quasisymmetric cubical set} by adjoining coordinate permutation symmetries that are required to be compatible with faces and degeneracies, but not necessarily with connections. Grandis and Mauri \cite{GM1} introduced a category obtained from $\Box$ by adjoining the coordinate permutation symmetries that are compatible with faces, degeneracies, and connections. Later, Isaacson \cite{I} introduced the term \emph{symmetric cubical set} for a presheaf on this category and studied the category of symmetric cubical sets from the perspective of model category theory. In particular, he showed that symmetric cubical sets also form a model for the homotopy theory of spaces. By definition, the class of quasisymmetric cubical sets strictly contains that of symmetric cubical sets.

	For a cubical set $X$, let $|X|$ denote its geometric realization. For a pointed cubical set $(X,x_0)$, let $\pi_n^\delta(X,x_0)$ denote its $n$-th discrete homotopy group. Our first main result is the following. As a consequence, we obtain a combinatorial description of the homotopy groups of the geometric realization of an arbitrary quasisymmetric cubical set, which is not necessarily a Kan complex.

	\begin{theorem}
		\label{main 1}
		Let $(X,x_0)$ be a pointed quasisymmetric cubical set. There is a natural isomorphism
		\[
		\pi_*^\delta(X,x_0)\cong\pi_*(|X|,x_0).
		\]
	\end{theorem}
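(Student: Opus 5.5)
The plan is to compare $\pi_*^\delta(X,x_0)$ with the homotopy groups of a cubical Kan fibrant replacement of $X$, following the strategy of Carranza and Kapulkin for the $1$-nerve. First I would make the natural comparison map precise. A discrete $n$-loop in $X$ is a map of cubical sets $I_m^{\otimes n}\to X$ from a subdivided cube (a grid of $m^n$ unit cubes, with $m$ large) that sends the boundary to $x_0$. Its discrete homotopy class is taken modulo subdivision-stabilization and discrete homotopies. Geometric realization sends such a map to a map $(I^n,\partial I^n)\to(|X|,x_0)$, since $|I_m^{\otimes n}|\cong I^n$. Both the stabilization relation and discrete homotopies are respected, so this defines a natural map $\Phi\colon\pi_n^\delta(X,x_0)\to\pi_n(|X|,x_0)$. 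Concatenation of grids realizes to concatenation of maps, so $\Phi$ is a homomorphism.

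Next I would prove that $\Phi$ is bijective. Let $X\to RX$ be a fibrant replacement. The intended choice is Kan's $\ex^\infty$ adapted to cubical sets, $RX=\operatorname{colim}_k \ex^k X$ with $\ex$ right adjoint to the cubical subdivision $\sd$. The reason for this choice is that an $n$-cube of $\ex^kX$ is exactly a map $\sd^k\square^n\to X$. This makes elements of $\pi_n(RX)$, computed as cubical homotopy groups of a Kan complex via \cite{CK1}, literally maps from subdivided cubes into $X$. The proof then has three parts:
\begin{enumerate}
\item Show that $\pi_*^\delta$ is invariant along $X\to\ex X$.
\item Identify $\pi_*^\delta$ of a Kan complex with the usual combinatorial homotopy groups, and hence with $\pi_*|RX|$.
\item Use that $|X|\to|RX|$ is a weak equivalence.
\end{enumerate}
Naturality of each comparison gives a commutative square, so $\Phi$ is an isomorphism.

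The quasisymmetric structure is needed in two places. First, $\sd^k\square^n$ (barycentric-type cubical subdivision) is not itself a grid $I_m^{\otimes n}$. Converting a map $\sd^k\square^n\to X$ into a grid map requires reflecting and permuting coordinates in different subcubes, so that the local orientations all agree with the global grid orientation. Coordinate permutations compatible with faces and degeneracies are exactly what make this gluing well defined. Connections are not needed, which matches the hypothesis. Second, the group structure and the comparison of concatenation in different coordinate directions rely on these symmetries.

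I expect the main obstacle to be step (1), or equivalently showing directly that $\Phi$ is injective. This amounts to a cubical simplicial-approximation-type argument: every map $|I_m^{\otimes n}|\to|X|$ and every homotopy between realized grid maps must be deformed, relative to the boundary, to the realization of a combinatorial grid map after sufficient subdivision. My first attempt would handle surjectivity by cellular approximation followed by an approximation lemma for maps from subdivided cubes into the realization of a cubical set, using the quasisymmetries to reorient cells. For injectivity, I would apply the same lemma to a homotopy, viewed as a map from $I^{n+1}$, and then read the resulting grid map as a discrete homotopy.
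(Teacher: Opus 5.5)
\paragraph{The gap.} Your argument rests on the sentence ``let $X\to RX$ be a fibrant replacement'' with $RX=\ex^\infty X$. You never show that a cubical $\ex^\infty X$ is a Kan complex, and this is where the difficulty of the theorem lies. For cubical sets there is no analogue of Kan's theorem for free. The paper exhibits a horn $\sqcap^2_{1,0}\to\ex^\infty\Box^1$ with no filler for its cubical $\ex$. It proves the Kan property only for signed quasisymmetric input, and only by an explicit construction in which the filler's blocks are cubes of the input with coordinates permuted and reversed. Even if your barycentric $\sd$ supplies reversals automatically, the permutation part must still come from the quasisymmetric structure of $X$, and you do not say how.

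Without the Kan property you cannot apply \cite{CK1} to $RX$. Your step (2), that $\pi_*^\delta$ of a Kan complex agrees with its cubical homotopy groups, is also only asserted. It would require composing an arbitrary grid with reversed slices into a single cube, and turning grid homotopies in an arbitrary direction $k$ into homotopies of maps $(\Box^n,\partial\Box^n)\to RX$. The paper does not use this as an input; it deduces it afterwards from the main theorems. Your fallback, a cubical approximation lemma for maps $|I_m^{\otimes n}|\to|X|$ and their homotopies, is not an available result and is essentially the theorem itself. An approximation lemma that did not use the symmetry would settle whether $\pi_*^\delta(X)\cong\pi_*(|X|)$ for arbitrary $X$, which the paper leaves open. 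Saying you would use ``the quasisymmetries to reorient cells'' does not supply the missing argument.

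\paragraph{Where quasisymmetry is actually used.} You also place the symmetry in the wrong spots. For the standard cubical subdivision, $\sd\Box^n\cong(\sd\Box^1)^{\otimes n}$. So $\sd^k\Box^n$ is already a grid with mixed orientations, which the orientation data $s$ of a grid records, and no permutations are needed to read it as a grid. The group law and the comparison of concatenations in different directions (Lemma~\ref{grid slide}) use only connections.

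In the paper quasisymmetry enters in two places. First, it builds a Kan replacement $X\to\mathcal{R}X\to\mathcal{G}\mathcal{R}X\to\mathcal{G}_0\mathcal{R}X$, each map a weak equivalence. Here $\mathcal{R}$ adds reversals so that $\mathcal{R}X$ is signed quasisymmetric. Then $\mathcal{G}_0\mathcal{R}X$, whose cubes are $0$-homotopy classes of grids, is Kan by Theorem~\ref{Kan complex}:
\begin{itemize}
\item signed permutations reduce every horn to $\sqcap^{n+1}_{n+1,0}$;
\item that horn is filled by a grid of size $(3,\ldots,3,1)$ whose outer blocks are signed permutations of connections of the side faces;
\item Proposition~\ref{gluing grids} refines the chosen representatives so that faces agreeing only up to $0$-homotopy agree strictly.
\end{itemize}
Second, Proposition~\ref{symmetric homotopy} moves a grid homotopy in direction $i$ to direction $n+1$.

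Because the $n$-cubes of $\mathcal{G}_0\mathcal{R}X$ with boundary at $x_0$ are literally $0$-homotopy classes of spherical grids, your steps (1) and (2) collapse to Lemma~\ref{homotopy of grids vs homotopy of cubical maps}, and no approximation argument is needed. Your route could be completed only by proving a horn-filling theorem for your $\ex^\infty$ and comparing its colimit relation with $0$-homotopy. The paper proves such a theorem for its own $\ex^\infty$ under signed quasisymmetry. Doing this would amount to the paper's proof.
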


	Let $\cSet$ and $\SymcSet$ denote the categories of cubical sets and symmetric cubical sets. In \cite{I}, Isaacson showed that the forgetful functor $\mathcal{U}\colon\SymcSet\to\cSet$ admits a left adjoint $\mathcal{F}\colon\cSet\to\SymcSet$ such that the unit $1_\cSet\Rightarrow\mathcal{UF}$ is an objectwise weak equivalence, where a weak equivalence is a cubical map that induces isomorphisms on homotopy groups of geometric realizations with respect to all basepoints. The left adjoint $\mathcal{F}$ is defined via a left Kan extension, and is therefore not explicit. Let $\QSymcSet$ denote the category of quasisymmetric cubical sets. In this paper, we combinatorially and explicitly construct the left adjoint of the forgetful functor $\mathcal{U}\colon\QSymcSet\to\cSet$, and prove the following second main result. As a consequence, we obtain a combinatorial description of the homotopy groups of the geometric realization of an arbitrary cubical set.

	\begin{theorem}
		\label{main 2}
		One can combinatorially construct the left adjoint $\mathcal{S}\colon\cSet\to\QSymcSet$ of the forgetful functor $\mathcal{U}\colon\QSymcSet\to\cSet$ such that the unit
		\[
		1_{\cSet}\Rightarrow\mathcal{U}\mathcal{S}
		\]
		is an objectwise weak equivalence.
	\end{theorem}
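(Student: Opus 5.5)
We will build $\mathcal{S}$ explicitly and then show the unit is a weak equivalence by reducing to representables and a colimit argument.

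\textbf{Construction.} Let $\Box_\Sigma$ be the category obtained from $\Box$ (generated by faces and degeneracies, possibly with connections) by adjoining the coordinate permutations $\sigma\in\Sigma_n$ acting on $\Box^n$. These are subject to the relations expressing compatibility with faces and degeneracies, but no relation is imposed with connections. A quasisymmetric cubical set is a presheaf on $\Box_\Sigma$, and $\mathcal{U}$ is restriction along $i\colon\Box\to\Box_\Sigma$. Its left adjoint is therefore the left Kan extension $i_!$. To make it combinatorial, we first prove a normal form: every morphism of $\Box_\Sigma$ factors as a morphism of $\Box$ followed by, or preceded by, a permutation, up to an explicit equivalence. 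Concretely, we define
\[
(\mathcal{S}X)_n=\Big(\coprod_{m}\ \coprod_{\text{words}} X_m\Big)\Big/\!\sim ,
\]
whose elements are formal composites $x\cdot w$ with $x\in X_m$ and $w$ a reduced word of faces, degeneracies and permutations in normal form. The relation $\sim$ pushes faces and degeneracies into $X$, and it moves connections past permutations only when the result is again a connection applied to $x$. We then verify directly the universal property $\mathrm{Hom}(\mathcal{S}X,Y)\cong\mathrm{Hom}(X,\mathcal{U}Y)$. The key check is that $\sim$ is generated exactly by the defining relations of $\Box_\Sigma$, so that the construction agrees with $i_!$.

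\textbf{Weak equivalence.} Both $X$ and $\mathcal{U}\mathcal{S}X$ commute with colimits in $X$, since $\mathcal{U}$ preserves colimits. Every cubical set is built as an iterated pushout of boundary inclusions $\partial\Box^n\to\Box^n$ along monomorphisms, with a sequential colimit at the end. We will check that $\mathcal{U}\mathcal{S}$ preserves monomorphisms, using the normal form. With that, a gluing lemma for weak equivalences along cofibrations, plus skeletal induction, reduces the claim to $X=\Box^n$ and $X=\partial\Box^n$. The boundary case itself follows from induction on $n$, since $\partial\Box^n$ is a colimit of lower cubes glued along monomorphisms.

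\textbf{Main obstacle.} It therefore suffices to show that $\mathcal{U}\mathcal{S}\Box^n=\mathcal{U}\Box_\Sigma(-,[1]^n)$ is weakly contractible. This is the hardest step, since this presheaf contains many permuted and connection-twisted cubes that are not identified with one another. The first approach is to construct an explicit cubical contraction: use the degeneracy and connection structure to write down a homotopy $\Box^1\otimes\mathcal{U}\mathcal{S}\Box^n\to\mathcal{U}\mathcal{S}\Box^n$ to the vertex $(0,\dots,0)$. Each generator $\sigma\circ f$ is sent to $\sigma\circ\gamma(f)$, where $\gamma$ is the standard connection contraction of $\Box^n$ applied coordinatewise. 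This is well defined because the contraction is equivariant under coordinate permutation at the level of the morphism $f$, even though connections do not commute with $\sigma$. If this fails, the alternative is to filter $\mathcal{U}\mathcal{S}\Box^n$ by $\Sigma_n$-orbits. Each stratum is a union of copies of $\Box^n$ glued along faces, and the nerve of the gluing poset is contractible, so the whole presheaf is contractible.
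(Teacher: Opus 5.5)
Your proposal has a genuine gap at the step you yourself call the main obstacle. Nothing in it shows that $\mathcal{U}\mathcal{S}\Box^n$ is weakly contractible, and without that the skeletal reduction proves nothing.

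\textbf{The contraction.} Your proposed contraction is a cubical map $\Box^1\otimes\mathcal{U}\mathcal{S}\Box^n\to\mathcal{U}\mathcal{S}\Box^n$, so it must commute with connections. You justify it by saying the contraction is equivariant in $f$ ``even though connections do not commute with $\sigma$''. That concedes the very compatibility that has to be proved, rather than supplying it. As written it is also not a single elementary homotopy for $n\ge 2$. Since $\Box$ has no diagonals, $(x,t)\mapsto(\min(x_1,t),\dots,\min(x_n,t))$ is not a morphism of $\Box$. Contracting one coordinate at a time would require moving the homotopy coordinate next to an interior block of $f$ by a permutation, which brings back the same interaction.

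\textbf{The fallback.} The fallback misdescribes the object. $\mathcal{U}\mathcal{S}\Box^n$ has nondegenerate cubes $(f,\kappa)$ in every dimension $k$, with $f\colon[1]^k\to[1]^n$ built from faces and connections and $\kappa\in\Sigma_k$. Many of these lie in no copy of $\Box^n$. Already for $n=1$, where $\Sigma_1$ is trivial, $(\gamma_{1,\alpha},(1\;2))$ is a nondegenerate $2$-cube outside the image of the unit. So filtering by $\Sigma_n$-orbits misses the permutations that matter.

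\textbf{The construction.} Less seriously, your construction is only sketched. The normal form is asserted, and as phrased it conflicts with your stipulation that no connection--permutation relation holds: reducing a word $\kappa\gamma_{i,\alpha}\lambda$ to a single permutation needs a relation that moves a permutation across $\gamma_{i,\alpha}$. Mono-preservation, which your reduction depends on, is deferred to that normal form. The paper instead writes everything down explicitly. It sets $\mathcal{S}X_n=X_n\times\Sigma_n$ modulo $(x\sigma_p,\kappa_{(p,q)})=(x\sigma_1,\kappa_{(1,q)})$, and includes the connection rule $(x,\kappa)\gamma_{i,\alpha}=(x\gamma_{\kappa^{-1}(i),\alpha},\kappa_{(\kappa^{-1}(i)+1,i+1)})$ as part of the definition.

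\textbf{The missing idea.} What you need is a cube joining a permuted cube to the unpermuted one. For $x\in X_n$ and $\kappa\in\Sigma_n$, the paper takes the $2n$-cube $Q(x,\kappa)=(x\gamma_{n,1}\cdots\gamma_{1,1},\bar\kappa)$. Each coordinate of $x$ is doubled by a min-connection, and $\bar\kappa\in\Sigma_{2n}$ fixes the odd positions and permutes the even ones by $\kappa$. Its faces are
\[
Q(x,\kappa)\partial_{2,1}\cdots\partial_{n+1,1}=(x,1)\quad\text{and}\quad Q(x,\kappa)\partial_{1,1}\cdots\partial_{n,1}=(x,\kappa).
\]
Explicit piecewise-linear maps $|(x,\kappa)|\times[0,1]\to|Q(x,\kappa)|$ slide the cell of $(x,\kappa)$ through $Q(x,\kappa)$ onto the cell of $(x,1)$. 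These maps are compatible with faces and degeneracies (Lemmas \ref{S t=0,1} and \ref{S gluing}). They therefore glue to a homotopy on $|\mathcal{U}\mathcal{S}X|$ between the identity and $\eta\circ f$, where $f\colon|\mathcal{U}\mathcal{S}X|\to|X|$ satisfies $f\circ\eta=1$, and this works for every $X$ at once.

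Because $|\cdot|$ glues only along faces and degeneracies, connections never have to be respected, which is exactly what your cubical contraction cannot avoid. This argument also makes the reduction to representables, mono-preservation and the gluing lemma unnecessary. Within your framework, this cube is what would settle the representable case, but then the framework does no work.
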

	
	Combining Theorem \ref{main 2} with the constructions of Section \ref{Fibrant replacement}, we obtain a functorial fibrant replacement which sends a cubical set to a quasisymmetric Kan complex. Consequently, for any pointed Kan complex \((X,x_0)\) which is not necessarily quasisymmetric, this fibrant replacement, together with the homotopy invariance of the discrete homotopy groups, yields a natural isomorphism
	\[
	\pi_*(X,x_0)\cong \pi_*^\delta(X,x_0).
	\]
	Note that the discrete homotopy groups on the right-hand side are defined purely combinatorially, whereas the homotopy groups on the left-hand side are defined homotopically. Finally, we apply Theorem \ref{main 1} to establish the Hurewicz theorem for the discrete homotopy groups of quasisymmetric cubical sets. The precise definition of the Hurewicz map will be given in Section \ref{Discrete homotopy groups of quasisymmetric cubical sets}. A cubical set $X$ is called \emph{$n$-connected} if $\pi_*^\delta(X,x_0)=0$ for all $*\le n$ and for every basepoint $x_0$.

	\begin{theorem}
		\label{main 3}
		Let $(X,x_0)$ be a pointed quasisymmetric cubical set. If $X$ is $(n-1)$-connected, then the map
		\[
		\pi_n^\delta(X,x_0)\to H_n(X),\quad[(x,s)]\mapsto\left[\sum_{i_1,\ldots,i_n}\mathrm{sgn}_{i_1,\ldots,i_n}(x)x(i_1,\ldots,i_n)\right]
		\]
		is the abelianization for $n=1$ and an isomorphism for $n\ge 2$.
	\end{theorem}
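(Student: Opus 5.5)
The plan is to reduce the statement to the classical Hurewicz theorem for the space $|X|$ by way of Theorem \ref{main 1}, and then check that the displayed combinatorial map corresponds to the topological Hurewicz map.

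\textbf{Step 1: connectivity of $|X|$.} By Theorem \ref{main 1}, applied at every basepoint, the natural isomorphism $\pi_*^\delta(X,x_0)\cong\pi_*(|X|,x_0)$ shows that $X$ is $(n-1)$-connected exactly when $|X|$ is $(n-1)$-connected as a space. The classical Hurewicz theorem then says that
\[
h\colon\pi_n(|X|,x_0)\to H_n(|X|)
\]
is the abelianization for $n=1$ and an isomorphism for $n\ge2$.

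\textbf{Step 2: homology.} Here $H_n(X)$ denotes the normalized cubical homology, with degenerate cubes, and presumably cubes in the image of connections, quotiented out. I would use the standard natural isomorphism $H_n(X)\cong H_n(|X|)$, which holds for cubical sets with connections by comparison with cellular homology of the CW complex $|X|$.

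\textbf{Step 3: commutativity.} It then suffices to show that the square formed by the isomorphism of Theorem \ref{main 1}, the displayed map, $h$, and $H_n(X)\cong H_n(|X|)$ commutes. I expect this to be the main obstacle. A representative $(x,s)$ of a discrete homotopy class should be a map from a subdivided $n$-cube $I_{m}^{\otimes n}$ into $X$ sending the boundary to $x_0$, possibly composed with coordinate permutations or reversals recorded by the signs $\mathrm{sgn}_{i_1,\ldots,i_n}(x)$. The isomorphism of Theorem \ref{main 1} sends such a representative to the composite $I^n\cong|I_m^{\otimes n}|\to|X|$, which collapses the boundary and so gives a map $S^n\to|X|$. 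Its Hurewicz image is the pushforward of the fundamental class of $(I^n,\partial I^n)$.

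\textbf{Step 4: the fundamental class.} In cellular, equivalently cubical, homology, the fundamental class of $(I^n,\partial I^n)$ is the sum of the top cubes of the subdivision, each oriented compatibly with the standard orientation. The orientation sign enters because a small cube may appear in $x$ through a permutation or reversal in the quasisymmetric structure; a transposition acts by $-1$ on the cubical chain of a top cell. Pushing this class forward gives exactly $\sum\mathrm{sgn}_{i_1,\ldots,i_n}(x)\,x(i_1,\ldots,i_n)$. This sum is a cycle modulo boundary cubes, which all land at degenerate cubes at $x_0$ or cancel in pairs along shared interior faces. That cycle condition needs to be verified carefully using the face compatibility of the quasisymmetric structure.

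Finally, I would check that the displayed map is well-defined on classes. This is automatic once commutativity is established, since the remaining maps in the square are well-defined isomorphisms.
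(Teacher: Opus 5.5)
Your route is viable, but it differs from the paper's.

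\textbf{How the paper argues.} The paper never invokes the classical Hurewicz theorem on $|X|$. It applies the Carranza--Kapulkin--Tonks Hurewicz theorem for cubical Kan complexes to $Y=\mathcal{G}_0\mathcal{R}X$, where the Hurewicz map is simply $[y]\mapsto[y]$. The signed-sum map $N(\mathcal{G}_0\mathcal{R}X)\to N(X)$ is a chain map by Lemma \ref{Hurewicz map boundary}, and it is a chain-level left inverse of $\pi\circ\phi\circ\iota$. Since $(\pi\circ\phi\circ\iota)_*$ is an isomorphism, the signed sum is its inverse on homology. The displayed map is then the composite $\pi_n^\delta(X,x_0)\cong\pi_n(\mathcal{G}_0\mathcal{R}X,x_0)\to H_n(\mathcal{G}_0\mathcal{R}X)\cong H_n(X)$. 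This uses only that $\pi\circ\phi\circ\iota$ is a weak equivalence. It never needs to know what the topological isomorphism of Theorem \ref{main 1} does to a representative.

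\textbf{What your route costs.} You replace CKT by the classical theorem, but then your Step 3, which you only assert, becomes the real content. It is true, but you must prove it by unwinding Theorem \ref{main 1}:
\begin{itemize}
\item The homotopy inverse of $|\pi|$ comes from Quillen's Theorem A and is not explicit. So first lift $[x]$ along $\pi$ to the directed grid $x$ itself, regarded as a spherical cube of $\mathcal{G}\mathcal{R}X$.
\item Then apply the explicit pointed retractions built in Propositions \ref{G} and \ref{R}. The first sends each block of $\|x\|$ onto $|x(i_1,\ldots,i_n)^{s(i_1,\ldots,i_n)}|$. The second applies the reflection $m_{s(i_1,\ldots,i_n)}$.
\item The resulting map $I^n\to|X|$ is cellular. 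Each block has degree $(-1)^{s_1(i_1)+\cdots+s_n(i_n)}$, and degenerate blocks land in lower skeleta.
\end{itemize}
This gives your Step 4, provided $H_n(X)\cong H_n(|X|)$ is taken to be the cellular identification.

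\textbf{Points to correct.}
\begin{itemize}
\item The signs have nothing to do with permutations. A grid carries only the reversal data $s$. The claim that a transposition acts by $-1$ on a top cube is irrelevant, and it is not true in $N(X)$, where $x\kappa$ is simply another generator.
\item $N(X)$ quotients out only degenerate cubes. Cubes in the image of connections are nondegenerate cells of $|X|$ (Lemma \ref{realization CW}). This is exactly what makes $N(X)$ the cellular chain complex of $|X|$.
\item The cycle condition in Step 4 uses only the face-matching condition defining grids (Lemma \ref{Hurewicz map boundary}), not quasisymmetry. Quasisymmetry enters solely through Theorem \ref{main 1}.
\end{itemize}
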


	The discrete homotopy theory of directed graphs has seen rapid development in recent years. This theory not only generalizes the homotopy theory of graphs but also captures the inherently directed nature of directed graphs. Grigor'yan, Lin, Muranov, and Yau \cite{GLMY2} introduced the path homology $\mathrm{PH}_*(G)$ in \cite{GLMY1} and the fundamental group $\pi_1(G,v)$ of a pointed directed graph $(G,v)$ in \cite{GLMY2}. They also proved a Hurewicz theorem for $\pi_1(G,v)$ in \cite{GLMY2}, with target the first path homology, and this one-dimensional Hurewicz theorem was subsequently generalized to a more elaborate version in \cite{KT} by the authors.

	Later, Li, Wu, Yau, and Zhang introduced the higher homotopy group $\bar{\pi}_n(G,v)$ of a pointed directed graph in \cite{LWYZ}, which satisfies
	\[
	\bar{\pi}_1(G,v)\cong\pi_1(G,v).
	\] 
	Recently, Theriault, Wu, Yau, and Zhang constructed some directed graphs with nontrivial higher homotopy groups in \cite{TWYZ}. They used a homomorphism $\tilde{h}_*$ towards the path homology, which factors as
	\begin{equation}
		\label{TWYZ Hurewicz map}
		\bar{\pi}_n(G,v)\xrightarrow{h_*}\mathrm{QH}_n(G)\xrightarrow{\phi_G}\mathrm{PH}_n(G)
	\end{equation}
	Here $\mathrm{QH}_n(G)$ denotes the singular cubical homology group of $G$, a distinct homology theory developed in \cite{GM2}. We introduce the 1-nerve $\mathrm{N}_1G$ of a directed graph $G$, analogous to the 1-nerve of a graph, which is automatically a symmetric cubical set. By identifying the homomorphism $h_*\colon\bar{\pi}_n(G,v)\to \mathrm{QH}_n(G)$ with the discrete Hurewicz map $h_*\colon\pi_n^\delta(\mathrm{N}_1G,v)\to H_n(\mathrm{N}_1G)$, we establish the following result by applying Theorem \ref{main 3} to $\mathrm{N}_1G$. We say $G$ is $n$-connected if $\mathrm{N}_1G$ is $n$-connected. 
	\begin{theorem}
		\label{main 4}
		Let $(G,v)$ be a pointed directed graph. If $G$ is $(n-1)$-connected, then the homomorphism
		\[
		h_*\colon\bar{\pi}_n(G,v)\to \mathrm{QH}_n(G)
		\]
		is the abelianization for $n=1$ and an isomorphism for $n\ge2$.
	\end{theorem}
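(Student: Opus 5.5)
Theorem \ref{main 4} will follow from Theorem \ref{main 3} applied to the pointed cubical set $(\mathrm{N}_1G,v)$, once three identifications are in place. First, $\mathrm{N}_1G$ is a symmetric cubical set, hence quasisymmetric, so Theorem \ref{main 3} applies. Its $n$-cubes are digraph maps $I^{n}\to G$ from a directed cube, with faces, degeneracies, connections and coordinate permutations acting by precomposition, so the symmetric structure is automatic. By definition, $G$ being $(n-1)$-connected means that $\mathrm{N}_1G$ is, so the hypothesis of Theorem \ref{main 3} holds.

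Second, I will construct a natural isomorphism $\bar{\pi}_n(G,v)\cong\pi_n^\delta(\mathrm{N}_1G,v)$. The Li--Wu--Yau--Zhang group $\bar{\pi}_n(G,v)$ is built from digraph maps out of (suitably subdivided, line-digraph) grids $I_{m_1}\boxtimes\cdots\boxtimes I_{m_n}\to G$ sending the boundary to $v$, modulo grid homotopies and stabilizations. I expect the discrete homotopy group of a cubical set to be defined by the same data, namely families $(x,s)$ of $n$-cubes indexed by a grid with shape $s$, glued along faces, with boundary at $x_0$. For $\mathrm{N}_1G$ such a family is exactly a digraph map from a grid into $G$. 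I will check in turn that the following match under this dictionary:
\begin{itemize}
\item the two notions of homotopy (a $1$-parameter family of grids versus an $(n+1)$-dimensional family of cubes);
\item the stabilizations (inserting degenerate rows);
\item the group operations, both given by concatenation of grids.
\end{itemize}
I expect this step to be the main obstacle, because the directed-graph definition allows line digraphs of arbitrary orientation as edges of the grid. This must be matched with the orientation and shape data $s$ in the cubical definition. If the orientations do not match literally, I will first try to show that reflections of coordinates give canonical isomorphisms between the two presentations, using that $\mathrm{N}_1G$ may be taken to include reversed edges in its structure.

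Third, I will identify $H_n(\mathrm{N}_1G)$ with $\mathrm{QH}_n(G)$. The singular cubical homology of Grigor'yan--Muranov is the normalized cubical chain complex of exactly these singular cubes $I^n\to G$, modulo degenerate cubes, and this is the cubical homology of $\mathrm{N}_1G$. I also need to compare the maps. Under the dictionary above, the map $h_*$ of \cite{TWYZ} sends a grid map to the signed sum of its elementary cubes, which is the formula $\sum\mathrm{sgn}_{i_1,\ldots,i_n}(x)\,x(i_1,\ldots,i_n)$ of Theorem \ref{main 3}. The signs $\mathrm{sgn}$ should match the orientation signs that \cite{TWYZ} uses for reversed edges of the line digraphs. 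I will verify this on a single elementary cube and then extend by additivity, so that the square formed by the three identifications commutes. Theorem \ref{main 3} then gives abelianization for $n=1$ and an isomorphism for $n\ge2$.
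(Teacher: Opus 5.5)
Your proposal is correct and follows the paper's route. The paper also applies Theorem~\ref{main 3} to the symmetric cubical set $\mathrm{N}_1G$, and it closes the commutative square using $\pi_n^\delta(\mathrm{N}_1G,v)\cong\bar{\pi}_n(G,v)$ (Proposition~\ref{two homotopy groups}) and $H_n(\mathrm{N}_1G)=\mathrm{QH}_n(G)$ (Lemma~\ref{two homology groups}).

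Two small corrections:
\begin{itemize}
\item The orientations match literally. An $n$-grid $(x,s)$ in $\mathrm{N}_1G$ is exactly a digraph map $I_1\otimes\cdots\otimes I_n\to G$, with $s$ recording the edge directions. So your reflection fallback is unnecessary, and in any case $\mathrm{N}_1G$ has no coordinate reversals in general. The only adjustment needed is inserting degenerate slices to reach the alternating shapes $J_k$ used in $\bar{\pi}_n$.
\item The product on $\pi_n^\delta$ is the block-diagonal product, not concatenation. It is identified with $+_1$ by Lemma~\ref{grid slide}.
\end{itemize}
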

	Since the first path homology group and the first singular cubical homology group of a directed graph are isomorphic, our Hurewicz theorem generalizes the one in \cite{GLMY2}. We also provide an alternative proof of the Hurewicz theorem for graphs, which was originally proved in \cite{CK2}.

	The organization of this paper is as follows. In Section \ref{Preliminaries}, we recall some basic notions about cubical sets, including their definition, products, geometric realizations, and homotopy and homology groups. In Section \ref{Discrete homotopy groups of cubical sets}, we define the discrete homotopy groups for general cubical sets and discuss their properties. In Section \ref{Quasisymmetric cubical sets}, we recall the (signed) symmetric cubical category and the category of (signed) symmetric cubical sets, and then generalize them to the (signed) quasisymmetric cubical category and the category of (signed) quasisymmetric cubical sets. We also prove Theorem \ref{main 2} in this section. In Section \ref{Fibrant replacement}, we construct an explicit fibrant replacement of a quasisymmetric cubical set. We also compare our constructions with the classical \(\ex^\infty\) functor in simplicial homotopy theory by introducing a naive cubical analogue. In Section \ref{Discrete homotopy groups of quasisymmetric cubical sets}, we prove Theorems \ref{main 1} and \ref{main 3} using the fibrant replacement constructed in Section \ref{Fibrant replacement}. Finally, in Section \ref{Discrete homotopy groups of directed graphs}, we define the 1-nerve of a directed graph and prove Theorem \ref{main 4} by applying Theorem \ref{main 3} to this 1-nerve.
	
	\section*{Acknowledgment}
	The authors would like to thank Kensuke Arakawa for bringing relevant references to their attention, and Xing Gu for inspiring discussions. The first author was partially supported by JSPS KAKENHI Grant Numbers JP22K03284.
	
	\section{Preliminaries}\label{Preliminaries}
	
	In this section, we recall the basic notions of cubical sets and Kan complexes, together with several key constructions, including products and geometric realization. We also review the homology of cubical sets and the homotopy groups of Kan complexes.

	\subsection{Cubical sets}
	
	We begin by defining the cubical category $\Box$. Let $[1]$ denote the poset $\{0<1\}$. The \emph{cubical category} $\Box$ has objects $[1]^n$ for $n\ge 0$, and morphisms generated under composition by the following three classes of maps:
	
	\begin{itemize}
		\item (faces) $\partial_{i,\alpha}\colon[1]^{n-1}\to[1]^n$ for $i=1,\ldots,n$ and $\alpha=0,1$ given by
		\[
		\partial_{i,\alpha}(x_1,\ldots,x_{n-1})=(x_1,\ldots,x_{i-1},\alpha,x_i,\ldots,x_{n-1}).
		\]
		
		\item (degeneracies) $\sigma_i\colon[1]^n\to[1]^{n-1}$ for $i=1,\ldots,n$ given by
		\[
		\sigma_i(x_1,\ldots,x_n)=(x_1,\ldots,x_{i-1},x_{i+1},\ldots,x_n).
		\]
		
		\item (connections) $\gamma_{i,\alpha}\colon[1]^{n}\to[1]^{n-1}$ for $i=1,\ldots,n-1$ and $\alpha=0,1$ given by
		\begin{align*}
			\gamma_{i,0}(x_1,\ldots,x_{n})=(x_1,\ldots,x_{i-1},\max\{x_i,x_{i+1}\},x_{i+2},\ldots,x_n)\\
			\gamma_{i,1}(x_1,\ldots,x_{n})=(x_1,\ldots,x_{i-1},\min\{x_i,x_{i+1}\},x_{i+2},\ldots,x_n).
		\end{align*}
	\end{itemize}
	
	\noindent It is straightforward to verify that these maps satisfy the following \emph{cubical identities}:
	\begin{alignat*}{2}
		\partial_{j,\beta}\partial_{i,\alpha}&=\partial_{i+1,\alpha}\partial_{j,\beta}\quad(j\le i)&\sigma_i\sigma_j&=\sigma_j\sigma_{i+1}\quad(j\le i)\\
		\sigma_j\partial_{i,\alpha}&=
		\begin{cases}
			\partial_{i-1,\alpha}\sigma_j&(j<i)\\
			1&(j=i)\\
			\partial_{i,\alpha}\sigma_{j-1}&(j>i)
		\end{cases}&
		\gamma_{j,\beta}\gamma_{i,\alpha}&=
		\begin{cases}
			\gamma_{i,\alpha}\gamma_{j+1,\beta}&(j>i)\\
			\gamma_{i,\alpha}\gamma_{i+1,\alpha}&(j=i,\,\beta=\alpha)
		\end{cases}\\
		\gamma_{j,\beta}\partial_{i,\alpha}&=
		\begin{cases}
			\partial_{i-1,\alpha}\gamma_{j,\beta}&(j<i-1)\\
			1&(j=i-1,i,\,\beta=\alpha)\\
			\partial_{j,\alpha}\sigma_j&(j=i-1,i,\,\beta=1-\alpha)\\
			\partial_{i,\alpha}\gamma_{j-1,\beta}&(j>i)
		\end{cases}\;&
		\sigma_j\gamma_{i,\alpha}&=
		\begin{cases}
			\gamma_{i-1,\alpha}\sigma_j&(j<i)\\
			\sigma_i\sigma_i&(j=i)\\
			\gamma_{i,\alpha}\sigma_{j+1}&(j>i)
		\end{cases}
	\end{alignat*}
	One may regard the cubical category $\Box$ as the category whose objects are $[1]^n$ for $n\ge 0$ and whose morphisms are freely generated by faces, degeneracies and connections, subject to the cubical identities.

	As in the simplicial setting, morphisms in the cubical category admit standard factorizations; see \cite[Theorem 5.1]{GM1}.

	\begin{lemma}
		\label{factorization}
		Every morphism $f\colon[1]^m\to[1]^n$ in the cubical category $\Box$ admits a unique factorization
		\[
		f=(\partial_{i_1,\alpha_1}\cdots\partial_{i_r,\alpha_r})(\gamma_{j_1,\beta_1}\cdots\gamma_{j_s,\beta_s})(\sigma_{k_1}\cdots\sigma_{k_t}),
		\]
		where $i_1>\cdots>i_r$, $j_1\le\cdots\le j_s$ and $k_1<\cdots<k_t$ such that $\beta_p\ne\beta_{p+1}$ whenever $j_p=j_{p+1}$.
	\end{lemma}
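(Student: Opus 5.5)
The plan has two parts: existence, by rewriting words in the generators with the cubical identities, and uniqueness, by reading each factor off the underlying map of posets.

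\emph{Existence.} Every morphism is a composite of faces, degeneracies and connections. I use the cubical identities as rewriting rules that push faces to the left and degeneracies to the right. The relations $\sigma_j\partial_{i,\alpha}$ and $\gamma_{j,\beta}\partial_{i,\alpha}$ replace a degeneracy or connection followed by a face with an expression in which the face comes first, or in which the pair cancels to $1$ or to a $\partial\sigma$. Similarly, $\sigma_j\gamma_{i,\alpha}$ moves degeneracies to the right of connections. The relation $\gamma_{j,\beta}\gamma_{i,\alpha}$ with $j>i$ sorts the connection indices into weakly increasing order. When two adjacent connections have equal index and equal sign, $\gamma_{j,\alpha}\gamma_{j,\alpha}=\gamma_{j,\alpha}\gamma_{j+1,\alpha}$ strictly increases an index. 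The face relation sorts the faces into strictly decreasing indices, and the degeneracy relation sorts the degeneracies into strictly increasing indices.

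Termination follows from a lexicographic measure. First, the total number of generators, which never increases: the cancellations decrease it, and $\sigma_j\gamma_{i,i}=\sigma_i\sigma_i$ preserves it. Second, the number of inversions relative to the target order $\partial<\gamma<\sigma$. Third, the disorder within each block. This yields a word of the stated shape, with $\beta_p\neq\beta_{p+1}$ whenever $j_p=j_{p+1}$.

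\emph{Uniqueness.} I read each factor off the map $f\colon[1]^m\to[1]^n$ itself.

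The face part is determined by the image of $f$. The image is a subcube of $[1]^n$ in which certain coordinates are constant, say coordinate $i$ is constantly $\alpha_i$. Listing these constant coordinates in decreasing order recovers $\partial_{i_1,\alpha_1}\cdots\partial_{i_r,\alpha_r}$. Since a product of faces is a monomorphism, the map $g=\gamma\cdots\sigma\cdots$ is determined by $f$.

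The map $g$ is surjective onto $[1]^{n-r}$. Each of its output coordinates is a monotone Boolean function of the inputs. The normal form forces these coordinates to depend on consecutive, disjoint blocks of input variables. Output coordinate $q$ is an iterated max/min of its block. The input variables omitted from every block are exactly those killed by $\sigma_{k_1}\cdots\sigma_{k_t}$, and these are determined as the variables on which $g$ does not depend. This recovers $k_1<\dots<k_t$.

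After removing the degeneracies, output coordinate $q$ is an iterated max/min of a block of consecutive input variables. In the normal form, a maximal run of equal indices $j_p=\cdots=j_{p+\ell}$ with alternating signs encodes exactly this nested expression. I must show that distinct alternating sign sequences give distinct Boolean functions, for example $\max(x,\min(y,z))\neq\min(\max(x,y),z)$. This can be checked by evaluating at suitable vertices of the cube, using an induction on the length of the run.

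The main obstacle is this last step. The combinatorics of how the normal-form convention on $\beta$ (for equal $j$) corresponds to nested lattice expressions is fiddly, and I would set up a careful induction on $s$, peeling off $\gamma_{j_1,\beta_1}$ by examining the first block. Alternatively, I would simply cite \cite[Theorem 5.1]{GM1} for this part.
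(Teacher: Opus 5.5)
The paper gives no proof of this lemma; it just quotes \cite[Theorem 5.1]{GM1}. Falling back on that citation, as you offer at the end, is therefore what the paper does, but then your direct argument adds nothing. As a self-contained proof it has a gap.

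The parts that work are these. Recovering the face part from the image of $f$ is correct. So is recovering the degeneracy part from the variables on which $f$ does not depend, since each output of a connection composite is a read-once $\max/\min$ formula and depends on every variable of its block.

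The gap is the connection part, which is where the uniqueness claim has real content, and the structure you propose for it is false. The inputs feeding one output coordinate do not correspond to a maximal run of equal indices.
\begin{itemize}
\item $\gamma_{1,0}\gamma_{2,0}$ and $\gamma_{1,0}\gamma_{2,1}$ are the maps $[1]^3\to[1]$ given by $\max\{x_1,x_2,x_3\}$ and $\max\{x_1,\min\{x_2,x_3\}\}$. Each is a single block, yet the indices $1<2$ form two runs.
\item $\gamma_{1,0}\gamma_{1,1}$ and $\gamma_{1,0}\gamma_{2,1}$ have the same sign sequence $(0,1)$ but give $\max\{\min\{x_1,x_2\},x_3\}$ and $\max\{x_1,\min\{x_2,x_3\}\}$.
\end{itemize}
So showing that distinct alternating sign sequences give distinct functions is the wrong statement. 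The bracketing is encoded jointly by indices and signs, and that correspondence is exactly the step you leave undone.

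The peeling induction you mention does work, carried out as follows.
\begin{itemize}
\item Write $c=\gamma_{j_1,\beta_1}c'$ with $c'$ in normal form.
\item All indices of $c'$ are $\ge j_1$, so $c'$ leaves the coordinates $<j_1$ untouched. Hence $j_1$ is the first output of $c$ depending on more than one variable.
\item The output $c_{j_1}$ is the $\max$ (if $\beta_1=0$) or $\min$ (if $\beta_1=1$) of $c'_{j_1}$ and $c'_{j_1+1}$.
\item The normal-form rule $j_1=j_2\Rightarrow\beta_1\ne\beta_2$ says precisely that $c'_{j_1}$ is a single variable or has outermost operation opposite to the one given by $\beta_1$. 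In other words, $c'_{j_1}$ is the first factor of the maximal decomposition of $c_{j_1}$ with respect to that operation.
\end{itemize}
You then need a lemma: the outermost operation and this maximal decomposition of a read-once $\max/\min$ formula are determined by the function. One way to see it uses minimal true sets. Every minimal true set of $\max\{f_1,\dots,f_k\}$ lies inside a single factor, whereas any two variables of a $\min$-formula are linked by a chain of minimal true sets. With that lemma, $\beta_1$, $c'_{j_1}$, $c'_{j_1+1}$, and hence $c'$, are determined by $c$, and induction on $s$ finishes.

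Separately, your termination measure fails as stated. The rules $\sigma_i\gamma_{i,\alpha}=\sigma_i\sigma_i$ and $\gamma_{j,1-\alpha}\partial_{i,\alpha}=\partial_{j,\alpha}\sigma_j$ change the type of a letter and can raise the inversion count at fixed length. For example, $\sigma_1\gamma_{1,0}\gamma_{1,1}\gamma_{1,0}$ has $3$ inversions, while $\sigma_1\sigma_1\gamma_{1,1}\gamma_{1,0}$ has $4$.

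Two repairs fix this:
\begin{itemize}
\item Insert the number of connections before the inversion count. It never increases and drops under these two rules.
\item Make ``disorder within a block'' precise, for example by a lexicographic order on the bounded index sequences. Index shifts such as $\gamma_{i,\alpha}\gamma_{i,\alpha}\to\gamma_{i,\alpha}\gamma_{i+1,\alpha}$ defeat a naive count.
\end{itemize}
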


	\begin{definition}
		The category $\cSet$ is the functor category $\mathsf{Set}^{\Box^\mathrm{op}}$. The objects and morphisms of $\cSet$ are referred to as \emph{cubical sets} and \emph{cubical maps}.
	\end{definition}

	\begin{remark}
		In this paper we work with cubical sets equipped with \emph{connections}. Connections on cubical sets were introduced by Brown and Higgins \cite{BH} as additional degeneracies. Although connections are not always assumed in the literature, they play a crucial role in the homotopy theory of cubical sets and will be essential for our purposes.
	\end{remark}

	Let $X$ be a cubical set. For $n\ge 0$, we write $X_n=X([1]^n)$, and denote by $X$ the union of the sets $X_n$ over all $n\ge 0$, when no confusion is likely to arise. As in the simplicial setting, every cubical set is equivalently described by a collection of sets $\{X_n\}_{n\ge 0}$ together with faces, degeneracies, and connections satisfying the cubical identities. An element of $X_n$ is called an \emph{$n$-cube}. We introduce three basic examples of cubical sets.

	\begin{example}
		\begin{enumerate}
			\item The cubical set $\Box^n=\Box(-,[1]^n)$ is called the \emph{$n$-cube}.
			
			\item The \emph{boundary} of $\Box^n$ is the cubical subset
			\[
			\partial\Box^n=\bigcup_{\substack{i=1,\ldots,n\\\alpha=0,1}}\mathrm{Im}\,\partial_{i,\alpha}.
			\]
			
			\item For $i=1,\ldots,n$ and $\alpha=0,1$, the \emph{$(i,\alpha)$-horn} in $\Box^n$ is the cubical subset of $\partial\Box^n$ defined by
			\[
			\sqcap^n_{i,\alpha}=\bigcup_{(j,\beta)\ne(i,\alpha)}\mathrm{Im}\,\partial_{j,\beta}.
			\]
		\end{enumerate}
	\end{example}

	Let $X$ be a cubical set. By definition, there is a one-to-one correspondence between $X_n$ and the set of cubical maps $\Box^n\to X$. This justifies the terminology: both $\Box^n$ and the elements of $X_n$ are called $n$-cubes. Given a morphism $\theta$ in $\Box$ and an element $x\in X$, we may regard the cube obtained by applying $\theta$ to $x$ as the composite
	\[
	\Box^m\xrightarrow{\theta}\Box^n\xrightarrow{x}X.
	\]
	Hence we denote it by $x\theta$. An $n$-cube $x\in X_n$ is called \emph{degenerate} if there exist $i\in\{1,\ldots,n\}$ and an element $y\in X_{n-1}$ such that $x=y\sigma_i$. Otherwise, $x$ is called \emph{nondegenerate}.

	\begin{definition}
		A cubical set $X$ is called a \emph{Kan complex} if for any $n\ge 1$, $i=1,\ldots,n$ and $\alpha=0,1$, every cubical map $\sqcap_{i,\alpha}^n\to X$ extends to a cubical map $\Box^n\to X$.
	\end{definition}

	\begin{example}
		Define the singular functor $\mathrm{Sing}\colon\mathsf{Top}\to\cSet$ by setting $(\mathrm{Sing}\,X)_n$ to be the set of continuous maps $[0,1]^n\to X$, where faces, degeneracies and connections are induced from the corresponding maps between cubes. Then for every space $X$, the cubical set $\mathrm{Sing}\,X$ is a Kan complex.
	\end{example}

	\subsection{Products}
	
	We define a product of cubical sets.
	
	\begin{definition}
		The functor
		\[
		\otimes\colon\cSet\times\cSet\to\cSet
		\]
		is defined as follows. For cubical sets $X$ and $Y$, the cubical set $X\otimes Y$ is given in degree $n$ by
		\[
		(X\otimes Y)_n=\left(\coprod_{k+l=n}X_k\times Y_l\right)/(x\sigma_{k+1},y)\sim(x,y\sigma_1)
		\]
		Faces, degeneracies and connections are defined by
		\begin{align*}
			(x,y)\partial_{i,\alpha}&=
			\begin{cases}
				(x\partial_{i,\alpha},y)&(i=1,\ldots,k)\\
				(x,y\partial_{i-k,\alpha})&(i=k+1,\ldots,k+l)
			\end{cases}\\
			(x,y)\sigma_i&=
			\begin{cases}
				(x\sigma_i,y)&(i=1,\ldots,k+1)\\
				(x,y\sigma_{i-k})&(i=k+1,\ldots,k+l)
			\end{cases}\\
			(x,y)\gamma_{i,\alpha}&=
			\begin{cases}
				(x\gamma_{i,\alpha},y)&(i=1,\ldots,k)\\
				(x,y\gamma_{i-k,\alpha})&(i=k+1,\ldots,k+l)
			\end{cases}
		\end{align*}
		where $(x,y)\in X_k\times Y_l$.
	\end{definition}

	\begin{example}
		It is straightforward to verify that there is a natural isomorphism
		\[
		\Box^m\otimes\Box^n\cong\Box^{m+n}.
		\]
	\end{example}

	By definition, the functor $\otimes\colon\cSet\times\cSet\to\cSet$ is a monoidal product. Define a functor $\otimes\colon\Box\times\Box\to\Box$ by $\otimes([1]^m,[1]^n)=[1]^{m+n}$. By \cite[Proposition 1.24]{DKLS}, the functor $\otimes\colon\cSet\times\cSet\to\cSet$ is the left Kan extension along the Yoneda embeddings, i.e., it fits into a commutative diagram
	\[
	\xymatrix{
		\Box\times\Box\ar[d]\ar[r]^\otimes&\Box\ar[d]\\
		\cSet\times\cSet\ar@{-->}[r]&\cSet
	}
	\]
	where the vertical maps are the Yoneda embeddings.

	We define homotopies between cubical maps. Unlike the situation for continuous maps, homotopies between cubical maps do not in general define an equivalence relation. However, it does induce an equivalence relation on maps into a Kan complex (see Proposition \ref{homotopy equivalence relation} below). The following definition is due to \cite[Definitions 2.8 and 2.11]{CK1}.

	\begin{definition}
		Let $f,g\colon X\to Y$ be cubical maps. An \emph{elementary homotopy} from $f$ to $g$ is a cubical map $h\colon X\otimes\Box^1\to Y$ such that the diagram
		\[
		\xymatrix{X\otimes\Box^0\ar[r]^{\partial_{1,0}}\ar[rd]_f&X\otimes\Box^1\ar[d]^h&X\otimes\Box^0\ar[l]_{\partial_{1,1}}\ar[ld]^g\\
			&Y}
		\]
		commutes. A \emph{homotopy} from $f$ to $g$ is a zig-zag of elementary homotopies from $f$ to $g$.
	\end{definition}
	The notion of homotopy leads us to the notion of homotopy equivalence, which is defined in \cite[Definition 2.13]{CK1}.
	\begin{definition}
		A cubical map $f\colon X\to Y$ is a \emph{homotopy equivalence} if there is a cubical map $g\colon Y\to X$ such that $g\circ f$ is homotopic to $1_X$ and $f\circ g$ is homotopic to $1_Y$.
	\end{definition}

	\subsection{Geometric realization}
	
	For $i=1,\ldots,n$ and $\alpha=0,1$, let
	\begin{align*}
		\partial_{i,\alpha}\colon [0,1]^{n-1}\to [0,1]^n,&\quad(x_1,\ldots,x_{n-1})\mapsto(x_1,\ldots,x_{i-1},\alpha,x_i,\ldots,x_{n-1}),\\
		\sigma_i\colon [0,1]^n\to [0,1]^{n-1},&\quad(x_1,\ldots,x_n)\mapsto(x_1,\ldots,x_{i-1},x_{i+1},\ldots,x_n).
	\end{align*}
	For $x\in X_n$, let $|x|=[0,1]^n$, where $X$ is a cubical set.

	\begin{definition}
		The \emph{geometric realization} is a functor $|\cdot|\colon\cSet\to\mathsf{Top}$ defined by
		\[
		|X|=\left(\coprod_{x\in X}|x|\right)/\sim,
		\]
		where the equivalence relation is generated by $\partial_{i,\alpha}|x|\sim|x\partial_{i,\alpha}|$ and $\sigma_i|x|\sim|x\sigma_i|$ for all $i$ and $\alpha$.
	\end{definition}

	We record a property of the geometric realization of a cubical set that follows directly from the definition.

	\begin{lemma}
		\label{realization CW}
		The geometric realization $|X|$ of every cubical set $X$ naturally carries a CW structure, whose cells are given by $\mathrm{Int}(|x|)$ for all nondegenerate cubes $x\in X$.
	\end{lemma}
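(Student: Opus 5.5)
The plan is to argue by induction on skeleta, following the standard argument for simplicial sets. Let $X^{(n)}\subseteq X$ be the cubical subset generated by the cubes of dimension at most $n$; it is closed under faces, degeneracies and connections. For $n\ge1$, let $N_n$ be the set of nondegenerate $n$-cubes of $X$. Then there is a pushout of cubical sets
\[
\coprod_{x\in N_n}\partial\Box^n\to X^{(n-1)},\qquad \coprod_{x\in N_n}\Box^n\to X^{(n)}.
\]
We check this levelwise. Every cube of $X^{(n)}$ that does not lie in $X^{(n-1)}$ has the form $x\theta$ for a unique $x\in N_n$ and a unique $\theta\colon[1]^m\to[1]^n$ that does not factor through a face. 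This follows from the Eilenberg--Zilber type uniqueness built into the factorization of Lemma \ref{factorization}. The one extra point is that connections act as degeneracies here: $x\gamma_{i,\alpha}$ is a new cube only when $x$ is nondegenerate, and uniqueness of the representation requires that nondegenerate cubes are not in the image of connections in a nontrivial way. I expect this point to be the main obstacle, since connections are not degeneracies in the strict sense. I would handle it using the standard fact for cubical sets with connections that each cube is uniquely $x\theta$ with $x$ nondegenerate and $\theta$ an epimorphism of $\Box$, which I would deduce from Lemma \ref{factorization} together with the sections $\theta\partial=1$ provided by the cubical identities.

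Geometric realization is a left adjoint, with right adjoint $\mathrm{Sing}$, so it preserves pushouts and colimits. Concretely, $|\Box^n|=[0,1]^n$, because the identification by degeneracies and connections collapses every higher cube of $\Box^n$ onto the top cell. Likewise $|\partial\Box^n|=\partial[0,1]^n$, since the boundary is the union of faces glued along lower faces, and realization commutes with these unions. Hence $|X^{(n)}|$ is obtained from $|X^{(n-1)}|$ by attaching one $n$-cell for each $x\in N_n$, with characteristic map $|x|=[0,1]^n\to|X|$. The $0$-skeleton $|X^{(0)}|$ is the discrete set $X_0$.

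Since $X=\bigcup_n X^{(n)}$ and the quotient topology on $|X|$ is the colimit topology of the $|X^{(n)}|$, this filtration exhibits $|X|$ as a CW complex. Its open $n$-cells are the images of $\mathrm{Int}(|x|)$ for $x\in N_n$; these are injective on interiors by the uniqueness established in the first step. Naturality is in the sense that cubical maps induce cellular maps, because a cube of dimension at most $n$ maps to a cube of dimension at most $n$.
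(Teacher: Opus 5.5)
\textbf{Gap: you are realizing $|X|_\gamma$, not $|X|$.} In this paper, $|X|$ identifies only along faces and degeneracies. Connections are deliberately \emph{not} collapsed; doing so gives $|X|_\gamma$. Also, ``nondegenerate'' means only ``not of the form $y\sigma_i$''. Your argument collapses connections throughout, and each key step fails for the paper's $|X|$.

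Take $X=\Box^1$. The $2$-cubes $\gamma_{1,0},\gamma_{1,1}\colon[1]^2\to[1]$ (max and min) are not of the form $y\sigma_i$, so they are nondegenerate. Consequently:
\begin{itemize}
\item[(i)] $|\Box^1|\neq[0,1]$. It contains the $2$-cells $\mathrm{Int}(|\gamma_{1,0}|)$ and $\mathrm{Int}(|\gamma_{1,1}|)$, plus infinitely many higher cells coming from iterated connections. The paper only knows that $|\Box^n|$ is contractible, and gets this by comparing it with $|\Box^n|_\gamma\cong[0,1]^n$ in the proof of Proposition \ref{realization connection}.
\item[(ii)] The representation $x\theta$ is not unique. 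The cube $\gamma_{1,0}$ equals $x\theta$ both for $x=\gamma_{1,0}$, $\theta=1$, and for $x=1_{[1]}$, $\theta=\gamma_{1,0}$.
\item[(iii)] Your pushout fails. Your skeleta are closed under connections, so $X^{(1)}=X^{(2)}=\Box^1$, while $N_2\supseteq\{\gamma_{1,0},\gamma_{1,1}\}$ is nonempty.
\end{itemize}
Relatedly, $|\cdot|$ is not left adjoint to $\mathrm{Sing}$: a map $|\Box^1|\to T$ carries more data than a path. It is $|\cdot|_\gamma$ that has this adjoint.

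If you strengthen ``nondegenerate'' to ``not in the image of any degeneracy or connection'', your argument describes a CW structure on $|X|_\gamma$. Its cells omit the connection cubes, so this is a different statement from the lemma.

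\textbf{Repair: forget connections entirely.} The paper gives no argument; it says the lemma follows from the definition. Since $|X|$ only sees the restriction of $X$ to the subcategory of $\Box$ generated by faces and degeneracies, work there throughout:
\begin{itemize}
\item Let $X^{(n)}$ be generated by the cubes of dimension at most $n$ under faces and degeneracies only.
\item Use the Eilenberg--Zilber lemma for degeneracies alone: every cube is uniquely $x\sigma_{k_1}\cdots\sigma_{k_t}$ with $k_1<\cdots<k_t$ and $x$ nondegenerate. This is proved exactly as in the simplicial case, using the sections $\sigma_i\partial_{i,\alpha}=1$.
\item The connection-free representable realizes to $[0,1]^n$, and its boundary to $\partial[0,1]^n$.
\item Realization still preserves colimits, because it factors through restriction along the inclusion of that subcategory, and restriction has a right adjoint.
\end{itemize}
The skeletal pushouts then become cell attachments indexed exactly by the nondegenerate cubes in the paper's sense, with connection cubes $y\gamma_{i,\alpha}$ included. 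That is precisely the statement.
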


	The category of simplicial sets carries a model category structure, where weak equivalences are simplicial maps that are homotopy equivalences on geometric realizations, and cofibrations are injective simplicial maps. The following analogue for cubical sets was proved by Cisinski \cite{C}.

	\begin{theorem}
		\label{cSet model structure}
		The category $\cSet$ carries a model category structure in which weak equivalences are cubical maps that are homotopy equivalences on geometric realizations, and cofibrations are injective cubical maps.
	\end{theorem}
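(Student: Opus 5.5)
The plan is to obtain the model structure from Cisinski's general theory of model structures on presheaf toposes, and then to identify its weak equivalences with maps inducing homotopy equivalences on $|\cdot|$. The first step is to check that $\Box$ with connections is an Eilenberg--Zilber (elegant Reedy) category. Lemma \ref{factorization} supplies unique epi--mono factorizations, and one needs the standard fact that every cube is uniquely a degeneracy/connection of a nondegenerate cube. From this it follows that the monomorphisms of $\cSet$ are exactly the relative cell complexes built from the boundary inclusions $\partial\Box^n\hookrightarrow\Box^n$. This takes care of the cofibration side: the monomorphisms are generated by a set, as Cisinski's theorem requires.

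The second step is to choose an exact cylinder, for which I would use $X\otimes\Box^1$ with the two endpoint inclusions $\partial_{1,0},\partial_{1,1}$. I would verify Cisinski's axioms for it. The functor $-\otimes\Box^1$ preserves monomorphisms and colimits, and the pushout-product of $\partial\Box^n\hookrightarrow\Box^n$ with $\partial\Box^1\hookrightarrow\Box^1$ is again a monomorphism, which can be computed via $\Box^m\otimes\Box^n\cong\Box^{m+n}$. Next I would take as generating anodyne extensions the open-box inclusions $\sqcap^n_{i,\alpha}\hookrightarrow\Box^n$ and show that the resulting class is closed under pushout-product with the endpoint inclusions. This is the usual ``horn times interval is anodyne'' computation, done one cube at a time. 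Cisinski's theorem then gives a cofibrantly generated model structure with cofibrations the monomorphisms. Its fibrant objects are the Kan complexes, and its weak equivalences are the maps inducing bijections on homotopy classes of maps into every Kan complex.

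The third step, and the one I expect to be the main obstacle, is to show that these weak equivalences coincide with the maps $f$ such that $|f|$ is a homotopy equivalence. Here I would use that $\Box$ with connections is a strict test category (Maltsiniotis), so that the localizer generated by the above data is the minimal one. Concretely, I would establish three facts:
\begin{itemize}
\item The nerve--realization adjunction $|\cdot|\dashv\mathrm{Sing}$ is a Quillen pair: $|\cdot|$ sends monomorphisms to CW inclusions by Lemma \ref{realization CW}, and sends horn inclusions to deformation retracts.
\item The unit $X\to\mathrm{Sing}|X|$ is a weak equivalence. I would first check this on representables, where $|\Box^n|=[0,1]^n$ is contractible and $\Box^n$ is anodyne-contractible using connections. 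I would then extend it to all $X$ by induction over skeleta and gluing, using that both sides preserve homotopy pushouts along monomorphisms.
\item Geometric realization reflects weak equivalences between Kan complexes, by a comparison of homotopy groups.
\end{itemize}
The delicate point is the representable and skeletal induction, which is exactly where connections are needed: without them $\Box^n$ is not contractible in the model-theoretic sense with respect to this cylinder. Once these facts are in place, a map is a weak equivalence if and only if its realization is a weak homotopy equivalence of CW complexes. By Whitehead's theorem, that is the same as being a homotopy equivalence.
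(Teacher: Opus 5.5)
Your overall route (Cisinski's machinery with the exact cylinder $-\otimes\Box^1$ and open boxes as generating anodyne extensions, plus Maltsiniotis' result that $\Box$ with connections is a strict test category) is the standard one behind the citation the paper gives. The paper does not prove this theorem itself; it attributes it to Cisinski. Your first two steps are fine in outline, with one small correction: the closure property Cisinski needs is under pushout-product with $\partial\Box^1\hookrightarrow\Box^1$, which is your ``horn times interval'' computation.

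The gap is in your third step, which conflates two different realizations. The realization in the statement is the paper's $|X|$, which glues cubes only along faces and degeneracies and imposes \emph{no} connection identifications.
\begin{itemize}
\item This functor is not left adjoint to $\mathrm{Sing}$. Because $\mathrm{Sing}$ carries connections, its left adjoint is the left Kan extension of $[1]^n\mapsto[0,1]^n$ with $\gamma_{i,\alpha}$ acting by $\max$/$\min$, namely $|\cdot|_\gamma$.
\item Likewise $|\Box^n|\neq[0,1]^n$. Already $\gamma_{1,0},\gamma_{1,1}\in(\Box^1)_2$ are nondegenerate in the paper's sense, so by Lemma \ref{realization CW} they contribute extra $2$-cells to $|\Box^1|$. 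The contractibility of $|\Box^n|$ is not evident; the paper gets it from Antolini's computation of $\pi_1$ and homology plus Whitehead's theorem.
\end{itemize}
So your three facts, once correctly stated for $|\cdot|_\gamma$, at best identify the weak equivalences with the maps $f$ for which $|f|_\gamma$ is a homotopy equivalence. To reach the theorem as stated you still need the natural homotopy equivalence $|X|\to|X|_\gamma$ of Proposition \ref{realization connection}. This is exactly the subtle point the paper flags right after the theorem: Antolini's argument only gave isomorphisms on $\pi_1$ and homology. The paper proves it later using Lemma \ref{cube cat classifying space} and Quillen's Theorem A. Your proposal never addresses it.

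There is also a secondary problem in the second of your three facts. The skeletal induction assumes that $X\mapsto\mathrm{Sing}|X|$ preserves homotopy pushouts along monomorphisms. For $\mathrm{Sing}$ this is an excision statement of essentially the same strength as the Quillen equivalence being proved, so as written it is close to circular. Since you already invoke the strict test category property, a cleaner finish is:
\begin{itemize}
\item the model structure's weak equivalences are then the test weak equivalences, i.e.\ the maps $f$ with $B(\Box\downarrow f)$ a weak homotopy equivalence;
\item Lemma \ref{cube cat classifying space} gives a natural homotopy equivalence $B(\Box\downarrow X)\simeq|X|_\gamma$;
\item Proposition \ref{realization connection} passes from $|X|_\gamma$ to $|X|$;
\item Whitehead's theorem upgrades weak equivalences to homotopy equivalences, because $|X|$ is a CW complex.
\end{itemize}
Finally, your remark that connections are what make $\Box^n$ weakly contractible is inaccurate. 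The inclusion $\{0\}\hookrightarrow\Box^1$ is the horn $\sqcap^1_{1,1}\hookrightarrow\Box^1$, hence anodyne, with or without connections. What connections buy is strictness of the test category.
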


	Cisinski \cite{C} proved that $\cSet$, equipped with the model category structure of Theorem \ref{cSet model structure}, is Quillen equivalent to $\mathsf{Top}$ via geometric realization. Consequently, cubical sets provide a model for the homotopy theory of spaces. We recall some basic properties of homotopy equivalence between cubical sets.
	\begin{lemma}
			\label{he is we}
			If a cubical map $f\colon X\to Y$ is a homotopy equivalence, then it is a weak equivalence.
	\end{lemma}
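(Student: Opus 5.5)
The plan is to push everything through geometric realization and use that homotopies between continuous maps do form an equivalence relation. Let $g\colon Y\to X$ be a homotopy inverse of $f$, so that $g\circ f$ is homotopic to $1_X$ and $f\circ g$ is homotopic to $1_Y$. We will show that $|f|\colon|X|\to|Y|$ is a homotopy equivalence of spaces with homotopy inverse $|g|$. By the definition of weak equivalence in Theorem \ref{cSet model structure}, this is exactly what is required.

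The key step is to compare the realization of $X\otimes\Box^1$ with $|X|\times[0,1]$. First, $|\Box^n|\cong[0,1]^n$, and the isomorphism $\Box^m\otimes\Box^n\cong\Box^{m+n}$ realizes to the homeomorphism $[0,1]^m\times[0,1]^n\cong[0,1]^{m+n}$. Next, $\otimes$ is the left Kan extension of its restriction to cubes, so it preserves colimits in each variable. Geometric realization is a left adjoint (to $\mathrm{Sing}$), so it also preserves colimits. Finally, $-\times[0,1]$ preserves colimits in compactly generated spaces, since $[0,1]$ is locally compact. Writing $X$ as the colimit of its cubes $\Box^n\to X$ therefore gives a natural homeomorphism
\[
|X\otimes\Box^1|\cong|X|\times[0,1].
\]
Under this homeomorphism, $|\partial_{1,\alpha}|$ corresponds to the inclusion at the endpoint $\alpha$, and $|X\otimes\Box^0|\cong|X|$.

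With this in hand, an elementary homotopy $h\colon X\otimes\Box^1\to Y$ from $f_0$ to $f_1$ realizes to a continuous homotopy $|X|\times[0,1]\to|Y|$ from $|f_0|$ to $|f_1|$. A general homotopy is a zig-zag of elementary homotopies. Topological homotopy is symmetric and transitive, so a zig-zag yields $|f_0|\simeq|f_1|$. Applying this to the two homotopies above, and using functoriality $|g\circ f|=|g|\circ|f|$, gives $|g|\circ|f|\simeq 1_{|X|}$ and $|f|\circ|g|\simeq 1_{|Y|}$. So $|f|$ is a homotopy equivalence, and $f$ is a weak equivalence.

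The main obstacle is the homeomorphism $|X\otimes\Box^1|\cong|X|\times[0,1]$, which has two delicate points. One is the point-set issue that products commute with colimits only in a convenient category of spaces; we work in compactly generated spaces, as is standard for $\mathsf{Top}$ in Cisinski's result. The other is that realization must be compatible with connections, because $|\Box^n|$ is computed in the category with connections. The realization in the paper only glues along faces and degeneracies. Each connection $\gamma_{i,\alpha}$ has a geometric model ($\max$ or $\min$ of two coordinates), and $|\cdot|$ is the left Kan extension of $[1]^n\mapsto[0,1]^n$ along these models. This identifies $|\Box^n|$ with $[0,1]^n$ via the Yoneda lemma, and once that holds the colimit argument goes through.
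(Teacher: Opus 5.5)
Your overall strategy is sound: realize the maps, turn cubical homotopies into topological ones, and use that topological homotopy is an equivalence relation. The key step, however, fails for the realization this paper uses.

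The paper's $|\cdot|$ glues only along faces and degeneracies, not along connections, so $|\Box^n|$ is \emph{not} $[0,1]^n$. For instance, $\gamma_{1,0}\in\Box^1_2$ is not of the form $y\sigma_i$. It is therefore nondegenerate and, by Lemma \ref{realization CW}, contributes a $2$-cell to $|\Box^1|$. In fact $\Box^1$ has nondegenerate cubes in every dimension, namely iterated connections. Hence $|X\otimes\Box^1|\cong|X|\times[0,1]$ is already false for $X=\Box^0$.

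Your last paragraph asserts that $|\cdot|$ is the left Kan extension of $[1]^n\mapsto[0,1]^n$ along the max/min models of the connections. That describes $|\cdot|_\gamma$, not $|\cdot|$. Likewise, $|\cdot|$ is not left adjoint to the paper's $\mathrm{Sing}$; that adjunction belongs to $|\cdot|_\gamma$. As written, your argument proves that $|f|_\gamma$ is a homotopy equivalence. Passing from there to $|f|$ would require Proposition \ref{realization connection}. That is a genuinely nontrivial result here: it uses Quillen's Theorem A together with Antolini's computations, and the paper notes that the earlier proof in the literature was incomplete.

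The detour is unnecessary, because you do not need a homeomorphism. You only need a continuous map $\Phi\colon|X|\times[0,1]\to|X\otimes\Box^1|$ whose restriction to $t=\alpha$ is the realization of $X\cong X\otimes\Box^0\xrightarrow{1\otimes\partial_{1,\alpha}}X\otimes\Box^1$.

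For $x\in X_n$, define $\Phi$ on $|x|\times[0,1]=[0,1]^{n+1}$ to be the characteristic map of the $(n+1)$-cube $(x,1_{[1]})\in(X\otimes\Box^1)_{n+1}$. The identities
\[
(x,1_{[1]})\partial_{i,\alpha}=(x\partial_{i,\alpha},1_{[1]}),\quad (x'\sigma_i,1_{[1]})=(x',1_{[1]})\sigma_i\ \ (i\le n),\quad (x,1_{[1]})\partial_{n+1,\alpha}=(x,\partial_{1,\alpha})
\]
follow directly from the definition of $\otimes$. They show that these cellwise maps respect the face and degeneracy identifications and have the required endpoints. Since $[0,1]$ is locally compact Hausdorff, $|X|\times[0,1]$ is the quotient of $\coprod_x|x|\times[0,1]$, so $\Phi$ is continuous.

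Then $|h|\circ\Phi$ is a topological homotopy from $|f_0|$ to $|f_1|$. Your zig-zag and functoriality steps finish the proof unchanged, with no reference to connections at all.
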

	\begin{lemma}
	\label{he vs we}
	Let $f\colon X\to Y$ be a cubical map between Kan complexes. Then $f$ is a homotopy equivalence if and only if it is a weak equivalence.
	\end{lemma}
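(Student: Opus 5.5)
The claim of Lemma \ref{he vs we} splits into two directions. The forward direction is exactly Lemma \ref{he is we}, so only the converse needs an argument: a weak equivalence $f\colon X\to Y$ between Kan complexes is a homotopy equivalence. My approach is the standard model-categorical Whitehead argument in the Cisinski model structure of Theorem \ref{cSet model structure}. I will use the following facts about that structure.

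\begin{itemize}
\item Cofibrations are the monomorphisms, so every cubical set is cofibrant.
\item The fibrant objects are exactly the Kan complexes. This is the standard identification of fibrant objects in the cubical model structure with connections (as in \cite{C} and \cite{CK1}), and I will cite it.
\end{itemize}

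\textbf{Step 1 (Whitehead).} Since $X$ and $Y$ are both fibrant and cofibrant, Whitehead's theorem in any model category makes $f$ a homotopy equivalence with respect to the model-categorical homotopy relation. Concretely, there is $g\colon Y\to X$ with $gf\simeq 1_X$ and $fg\simeq 1_Y$, where the homotopies are left homotopies through any cylinder object.

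\textbf{Step 2 (cylinder).} For a cubical set $A$, the cylinder object I will use is $A\otimes\Box^1$, with the two inclusions $A\otimes\partial_{1,0}$ and $A\otimes\partial_{1,1}$. The map $A\otimes\partial\Box^1=A\sqcup A\to A\otimes\Box^1$ is a monomorphism, hence a cofibration. The projection $A\otimes\Box^1\to A$ is a weak equivalence, because its geometric realization is the projection $|A|\times[0,1]\to|A|$. Here I need that realization takes $\otimes$ to the product, which I would check on representables using $\Box^m\otimes\Box^n\cong\Box^{m+n}$ and the Kan extension description of $\otimes$.

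\textbf{Step 3 (matching the homotopy relations).} Between a cofibrant source and a fibrant target, left homotopy can be computed with any fixed good cylinder. So the left homotopies from Step 1 can be taken to be maps $A\otimes\Box^1\to B$, which are precisely elementary homotopies in the sense of the definition above. A single elementary homotopy is in particular a zig-zag, so $gf$ is homotopic to $1_X$ and $fg$ is homotopic to $1_Y$ in the paper's sense, and $f$ is a homotopy equivalence.

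The main obstacle is justifying the model-theoretic inputs rather than the formal argument. These are the identification of fibrant objects with Kan complexes, and the fact that $A\otimes\Box^1$ is a good cylinder, which rests on the compatibility of realization with $\otimes$. For both I would rely on \cite{C} and \cite{CK1}, where these facts are established.
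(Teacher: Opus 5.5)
Your argument is correct. The paper gives no proof of Lemma \ref{he vs we}. It simply recalls this lemma, together with Lemma \ref{he is we}, as a known fact about the model structure of Theorem \ref{cSet model structure}. Your route is the standard argument behind that fact: Whitehead's theorem for bifibrant objects, then the observation that a left homotopy into a fibrant target can be taken through the fixed good cylinder $A\otimes\Box^1$, i.e.\ as a single elementary homotopy. As you say, the real input is that the fibrant objects are exactly the Kan complexes.

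One small inaccuracy in Step 2 needs fixing. The paper's realization $|\cdot|$ imposes no connection identifications, so $|\Box^1|$ is not $[0,1]$. For example, $\gamma_{1,0}$ is a nondegenerate $2$-cube of $\Box^1$, so $|\Box^1|$ is only a contractible CW complex. There are two easy repairs:
\begin{enumerate}
\item Argue with $|\cdot|_\gamma$ instead. There $|\Box^n|_\gamma=[0,1]^n$ and $|A\otimes\Box^1|_\gamma\cong|A|_\gamma\times[0,1]$, and Proposition \ref{realization connection} transfers the conclusion to $|\cdot|$.
\item Avoid realizations altogether. The map $A\otimes\partial_{1,0}\colon A\to A\otimes\Box^1$ is the Leibniz product of $\emptyset\to A$ with the open box inclusion $\sqcap^1_{1,1}\to\Box^1$, hence anodyne. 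The projection $A\otimes\Box^1\to A$ is then a weak equivalence by two-out-of-three.
\end{enumerate}
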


	Although we require cubical sets to have connections, the geometric realization defined above does not impose the corresponding connection identifications. To justify this inconsistency, we define the geometric realization of a cubical set that takes into account connections, and show that the two geometric realizations of a cubical set are naturally homotopy equivalent. For $i=1,\ldots,n-1$ and $\alpha=0,1$, let
	\begin{alignat*}{2}
		&\gamma_{i,0}\colon[0,1]^n\to[0,1]^{n-1},\quad&(x_1,\ldots,x_{n})\mapsto(x_1,\ldots,x_{i-1},\max\{x_i,x_{i+1}\},x_{i+2},\ldots,x_n)\\
		&\gamma_{i,1}\colon[0,1]^n\to[0,1]^{n-1},\quad&(x_1,\ldots,x_{n})\mapsto(x_1,\ldots,x_{i-1},\min\{x_i,x_{i+1}\},x_{i+2},\ldots,x_n).
	\end{alignat*}

	\begin{definition}
		Define a functor $|\cdot|_\gamma\colon\cSet\to\mathsf{Top}$ defined by
		\[
		|X|_\gamma=\left(\coprod_{x\in X}|x|\right)/\sim,
		\]
		where the equivalence relation is generated by $\partial_{i,\alpha}|x|\sim|x\partial_{i,\alpha}|$, $\sigma_i|x|\sim|x\sigma_i|$ and $\gamma_{i,\alpha}|x|\sim|x\gamma_{i,\alpha}|$ for all $i$ and $\alpha$.
	\end{definition}

	Antolini \cite{A} claimed that the following proposition holds. However, she only showed that for every cubical set $X$, the map $|X|\to|X|_\gamma$ induces isomorphisms in fundamental groups and on homology. It is well known that such a map between CW complexes need not be a homotopy equivalence. In Section \ref{Fibrant replacement}, we provide a correct proof of the claim.

	\begin{proposition}
		\label{realization connection}
		The canonical natural transformation $|\cdot|\Rightarrow|\cdot|_\gamma$ is an objectwise homotopy equivalence.
	\end{proposition}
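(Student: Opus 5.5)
Both realizations $|\cdot|$ and $|\cdot|_\gamma$ commute with colimits: each is the left Kan extension along the Yoneda embedding of a cosimplicial-type functor $\Box\to\mathsf{Top}$, namely $[1]^n\mapsto|\Box^n|$ and $[1]^n\mapsto|\Box^n|_\gamma$. The natural transformation $|\cdot|\Rightarrow|\cdot|_\gamma$ is therefore determined by its components on representables $\Box^n$. My plan has three steps.

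\textbf{Step 1 (representables).} I will show that $|\Box^n|\to|\Box^n|_\gamma=[0,1]^n$ is a homotopy equivalence. The target is contractible, so it suffices to show $|\Box^n|$ is contractible. By Lemma~\ref{realization CW}, $|\Box^n|$ is a CW complex whose cells are the nondegenerate cubes of $\Box^n$. These are the morphisms $[1]^m\to[1]^n$ not factoring through a degeneracy, including connection composites. So $|\Box^n|$ is much larger than $[0,1]^n$, but it is built from cubes glued along faces, and I expect contractibility to follow from a linear contraction toward the vertex $(1,\ldots,1)$. An alternative route: $\Box^n$ is a cubical set whose realization is weakly equivalent (by Cisinski's model structure) to the realization of the representable, which is contractible.

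\textbf{Step 2 (globalization).} I will write $X$ as the colimit of its skeleta, each obtained from the previous by pushouts along $\partial\Box^n\to\Box^n$ (nondegenerate cubes). Both functors preserve these pushouts and sequential colimits, and the boundary inclusions go to cofibrations. By the gluing lemma and induction on skeleta, it then suffices to know the map is an equivalence on $\Box^n$ and on $\partial\Box^n$. For $\partial\Box^n$ I would again decompose it as a colimit of faces and lower-dimensional representables and induct. The one subtlety is that $|\cdot|_\gamma$ identifies cubes via connections, so the cell attachments in $|X|_\gamma$ are not simply the cells of the nondegenerate cubes. Still, both sides are computed by the same colimit diagram in $\mathsf{Top}$, and the cofibrancy is preserved because the maps are inclusions of subcomplexes.

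\textbf{Step 3 (the obstacle).} The main obstacle, and likely why the paper defers the proof to Section~\ref{Fibrant replacement}, is making Step 2 rigorous: the skeletal pushout decomposition in $\cSet$ with connections involves nondegenerate cubes that are connection-degenerate, and $|\cdot|_\gamma$ does not obviously send $\partial\Box^n\hookrightarrow\Box^n$ to a cofibration with the expected homotopy type.

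The fallback I would try is to apply the fibrant replacement of that section. Given a natural weak equivalence $X\to\mathcal{E}X$ to a Kan complex, built by iterated horn filling (pushouts along $\sqcap^n_{i,\alpha}\to\Box^n$), I would check that both realizations send horn inclusions to homotopy equivalences, so $|X|\simeq|\mathcal{E}X|$ and $|X|_\gamma\simeq|\mathcal{E}X|_\gamma$. For the Kan complex $\mathcal{E}X$, combinatorial homotopy groups are available, and I would compare both realizations to them directly.
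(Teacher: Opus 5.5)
Your outline (settle the representables, then globalize by a skeletal induction and the gluing lemma) could work, but as written both load-bearing steps are unproved, and the fallback does not supply them.

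\textbf{Step~1.} This is the main gap. The contractibility of $|\Box^n|$ is exactly what the paper's proof also reduces to: the fibres in its Theorem~A argument are $B(\overline{\Box\downarrow\Box^n})\simeq|\Box^n|$. Neither of your justifications works.
\begin{itemize}
\item $|\Box^n|$ is not a subspace of $[0,1]^n$. It is a CW complex with a cell for every nondegenerate connection composite, and contracting each cube linearly inside itself is incompatible with the face identifications. Already for $n=1$, the $2$-cells $\gamma_{1,0}$ and $\gamma_{1,1}$ each have two sides glued to the edge $e=\mathrm{id}_{[1]}$ and two sides collapsed to a vertex. So they are attached along the null-homotopic loop $e\,e^{-1}$, and the $2$-skeleton of $|\Box^1|$ is $\simeq S^2\vee S^2$. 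Contractibility genuinely depends on the higher connection cells.
\item The appeal to Cisinski's model structure is circular. Its weak equivalences are detected by the connection-respecting realization (equivalently by $B(\Box\downarrow X)$). Using them to say anything about $|\cdot|$ presupposes the proposition.
\end{itemize}
The paper gets contractibility from Antolini's theorem that $|X|\to|X|_\gamma$ is an isomorphism on $\pi_1$ and on homology, applied to $X=\Box^n$, together with Whitehead's theorem.

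A self-contained repair in the spirit of your ``contraction toward $(1,\ldots,1)$'' uses connections rather than linear maps. The morphism $\gamma_{n,0}$ gives an elementary homotopy $\Box^n\otimes\Box^1\cong\Box^{n+1}\to\Box^n$ from the identity to $\partial_{n,1}\sigma_n$. Any elementary homotopy $X\otimes\Box^1\to Y$ realizes to a homotopy $|X|\times[0,1]\to|Y|$, by sending $|x|\times[0,1]$ onto $|(x,\mathrm{id}_{[1]})|$. Hence $|\Box^n|\simeq|\Box^{n-1}|\simeq\cdots\simeq\mathrm{pt}$.

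\textbf{Step~2.} You flag this gap but leave it open. The fix is to attach cells only for cubes that are nondegenerate in the Eilenberg--Zilber sense, i.e.\ not of the form $y\theta$ with $\theta$ a nonidentity composite of connections and degeneracies.
\begin{itemize}
\item By Lemma~\ref{factorization} and the uniqueness statement the paper uses in the proof of Lemma~\ref{cube cat hocolim}, every cube is uniquely $y(\gamma\cdots)(\sigma\cdots)$ with $y$ of this kind.
\item Hence $\mathrm{sk}_nX$ (the cubical subset generated by the cubes of dimension $\le n$) is the pushout of $\mathrm{sk}_{n-1}X\leftarrow\coprod\partial\Box^n\to\coprod\Box^n$ over such $n$-cubes.
\item Both realizations preserve this pushout. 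Moreover, $|\partial\Box^n|\to|\Box^n|$ is a subcomplex inclusion and $|\partial\Box^n|_\gamma=\partial[0,1]^n$, so both boundary maps are cofibrations.
\item Induction on $n$, the gluing lemma, and passage to the sequential colimit then complete the argument.
\end{itemize}

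With both repairs your route is a genuinely more elementary proof than the paper's, since it needs no Reedy or homotopy-colimit comparison and no Theorem~A. The fibrant-replacement fallback, by contrast, does not help. It still needs $|\Box^n|$ and $|\sqcap^n_{i,\alpha}|$ to be contractible. It also needs a comparison between the connection-free realization of a Kan complex and its combinatorial homotopy groups, which is no easier than the proposition itself.
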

	Thus adding the connection identifications does not change the homotopy type.

	\subsection{Homology}
	
	We recall the homology of cubical sets. Let $X$ be a cubical set. Define a chain complex $C(X)$ by setting $C_n(X)$ to be the free abelian group generated by $X_n$, with boundary operator
	\[
	\partial x=\sum_{i=1}^n\sum_{\alpha=0,1}(-1)^{i+\alpha}x\partial_{i,\alpha}
	\]
	for $x\in X_n$. Let $D_n(X)$ denote the subgroup of $C_n(X)$ generated by degenerate $n$-cubes of $X$. By the cubical identities, $\partial D_n(X)\subset D_{n-1}(X)$, so $D(X)$ is a chain subcomplex of $C(X)$. Define the chain complex
	\[
	N(X)=C(X)/D(X)
	\]
	which is called the \emph{normalized chain complex} of $X$. Let $\mathsf{GrAb}$ denote the category of graded abelian groups.

	\begin{definition}
		Define a functor
		\[
		H_*\colon\cSet\to\mathsf{GrAb},\quad X\mapsto H_*(N(X)).
		\]
		Each $H_n(X)$ is called the $n$-th homology of a cubical set $X$.
	\end{definition}

	We record some fundamental properties of the homology of cubical sets; see, for example, \cite{BGJW,F}.

	\begin{proposition}
		Let $f,g\colon X\to Y$ be cubical maps between cubical sets $X$ and $Y$. If $f$ and $g$ are homotopic, then
		\[
		f_*=g_*\colon H_*(X)\to H_*(Y).
		\]
	\end{proposition}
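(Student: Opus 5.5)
It suffices to treat an elementary homotopy $h\colon X\otimes\Box^1\to Y$ from $f$ to $g$: a general homotopy is a zig-zag of elementary ones, and equality of induced maps on homology is preserved along each link of the zig-zag regardless of its direction. So I will build a chain homotopy $P\colon N_n(X)\to N_{n+1}(Y)$ satisfying $\partial P+P\partial=\pm(g_*-f_*)$ on the normalized chains.

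The natural choice is the prism operator. For $x\in X_n$, let $\iota$ denote the nondegenerate $1$-cube of $\Box^1$, and set
\[
P(x)=h(x,\iota)\in Y_{n+1},
\]
where $(x,\iota)\in X_n\times(\Box^1)_1$ is viewed as an $(n+1)$-cube of $X\otimes\Box^1$. Using the face formulas for $\otimes$, the faces are
\[
(x,\iota)\partial_{i,\alpha}=(x\partial_{i,\alpha},\iota)\quad(i\le n),\qquad (x,\iota)\partial_{n+1,\alpha}=(x,\iota\partial_{1,\alpha}).
\]
The last two faces are the images of $x$ under $X\otimes\Box^0\xrightarrow{\partial_{1,\alpha}}X\otimes\Box^1$, so applying $h$ gives $f(x)$ for $\alpha=0$ and $g(x)$ for $\alpha=1$. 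Expanding the boundary formula then yields
\[
\partial P(x)=\sum_{i\le n,\alpha}(-1)^{i+\alpha}P(x\partial_{i,\alpha})+(-1)^{n+1}\bigl(f(x)-g(x)\bigr)=P(\partial x)+(-1)^{n+1}\bigl(f(x)-g(x)\bigr).
\]
Replacing $P$ by $P'=(-1)^{n}P$ in degree $n$ gives $\partial P'+P'\partial=f-g$ on $C(X)$, which is a standard sign adjustment.

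It remains to check that $P$ descends to the normalized complexes, i.e. that $P(D_n(X))\subset D_{n+1}(Y)$. If $x=y\sigma_i$ with $i\le n$, then the degeneracy formula for $\otimes$ gives $(x,\iota)=(y\sigma_i,\iota)=(y,\iota)\sigma_i$. Since $h$ is a cubical map, $P(x)=h(y,\iota)\sigma_i$ is degenerate. (The index $i$ ranges over $1,\dots,k+1$ in the definition, and here $i\le n=k+1$, which is consistent with the identification relation in $\otimes$.) Also $f$ and $g$ preserve degeneracies. Hence the homotopy formula holds on $N(X)\to N(Y)$, and $f_*=g_*$ on $H_*$.

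The main obstacle I expect is bookkeeping rather than substance. One point is getting the sign conventions right. The other is making sure $(x,\iota)$ is well defined in the quotient defining $X\otimes\Box^1$ when $x$ is degenerate in its last coordinate. This is harmless, because the relation only identifies $(x\sigma_{k+1},y)$ with $(x,y\sigma_1)$, and $P$ is defined directly on representatives and then composed with $h$. It therefore suffices to define $P$ on generators $x\in X_n$ with the fixed representative $(x,\iota)$; no choice is involved.
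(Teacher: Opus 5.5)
Your proof is correct. The paper does not prove this proposition itself; it records it as a known fact and points to the literature, so there is no internal argument to compare against.

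Your prism operator $P(x)=h(x,\iota)$ gives a complete, self-contained chain-level proof, and both checks it needs are done correctly:

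\begin{itemize}
\item The boundary formula $\partial P=P\partial+(-1)^{n+1}(f-g)$ follows from the face formulas of $\otimes$ together with $X\otimes\Box^0\cong X$.
\item The inclusion $P(D_n(X))\subset D_{n+1}(Y)$ follows from $(y\sigma_i,\iota)=(y,\iota)\sigma_i$ for $i\le n=k+1$.
\end{itemize}

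Since $N(X)$ is formed by quotienting only by degenerate cubes, connections never enter, which is consistent with the paper's normalization. Your handling of the zig-zag is also fine.

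One small slip: with $P'_n=(-1)^nP_n$ you get $\partial P'+P'\partial=g-f$, not $f-g$. Take $P'_n=(-1)^{n+1}P_n$ if you want $f-g$. This is irrelevant to the conclusion.

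For comparison, the route suggested by the surrounding text of the paper would go through the comparison $H_*(X)\cong H_*(|X|)$. That would require its naturality plus an identification of $|X\otimes\Box^1|$ with $|X|\times[0,1]$. Your combinatorial argument avoids both and works directly on normalized chains.
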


	\begin{proposition}
		For every cubical set $X$, there is a natural isomorphism
		\[
		H_*(X)\cong H_*(|X|).
		\]
	\end{proposition}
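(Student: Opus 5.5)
The natural route is to compare the normalized chain complex $N(X)$ with the cellular chain complex of $|X|$, using the CW structure of Lemma \ref{realization CW}, and then use the natural isomorphism between cellular and singular homology. The cells of $|X|$ are the interiors $\mathrm{Int}(|x|)$ of the nondegenerate cubes $x$. So in degree $n$ the cellular chain group $C^{\mathrm{cell}}_n(|X|)=H_n(|X|^{(n)},|X|^{(n-1)})$ is free abelian on the nondegenerate $n$-cubes. Since $D_n(X)$ is spanned by the degenerate cubes, $N_n(X)$ is free on the same basis. Choosing the orientation of each cell by the standard orientation of $[0,1]^n$ gives an isomorphism of graded groups $N_n(X)\cong C^{\mathrm{cell}}_n(|X|)$.

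The next step is to check that this isomorphism is a chain map. The cellular boundary of the cell of $x$ is computed by its characteristic map $|x|=[0,1]^n\to|X|$ restricted to $\partial[0,1]^n$. The faces $\partial_{i,\alpha}[0,1]^{n-1}$ are mapped by $|x\partial_{i,\alpha}|$. Each face appears with incidence sign $(-1)^{i+\alpha}$ relative to the boundary orientation of $[0,1]^n$, a standard computation. If $x\partial_{i,\alpha}$ is degenerate, say $y\sigma_j$, its characteristic map factors through the projection $\sigma_j\colon[0,1]^{n-1}\to[0,1]^{n-2}$. That map lands in the $(n-2)$-skeleton, so it contributes zero to the degree computation, matching the fact that degenerate cubes vanish in $N(X)$. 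Hence $\partial^{\mathrm{cell}}$ corresponds to the boundary of $N(X)$.

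Naturality needs care, because a cubical map $f$ may send a nondegenerate cube to a degenerate one. In that case the induced cellular map sends the cell into a lower skeleton, so it gives $0$, which again agrees with $N(f)$. Composing with the natural isomorphism $H_*(C^{\mathrm{cell}}(|X|))\cong H_*(|X|)$ then yields $H_*(X)\cong H_*(|X|)$, natural in $X$.

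I expect the main obstacle to be making the incidence-degree computation precise. This is the claim that the degree of the attaching map onto each cell agrees with $\sum(-1)^{i+\alpha}$ over the faces equal to that cell. The cleanest way is probably to reduce to the universal case $X=\Box^n$, where $|\Box^n|=[0,1]^n$, and then use naturality of cellular chains along the cellular map $|x|\colon[0,1]^n\to|X|$. For this one needs the fact that $|\cdot|$ sends cubical maps to cellular maps: nondegenerate cubes go to cubes, and degenerate ones factor through projections. Connections cause no trouble, since $|\cdot|$ does not impose connection identifications.
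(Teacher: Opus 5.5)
The paper gives no argument of its own for this statement. It records it as a standard fact and refers to the literature. Your cellular-chain argument is a correct, self-contained alternative that derives the isomorphism directly from Lemma \ref{realization CW}. It uses only two facts: $|f|$ is cellular for every cubical map $f$, and cellular and singular homology are naturally isomorphic. Your handling of degenerate faces, and of cubical maps sending nondegenerate cubes to degenerate ones, is exactly what is needed. It also shows why connection cubes cause no trouble: they are nondegenerate, so they are both generators of $N(X)$ and cells of $|X|$. The citation buys the paper brevity. Your route buys an identification of $N(X)$ with the cellular chains of $|X|$ on the nose, with the sign $(-1)^{i+\alpha}$ coming from the boundary orientation of $[0,1]^n$.

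One claim in your last paragraph is false in this paper's setting: $|\Box^n|\neq[0,1]^n$.
\begin{itemize}
\item Here $\Box^n=\Box(-,[1]^n)$ is representable on the cubical category with connections. It therefore has nondegenerate cubes such as $\gamma_{1,0}\in(\Box^1)_2$ and all iterated connections.
\item Since $|\cdot|$ does not collapse them, $|\Box^n|$ is an infinite-dimensional CW complex. The paper distinguishes it from $|\Box^n|_\gamma\cong[0,1]^n$ in the proof of Proposition \ref{realization connection}.
\end{itemize}
This does not break your argument. Take the universal case to be $[0,1]^n$ with its standard face cell structure. This is the closure of the top cell of $|\Box^n|$, and it is embedded because distinct faces of the identity cube are distinct nondegenerate cubes.

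The characteristic map $\chi_x\colon[0,1]^n\to|X|$ is cellular for this structure, because its restriction to each face of $[0,1]^n$ is the characteristic map of the corresponding face of $x$. Hence it sends the face cell $(i,\alpha)$ to the cell of $x\partial_{i,\alpha}$ with coefficient $1$, or to $0$ when $x\partial_{i,\alpha}$ is degenerate. For the top cell $e$ of $[0,1]^n$ one has $\partial e=\sum_{i,\alpha}(-1)^{i+\alpha}e_{i,\alpha}$. Pushing this forward along $\chi_x$ gives exactly the boundary of $N(X)$.
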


	\subsection{Homotopy groups}
	
	In \cite{CK1}, Carranza and Kapulkin define the homotopy groups of cubical Kan complexes in close analogy with the simplicial setting. We briefly recall their construction.

	A \emph{pair of cubical sets} $(X,A)$ consists of a cubical set $X$ together with a cubical subset $A\subset X$. A \emph{cubical map of pairs} $f\colon(X,A)\to(Y,B)$ is a cubical map $f\colon X\to Y$ such that $f(A)\subset B$. A homotopy between cubical maps of pairs $(X,A)\to(Y,B)$ is defined in the obvious manner.

	Let $X$ be a cubical set. For $x\in X_0$, we denote by the same symbol $x$ the cubical subset of $X$ consisting of the cubes $x\sigma_1\cdots\sigma_n$ for all $n\ge 0$. A \emph{pointed cubical set} is a pair of cubical sets $(X,x_0)$ for some $x_0\in X_0$. A \emph{pointed cubical map} $f\colon(X,x_0)\to(Y,y_0)$ between pointed cubical sets is a cubical map of pairs. A homotopy equivalence between pointed cubical sets is defined in the obvious manner.

	The following proposition is proved in \cite[Proposition 2.10]{CK1}.

	\begin{proposition}
		\label{homotopy equivalence relation}
		Let $X$ be a cubical set, and let $Y$ be a Kan complex. Then the relation of homotopy defines an equivalence relation on the set of cubical maps from $X$ to $Y$.
	\end{proposition}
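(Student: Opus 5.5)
Reflexivity is immediate: the constant elementary homotopy $X\otimes\Box^1\xrightarrow{1\otimes\sigma_1}X\otimes\Box^0\cong X\xrightarrow{f}Y$ restricts to $f$ at both ends. The substantive parts are symmetry and transitivity for \emph{elementary} homotopies. Since a homotopy is by definition a zig-zag of elementary homotopies, the relation ``there is a homotopy from $f$ to $g$'' is the equivalence relation generated by elementary homotopy. So the real content is that, when $Y$ is Kan, elementary homotopy is already an equivalence relation, and hence coincides with homotopy. I would prove exactly this.

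The plan is to reduce to horn filling in $Y$ via the adjunction that $\otimes$ provides. By the left Kan extension description of $\otimes$ (via \cite[Proposition 1.24]{DKLS}), $X\otimes\Box^n$ is a colimit of cubes $\Box^{m+n}$. A map $X\otimes\Box^n\to Y$ therefore amounts to a compatible family of maps $\Box^m\otimes\Box^n\to Y$, one for each cube of $X$. I would then show that $Y$ Kan implies that every map $X\otimes\sqcap^2_{i,\alpha}\cup \ldots\to Y$ of the relevant shape extends over $X\otimes\Box^2$. Concretely, this requires the inclusion
\[
(X\otimes\sqcap^2_{i,\alpha})\cup(\partial X\text{-part})\hookrightarrow X\otimes\Box^2
\]
to be a relative cell complex built from horn inclusions. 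I would verify this by inducting over the nondegenerate cubes of $X$, ordered by dimension. For each nondegenerate $x\in X_m$, the pushout-product $(\partial\Box^m\otimes\Box^2)\cup(\Box^m\otimes\sqcap^2_{i,\alpha})\to\Box^{m+2}$ equals the horn inclusion $\sqcap^{m+2}_{m+i,\alpha}\to\Box^{m+2}$. Degenerate cubes and connection-images are then handled automatically, because the extension is chosen on nondegenerate cubes and propagated.

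Given this extension property, the proof is the standard ``fill a square'' argument. For symmetry, take an elementary homotopy $h$ from $f$ to $g$ and fill the $(1,1)$-horn in $X\otimes\Box^2$ whose face $(1,0)$ is $h$, whose face $(2,0)$ is the constant homotopy at $f$, and whose face $(2,1)$ is the constant homotopy at $g$. The missing face $(1,1)$ is then an elementary homotopy from $g$ to $f$. For transitivity, given $h$ from $f$ to $g$ and $k$ from $g$ to $e$, fill the $(2,1)$-horn with $h$ on face $(1,0)$, $k$ on face $(1,1)$ appropriately reoriented, and constant at $f$ on face $(2,0)$. This yields a homotopy from $f$ to $e$, after using symmetry to fix orientations. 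A cleaner route for transitivity, if the face bookkeeping is awkward, is to use a connection $\gamma$: form the degenerate square $g\circ(1\otimes\gamma_{1,\alpha})$ and place $h$ and $k$ along adjacent edges.

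The main obstacle is the cell-by-cell extension. Faces of $(x,-)$ in $X\otimes\Box^2$ involve $x\partial_{j,\beta}$, which may be degenerate or may coincide with other cubes. So one must check that the values prescribed on $\partial\Box^m\otimes\Box^2$ are already well defined from earlier stages, and that the quotient relation $(x\sigma_{k+1},y)\sim(x,y\sigma_1)$ in the definition of $\otimes$ causes no conflicts. I would handle this by working with the skeletal filtration of $X$ (using Lemma \ref{factorization} to write each cube uniquely as a degeneracy/connection of a nondegenerate one), defining the filler only on nondegenerate $x$ and extending by the unique factorization.
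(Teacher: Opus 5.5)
The reduction is sound. Since a homotopy is by definition a zig-zag, the only content is that elementary homotopy is already an equivalence relation when $Y$ is Kan. Your extension property for $X\otimes\sqcap^2_{i,\alpha}\hookrightarrow X\otimes\Box^2$, built cell by cell from $(\partial\Box^m\otimes\Box^2)\cup(\Box^m\otimes\sqcap^2_{i,\alpha})=\sqcap^{m+2}_{m+i,\alpha}$, is essentially the pushout-product argument behind the cited \cite[Proposition 2.10]{CK1}; the paper gives no proof of its own.

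The gap is in the square-filling step, which is where symmetry and transitivity actually come from: both boxes you write down are wrong. Write $c_{\beta_1\beta_2}\colon X\to Y$ for the restriction of $H\colon X\otimes\Box^2\to Y$ to the vertex $(\beta_1,\beta_2)$. Then $H\partial_{1,\beta}$ is an elementary homotopy from $c_{\beta 0}$ to $c_{\beta 1}$, and $H\partial_{2,\beta}$ is one from $c_{0\beta}$ to $c_{1\beta}$.

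\begin{itemize}
\item In your symmetry box, $h$ on face $(1,0)$ and the constant homotopies on $(2,0)$ and $(2,1)$ force $c_{10}=f$ and $c_{11}=g$. So the filled face $(1,1)$ is again a homotopy from $f$ to $g$, not from $g$ to $f$.
\item In your transitivity box, $h$ on $(1,0)$ and the constant homotopy at $f$ on $(2,0)$ force face $(1,1)$ to start at $c_{10}=f$. Neither $k$ nor a reversal of it fits there; the data do not even define a map on $X\otimes\sqcap^2_{2,1}$ unless $f=g$ or $f=e$. Appealing to symmetry cannot repair this.
\item The connection variant $g\circ(1\otimes\gamma_{1,\alpha})$ does not typecheck, since $g$ is a map $X\to Y$.
\end{itemize}

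The fix is to use the single box shape $\sqcap^2_{2,1}$. Put $a$ (from $p$ to $q$) on face $(1,0)$, $b$ (from $p$ to $r$) on $(2,0)$, and $d$ (from $r$ to $s$) on $(1,1)$. These agree at the shared corners $(0,0)$ and $(1,0)$, and the face $(2,1)$ of a filler is an elementary homotopy from $q$ to $s$.
\begin{itemize}
\item For symmetry, take $a=h$ and $b=d$ constant at $f$; this gives a homotopy from $g$ to $f$.
\item For transitivity, take $a$ constant at $f$, $b=h$, $d=k$; this gives a homotopy from $f$ to $e$.
\end{itemize}

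With this correction your proof goes through, but you should make a few inputs explicit.
\begin{itemize}
\item In the skeletal induction, ``nondegenerate'' must mean not in the image of any degeneracy \emph{or connection}. The paper's notion of degenerate uses only the $\sigma_i$.
\item Presenting each skeleton as a pushout along $\coprod\partial\Box^m\to\coprod\Box^m$ requires the Eilenberg--Zilber property of $\Box$ with connections. That is more than the unique factorization of morphisms in Lemma~\ref{factorization}, so cite it.
\item You also use that $X\otimes-$ preserves colimits and monomorphisms.
\item The ``$\partial X$-part'' in your display is superfluous: the relevant map is just $X\otimes\sqcap^2_{i,\alpha}\hookrightarrow X\otimes\Box^2$.
\end{itemize}

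Compared with quoting the general theorem that pushout products of monomorphisms with open-box inclusions are anodyne, your skeletal induction proves only the case needed. It is correspondingly more self-contained.
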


	This proposition readily generalizes to cubical maps between pairs. In particular, homotopy classes of maps between pairs are well defined whenever the target is a pair consisting of a Kan complex and its cubical subset.

	\begin{definition}
		Let $(X,x_0)$ be a pointed Kan complex. The $n$-th homotopy group $\pi_n(X,x_0)$ is defined as the set of homotopy classes of cubical maps $(\Box^n,\partial\Box^n)\to(X,x_0)$.
	\end{definition}

	By definition, $\pi_0(X,x_0)$ coincides with $\pi_0(|X|,x_0)$, the set of path components of $|X|$. We now define a group structure on $\pi_n(X,x_0)$ for $n\ge 1$. Let $f,g\colon(\Box^n,\partial\Box^n)\to(X,x_0)$ be cubical maps. Define a cubical map $h\colon\sqcap^{n+1}_{n+1,1}\to X$ by setting
	\[
	h\vert_{\sqcap^{n+1}_{n+1,1}\partial_{n+1,0}}=f,\quad h\vert_{\sqcap^{n+1}_{n+1,1}\partial_{n,1}}=g
	\]
	and
	\[
	h\vert_{\sqcap^{n+1}_{n+1,1}\partial_{i,\alpha}}=x_0
	\]
	for all $(i,\alpha)\ne(n,1),(n+1,0),(n+1,1)$, where $x_0$ denotes the constant cubical map with value $x_0$. Since $X$ is a Kan complex, the cubical map $h$ extends to a cubical map $\bar{h}\colon\Box^{n+1}\to X$. As shown in \cite{CK1}, the homotopy class of a cubical map $\bar{h}\vert_{\Box^{n+1}\partial_{n+1,1}}\colon(\Box^n,\partial\Box^n)\to(X,x_0)$ is independent of the choice of extension $\bar{h}$. We therefore define the product of homotopy classes by
	\begin{equation}
		\label{product Kan complex}
		[f]\cdot [g]=[\bar{h}\vert_{\Box^{n+1}\partial_{n+1,1}}].
	\end{equation}
	The following lemma is proved in \cite{CK1}.

	\begin{lemma}
		Let $(X,x_0)$ be a pointed Kan complex. For $n\ge 1$, the product \eqref{product Kan complex} defines a group structure on $\pi_n(X,x_0)$.
	\end{lemma}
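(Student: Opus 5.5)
The plan is to follow the classical argument for simplicial Kan complexes (as in May's treatment), transplanted to cubes. The organizing device is the notion of a \emph{witness}. Call an $(n+1)$-cube $z\in X_{n+1}$ a witness for the triple $(f,g,k)$ if
\[
z\partial_{n+1,0}=f,\quad z\partial_{n,1}=g,\quad z\partial_{n+1,1}=k,
\]
and every other face $z\partial_{i,\alpha}$ is the constant cube at $x_0$. The independence of the choice of $\bar h$ cited from \cite{CK1} shows that $[f]\cdot[g]=[k]$ holds exactly when some witness exists for a representative triple. We take as given that the product is well defined on homotopy classes, and we reduce every group axiom to producing witnesses.

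\textbf{Identity and inverses.} Let $e$ be the constant map at $x_0$.
\begin{itemize}
\item \emph{Right unit.} The degenerate cube $f\sigma_{n+1}$ witnesses $(f,e,f)$. By the identities for $\sigma_j\partial_{i,\alpha}$, its $(n+1,\alpha)$-faces are both $f$, and its $(n,1)$-face is $f\partial_{n,1}\sigma_n$, which is constant.
\item \emph{Left unit.} We want a witness for $(e,g,g)$. The natural candidate is a connection $g\gamma_{n,\beta}$, possibly composed with a degeneracy. We pick $\beta$ so that the connection identities $\gamma_{j,\beta}\partial_{i,\alpha}=1$ for $\beta=\alpha$ and $=\partial_{j,\alpha}\sigma_j$ for $\beta\ne\alpha$ place $g$ at $(n,1)$ and $(n+1,1)$ and put constants elsewhere. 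This is precisely where connections are needed.
\item \emph{Inverses.} Fill the horn $\sqcap^{n+1}_{n,1}$ whose $(n+1,0)$-face is $f$, whose $(n+1,1)$-face is $e$, and whose other faces are constant. The missing face $g$ gives $[f][g]=[e]$, so $f$ has a right inverse. Filling a horn with $f$ placed at $(n,1)$ instead gives a left inverse. Once associativity is known, left and right inverses agree.
\end{itemize}

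\textbf{Associativity.} This is the main step. Take witnesses $z_1$ for $(f,g,fg)$, $z_2$ for $(g,h,gh)$ and $z_3$ for $((fg),h,(fg)h)$, all obtained by Kan filling. Assemble them, together with constant cubes, as faces of an open box $\sqcap^{n+2}_{i,\alpha}$ in $\Box^{n+2}$, using the coordinates $n$, $n+1$, $n+2$ as the three directions in which the compositions happen. Fill the box to obtain $w\in X_{n+2}$. Then the face of $w$ in the missing direction is a witness for $(f,gh,(fg)h)$, which gives $[f]([g][h])=([f][g])[h]$.

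The main obstacle is the bookkeeping in this step. We must choose the horn index $(i,\alpha)$ and the placement of $z_1,z_2,z_3$ so that adjacent faces agree on their common $(n)$-dimensional intersections, as dictated by $\partial_{j,\beta}\partial_{i,\alpha}=\partial_{i+1,\alpha}\partial_{j,\beta}$. The missing face must then have $f$, $gh$ and $(fg)h$ in the witness positions $(n+1,0)$, $(n,1)$ and $(n+1,1)$. My first attempt is to imitate the simplicial $\Lambda^3_1$ argument: put $z_1$ at $(n+2,0)$, $z_2$ at $(n,1)$ and $z_3$ at $(n+1,1)$, and fill at $(n+2,1)$. If the compatibility checks fail, I would first try permuting which of the three $(n+1)$-cube witnesses occupies which face, and only then precompose some of them with connections or degeneracies to reposition edges.

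Finally, we should check that witnesses obtained from different fillings give the same classes; the cited independence result makes this routine. With that, the group axioms follow.
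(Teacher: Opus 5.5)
Your units and inverses are correct (for the left unit take $\beta=1$: $g\gamma_{n,1}$ has $(n,1)$- and $(n+1,1)$-faces equal to $g$ and all other faces constant). The witness/box-filling strategy is also the standard one; the paper gives no proof of its own and defers to \cite{CK1}. The gap is in associativity, the step you call the main one and leave unverified. The open box you describe is not a horn: $z_1$ at $(n+2,0)$, $z_2$ at $(n,1)$, $z_3$ at $(n+1,1)$, and constant cubes elsewhere. The identity $w\partial_{n+2,0}\partial_{n+1,0}=w\partial_{n+1,0}\partial_{n+1,0}$ would force $z_1\partial_{n+1,0}=f$ to be constant.

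Permuting the three witnesses cannot repair this. For general $f,g,h$, each of $f,g,h,fg,gh,(fg)h$ must occur as a common face of two of the four non-constant faces, namely those carrying $z_1,z_2,z_3$ and the sought witness for $(f,gh,(fg)h)$. Then these four faces would be pairwise adjacent in the $3$-cube spanned by directions $n,n+1,n+2$, which is impossible. This is exactly where the analogy with the tetrahedron in the $\Lambda^3_1$ argument breaks down: you need an extra non-constant face that transports $f$ to the opposite side, not a modification of the witnesses.

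Concretely, keep your placement of $z_1,z_2,z_3$, but put the degenerate cube $f\sigma_{n+1}$ at $(n+1,0)$. Put constants at $(n,0)$ and at all $(i,\alpha)$ with $i<n$. Writing $F_{k,\gamma}$ for the prescribed faces, the compatibility conditions $F_{j,\beta}\partial_{i,\alpha}=F_{i+1,\alpha}\partial_{j,\beta}$ ($j\le i$) reduce to the following:
\begin{itemize}
\item $z_3\partial_{n+1,0}=fg=z_1\partial_{n+1,1}$;
\item $z_2\partial_{n+1,0}=g=z_1\partial_{n,1}$;
\item $z_2\partial_{n,1}=h=z_3\partial_{n,1}$;
\item $(f\sigma_{n+1})\partial_{n+1,0}=f=z_1\partial_{n+1,0}$;
\item $(f\sigma_{n+1})\partial_{n,1}=f\partial_{n,1}\sigma_n=z_2\partial_{n,0}$, both sides constant.
\end{itemize}
All remaining conditions only involve cubes that are constant at $x_0$. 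So this defines a map $\sqcap^{n+2}_{n+2,1}\to X$.

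For a filler $w$, the face $k=w\partial_{n+2,1}$ satisfies $k\partial_{n+1,0}=(f\sigma_{n+1})\partial_{n+1,1}=f$, $k\partial_{n,1}=z_2\partial_{n+1,1}=gh$ and $k\partial_{n+1,1}=z_3\partial_{n+1,1}=(fg)h$. All other faces of $k$ are constant. Hence $k$ witnesses $(f,gh,(fg)h)$, which gives $[f]([g][h])=([f][g])[h]$. With this correction the rest of your argument goes through.
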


	We record some fundamental properties of the homotopy groups of Kan complexes; see \cite[Theorem 3.17 and Proposition 4.3]{CK1}.

	\begin{proposition}
		Let $f\colon(X,x_0)\to(Y,y_0)$ be a pointed cubical map between pointed Kan complexes. Then
		\[
		f_*\colon\pi_n(X,x_0)\to\pi_n(Y,y_0),\quad[x]\mapsto[f(x)]
		\]
		is well defined. Moreover, for $n\ge 1$, it is a group homomorphism.
	\end{proposition}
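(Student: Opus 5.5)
The statement has two parts: $f_*$ is well defined on homotopy classes, and for $n\ge1$ it respects the product \eqref{product Kan complex}. I will treat them in that order.

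\textbf{Well-definedness.} Let $x\colon(\Box^n,\partial\Box^n)\to(X,x_0)$ represent a class, and let $x'$ be homotopic to $x$ as maps of pairs. Such a homotopy is a zig-zag of elementary homotopies
\[
h\colon(\Box^n\otimes\Box^1,\partial\Box^n\otimes\Box^1)\to(X,x_0).
\]
Post-composing each elementary homotopy with $f$ gives $f\circ h\colon\Box^n\otimes\Box^1\to Y$. It sends $\partial\Box^n\otimes\Box^1$ into $f(x_0)$, which is the subcomplex $y_0$, because $f$ is a map of pairs. Its restrictions along $\partial_{1,0}$ and $\partial_{1,1}$ are $f\circ$ (the respective endpoints). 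So $f\circ h$ is again an elementary homotopy of pairs. Concatenating these yields a zig-zag from $f(x)$ to $f(x')$. Since $Y$ is Kan, Proposition \ref{homotopy equivalence relation}, extended to pairs, makes homotopy classes in $\pi_n(Y,y_0)$ well defined. Hence $[f(x)]$ depends only on $[x]$. The case $n=0$ is included.

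\textbf{Homomorphism.} Fix $n\ge1$ and classes $[a],[b]\in\pi_n(X,x_0)$. Choose the horn $h\colon\sqcap^{n+1}_{n+1,1}\to X$ with faces $a$ at $(n+1,0)$, $b$ at $(n,1)$, and $x_0$ elsewhere. Choose a Kan filler $\bar h\colon\Box^{n+1}\to X$, so that $[a]\cdot[b]=[\bar h\vert_{\Box^{n+1}\partial_{n+1,1}}]$.

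Now consider $f\circ\bar h\colon\Box^{n+1}\to Y$. Its restriction to $\sqcap^{n+1}_{n+1,1}$ is exactly the horn built from $f(a)$ and $f(b)$, with all other faces equal to the constant map at $f(x_0)=y_0$. Thus $f\circ\bar h$ is one admissible filler of the horn defining $[f(a)]\cdot[f(b)]$. By the independence-of-filler statement recalled before \eqref{product Kan complex} (from \cite{CK1}), we get
\[
[f(a)]\cdot[f(b)]=[(f\circ\bar h)\vert_{\Box^{n+1}\partial_{n+1,1}}]=[f(\bar h\vert_{\Box^{n+1}\partial_{n+1,1}})]=f_*([a]\cdot[b]).
\]

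The one step needing care is the restriction to the constant faces. It requires that $f$ carries the degenerate cubes $x_0\sigma_1\cdots\sigma_k$ to $y_0\sigma_1\cdots\sigma_k$. This holds because $f$ is cubical and $f(x_0)=y_0$. Beyond this, everything is formal naturality; the substantive input (independence of the filler and the group axioms) is already provided by \cite{CK1}.
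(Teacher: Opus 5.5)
Your argument is correct: post-composing with $f$ sends elementary homotopies of pairs to elementary homotopies of pairs, and sends a Kan filler of the horn built from $a,b$ to a filler of the horn built from $f(a),f(b)$, so independence of the filler gives $f_*([a]\cdot[b])=[f(a)]\cdot[f(b)]$. The paper does not prove this statement itself and only cites \cite[Theorem 3.17 and Proposition 4.3]{CK1}; your proof is the standard naturality argument and uses only the two facts the paper recalls from \cite{CK1}, namely independence of the filler and well-definedness of the product on homotopy classes.
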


	\begin{proposition}
		Let $f,g\colon(X,x_0)\to(Y,y_0)$ be pointed cubical maps between pointed Kan complexes. If $f$ and $g$ are pointed homotopic, then
		\[
		f_*=g_*\colon\pi_n(X,x_0)\to\pi_n(Y,y_0).
		\]
	\end{proposition}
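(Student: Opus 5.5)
The plan is to reduce to the corresponding fact about Kan complexes, using the model structure of Theorem \ref{cSet model structure} together with Lemmas \ref{he is we} and \ref{he vs we}. The geometric realization $|\cdot|\colon\cSet\to\mathsf{Top}$ and $\mathrm{Sing}$ form a Quillen equivalence, and homotopy groups of Kan complexes should agree with those of their realizations. So I would show that a pointed homotopy $f\simeq g$ between pointed cubical maps of pointed Kan complexes induces the same maps on $\pi_n$, directly from the definitions.

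Let $h\colon X\otimes\Box^1\to Y$ be an elementary pointed homotopy from $f$ to $g$; a general homotopy is a zig-zag of these, and since equality of induced maps is symmetric it suffices to treat one elementary homotopy. Take a representative $x\colon(\Box^n,\partial\Box^n)\to(X,x_0)$. Then the composite
\[
\Box^n\otimes\Box^1\xrightarrow{x\otimes 1}X\otimes\Box^1\xrightarrow{h}Y
\]
is an elementary homotopy from $f\circ x$ to $g\circ x$. Because $h$ is pointed, it carries $\partial\Box^n\otimes\Box^1$ into $y_0$, so it is a homotopy of maps of pairs $(\Box^n,\partial\Box^n)\to(Y,y_0)$.

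By the pair version of Proposition \ref{homotopy equivalence relation}, which applies since $Y$ is Kan, homotopy of pairs is an equivalence relation. Hence $[f\circ x]=[g\circ x]$ in $\pi_n(Y,y_0)$, and so $f_*[x]=g_*[x]$. Well-definedness on classes is already given by the preceding proposition.

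The only delicate point is checking that $x\otimes 1$ is compatible with the pair structure and that the identification $\Box^n\otimes\Box^1\cong\Box^{n+1}$ matches the convention used for homotopies of pairs. This is routine from the definition of $\otimes$ on faces: the faces $\partial_{i,\alpha}$ with $i\le n$ of $\Box^{n+1}$ correspond to $\partial\Box^n\otimes\Box^1$. I do not expect any real obstacle here; the statement is essentially formal given Proposition \ref{homotopy equivalence relation}.
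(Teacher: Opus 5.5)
Your argument is correct. It gives a self-contained proof where the paper gives none: the paper only records the statement with a pointer to \cite[Theorem 3.17 and Proposition 4.3]{CK1}.

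The key step is sound. For a pointed elementary homotopy $h\colon(X\otimes\Box^1,x_0\otimes\Box^1)\to(Y,y_0)$ and a representative $x\colon(\Box^n,\partial\Box^n)\to(X,x_0)$, the composite $h\circ(x\otimes 1_{\Box^1})$ is an elementary homotopy of pairs from $f\circ x$ to $g\circ x$. This uses only two facts:
\begin{itemize}
\item bifunctoriality of $\otimes$, which identifies the two ends via $\Box^n\otimes\Box^0\cong\Box^n$;
\item that $x\otimes 1_{\Box^1}$ carries $\partial\Box^n\otimes\Box^1$ into $x_0\otimes\Box^1$, which $h$ sends to $y_0$.
\end{itemize}
Reducing a zig-zag to its elementary pieces is legitimate. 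A pointed homotopy is a zig-zag of elementary homotopies of pairs, so every intermediate map is pointed and transitivity of equality finishes the argument.

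Two remarks on presentation.
\begin{itemize}
\item Your opening paragraph about the model structure, the Quillen equivalence, and Lemmas \ref{he is we} and \ref{he vs we} plays no role in what you actually do, and should be removed. Routing the argument through $|\cdot|$ would require the much deeper Theorem \ref{homotopy group realization} and its naturality, which your direct argument avoids.
\item Proposition \ref{homotopy equivalence relation} is needed only to make $\pi_n(Y,y_0)$ a set of equivalence classes at all, not for the step itself. A single elementary homotopy of pairs already witnesses $[f\circ x]=[g\circ x]$.
\end{itemize}
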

	\begin{corollary}
			Let $(X,x_0)$, $(Y,y_0)$ be pointed Kan complexes. If $(X,x_0)$ and $(Y,y_0)$ have the same pointed homotopy type, then there is an isomorphism
			\[
			\pi_n(X,x_0)\cong\pi_n(Y,y_0).
			\]
	\end{corollary}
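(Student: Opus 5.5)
The corollary follows formally from the two preceding propositions, by the usual argument for homotopy invariance. Assume $(X,x_0)$ and $(Y,y_0)$ have the same pointed homotopy type. Then there are pointed cubical maps $f\colon(X,x_0)\to(Y,y_0)$ and $g\colon(Y,y_0)\to(X,x_0)$ such that $g\circ f$ is pointed homotopic to $1_X$ and $f\circ g$ is pointed homotopic to $1_Y$. Here a pointed homotopy is a zig-zag of elementary homotopies of maps of pairs. I will show that $f_*\colon\pi_n(X,x_0)\to\pi_n(Y,y_0)$ is a bijection with inverse $g_*$, and that it is a group isomorphism for $n\ge1$.

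First, induced maps are functorial: $(g\circ f)_*=g_*\circ f_*$ and $(1_X)_*=1$. This is immediate from the formula $[x]\mapsto[f(x)]$ in the first proposition, since $(g\circ f)(x)=g(f(x))$ as cubical maps $(\Box^n,\partial\Box^n)\to(X,x_0)$. Well-definedness of these maps uses that $X$ and $Y$ are Kan complexes, which is why homotopy classes make sense via Proposition \ref{homotopy equivalence relation} applied to pairs.

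Second, apply the second proposition to the pointed homotopic pairs $g\circ f\simeq 1_X$ and $f\circ g\simeq 1_Y$. This gives $g_*\circ f_*=(g\circ f)_*=(1_X)_*=1$ and $f_*\circ g_*=1$. Hence $f_*$ is a bijection of pointed sets; for $n=0$ this is the assertion.

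Third, for $n\ge1$ the first proposition says $f_*$ is a group homomorphism, so a bijective homomorphism is an isomorphism. There is no real obstacle here. The only point needing care is that "pointed homotopic" in the second proposition is meant with respect to zig-zags of elementary homotopies. This is the same notion used to define the pointed homotopy type, so the hypotheses match exactly.
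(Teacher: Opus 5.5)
Your argument is correct and is the one the paper intends. The paper states this corollary without proof, immediately after the two propositions you use (well-definedness and homomorphism property of $f_*$, and invariance of $f_*$ under pointed homotopy). It follows from them exactly as you describe: functoriality together with $g\circ f\simeq 1_X$ and $f\circ g\simeq 1_Y$ makes $g_*$ a two-sided inverse of the homomorphism $f_*$.
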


	\begin{proposition}
		Let $(X,x_0)$ be a pointed Kan complex. If $n\ge 2$, then $\pi_n(X,x_0)$ is an abelian group.
	\end{proposition}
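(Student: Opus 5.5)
The plan is to run the classical Eckmann--Hilton argument, using the fact that in a Kan complex the product \eqref{product Kan complex} can be computed in any of the $n$ coordinate directions. Fix $n\ge 2$ and cubical maps $f,g\colon(\Box^n,\partial\Box^n)\to(X,x_0)$.

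The first step is to define a second product $\cdot_k$ for each direction $k=1,\ldots,n$. Use the same horn-filling construction as in \eqref{product Kan complex}, but place the last coordinate of $\Box^{n+1}$ into position $k$ and the filling direction into position $k+1$. Concretely, fill an appropriate horn $\sqcap^{n+1}_{k+1,1}$ whose $(k+1,0)$-face is $f$ and whose $(k,1)$-face is $g$, with all other faces constant at $x_0$; then restrict the filler to the $(k+1,1)$-face. The well-definedness argument from \cite{CK1} is symmetric in position and goes through verbatim. It also shows that the class of the constant map $x_0$ is a two-sided unit for every $\cdot_k$.

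The second step is the interchange law, which is the main obstacle:
\[
([a]\cdot_k[b])\cdot_l([c]\cdot_k[d])=([a]\cdot_l[c])\cdot_k([b]\cdot_l[d])\quad(k\ne l).
\]
I would prove it by constructing a single $(n+2)$-cube. Start from the four squares $a,b,c,d$ arranged in a $2\times 2$ pattern in directions $k,l$, together with the fillers witnessing each of the four products. Assemble these on faces of a horn in $\Box^{n+2}$ whose two extra coordinates record the two filling directions. Fill it using the Kan property, then read off the two iterated composites as faces of that cube, which are related by a further filler. This gives a homotopy rel boundary, so the two sides agree. Arranging the horn so that its missing face is the right one will take some care, and connections are used to produce the degenerate faces needed to glue constant pieces. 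An alternative, if this becomes messy, is to reduce to $\mathrm{Sing}$: the unit $X\to\mathrm{Sing}|X|$ is a weak equivalence between Kan complexes, hence a homotopy equivalence by Lemma~\ref{he vs we}. The Carranza--Kapulkin products should correspond to the usual concatenation products in $\pi_n(|X|,x_0)$, where commutativity for $n\ge2$ is classical. This route still requires checking that the induced map respects products, which is again a filler argument, but one carried out in a space, where it is easy.

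The final step is formal once unit and interchange are in hand. We have
\[
[a]\cdot_k[b]=([a]\cdot_l e)\cdot_k(e\cdot_l[b])=([a]\cdot_k e)\cdot_l(e\cdot_k[b])=[a]\cdot_l[b],
\]
so all the products $\cdot_k$ coincide. Similarly,
\[
[a]\cdot[b]=(e\cdot_l[a])\cdot_k([b]\cdot_l e)=(e\cdot_k[b])\cdot_l([a]\cdot_k e)=[b]\cdot[a].
\]
Since $n\ge2$, two distinct directions $k\ne l$ exist, and therefore $\pi_n(X,x_0)$ is abelian.
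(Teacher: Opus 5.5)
Your argument is correct in outline, but it is not the paper's route. The paper gives no proof of this proposition and simply quotes it from Carranza--Kapulkin \cite{CK1}.

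Within the paper's framework the result is also immediate from Theorem \ref{homotopy group realization}. Its natural isomorphism $\pi_n(X,x_0)\cong\pi_n(|X|,x_0)$ is an isomorphism of groups, so commutativity for $n\ge 2$ is inherited from topology. That is your fallback route, and the compatibility with products that you leave to check is exactly what the cited theorem supplies. If you go through $\mathrm{Sing}$ by hand, note that $\mathrm{Sing}$ carries connections, so its left adjoint is $|\cdot|_\gamma$ and Proposition \ref{realization connection} is also needed.

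Your Eckmann--Hilton argument instead stays inside cubical sets. It is the Kan-complex counterpart of the paper's proof that $\pi_n^\delta$ is abelian via Lemma \ref{grid slide}, with horn-filler products $\cdot_k$ in place of concatenations $+_k$. It has two costs:
\begin{itemize}
\item A bare Kan complex has no coordinate permutations. So well-definedness of $\cdot_k$ must come from re-running the \cite{CK1} argument with the filling coordinate at position $k+1$, not from a symmetry. The units are witnessed by the fillers $a\sigma_{k+1}$ and $b\gamma_{k,1}$; the latter is where connections are genuinely used.
\item Interchange must actually be proved.
\end{itemize}

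The interchange step closes with a single horn and no connections:
\begin{itemize}
\item In $\Box^{n+2}$, insert $t_k$ right after $y_k$ and $t_l$ right after $y_l$.
\item Put $\cdot_k$-fillers for $a\cdot_k b$ and $c\cdot_k d$ on the faces $t_l=0$ and $y_l=1$.
\item Put $\cdot_l$-fillers for $a\cdot_l c$ and $b\cdot_l d$ on the faces $t_k=0$ and $y_k=1$.
\item On $y_l=1$ the coordinate $t_l$ serves as the $l$-th coordinate of $c,d$; on $y_k=1$ the coordinate $t_k$ serves as the $k$-th coordinate of $b,d$.
\item On $t_l=1$, put a $\cdot_k$-filler for $(a\cdot_l c)\cdot_k(b\cdot_l d)$, built on the tops of the two $\cdot_l$-fillers.
\item Put $x_0$ on every other face except $t_k=1$.
\end{itemize}
These faces agree on overlaps. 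The face $t_k=1$ of a filler of this horn is then a $\cdot_l$-filler for $(a\cdot_k b,\,c\cdot_k d)$ whose top is $(a\cdot_l c)\cdot_k(b\cdot_l d)$. By independence of the filler, this is the interchange law.
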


	One of the main results of \cite[Theorem 3.24]{CK1} is the following theorem.

	\begin{theorem}
		\label{homotopy group realization}
		Let $(X,x_0)$ be a pointed Kan complex. There is a natural isomorphism
		\[
		\pi_*(X,x_0)\cong\pi_*(|X|,x_0).
		\]
	\end{theorem}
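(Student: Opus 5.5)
This statement is recalled from \cite[Theorem 3.24]{CK1}, so the plan is the standard one. Construct a comparison map and prove it bijective by reducing to singular cubical sets of spaces.

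First define a natural map $\Phi\colon\pi_n(X,x_0)\to\pi_n(|X|,x_0)$. A cubical map $f\colon(\Box^n,\partial\Box^n)\to(X,x_0)$ realizes to $|f|\colon([0,1]^n,\partial[0,1]^n)\to(|X|,x_0)$, since $|\Box^n|=[0,1]^n$. The steps are:
\begin{enumerate}
\item Well-definedness. An elementary homotopy $X\otimes\Box^1\to Y$ realizes to a topological homotopy, because $|\Box^n\otimes\Box^1|\cong[0,1]^{n+1}$. A zig-zag of such homotopies therefore also gives a topological homotopy, and the boundary condition is preserved.
\item Homomorphism. Realize the filler $\bar h\colon\Box^{n+1}\to X$ used in \eqref{product Kan complex}. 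This gives a map $[0,1]^{n+1}\to|X|$ whose faces are $f$, $g$, the product, and constants. A standard deformation of $[0,1]^{n+1}$ onto the union of the faces $\partial_{n+1,0}$ and $\partial_{n,1}$ then shows $[|\bar h\partial_{n+1,1}|]=[|f|]\cdot[|g|]$ in $\pi_n(|X|)$.
\item Naturality. This is immediate from the construction.
\end{enumerate}

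For bijectivity, use the adjunction $|\cdot|\dashv\mathrm{Sing}$ with unit $\eta\colon X\to\mathrm{Sing}|X|$. By Cisinski's Theorem \ref{cSet model structure} and the Quillen equivalence, $\eta$ is a weak equivalence, i.e.\ $|\eta|$ is a homotopy equivalence. Since $X$ and $\mathrm{Sing}|X|$ are both Kan complexes, Lemma \ref{he vs we} makes $\eta$ a cubical homotopy equivalence. We want it to be a pointed one; this follows by the usual argument, since $x_0\hookrightarrow X$ is a cofibration, which gives homotopy extension for Kan targets. Hence $\eta_*\colon\pi_n(X,x_0)\cong\pi_n(\mathrm{Sing}|X|,x_0)$.

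It then suffices to prove $\pi_n(\mathrm{Sing}\,T,t)\cong\pi_n(T,t)$ for any space $T$. An $n$-cube of $\mathrm{Sing}\,T$ with boundary at $t$ is exactly a map $([0,1]^n,\partial)\to(T,t)$. For Sing, cubical homotopy agrees with topological homotopy: an elementary homotopy is literally a map $[0,1]^{n+1}\to T$, and conversely every topological homotopy is one. So this map is a bijection and tautologically matches $\Phi$ after composing with the counit.

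I expect the main obstacle to be the homomorphism check, together with the pointed upgrade of the homotopy equivalence. If the homomorphism check resists direct attack, I would instead transport it through Sing, where the Kan fillers of the horn $\sqcap^{n+1}_{n+1,1}$ can be chosen explicitly by a retraction $[0,1]^{n+1}\to|\sqcap^{n+1}_{n+1,1}|$. That reduces the homomorphism property to a concrete topological homotopy.
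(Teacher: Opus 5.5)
The paper gives no proof of this statement; it is imported from \cite[Theorem 3.24]{CK1}. Your reconstruction is the standard argument and is sound in outline: realize via characteristic maps, compare through the unit $X\to\mathrm{Sing}|X|$, upgrade the resulting weak equivalence of Kan complexes to a pointed homotopy equivalence, and use that $\pi_n(\mathrm{Sing}\,T,t)=\pi_n(T,t)$ tautologically.

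As written, however, it conflates the two realizations that the paper deliberately distinguishes. The paper's $|\cdot|$ identifies only faces and degeneracies, so $|\Box^n|\ne[0,1]^n$. For example, $\gamma_{1,0}\colon[1]^2\to[1]^1$ is a nondegenerate $2$-cube of $\Box^1$ and contributes its own $2$-cell by Lemma~\ref{realization CW}. For the same reason $|\cdot|$ is not left adjoint to $\mathrm{Sing}$: the assignment $x\mapsto\rho_x$ does not commute with connections as a map $X\to\mathrm{Sing}|X|$. The adjunction, hence Cisinski's Quillen equivalence and your unit $\eta$, lives on $|\cdot|_\gamma$.

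The repair is short:
\begin{enumerate}
\item Define $\Phi$ by $[x]\mapsto[\rho_x\colon|x|=[0,1]^n\to|X|]$. Your steps 1--3 go through unchanged, since an elementary homotopy is just an $(n+1)$-cube of $X$.
\item Run the bijectivity argument with $|\cdot|_\gamma\dashv\mathrm{Sing}$ to get $\pi_n(X,x_0)\cong\pi_n(|X|_\gamma,x_0)$.
\item Note that this bijection is $\Phi$ followed by the map induced by $|X|\to|X|_\gamma$, which is a homotopy equivalence by Proposition~\ref{realization connection}. Hence $\Phi$ is bijective.
\end{enumerate}

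The pointed upgrade should also be made precise rather than left to ``the usual argument''. The clean route is the under-category model structure on $\Box^0\downarrow\cSet$. Every $(X,x_0)$ is cofibrant there, because cofibrations are monomorphisms, and it is fibrant exactly when $X$ is Kan. By left properness, the reduced cylinder $(X\otimes\Box^1)/(x_0\otimes\Box^1)$ is a cylinder object, and homotopy with respect to it is pointed homotopy. So $\eta$, a weak equivalence between bifibrant objects, is a pointed homotopy equivalence. Alternatively, you can bypass the upgrade: for Kan $Y$, $\pi_n(Y,y_0)$ is the hom-set $\mathrm{Ho}(\Box^0\downarrow\cSet)(\Box^n/\partial\Box^n,Y)$, on which a weak equivalence acts bijectively.

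With these two adjustments your proof is complete. It rests on Cisinski's Quillen equivalence and Lemma~\ref{he vs we}, the same external input that the citation to \cite{CK1} supplies in the paper.
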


	\section{Discrete homotopy groups of cubical sets}\label{Discrete homotopy groups of cubical sets}
	
	In this section, we define the discrete homotopy groups of a cubical set in a purely combinatorial manner, extending the notion of the discrete homotopy groups of a graph, and show their fundamental properties.

	\begin{definition}
		Let $X$ be a cubical set. A $0$-grid in $X$ is an element of $X_0$, and for $n\ge 1$, an \emph{$n$-grid} of size $(l_1,\ldots,l_n)$ in $X$ is a pair $(x,s)$ of functions
		\[
		x\colon\prod_{k=1}^n\{1,\ldots,l_k\}\to X_n\quad\text{and}\quad s=(s_1,\ldots,s_n)\colon\prod_{k=1}^n\{1,\ldots,l_k\}\to\{0,1\}^n
		\]
		satisfying
		\[
		x(i_1,\ldots,i_n)\partial_{k,1-s_k(i_k)}=x(i_1,\ldots,i_{k-1},i_k+1,i_{k+1},\ldots,i_n)\partial_{k,s_k(i_k+1)}.
		\]
		for all $i_1,\ldots,i_n$ and $k$.
	\end{definition}
	Thus an $n$-grid is a cubical map from a finite rectangular cubical subdivision of the $n$-cube into $X$, where $s=(s_1,\ldots,s_n)$ records whether the $k$-th coordinate of each elementary cube is traversed in the standard or reversed direction for $k=1,\ldots,n$.

	\begin{example}
		A $2$-grid $(x,s)$ of size $(3,2)$ with $s=(s_1,s_2)$ is specified by $s_1(1)=0,\,s_1(2)=1,\,s_1(3)=1$ and $s_2(1)=0,\,s_2(2)=1$. This grid is illustrated as follows.
		
		\begin{figure}[htbp]
			\centering
			\begin{tikzpicture}[x=5mm, y=5mm, thick]
				\draw[myarrow=.6](0,0)--(0,3);
				\draw[myarrow=.6](3,0)--(3,3);
				\draw[myarrow=.6](6,0)--(6,3);
				\draw[myarrow=.6](9,0)--(9,3);
				\draw[myarrow=.6](0,6)--(0,3);
				\draw[myarrow=.6](3,6)--(3,3);
				\draw[myarrow=.6](6,6)--(6,3);
				\draw[myarrow=.6](9,6)--(9,3);
				\draw[myarrow=.6](0,0)--(3,0);
				\draw[myarrow=.6](0,3)--(3,3);
				\draw[myarrow=.6](0,6)--(3,6);
				\draw[myarrow=.6](6,0)--(3,0);
				\draw[myarrow=.6](6,3)--(3,3);
				\draw[myarrow=.6](6,6)--(3,6);
				\draw[myarrow=.6](9,0)--(6,0);
				\draw[myarrow=.6](9,3)--(6,3);
				\draw[myarrow=.6](9,6)--(6,6);
				\fill[black](0,0) circle(2pt);
				\fill[black](3,0) circle(2pt);
				\fill[black](6,0) circle(2pt);
				\fill[black](9,0) circle(2pt);
				\fill[black](0,3) circle(2pt);
				\fill[black](3,3) circle(2pt);
				\fill[black](6,3) circle(2pt);
				\fill[black](9,3) circle(2pt);
				\fill[black](0,6) circle(2pt);
				\fill[black](3,6) circle(2pt);
				\fill[black](6,6) circle(2pt);
				\fill[black](9,6) circle(2pt);
				\draw(1.5,0.9) node[above]{$x_{1,1}$};
				\draw(4.5,0.9) node[above]{$x_{2,1}$};
				\draw(7.5,0.9) node[above]{$x_{3,1}$};
				\draw(1.5,3.9) node[above]{$x_{1,2}$};
				\draw(4.5,3.9) node[above]{$x_{2,2}$};
				\draw(7.5,3.9) node[above]{$x_{3,2}$};
			\end{tikzpicture}
		\end{figure}
	\end{example}

	We introduce some operators on grids, which are similar to the cubical operators. Let $X$ be a cubical set, and $(x,s)$ an $n$-grid of size $(l_1,\ldots,l_n)$ in $X$ for $n\ge 1$. If $s_{k}=0$ for all $k$, then $(x,s)$ is called \emph{directed}. A \emph{subgrid} of $(x,s)$ is a restriction of $(x,s)$, which itself forms a grid in $X$. We define $(x,s)\partial_{k,\alpha}$ as the $(n-1)$-grid $(y,t)$ of size $(l_1,\ldots,l_{k-1},l_{k+1},\ldots,l_n)$ such that
	\begin{align*}
		&y(i_1,\ldots,i_{k-1},i_{k+1},\ldots,i_n)\\
		&=
		\begin{cases}
			x(i_1,\ldots,i_{k-1},1,i_{k+1},\ldots,i_n)\partial_{k,s_k(1)}&(\alpha=0)\\
			x(i_1,\ldots,i_{k-1},l_k,i_{k+1},\ldots,i_n)\partial_{k,1-s_k(l_k)}&(\alpha=1)
		\end{cases}\\
		t&=(s_1,\ldots,s_{k-1},s_{k+1},\ldots,s_n).
	\end{align*}
	We also define $(x,s)\sigma_{k,\alpha}$ for $\alpha=0,1$ as the $(n+1)$-grid $(y,t)$ of size $(l_1,\ldots,l_{k-1},1,l_k,\ldots,l_n)$ such that
	\begin{align*}
		y(i_1,\ldots,i_{k-1},1,i_k,\ldots,i_n)&=x(i_1,\ldots,i_n)\sigma_k\\
		t&=(s_1,\ldots,s_{k-1},\alpha,s_{k},\ldots,s_n).
	\end{align*}
	We finally define $(x,s)\gamma_{k,\alpha}$ as the $(n+1)$-grid $(y,t)$ of size $(l_1,\ldots,l_{k-1},l_k,l_k,l_{k+1},\ldots,l_n)$ such that
	\begin{align*}
		y(i_1,\ldots,i_{k-1},j_0,j_1,i_{k+1},\ldots,i_n)&=
		\begin{cases}
			x(i_1,\ldots,i_{k-1},j_{1-\alpha},i_{k+1},\ldots,i_n)\sigma_{k+\alpha}&(j_0<j_1)\\
			x(i_1,\ldots,i_{k-1},j_0,i_{k+1},\ldots,i_n)\gamma_{k,\alpha}&(j_0=j_1)\\
			x(i_1,\ldots,i_{k-1},j_\alpha,i_{k+1},\ldots,i_n)\sigma_{k+1-\alpha}&(j_0>j_1)
		\end{cases}\\
		t&=(s_1,\ldots,s_{k-1},s_k,s_k,s_{k+1},\ldots,s_n).
	\end{align*}
	It is straightforward to verify that faces and connections satisfy the cubical identities. However, the cubical identities involving degeneracies do not hold in general. 
	The boundary of $(x,s)$ is defined as the subset of $(x,s)$ such that
	\[
	\partial(x,s)=\bigcup_{i,\alpha}(x,s)\partial_{i,\alpha}.
	\]
	Let $f\colon X\to Y$ be a cubical map. For every $n$-grid $(x,s)$ of size $(l_1,\ldots,l_n)$ in $X$, $f(x,s)=(f(x),s)$ is an $n$-grid of size $(l_1,\ldots,l_n)$ in $Y$. Moreover, if $\partial(x,s)$ is contained in a cubical subset $A\subset X$, then $f(x,s)$ is contained in $f(A)$.

	Next, we introduce the concatenation of grids. Geometrically, gluing two cubes of the same dimension along a common face yields a new cube. The concatenation of grids serves as a combinatorial abstraction of this observation, which was originally introduced in \cite{BHS} to define cubical \(\omega\)-groupoids.
	\begin{definition}
		Let $x=(x,s)$ and $y=(y,t)$ be $n$-grids of size $(l_1,\ldots,l_n)$ and $(m_1,\ldots,m_n)$ in a cubical set $X$ for $n\ge 1$. For $k=1,\ldots,n$, whenever $x\partial_{k,1}=y\partial_{k,0}$, we define $x+_ky$ as the $n$-grid $(z,u)$ such that
		\begin{align*}
			z(i_1,\ldots,i_n)&=
			\begin{cases}
				x(i_1,\ldots,i_n)&(i_k\le l_k)\\
				y(i_1,\ldots,i_{k-1},i_k-l_k,i_{k+1},\ldots,i_n)&(i_k>l_k)
			\end{cases}\\
			u(i_1,\ldots,i_n)&=
			\begin{cases}
				s(i_1,\ldots,i_n)&(i_k\le l_k)\\
				t(i_1,\ldots,i_{k-1},i_k-l_k,i_{k+1},\ldots,i_n)&(i_k>l_k).
			\end{cases}
		\end{align*}
	\end{definition}

	It is straightforward to verify that $+_k$ is associative and satisfies an interchange law
	\begin{equation}
		\label{interchange law}
		(x_1+_ky_1)+_l(x_2+_ky_2)=(x_1+_lx_2)+_k(y_1+_ly_2)
	\end{equation}
	for $k\ne l$. Let $x=(x,s)$ be an $n$-grid of size $(l_1,\ldots,l_n)$. For $k=1,\ldots,n$ and $j=1,\ldots,l_k$, we define the \emph{slice} $x[k,j]$ of $x$ as the $n$-grid $(y,t)$ of size $(l_1,\ldots,l_{k-1},1,l_{k+1},\ldots,l_n)$ such that
	\begin{align*}
		y(i_1,\ldots,i_{k-1},1,i_{k+1},\ldots,i_n)&=x(i_1,\ldots,i_{k-1},j,i_{k+1},\ldots,i_n)\\
		t&=(s_1,\ldots,s_{k-1},t_k,s_{k+1},\ldots,s_n)
	\end{align*}
	where 
	\[
	t_k\colon\{1\}\to\{s_k(j)\}.
	\]
	By definition, $x[k,j]+_kx[k,j+1]$ is well defined for $j=1,\ldots,l_k-1$. Next we introduce some equivalence relations between grids. We write
	\[
	x\xrightarrow{k}y
	\]
	for $n$-grids $x$ and $y$ if $x[k,j]=z\sigma_{k,\alpha}$ for some $(n-1)$-grid $z$, and
	\[
	y=x[k,1]+_k\cdots+_kx[k,j-1]+_kx[k,j+1]+_k\cdots+_kx[k,l_k],
	\]
	where $x$ is of size $(l_1,\ldots,l_n)$. If $x=x_1\xrightarrow{k_1}\cdots\xrightarrow{k_{m-1}}x_m=y$ for some $x_1,\ldots,x_m$ and $k_1,\ldots,k_{m-1}$, then we write $x\ge y$. Clearly, this provides a poset structure on the set of $n$-grids.

	\begin{definition}
		\label{0homotopy}
		We say that $n$-grids $x$ and $y$ are \emph{$0$-homotopic} if there exist $n$-grids $x_1,\ldots,x_k$ in $X$ such that $x=x_1$, $y=x_k$, and for each $i=1,\ldots,k-1$, either $x_i\le x_{i+1}$ or $x_i\ge x_{i+1}$ holds. In this case, we write $x\simeq_0y$.
	\end{definition}

	In other words, 0-homotopy only inserts or deletes degenerate slices of a grid. Let $x$ and $y$ be $n$-grids of the same size $(l_1,\ldots,l_n)$ in a cubical set $X$. If there exists an $(n+1)$-grid $z$ of size $(l_1,\ldots,l_{k-1},1,l_k,\ldots,l_n)$ such that $z\partial_{k,0}=x$ and $z\partial_{k,1}=y$, then we write
	\[
	x\overset{k}{\Rightarrow}y.
	\]

	\begin{definition}
		\label{1homotopy}
		We say that $n$-grids $x$ and $y$ are \emph{$1$-homotopic} if there exist $n$-grids $x_1,\ldots,x_k$ in $X$ such that $x=x_1$, $y=x_k$, and for each $i=1,\ldots,k-1$, either $x_i\overset{l_i}{\Rightarrow}x_{i+1}$ or $x_{i+1}\overset{l_i}{\Rightarrow}x_i$ holds for some $l_i$. In this case, we write $x\simeq_1y$.
	\end{definition}

	\begin{definition}
		\label{homotopy grids def}
		We say that $n$-grids $x$ and $y$ are \emph{homotopic} if there exist $n$-grids $x_1,\ldots,x_k$ such that $x=x_1$, $y=x_k$, and for each $i=1,\ldots,k-1$, either $x_i\simeq_0x_{i+1}$ or $x_i\simeq_1x_{i+1}$ holds. In this case, we write $x\simeq y$.
	\end{definition}
	Note that 0-homotopy, 1-homotopy and homotopy are equivalence relations between grids.

	\begin{remark}
		In \cite{KT}, the authors defined a $C_r$-homotopy for $r\ge 0$ between paths in a directed graph, and proved the Hurewicz theorem in dimension one and the Seifert-van Kampen theorem for the $r$-fundamental group defined by $C_r$-homotopies. A $0$-homotopy and a homotopy between $1$-grids correspond to a $C_0$-homotopy and a $C_2$-homotopy, respectively.
	\end{remark}

	The following lemma is immediate from the definition.

	\begin{lemma}
		\label{homotopy description}
		Let $X$ be a cubical set. Two $n$-grids $x$ and $y$ are homotopic if and only if there exist $n$-grids $z$ and $w$ such that
		\[
		x\le z\simeq_1 w\ge y.
		\]
	\end{lemma}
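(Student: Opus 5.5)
The direction ``if'' is immediate: $x\le z$ and $w\ge y$ are $0$-homotopies and $z\simeq_1w$ is a $1$-homotopy, so $x\simeq y$ by Definition \ref{homotopy grids def}. For the converse, I would write a homotopy as a word in three kinds of elementary steps. $U$ is ``go up'': $a\le a'$, i.e.\ $a'$ is obtained from $a$ by inserting one degenerate slice $z\sigma_{k,\alpha}$. $D$ is ``go down'', the reverse of $U$. $H$ is a single step $a\overset{l}{\Rightarrow}b$ or $b\overset{l}{\Rightarrow}a$. The plan is to rewrite any such word into the normal form $U^*H^*D^*$, which is exactly $x\le z\simeq_1w\ge y$. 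For this it suffices to establish three local rewriting rules:
\[
HU\rightsquigarrow UH,\qquad DH\rightsquigarrow HD,\qquad DU\rightsquigarrow UD.
\]
The word $UD$ is already allowed, since $a\le b\ge c$ is fine.

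\textbf{Step 1 (insertion lemma).} Let $c$ be an $(n+1)$-grid with $c\partial_{l,0}=a$ and $c\partial_{l,1}=b$. Suppose $a'$ (or $b'$) is obtained from $a$ (or $b$) by inserting a degenerate slice $z\sigma_{k,\alpha}$ at position $j$ in direction $k$. I want an $(n+1)$-grid $c'$ with $c'\partial_{l,0}\ge a$, $c'\partial_{l,1}\ge b$, and with the prescribed end equal to $a'$ (resp.\ $b'$). Direction $k$ of the $n$-grid corresponds to a direction $k'\neq l$ of $c$. I would construct $c'$ by inserting into $c$, in direction $k'$ at position $j$, the slice $w\sigma_{k',\alpha}$. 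Here $w$ is the $n$-grid obtained from $c$ by taking the common face between slices $j$ and $j+1$ (or the appropriate boundary face) in direction $k'$. The faces of $c'$ in direction $l$ are then $a$ and $b$ with the degenerate slices $(w\partial_{l,\epsilon})\sigma_{k,\alpha}$ inserted. The point to verify is that $w\partial_{l,0}=z$, so the new end is exactly $a'$. This uses the face--degeneracy identities $\sigma\partial$ at the level of cubes, which do hold even though the grid-level degeneracy identities fail in general. Step 1 gives both $HU\rightsquigarrow UH$ and $DH\rightsquigarrow HD$: in either case one inserts into the witnessing grid $c$ the slice that is being inserted into (resp.\ deleted from, read backwards) one end, and reads off that the other end only gains a degenerate slice.

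\textbf{Step 2 (amalgamation).} Let $b\le a$ and $b\le a'$, where $a$ and $a'$ are obtained from $b$ by inserting degenerate slices, possibly in different directions. I want $e$ with $a\le e\ge a'$. I would insert both slices into $b$. If the directions coincide, the slices are simply placed at their positions, keeping a chosen order when the positions coincide. If the directions $k\ne k'$ differ, first insert the direction-$k$ slice. Then the direction-$k'$ slice of $b$ must be lengthened across the new direction-$k$ position by one extra cube. That cube is the corresponding degenerate cube $z\sigma\sigma$, which is still degenerate in direction $k'$, so the lengthened slice is still of the form $z'\sigma_{k',\alpha}$. Deleting either inserted slice from $e$ recovers $a'$ or $a$ respectively. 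This gives $DU\rightsquigarrow UD$.

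\textbf{Step 3 (termination).} Each rule moves a $U$ to the left or a $D$ to the right past one letter without changing the number of letters of each type. So a standard bubble-sort argument on the inversion count terminates at the normal form $U^*H^*D^*$.

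I expect the main obstacle to be the bookkeeping in Steps 1 and 2. One must check that the inserted slices satisfy the grid compatibility condition with the direction vectors $s$, including the sign $\alpha$ of the inserted slice. One must also check that lengthening a degenerate slice in another direction keeps it degenerate, precisely because degeneracy identities for grids fail in general. I would handle this by doing every check on individual cubes using the cubical identities $\sigma_j\partial_{i,\alpha}$ and $\sigma_i\sigma_j$ recalled in Section \ref{Preliminaries}, rather than at the level of grids.
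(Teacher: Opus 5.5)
Your proof is correct. The paper does not actually prove this lemma; it only says the lemma is immediate from the definitions. So your argument supplies details the paper leaves implicit, rather than following or departing from a written proof.

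The two local facts you isolate are what is really needed.
\begin{itemize}
\item An elementary $1$-homotopy can be transported along a single slice insertion at either end, by inserting the matching degenerate slice $w\sigma_{k',\alpha}$ into the witnessing $(n+1)$-grid. Your key check is that the $l$-faces of this slice are, up to index shift, $(w\partial_{l,\epsilon})\sigma_{k,\alpha}$, and that $w\partial_{l,0}=z$. This uses only the cube-level $\sigma\partial$ identities and the fact that faces of grids commute, and both hold.
\item Two single insertions into the same grid have a common upper bound.
\end{itemize}
The inversion count for the order $U<H<D$ then gives termination.

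A more global route, closer to the paper's later expander lemmas, would take a common refinement of each whole $0$-homotopy class and push refinements through $1$-homotopies all at once. It needs the same two ingredients, so your single-slice version is the more elementary of the two.

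Two points of precision in Step 2 when $k\neq k'$:
\begin{itemize}
\item Inserting the direction-$k'$ slice into $a$ adds a whole layer of cubes, one for each position in the remaining $n-2$ directions, not a single cube.
\item For $e\ge a'$ you must also know that the direction-$k$ slice of $e$ is still of the form $u\sigma_{k,\alpha}$. This holds because the new cubes in that slice have the form $v\sigma_k\sigma_{k'}$, and for $k<k'$ this equals $v\sigma_{k'-1}\sigma_k$. Hence they are degenerate in direction $k$ as well as in direction $k'$.
\end{itemize}
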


	Let $(X,x_0)$ be a pointed cubical set. We say that an $n$-grid $x$ in $X$ is \emph{spherical} if the boundary $\partial x$ consists of $x_0\sigma_{1}\cdots\sigma_{n-1}$. The set of spherical $n$-grids is closed under $0$-homotopy. We define a $1$-homotopy of spherical grids in the obvious manner. Hence we obtain a homotopy of spherical grids.

	\begin{definition}
		Let $(X,x_0)$ be a pointed cubical set. For $n\ge 0$, the $n$-th \emph{discrete homotopy group} $\pi_n^\delta(X,x_0)$ is defined as the set of homotopy classes of spherical grids.
	\end{definition}

	We define a group structure on $\pi_n^\delta(X,x_0)$ for $n\ge 1$. Let $x=(x,s)$ and $y=(y,t)$ be spherical $n$-grids of size $(l_1,\ldots,l_n)$ and $(m_1,\ldots,m_n)$ in a pointed cubical set $(X,x_0)$ for $n\ge 1$. Define the multiplication $x\cdot y$ as the spherical $n$-grid $(z,u)$ of size $(l_1+m_1,\ldots,l_n+m_n)$ such that
	\begin{align*}
		z(i_1,\ldots,i_n)&=
		\begin{cases}
			x(i_1,\ldots,i_n)&(i_1\le l_1,\ldots,i_n\le l_n)\\
			y(i_1-l_1,\ldots,i_n-l_n)&(i_1>l_1,\ldots,i_n>l_n)\\
			x_0\sigma_1\cdots\sigma_n&(\text{otherwise}),
		\end{cases}\\
		u_k(i_k)&=
		\begin{cases}
			s_k(i_k)&(i_k\le l_k)\\
			t_k(i_k-l_k)&(i_k>l_k).
		\end{cases}
	\end{align*}

	Thus the multiplication is obtained by placing the two spherical grids in diagonal blocks and sending all remaining blocks to the basepoint. The following lemma is immediate from the definitions of homotopies between grids and the multiplication of grids.

	\begin{lemma}
		\label{grid product}
		For $n\ge 1$ and $i=1,2$, let $x_i$ and $y_i$ be spherical $n$-grids in a pointed cubical set. If $x_1\simeq x_2$ and $y_1\simeq y_2$, then
		\[
		x_1\cdot y_1\simeq x_2\cdot y_2.
		\]
	\end{lemma}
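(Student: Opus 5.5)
It suffices to treat the two generating moves of homotopy separately, varying one factor at a time: by transitivity, $x_1\cdot y_1\simeq x_2\cdot y_1\simeq x_2\cdot y_2$. By Definition \ref{homotopy grids def}, $x_1\simeq x_2$ is witnessed by a chain in which consecutive grids are related either by an elementary $0$-homotopy move $\xrightarrow{k}$ (in one direction or the other) or by an elementary $1$-homotopy move $\overset{k}{\Rightarrow}$. Since $\cdot$ is defined uniformly, it is enough to show that if $x\to x'$ is one elementary move between spherical grids, then $x\cdot y$ and $x'\cdot y$ are related by a homotopy. The symmetric argument then handles the second factor.

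\emph{Case of a $0$-homotopy move.} Suppose $x\xrightarrow{k}x'$, so the slice $x[k,j]$ equals $z\sigma_{k,\alpha}$ for some $(n-1)$-grid $z$, and $x'$ is obtained by deleting this slice. In $x\cdot y$, the $k$-th slice at position $j\le l_k$ has two kinds of entries. Those coming from $x$ form $x[k,j]$, which is degenerate in direction $k$. The remaining entries are $x_0\sigma_1\cdots\sigma_n$, which is degenerate in every direction, with direction marker $s_k(j)$ in coordinate $k$. So the whole slice $(x\cdot y)[k,j]$ has the form $w\sigma_{k,\alpha}$, where $w$ is $z$ extended by basepoint cubes. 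Deleting it gives exactly $x'\cdot y$, hence $x\cdot y\xrightarrow{k}x'\cdot y$. The one point to verify is that a spherical grid of positive size has faces equal to the basepoint, so the glued slice satisfies the grid compatibility conditions; this is immediate from sphericity.

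\emph{Case of a $1$-homotopy move.} Suppose $x\overset{k}{\Rightarrow}x'$ via an $(n+1)$-grid $h$ of size $(l_1,\ldots,l_{k-1},1,l_k,\ldots,l_n)$, with $h\partial_{k,0}=x$ and $h\partial_{k,1}=x'$. Since the homotopy is between spherical grids, $h$ is taken to be a homotopy of spherical grids, meaning its faces other than the $k$-th ones are basepoint cubes. Define an $(n+1)$-grid $H$ by placing $h$ in the block indexed by $i_m\le l_m$ for the coordinates $m\ne k$. Place $y\sigma_{k,0}$ in the block $i_m>l_m$. Fill every other position with $x_0\sigma_1\cdots\sigma_{n+1}$. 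Then $H\partial_{k,0}=x\cdot y$ and $H\partial_{k,1}=x'\cdot y$. What needs checking is that $H$ satisfies the grid gluing condition across the block boundaries. This holds because all faces of $h$ in directions other than $k$ are basepoint cubes, and the faces of $y\sigma_{k,0}$ are either degenerate copies of basepoint faces or $y$ itself.

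I expect the main obstacle to be this bookkeeping. Two details need care: the reindexing of coordinates after inserting the new $k$-th coordinate, and the case where $h$ is a homotopy whose intermediate grids are spherical only in the relative sense. If the latter causes trouble, I would fall back on Lemma \ref{homotopy description}, which reduces the problem to one $\le$-chain and one $\simeq_1$-chain. With that reduction the two cases above apply directly, and combining them gives $x_1\cdot y_1\simeq x_2\cdot y_2$.
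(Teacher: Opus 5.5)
Your argument is correct and is precisely the direct check from the definitions that the paper has in mind; the paper states this lemma as immediate from the definitions and gives no written proof. A deletion or insertion of a degenerate slice in one factor is the same move on the product, because the rest of that slice of $x\cdot y$ consists of basepoint cubes. A $1$-homotopy $h$ of one factor extends to the product by filling the other diagonal block with a degenerate copy of the other factor and the remaining blocks with basepoint cubes.

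There are two small corrections. First, that degenerate block must be $y\sigma_{k,\beta}$ (resp.\ $x\sigma_{k,\beta}$), where $\beta$ is the direction label of $h$ in its $k$-th coordinate, not $y\sigma_{k,0}$; a grid carries a single label per coordinate index, so $y\sigma_{k,0}$ only works when $\beta=0$. Second, no fallback is needed for sphericity. A $1$-homotopy of spherical grids is by convention an $(n+1)$-grid whose faces other than the two $k$-faces are basepoint cubes, as in the proof of Lemma \ref{Hurewicz map homotopy}.
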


	Next we show that the multiplication of grids is homotopic to the concatenation of grids along any direction. The idea is to slide the grids along any direction through homotopy.
	\begin{lemma}
		\label{grid slide}
		Let $x$ and $y$ be spherical $n$-grids in a pointed cubical set $(X,x_0)$. Then there exist spherical $n$-grids $a$ and $b$ of the same size such that $x\le a$, $y\le b$ and for each $i=1,\ldots,n$,
		\[
		x\cdot y\simeq a+_ib.
		\]
	\end{lemma}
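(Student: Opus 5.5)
The strategy is to pad $x$ and $y$ by degenerate basepoint slices so that both live in a common ambient size. Then, for each direction $i$, I will slide the diagonal block of $y$ until it sits next to the block of $x$ along direction $i$.

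Suppose $x$ has size $(l_1,\ldots,l_n)$ and $y$ has size $(m_1,\ldots,m_n)$. Let $a$ be the spherical grid of size $(l_1+m_1,\ldots,l_n+m_n)$ obtained from $x$ by appending, in every direction $k$, $m_k$ slices equal to the constant cube $x_0\sigma_1\cdots\sigma_n$. Let $b$ be obtained from $y$ by prepending, in every direction $k$, $l_k$ such constant slices. A constant slice is of the form $z\sigma_{k,\alpha}$ with $z$ constant, and deleting such slices one at a time recovers $x$ (resp.\ $y$). Hence $x\le a$ and $y\le b$, and both $a$ and $b$ are spherical with the common size $(l_1+m_1,\ldots,l_n+m_n)$. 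The direction data $s$ on the padded slices may be chosen freely, say $0$. A small point to check is that the direction labels of $x\cdot y$ in direction $k$ are $s_k$ followed by $t_k$. The labels of $a+_ib$ in direction $i$ are likewise $s_i$ followed by the padding and then the padding followed by $t_i$. In the other directions the labels of $a$ and $b$ differ, but a concatenation along $i$ only requires the labels to be consistent where they are used. I will therefore define $a$ and $b$ to carry the labels $u_k$ of $x\cdot y$ off the $x$-block and $y$-block respectively, to make everything compatible.

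Next, fix $i$ and compare $x\cdot y$ with $a+_ib$. Inside $a+_ib$, the $x$-block occupies indices $i_k\le l_k$ for all $k$. The $y$-block occupies $i_i>2l_i+m_i$ and $i_k>l_k$ for $k\ne i$. Everything else is constant. After deleting the $m_i+l_i$ middle constant slices in direction $i$, which is allowed since those slices are entirely constant, we obtain a grid of size $(\ldots,l_i+m_i,\ldots)$. This grid differs from $x\cdot y$ only in the coordinates $k\ne i$, where its size is $l_k+m_k$ in both. In fact the two grids then coincide: in $x\cdot y$ the $y$-block lies at $i_k>l_k$ for all $k$, which is exactly the position just described. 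So the argument reduces to the equality $x\cdot y\ge a+_ib$, up to labels, rather than a genuine homotopy.

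The relation is then $0$-homotopy, which is in particular a homotopy by Definition \ref{homotopy grids def}. The main obstacle is the bookkeeping of the direction functions $s$. The middle slices in direction $i$ are constant, so their labels are irrelevant for deletion. The padded slices in directions $k\ne i$ inherit the labels of the opposite block, so $a+_ib$ after deletion has exactly the labels $u$ of $x\cdot y$. If it turns out that labels must agree globally for $+_i$ to be defined, I will instead insert degenerate slices carrying either label freely. Flipping the label of a constant slice is achieved by adding a constant slice with the other label and deleting the old one, which is a $0$-homotopy zig-zag.
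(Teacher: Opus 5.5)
Your argument is correct for the lemma as literally stated, but it takes a genuinely different and much weaker route than the paper. You pad $x$ with constant slices only after it, and $y$ only before it, in every direction, and both padded grids carry the labels $u_k$ of $x\cdot y$. This choice of labels, rather than your earlier remark that labels need only be consistent where used, is what makes $a+_ib$ defined, since $a\partial_{i,1}=b\partial_{i,0}$ must hold as grids, labels included. With this arrangement $a+_ib$ is simply $x\cdot y$ with $l_i+m_i$ constant slices inserted in direction $i$. Hence $a+_ib\ge x\cdot y$ (the inequality goes this way, not as you wrote it), the homotopy is a pure $0$-homotopy, and nothing is ever slid.

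The paper instead takes $a,b$ to be equal-size refinements with uniform labels. It moves $b$ relative to $a$ by genuine $1$-homotopies, built from the $(n+1)$-grids $(b\gamma_{j,1})+_j(b\gamma_{j,0})$ and the interchange law, passing from $a+_{\{1,\ldots,n\}}b$ to $a+_{\{i\}}b$ one coordinate at a time. This proves $a\cdot b\simeq a+_ib$ for \emph{arbitrary} such $a,b$, including when $a+_ib$ places the two grids side by side and aligned in the other coordinates. That stronger form is what the paper uses afterwards: Lemma \ref{grid inverse} passes from $x+_n\bar{x}$ to $x\cdot\bar{x}$, and the commutativity argument needs $\bar{b}+_n\bar{a}\simeq\bar{b}+_1\bar{a}$ and $b+_1a\simeq y\cdot x$. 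With your corner-padded $a,b$, deleting constant slices from $b+_1a$ leaves $y$ before $x$ in direction $1$ but $x$ before $y$ in every other direction. That is neither $x\cdot y$ nor $y\cdot x$, and connecting it to either requires exactly the sliding homotopy your construction avoids. So your proof shows that the existential wording of the lemma is nearly tautological, but it could not replace the paper's proof in its applications.
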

	
	\begin{proof}
		Let $x=(x,s)$ and $y=(y,t)$. By inserting degenerate slices if necessary, we can construct $n$-grids $(a,u)$ and $(b,v)$ of the same size such that $u_1=\cdots=u_n=v_1=\cdots=v_n$, $x\le a$ and $y\le b$. We denote any $n$-grid consisting of $x_0\sigma_{1}\cdots\sigma_{n}$ simply by $x_0$. For $\{i_1<\cdots<i_k\}\subset\{1,\ldots,n\}$, we inductively define
		\[
		x+_{\{i_1,\ldots,i_k\}}y=(x+_{\{i_1,\ldots,i_{k-1}\}}x_0)+_{i_k}(x_0+_{\{i_1,\ldots,i_{k-1}\}}y).
		\]
		Then $x\cdot y=x+_{\{1,\ldots,n\}}y$. Note that
		\[
		((b\gamma_{j,1})+_j(b\gamma_{j,0}))\partial_{j+1,\alpha}=
		\begin{cases}
			x_0+_jb&(\alpha=0)\\
			b+_jx_0&(\alpha=1).
		\end{cases}
		\]
		Suppose $j>i$. Then we may define
		\[
		z=((a+_jx_0)\sigma_{j+1})+_{\{1,\ldots,j-1\}}((b\gamma_{j,1})+_j(b\gamma_{j,0})).
		\]
		By the interchange law \eqref{interchange law}, $z\partial_{j+1,\alpha}\simeq_0a+_{\{1,\ldots,j-\alpha\}}b$. Hence $a+_{\{1,\ldots,j-1\}}b\simeq a+_{\{1,\ldots,j\}}b$. Suppose $j<i$. We may also define
		\[
		w=((a+_{\{1,\ldots,j\}}x_0)\sigma_{j+1})+_{i+1}(x_0+_{\{1,\ldots,j-1\}}((b\gamma_{j,1})+_j(b\gamma_{j,0}))).
		\]
		By the interchange law, $w\partial_{j+1,\alpha}\simeq_0a+_{\{1,\ldots,j-\alpha,i\}}b$. Hence $a+_{\{1,\ldots,j-1,i\}}b\simeq a+_{\{1,\ldots,j,i\}}b$. Consequently,
		\[
		x\cdot y=x+_{\{1,\ldots,n\}}y\simeq a+_ib.
		\]
		Therefore the claim follows.
	\end{proof}

	We show that the multiplication above admits an inverse. Let $x=(x,s)$ be a spherical $n$-grid of size $(l_1,\ldots,l_n)$. Define an $n$-grid $\bar{x}=(\bar{x},\bar{s})$ by
	\[
	\bar{x}(i_1,\ldots,i_n)=x(i_1,\ldots,i_{n-1},l_n+1-i_n)\quad\text{and}\quad\bar{s}=(s_1,\ldots,s_{n-1},1-s_n),
	\]
	where $s=(s_1,\ldots,s_n)$.

	\begin{lemma}
		\label{grid inverse}
		For every spherical $n$-grid $x=(x,s)$, we have
		\[
		x\cdot\bar{x}\simeq x_0\simeq \bar{x}\cdot x.
		\]
	\end{lemma}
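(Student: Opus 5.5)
First I would use Lemma \ref{grid slide} to replace the product by a concatenation in the last direction. It gives spherical grids $a$ and $b$ of the same size with $x\le a$, $\bar{x}\le b$ and $x\cdot\bar{x}\simeq a+_n b$. The lemma only asserts the existence of $a$ and $b$, but its proof inserts degenerate slices so that all direction functions agree. So I will choose the padding of $x$ first, obtaining $a$, and then take $b=\bar{a}$. Reversing the $n$-th coordinate commutes with inserting degenerate slices, so $\bar{x}\le\bar{a}$. It therefore suffices to show $a+_n\bar{a}\simeq x_0$.

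Since only the $n$-th coordinate of $\bar{a}$ is reversed, the direction functions in directions $1,\ldots,n-1$ still agree; I will check that the lemma's argument goes through with this. The core step is a folding homotopy. Write $l=l_n$ and let $a$ have slices $a[n,1],\ldots,a[n,l]$. Then $a+_n\bar{a}$ has slices $a[n,1],\ldots,a[n,l],\overline{a[n,l]},\ldots,\overline{a[n,1]}$. The slices $a[n,l]$ and $\overline{a[n,l]}$ are the same cube traversed with opposite orientation.

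The cancellation move is an $(n+1)$-grid $z$ with $z\partial_{n+1,0}$ containing the pair $a[n,l]+_n\overline{a[n,l]}$ and $z\partial_{n+1,1}$ containing the degenerate grid $(a[n,l]\partial_{n,0})\sigma_{n,\ast}$ doubled, with all other slices unchanged. To build it I use connections. Put the size-$(\ldots,1,\ldots,1)$ block $a[n,l]\gamma_{n,\alpha}$ adjacent to its reversed copy in direction $n$, and fill the unaffected slices with $a[n,j]\sigma_{n+1}$. The identities $\gamma_{j,\beta}\partial_{i,\alpha}$ show that the faces of a connection $y\gamma_{n,\alpha}$ are $y$ twice and $y\partial_{n,\alpha'}\sigma_n$ twice. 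Hence the two faces of $z$ in direction $n+1$ are the original grid and the grid with the pair replaced by degenerate slices. These two grids are related by $\overset{n+1}{\Rightarrow}$, and the degenerate slices can be deleted by a $0$-homotopy. Sphericity is preserved, because the faces of $a$ in directions $k<n$ are at $x_0$, so the same holds for the new boundary cubes, which are degenerate images of $a[n,l]\partial_{n,0}$ or basepoint cubes.

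Iterating this move $l$ times peels off the pairs from the middle outward. This leaves degenerate slices $(a\partial_{n,0})\sigma_n$, which is the basepoint grid, so $a+_n\bar{a}\simeq x_0$. The identity $\bar{x}\cdot x\simeq x_0$ follows by the same argument with the roles of $a$ and $\bar{a}$ exchanged, using $\bar{\bar{a}}=a$.

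The main obstacle is building the folding grid $z$ correctly with the orientation data $s_n$. One has to choose $\alpha$ for the connection according to $s_n(l)$, and check that the adjacency condition holds in direction $n$ and in direction $n+1$ against the neighbouring $\sigma$-blocks. The mismatch of degeneracy cubical identities for grids, noted after their definition, suggests doing this check cube by cube rather than formally. I expect it to require the direction functions in $a$ to be constant, which the padding from Lemma \ref{grid slide} arranges.
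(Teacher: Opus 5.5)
Your proof is correct, but its core step is organized differently from the paper's.

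\textbf{Comparison with the paper.} The paper cancels $x$ against $\bar{x}$ in one move. It forms the single $(n+1)$-grid $y=(x\gamma_{n,1})+_{n+1}(\overline{x\gamma_{n,1}})$ of size $(l_1,\ldots,l_n,2l_n)$. The only face of $y$ not at the basepoint is $y\partial_{n,1}=x+_n\bar{x}$, so $x+_n\bar{x}$ is $1$-homotopic (in direction $n$) to a basepoint grid. The paper then invokes Lemma~\ref{grid slide}.

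You instead peel off one pair of slices at a time. You use $l_n$ thickness-one $1$-homotopies in direction $n+1$, alternating with $0$-homotopies that delete the two degenerate slices each fold creates. The paper's version needs no iteration and no intermediate grids. Yours only ever uses connections of single cubes, so every gluing check is local.

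\textbf{Orientations.} Your attention to orientations is well placed, and it applies to the paper's grid too. The grid connection of Section~\ref{Discrete homotopy groups of cubical sets} glues correctly only at slices with $s_n(j)=0$. For example, if $s_n(j)=1$ with $j<l_n$, the diagonal block $x(\ldots,j,\ldots)\gamma_{n,1}$ has $\partial_{n,0}$-face $x(\ldots,j,\ldots)\partial_{n,0}\sigma_n$. Gluing it to the neighbouring block $x(\ldots,j,\ldots)\sigma_n$ instead requires this face to be $x(\ldots,j,\ldots)$. So for a general spherical grid one must use $\gamma_{n,0}$ on such blocks. That is exactly your rule of choosing $\alpha$ according to $s_n$.

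\textbf{Two simplifications.} First, the padding before taking $b=\bar{a}$ is unnecessary. The grids $x$ and $\bar{x}$ already have the same size and the same direction functions in directions $1,\ldots,n-1$. In the case $i=n$, the sliding argument of Lemma~\ref{grid slide} only passes through grids $a+_{\{1,\ldots,j,n\}}b$, which are concatenated along direction $n$. So agreement in direction $n$ is never used. This is the check you deferred, and it does go through.

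Second, the folding does not need constant direction functions. Padding could not produce them anyway, since it never changes the orientation of existing slices. For $k<n$ one has $y\gamma_{n,\alpha}\partial_{k,\beta}=(y\partial_{k,\beta})\gamma_{n-1,\alpha}$. Hence the connection blocks glue in direction $k$ for arbitrary $s_k$.
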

	
	\begin{proof}
		Consider an $(n+1)$-grid $y=(x{\gamma_{n,1}})+_{n+1}(\overline{x{\gamma_{n,1}}})$. By definition,
		\[
		y\partial_{i,\alpha}=\begin{cases}
			x+_n\bar{x}&((i,\alpha)=(n,1))\\
			x_0&\text{(otherwise)},
		\end{cases}
		\]
		which implies that $x\cdot\bar{x}\simeq x_0$ by Lemma \ref{grid slide}. Since $\overline{\bar{x}}=x$, we also obtain $\bar{x}\cdot x\simeq x_0$.
	\end{proof}

	\begin{proposition}
		Let $(X,x_0)$ be a pointed cubical set. Then for $n\ge 1$,
		\[
		\pi_n^\delta(X,x_0)\times\pi_n^\delta(X,x_0)\to\pi_n^\delta(X,x_0),\quad([x],[y])\mapsto[x\cdot y]
		\]
		is a well defined group structure.
	\end{proposition}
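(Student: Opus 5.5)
Well-definedness is exactly Lemma \ref{grid product}. If $x_1\simeq x_2$ and $y_1\simeq y_2$ are spherical $n$-grids, then $x_1\cdot y_1\simeq x_2\cdot y_2$. So the class $[x\cdot y]$ depends only on $[x]$ and $[y]$. We should also note that $x\cdot y$ is again spherical. Its boundary consists of boundary slices of $x$, boundary slices of $y$, and the constant blocks $x_0\sigma_1\cdots\sigma_n$, all of which have faces equal to $x_0\sigma_1\cdots\sigma_{n-1}$. The compatibility condition between adjacent blocks holds because the faces of $x$ and $y$ that meet the constant blocks are boundary faces. It then remains to check the group axioms: identity, inverses and associativity.

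\emph{Inverses.} These are supplied by Lemma \ref{grid inverse}: $[x]\cdot[\bar{x}]=[x_0]=[\bar{x}]\cdot[x]$, where $x_0$ denotes the spherical grid of size $(1,\ldots,1)$ with value $x_0\sigma_1\cdots\sigma_n$.

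\emph{Identity.} We show $x\cdot x_0\simeq x\simeq x_0\cdot x$. By definition, $x\cdot x_0$ is a grid of size $(l_1+1,\ldots,l_n+1)$. Its $x$-block sits in the corner, and every other block is the constant cube $x_0\sigma_1\cdots\sigma_n$. We then delete the last slice in each direction $k$. That slice is constant, hence equal to $z\sigma_{k,\alpha}$ for a suitable constant $(n-1)$-grid $z$ with the matching orientation $\alpha=u_k(l_k+1)$. So $x\cdot x_0\ge x$, and therefore $x\cdot x_0\simeq_0 x$. The same argument, deleting the first slices, gives $x_0\cdot x\simeq_0 x$.

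\emph{Associativity.} This is the step I expect to need the most care. Both $(x\cdot y)\cdot w$ and $x\cdot(y\cdot w)$ are literally the same grid: $x$, $y$ and $w$ placed in consecutive diagonal blocks of size $(l_1+m_1+p_1,\ldots)$, with the basepoint everywhere else. In the first case the off-diagonal region is formed from the constant blocks of $x\cdot y$ together with the new constant blocks, and in the second case symmetrically. The orientation functions $u_k$ agree as well, since both are the concatenation of $s_k$, $t_k$ and the orientation function of $w$. So I would verify directly from the definition that
\[
((x\cdot y)\cdot w)(i_1,\ldots,i_n)=(x\cdot(y\cdot w))(i_1,\ldots,i_n)
\]
for all indices. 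The check splits into cases according to whether all $i_k\le l_k$, all $l_k<i_k\le l_k+m_k$, all $i_k>l_k+m_k$, or none of these. The only subtle case is an index that lies in the $x\cdot y$ region in some coordinates but not in others. Both definitions assign the basepoint there. So associativity holds on the nose, with no homotopy needed.
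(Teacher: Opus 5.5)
Your proof is correct and follows the paper's argument: well-definedness from Lemma \ref{grid product}, associativity holding on the nose from the diagonal-block definition of the product, and inverses from Lemma \ref{grid inverse}. You also verify two points the paper leaves implicit: that $x\cdot y$ is again spherical, and that the constant grid is a two-sided identity via $x\cdot x_0\ge x$ and $x_0\cdot x\ge x$ (deleting constant boundary slices). The identity check is genuinely needed, since associativity together with $x\cdot\bar{x}\simeq x_0\simeq\bar{x}\cdot x$ does not by itself force $[x_0]$ to be a unit.
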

	
	\begin{proof}
		By Lemma \ref{grid product}, the multiplication is well defined. By definition, the multiplication is associative, and by Lemma \ref{grid inverse}, admits an inverse.
	\end{proof}

	Let $f\colon(X,x_0)\to(Y,y_0)$ be a pointed cubical map between pointed cubical sets $(X,x_0)$ and $(Y,y_0)$. Let $x$ and $y$ be spherical $n$-grids in $(X,x_0)$. Then $f(x)$ is a spherical $n$-grid in $(Y,y_0)$. Clearly, if spherical $n$-grids $x$ and $y$ in $(X,x_0)$ are homotopic, then $f(x)$ and $f(y)$ are also homotopic. Moreover, one has $f(x\cdot y)=f(x)\cdot f(y)$. Hence the map
	\[
	f_*\colon\pi_n^\delta(X,x_0)\to\pi_n^\delta(Y,y_0),\quad[x]\mapsto[f(x)]
	\]
	is a well defined homomorphism. By definition, the induced homomorphisms on discrete homotopy groups satisfy functoriality. 

	\begin{proposition}
		\label{homotopy invariance}
		Let $f,g\colon(X,x_0)\to(Y,y_0)$ be pointed cubical maps between pointed cubical sets. If there exists a pointed elementary homotopy from $f$ to $g$, then
		\[
		f_*=g_*\colon\pi_*^\delta(X,x_0)\to\pi_*^\delta(Y,y_0).
		\]
	\end{proposition}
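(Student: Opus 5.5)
Given a pointed elementary homotopy $h\colon X\otimes\Box^1\to Y$ from $f$ to $g$, the plan is to show directly that $f(x)\overset{n+1}{\Rightarrow}g(x)$ for every spherical $n$-grid $x=(x,s)$. Then $f(x)\simeq_1 g(x)$, which gives $f_*[x]=g_*[x]$. The natural candidate for the $(n+1)$-grid $z$ is obtained by applying $h$ cube by cube, after pairing each cube with the unique nondegenerate $1$-cube of $\Box^1$. Concretely, $z$ has size $(l_1,\ldots,l_n,1)$ and is given by
\[
z(i_1,\ldots,i_n,1)=h\bigl(x(i_1,\ldots,i_n),\iota\bigr),\qquad \iota=1_{[1]}\in\Box^1_1,
\]
with direction data $(s_1,\ldots,s_n,0)$. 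For $n=0$ the grid is just the $1$-cube $h(x_0,\iota)$, which is degenerate at $y_0$ because $h$ is pointed; hence $f$ and $g$ agree on $\pi_0^\delta$.

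First I will check that $z$ is a grid. This uses the face formula in $X\otimes\Box^1$: for $k\le n$ we have $(x,\iota)\partial_{k,\alpha}=(x\partial_{k,\alpha},\iota)$. The gluing identity for $z$ in direction $k$ then follows by applying $h$ to the gluing identity of $x$, and direction $n+1$ imposes no condition since $l_{n+1}=1$. Next I compute the faces of $z$. Using $(x,\iota)\partial_{n+1,\alpha}=(x,\iota\partial_{1,\alpha})$ together with the commuting triangles defining an elementary homotopy, we get $z\partial_{n+1,0}=f(x)$ and $z\partial_{n+1,1}=g(x)$. For $k\le n$, the face $z\partial_{k,\alpha}$ is $h$ applied to $(x\partial_{k,\alpha},\iota)$. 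Since $x$ is spherical, $x\partial_{k,\alpha}$ consists of $x_0\sigma_1\cdots\sigma_{n-1}$. The relation $(x\sigma_{k+1},y)\sim(x,y\sigma_1)$ and the pointedness of $h$ then show that $h(x_0\sigma_1\cdots\sigma_{n-1},\iota)=h(x_0,\iota)\sigma_1\cdots$ is a degeneracy of $y_0$, so these faces land at the basepoint. Thus $z$ is a spherical $1$-homotopy in the sense required for spherical grids.

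The main point to handle carefully is exactly this bookkeeping of the side faces. One must make sure that $z$ is a legitimate \emph{spherical} $1$-homotopy, meaning all of its faces other than the two ends are the basepoint cube. This needs a precise meaning of "pointed elementary homotopy": I will take it to mean $h(x_0\otimes\Box^1)\subset y_0$, and use the tensor relation to identify the degenerate cubes. Once this is verified, the equality $f_*=g_*$ holds on representatives, hence on the homotopy classes.
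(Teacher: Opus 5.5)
Your proposal is correct and is the paper's argument: the paper's proof simply asserts that $f(x,s)\overset{n+1}{\Rightarrow}g(x,s)$ as spherical grids. Your $(n+1)$-grid $z(i_1,\ldots,i_n,1)=h(x(i_1,\ldots,i_n),\iota)$ with direction data $(s_1,\ldots,s_n,0)$, together with your face computations, is exactly the verification left implicit there. One small slip: for $n=0$ every vertex $x\in X_0$ is a spherical $0$-grid (the sphericity condition is vacuous), so the relevant $1$-cube is $h(x,\iota)$ joining $f(x)$ to $g(x)$, not $h(x_0,\iota)$; your general construction already covers this case, so that side remark should just be dropped.
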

	
	\begin{proof}
		Let $h\colon(X\otimes\Box^1,x_0\otimes\Box^1)\to(Y,y_0)$ be a pointed elementary homotopy from $f$ to $g$. For every spherical $n$-grid $(x,s)$ in $(X,x_0)$,
		\[
		f(x,s)\overset{n+1}{\Rightarrow}g(x,s)
		\]
		as spherical grids. Hence $[f(x,s)]=[g(x,s)]$ in $\pi_n^\delta(Y,y_0)$, and therefore the claim follows.
	\end{proof}
	
	\begin{corollary}
			\label{dhg under he}
			Let $(X,x_0)$, $(Y,y_0)$ be pointed cubical sets. If $(X,x_0)$ and $(Y,y_0)$ have the same pointed homotopy type, then there is an isomorphism
			\[
			\pi_n^\delta(X,x_0)\cong\pi_n^\delta(Y,y_0).
			\]
	\end{corollary}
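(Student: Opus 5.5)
The plan is to deduce Corollary \ref{dhg under he} directly from Proposition \ref{homotopy invariance} together with functoriality of $f\mapsto f_*$. Suppose $f\colon(X,x_0)\to(Y,y_0)$ is a pointed homotopy equivalence with pointed homotopy inverse $g\colon(Y,y_0)\to(X,x_0)$. Then $g\circ f$ is pointed homotopic to $1_X$ and $f\circ g$ is pointed homotopic to $1_Y$. I will show that $f_*\colon\pi_n^\delta(X,x_0)\to\pi_n^\delta(Y,y_0)$ is an isomorphism with inverse $g_*$. For $n\ge1$, $f_*$ is already a homomorphism, as established before Proposition \ref{homotopy invariance}, so a bijection suffices. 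For $n=0$ the statement is a bijection of pointed sets, which the same argument gives.

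The key step is to pass from elementary homotopies to homotopies. By definition, a pointed homotopy from $g\circ f$ to $1_X$ is a finite zig-zag
\[
g\circ f=h_0,\ h_1,\ \ldots,\ h_m=1_X,
\]
where each consecutive pair is joined by a pointed elementary homotopy in one direction or the other. Proposition \ref{homotopy invariance} gives $(h_{i})_*=(h_{i+1})_*$ for each consecutive pair; the direction of the elementary homotopy is irrelevant, since equality is symmetric. Chaining these equalities gives $(g\circ f)_*=(1_X)_*$. Functoriality then yields
\[
g_*\circ f_*=(g\circ f)_*=(1_X)_*=1_{\pi_n^\delta(X,x_0)}.
\]
The same argument applied to $f\circ g$ gives $f_*\circ g_*=1$.

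Hence $f_*$ is an isomorphism in every degree. The only point needing care is that "pointed homotopy" here means a zig-zag of pointed elementary homotopies, so that each step keeps the basepoint and Proposition \ref{homotopy invariance} applies at each step. Since that proposition is stated for pointed elementary homotopies, there is no real obstacle.
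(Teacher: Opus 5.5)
Your proposal is correct and is essentially the paper's argument. The paper states this corollary without proof as an immediate consequence of Proposition~\ref{homotopy invariance} and functoriality of $f\mapsto f_*$; you make explicit the step the paper leaves implicit, namely applying that proposition along each link of a zig-zag of pointed elementary homotopies.
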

	
	\begin{proposition}
		Let $(X,x_0)$ be a pointed cubical set. For $n\ge 2$, $\pi_n^\delta(X,x_0)$ is an abelian group.
	\end{proposition}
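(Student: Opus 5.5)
The plan is an Eckmann--Hilton argument carried out directly on spherical grids, using Lemma \ref{grid slide} to move between the diagonal product and concatenation in each coordinate direction.

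Let $x=(x,s)$ and $y=(y,t)$ be spherical $n$-grids with $n\ge 2$. By Lemma \ref{grid slide}, there are spherical grids $a\ge x$ and $b\ge y$ of the same size such that $x\cdot y\simeq a+_ib$ for every $i=1,\ldots,n$. Since $a\simeq_0 x$ and $b\simeq_0 y$, it suffices to prove
\[
a+_1b\simeq b+_1a.
\]
Lemma \ref{grid slide}, applied to the pair $(y,x)$, gives $y\cdot x\simeq b'+_1a'$ for suitable $a'\ge x$ and $b'\ge y$. The first check is that the proof of that lemma only inserts degenerate slices to match sizes and sign functions. Hence, after further insertions, we may take the same padded grids $a,b$ in both cases. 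Then $y\cdot x\simeq b+_1a$, and commutativity reduces to the displayed relation.

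To show $a+_1b\simeq b+_1a$, pick a second direction $2$; this is where $n\ge 2$ is needed. Write $x_0$ for the constant grid of the appropriate size. By inserting constant slices, which are degenerate and so give $0$-homotopies, we have
\[
a+_1b\simeq_0 (a+_2x_0)+_1(x_0+_2b).
\]
Next, slide $a$ across in direction $2$. Concretely, the proof of Lemma \ref{grid slide} shows $a+_2x_0\simeq x_0+_2a$ via the $(n+1)$-grid $(a\gamma_{2,1})+_2(a\gamma_{2,0})$, and similarly $x_0+_2b\simeq b+_2x_0$. This gives
\[
(a+_2x_0)+_1(x_0+_2b)\simeq(x_0+_2a)+_1(b+_2x_0).
\]
By the interchange law \eqref{interchange law}, this equals $(x_0+_1b)+_2(a+_1x_0)$. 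Now slide in direction $1$ to reach $(b+_1x_0)+_2(x_0+_1a)$. Interchanging again gives $(b+_2x_0)+_1(x_0+_2a)$, and sliding back yields $(b+_2 x_0)+_1(a+_2 x_0)$, which is $0$-homotopic to $b+_1a$. This is the usual square rotation in the Eckmann--Hilton argument.

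The main obstacle is compatibility of these local homotopies with concatenation. A homotopy applied to one block of a concatenated grid must extend to the whole grid. This holds because elementary $1$-homotopies $z$ with $z\partial_{k,0}=u$ and $z\partial_{k,1}=u'$ can be concatenated with the constant degenerate grid $x_0\sigma_{k,\alpha}$ on the complementary block. The boundaries agree since everything involved is spherical. The other delicate point is bookkeeping of sizes and sign functions, so that all concatenations are defined. I would handle this by first padding all grids, using insertions of degenerate slices, to a common size with all sign functions equal, exactly as in the proof of Lemma \ref{grid slide}.
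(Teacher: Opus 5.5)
Your Eckmann--Hilton rotation is a sound strategy, but its first step is false as stated. A $0$-homotopy only inserts or deletes whole degenerate slices. Inserting constant slabs into $a+_1b$ in direction $2$ yields $(a+_1b)+_2(x_0+_1x_0)=(a+_2x_0)+_1(b+_2x_0)$, or the variant with $x_0$ underneath. In either case $a$ and $b$ still occupy the same rows. In the staggered grid $(a+_2x_0)+_1(x_0+_2b)$, every slice in every direction passes through a block of $a$ or of $b$. So in general none of its slices is degenerate, and this grid is not $0$-homotopic to $a+_1b$.

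To reach the staggered grid from $(a+_2x_0)+_1(b+_2x_0)$ you must slide $b$ along its column. This is precisely the $(n+1)$-grid $z$ with $j=2$ in the proof of Lemma \ref{grid slide}; indeed your staggered grid is $a+_{\{1,2\}}b$ in the notation of that proof, up to the interchange law. Since you only need $a+_1b\simeq b+_1a$, replacing $\simeq_0$ by $\simeq$ and inserting this extra slide repairs the argument. The later slides, the two uses of the interchange law, and the final deletion of constant rows are all correct.

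There is also a smaller point about extending local homotopies. When you slide one column or row, the neighbouring block is not constant (e.g.\ $x_0+_2b$ next to $a+_2x_0$). So the local $1$-homotopy must be extended by the degenerate grid $w\sigma_{k,\alpha}$ of the actual neighbouring block $w$, padded in the time direction to the length of the slide. The two column slides also run in opposite orientations, so they have to be performed one at a time.

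The paper takes a different route and does not rotate a square at all. It uses the reversal $\bar{\cdot}$ of the $n$-th coordinate and writes $x\cdot y\simeq a+_nb=\overline{\bar{b}+_n\bar{a}}\simeq\overline{\bar{b}+_1\bar{a}}\simeq b+_1a\simeq y\cdot x$. Here $n\ge 2$ ensures that reversing direction $n$ does not change the order of a concatenation in direction $1$. That is a two-line argument, but it relies on grids with reversed directions, and tacitly on $\bar{\cdot}$ preserving homotopy. Your version uses only the connection-based slides and the interchange law and never reverses a grid, at the cost of the bookkeeping just described.
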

	
	\begin{proof}
		Let $x$ and $y$ be spherical $n$-grids in $(X,x_0)$, and let $a$ and $b$ be as in Lemma \ref{grid slide}. Since $n\ge 2$,
		\[
		x\cdot y\simeq a+_nb=\overline{\bar{b}+_n\bar{a}}\simeq \overline{\bar{b}+_1\bar{a}}\simeq b+_1a\simeq y\cdot x.
		\]
		Hence, for $n\ge 2$, $\pi_n^\delta(X,x_0)$ is an abelian group.
	\end{proof}


	\section{Quasisymmetric cubical sets}\label{Quasisymmetric cubical sets}
	
	In this section, we introduce the notion of quasisymmetric cubical sets, which are cubical sets equipped with coordinate permutation symmetries that are required to be compatible with faces and degeneracies, but not necessarily with connections. We also introduce signed quasisymmetric cubical sets which are quasisymmetric cubical sets additionally equipped with symmetries by reversing cube coordinates. We then give a purely combinatorial construction of the left adjoint of the forgetful functor from the category of quasisymmetric cubical sets to $\cSet$, and show that the unit of this adjunction is an objectwise weak equivalence.

	\subsection{Symmetric and signed symmetric cubical sets}
	
	Let $\Sigma_n$ denote the $n$-th symmetric group for $n\ge 1$, and set $\Sigma_0=\{1\}$. 
	We define the following maps:
	\begin{itemize}
		\item (permutations) $\kappa\colon[1]^n\to[1]^n$ for $\kappa\in\Sigma_n$ is given by
		\[
		\kappa(x_1,\ldots,x_n)=(x_{\kappa(1)},\ldots,x_{\kappa(n)}).
		\]
	\end{itemize}
	For $\kappa,\lambda\in\Sigma_n$, let
	\[
	(\kappa\lambda)(x_1,\ldots,x_n)=\kappa(\lambda(x_1,\ldots,x_n)).
	\]
	Then we have
	\[
	(\kappa\lambda)(i)=\lambda(\kappa(i))
	\]
	for $i=1,\ldots,n$. The \emph{symmetric cubical category} $\Box_\mathsf{sym}$ is defined by adjoining the maps $\kappa$, for all $\kappa\in\Sigma_n$ and all $n\ge 0$, to the cubical category $\Box$. The symmetric cubical category was introduced by Grandis and Mauri \cite{GM1}, who referred to it as the extended cubical category. Since we will consider a further extension of the cubical category, we adopt the terminology symmetric cubical category for $\Box_\mathsf{sym}$.

	For $i=1,\ldots,n$ and $\kappa\in\Sigma_n$, define $\kappa^{(i)}\in\Sigma_{n-1}$ by
	\[
	\kappa^{(i)}(j)=
	\begin{cases}
		\kappa(j)&(j<i,\,\kappa(j)<\kappa(i))\\
		\kappa(j)-1&(j<i,\,\kappa(j)>\kappa(i))\\
		\kappa(j+1)&(j\ge i,\,\kappa(j)<\kappa(i))\\
		\kappa(j+1)-1&(j\ge i,\,\kappa(j)>\kappa(i)).
	\end{cases}
	\]
	Namely, $\kappa^{(i)}$ is the restriction of $\kappa$ to the map $\{1,\ldots,n\}-\{i\}\to\{1,\ldots,n\}-\{\kappa(i)\}$ together with the identification $\{1<\ldots<n\}-\{k\}=\{1<\ldots<n-1\}$ as chains. Note that for $\kappa,\lambda\in\Sigma_n$ and $i=1,\ldots,n$,
	\begin{equation}
		\label{permutation deletion}
		(\kappa\lambda)^{(i)}=\kappa^{(i)}\lambda^{(\kappa(i))}.
	\end{equation}
	For $\kappa\in\Sigma_n$ and $p,q\in\{1,\ldots,n+1\}$, define $\kappa_{(p,q)}\in\Sigma_{n+1}$ by
	\[
	\kappa_{(p,q)}(p)=q\quad\text{and}\quad(\kappa_{(p,q)})^{(p)}=\kappa.
	\]
	It is straightforward to verify the identities
	\begin{equation}
		\label{permutation identity 1}
		\kappa\partial_{\kappa(i),\alpha}=\partial_{i,\alpha}\kappa^{(i)},\quad\kappa^{(i)}\sigma_{\kappa(i)}=\sigma_i\kappa
	\end{equation}
	and
	\begin{equation}
		\label{permutation identity 2}
		\kappa\gamma_{\kappa(i),\alpha}=\gamma_{i,\alpha}\kappa_{(i+1,\kappa(i)+1)},\quad\gamma_{i,\alpha}(i\,i+1)=\gamma_{i,\alpha}
	\end{equation}
	hold, where $(i\;i+1)$ denotes the transposition of $i$ and $i+1$. Alternatively, we may write \eqref{permutation identity 2} as the following identities
	\begin{equation}
		\label{permutation identity 2 alternative}
		\gamma_{i,\alpha}\kappa_{(i+1,\kappa(i)+1)}=\kappa\gamma_{\kappa(i),\alpha}=\gamma_{i,\alpha}\kappa_{(i+1,\kappa(i))}.
	\end{equation}
	Analogously to the cubical category $\Box$, the symmetric cubical category $\Box_\mathsf{sym}$ may be regarded as the category whose objects are $[1]^n$ for $n\ge 0$ and whose morphisms are freely generated by faces, degeneracies, connections and permutations, subject to the cubical identities together with the identities \eqref{permutation identity 1} and \eqref{permutation identity 2}.

	\begin{definition}
		The category $\SymcSet$ is the functor category $\mathsf{Set}^{\Box_\mathsf{sym}^\mathrm{op}}$. The objects and morphisms of $\SymcSet$ are referred to as \emph{symmetric cubical sets} and \emph{symmetric cubical maps}.
	\end{definition}

	\begin{example}
		The singular functor $\mathrm{Sing}\colon\mathsf{Top}\to\cSet$ factors through $\SymcSet$.
	\end{example}

	Isaacson \cite{I} showed that there is a model category structure on $\SymcSet$ such that the forgetful functor $\SymcSet\to\cSet$ is a Quillen equivalence, where $\cSet$ is equipped with the model category structure of Theorem \ref{cSet model structure}. Consequently, symmetric cubical sets also provide a model for the homotopy theory of spaces. An analogue of this result for cubical sets without connections was proved in \cite[Theorem 5.9]{Ar}.

	Recall that the $n$-th \emph{signed symmetric group} $B_n$ is the group of signed permutations on $\{\pm 1,\ldots,\pm n\}$. Namely, $B_n$ is the wreath product $\Z_2\wr\Sigma_n$. Hence, $B_n=(\Z_2)^n\times\Sigma_n$ as a set such that the product is given by
	\[
	(a,\kappa)(b,\lambda)=(a\kappa(b),\kappa\lambda)
	\]
	for $(a,\kappa),(b,\lambda)\in B_n$, where $\Sigma_n$ permutes coordinates of $(\Z_2)^n$. Set $\Z_2=\{0,1\}$, and for $a\in(\Z_2)^n$, write $a=(a_1,\ldots,a_n)$. We define the following map:
	\begin{itemize}
		\item (signed permutations) $(a,\kappa)\colon[1]^n\to[1]^n$ for $(a,\kappa)\in B_n$ is given by
		\[
		(a,\kappa)(x_1,\ldots,x_n)=(x_{\kappa(1)}^{a_1},\ldots,x_{\kappa(n)}^{a_n})
		\]
		where for $x\in[1]$, we set $x^0=x$ and $x^1=1-x$.
	\end{itemize}
	Define the \emph{signed symmetric cubical category} $\Box_\mathsf{sym}^\pm$ as the category obtained from $\Box_\mathsf{sym}$ by adjoining signed permutations.

	For $(a,\kappa)\in B_n$ and $i=\pm 1,\ldots,\pm n$, define $(a,\kappa)^{(i)}\in B_{n-1}$ by
	\[
	(a,\kappa)^{(i)}(j)=
	\begin{cases}
		(-1)^{a_j}\kappa(j)&(j<i,\,\kappa(j)<\kappa(i))\\
		(-1)^{a_j}(\kappa(j)-1)&(j<i,\,\kappa(j)\ge\kappa(i))\\
		(-1)^{a_{j+1}}\kappa(j+1)&(j\ge i,\,\kappa(j)<\kappa(i))\\
		(-1)^{a_{j+1}}(\kappa(j+1)-1)&(j\ge i,\,\kappa(j)\ge\kappa(i)).
	\end{cases}
	\]
	where $j=\pm 1,\ldots,\pm n$. Analogously to \eqref{permutation deletion}, for $(a,\kappa),(b,\lambda)\in B_n$ and $i=\pm 1,\ldots,\pm n$, we have
	\[
	((a,\kappa)(b,\lambda))^{(i)}=(a,\kappa)^{(i)}(b,\lambda)^{(\kappa(i))}.
	\]
	For $(a,\kappa)\in B_n$, $p,q\in\{1,\ldots,n+1\}$ and $\alpha=0,1$, define $(a,\kappa)_{(p,q),\alpha}\in B_{n+1}$ by
	\[
	((a,\kappa)_{(p,q),\alpha})(p)=(-1)^\alpha q\quad\text{and}\quad((a,\kappa)_{(p,q),\alpha})^{(p)}=(a,\kappa).
	\]
	It is straightforward to verify that the identities
	\begin{equation}
		\label{signed permutation identity 1}
		(a,\kappa)\partial_{\kappa(i),\alpha+a_i}=\partial_{i,\alpha}(a,\kappa)^{(i)},\quad(a,\kappa)^{(i)}\sigma_{\kappa(i)}=\sigma_i(a,\kappa)
	\end{equation}
	and
	\begin{equation}
		\label{signed permutation identity 2}
		(a,\kappa)\gamma_{\kappa(i),\alpha+a_i}=\gamma_{i,\alpha}(a,\kappa)_{(i+1,\kappa(i)+1),a_i},\quad\gamma_{i,\alpha}(0,(i\,i+1))=\gamma_{i,\alpha}.
	\end{equation}
	Analogously to the symmetric cubical category $\Box_\mathsf{sym}$, the signed symmetric cubical category $\Box_\mathsf{sym}^\pm$ may be regarded as the category whose objects are $[1]^n$ for $n\ge0$ and whose morphisms are freely generated by faces, degeneracies, connections and signed permutations, subject to the cubical identities together with the identities \eqref{signed permutation identity 1} and \eqref{signed permutation identity 2}.

	\begin{definition}
		The category $\SymcSet^\pm$ is the functor category $\mathsf{Set}^{(\Box_\mathsf{sym}^\pm)^\mathrm{op}}$. The objects and morphisms of $\SymcSet^\pm$ are referred to as \emph{signed symmetric cubical sets} and \emph{signed symmetric cubical maps}.
	\end{definition}

	\subsection{Quasisymmetric and signed quasisymmetric cubical sets}
	
	We define the quasisymmetric cubical category.

	\begin{definition}
		The \emph{quasisymmetric cubical category} $\Box_\mathsf{qsym}$ is defined as the category whose objects are $[1]^n$ for $n\ge 0$ and whose morphisms are freely generated by faces, degeneracies, connections and permutations subject to the cubical identities together with the identity \eqref{permutation identity 1}.
	\end{definition}

	Note that we are not imposing the identities \eqref{permutation identity 2} for the quasisymmetric cubical category. By definition, there is a sequence of the canonical functors
	\[
	\Box\to\Box_\mathsf{qsym}\to\Box_\mathsf{sym}.
	\]

	\begin{definition}
		The category $\QSymcSet$ is the functor category $\mathsf{Set}^{\Box_\mathsf{qsym}^\mathrm{op}}$. The objects and morphisms of $\QSymcSet$ are referred to as \emph{quasisymmetric cubical sets} and \emph{quasisymmetric cubical maps}.
	\end{definition}

	Quasisymmetric cubical sets have the following property that significantly simplifies homotopy theory of grids.

	\begin{proposition}
		\label{symmetric homotopy}
		Let $X$ be a quasisymmetric cubical set. For $n$-grids $x$ and $y$ in $X$, we have $x\overset{n+1}{\Rightarrow}y$ if and only if $x\overset{i}{\Rightarrow}y$ for some $i\in\{1,\ldots,n+1\}$.
	\end{proposition}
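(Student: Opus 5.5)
The forward direction is trivial: take $i=n+1$. For the converse, suppose $x\overset{i}{\Rightarrow}y$ for some $i\le n$. Then there is an $(n+1)$-grid $z=(z,u)$ of size $(l_1,\ldots,l_{i-1},1,l_i,\ldots,l_n)$ with $z\partial_{i,0}=x$ and $z\partial_{i,1}=y$. The plan is to move the distinguished coordinate from position $i$ to position $n+1$ by acting on every cube of $z$ with a single permutation. This produces an $(n+1)$-grid $z'$ of size $(l_1,\ldots,l_n,1)$ with $z'\partial_{n+1,0}=x$ and $z'\partial_{n+1,1}=y$.

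Let $\kappa\in\Sigma_{n+1}$ be the cyclic permutation with $\kappa(n+1)=i$ that preserves the order of the other indices. The intended effect of precomposing with $\kappa$ is that coordinate $n+1$ of the new cube is coordinate $i$ of the old cube, and the remaining coordinates keep their order. I will fix the convention so that $\kappa^{(n+1)}=1$. Define
\[
z'(i_1,\ldots,i_n,1)=z(i_1,\ldots,i_{i-1},1,i_i,\ldots,i_n)\kappa,
\]
and reorder the direction data to $u'=(u_1,\ldots,u_{i-1},u_{i+1},\ldots,u_{n+1},u_i)$.

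The key tool is identity \eqref{permutation identity 1}, which holds in $\Box_\mathsf{qsym}$ and hence acts on any quasisymmetric cubical set. Read contravariantly on cubes, it says $(w\kappa)\partial_{j,\alpha}=(w\partial_{\kappa(j),\alpha})\kappa^{(j)}$ for every cube $w$ of $X$.

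First I check the grid condition for $z'$. For $k\le n$ the gluing condition in direction $k$ of $z'$ comes from the gluing condition in direction $\kappa(k)$ of $z$ after applying $\kappa^{(k)}$. Since $\kappa^{(k)}$ is applied to both sides, equal faces stay equal. The direction $n+1$ has length one, so it imposes no condition.

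Next I identify the boundary faces. For $j=n+1$ we have $(w\kappa)\partial_{n+1,\alpha}=(w\partial_{i,\alpha})\kappa^{(n+1)}=w\partial_{i,\alpha}$. Hence $z'\partial_{n+1,0}=z\partial_{i,0}=x$, and likewise $z'\partial_{n+1,1}=y$. Here one has to track the sign data $u_i$, since the face operator on grids uses $\partial_{k,s_k(1)}$. The choice of $u'$ carries the same sign into position $n+1$, so the grid faces match exactly.

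The main obstacle is this index bookkeeping. I must pin down the convention $(\kappa\lambda)(i)=\lambda(\kappa(i))$ together with the action $\kappa(x_1,\ldots,x_n)=(x_{\kappa(1)},\ldots)$, and then choose between $\kappa$ and $\kappa^{-1}$ so that $\kappa^{(n+1)}$ is the identity and $\kappa^{(k)}$ is the correct order-preserving relabelling. If the direct computation is awkward, a fallback is to move coordinate $i$ one step at a time using adjacent transpositions $(j\;j+1)$. Each such step is a separate instance of \eqref{permutation identity 1} with an easily computed $\kappa^{(j)}$, and the argument then proceeds by induction from $i$ up to $n+1$. Connections are never used, which is consistent with the hypothesis that only quasisymmetry is assumed.
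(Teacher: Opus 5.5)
Your strategy is the same as the paper's: act on every cube of $z$ by a single permutation that carries coordinate $i$ to coordinate $n+1$, and read off the new faces from \eqref{permutation identity 1} alone, with no connections. Your checks of the grid condition and of the reordered direction data $u'$ go beyond the paper, which records only the face computation.

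There is one convention slip, and as written it makes your face computation false. With the paper's conventions ($w\kappa=w\circ\kappa$ and $\kappa(x_1,\ldots,x_n)=(x_{\kappa(1)},\ldots,x_{\kappa(n)})$), identity \eqref{permutation identity 1} on cubes reads $(w\kappa)\partial_{\kappa(j),\alpha}=(w\partial_{j,\alpha})\kappa^{(j)}$. This is the form used when checking that $\mathcal{S}X$ is quasisymmetric. Your version, $(w\kappa)\partial_{j,\alpha}=(w\partial_{\kappa(j),\alpha})\kappa^{(j)}$, is not a consequence of it.

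Consequently your $\kappa$ with $\kappa(n+1)=i$ gives $(w\kappa)\partial_{n+1,\alpha}=(w\partial_{n,\alpha})\kappa^{(n)}$. This is the wrong face when $i<n$, so the criterion $\kappa^{(n+1)}=1$ is also the wrong one. Take the inverse permutation instead: $\kappa(j)=j$ for $j<i$, $\kappa(i)=n+1$, and $\kappa(j)=j-1$ for $j>i$. Then $\kappa^{(i)}=1$, and
$(w\kappa)\partial_{n+1,\alpha}=(w\kappa)\partial_{\kappa(i),\alpha}=(w\partial_{i,\alpha})\kappa^{(i)}=w\partial_{i,\alpha}$.
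This is exactly the paper's choice. It also realizes the geometric effect you describe in words and is consistent with your reordering $u'$, so nothing else in your argument needs to change.

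Your fallback via adjacent transpositions would sidestep the issue entirely. Each $(j\;j+1)$ is an involution with $(j\;j+1)^{(j)}=(j\;j+1)^{(j+1)}=1$, so both readings of the identity agree there.
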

	
	\begin{proof}
		Suppose that $x\overset{i}{\Rightarrow}y$ for some $i$. Then there exists an $(n+1)$-grid $z$ such that $z\partial_{i,0}=x$ and $z\partial_{i,1}=y$. Let 
		\[\kappa=\begin{pmatrix}
			1&2&\cdots&i-1&i&i+1&\cdots&n+1\\
			1&2&\cdots&i-1&n+1&i&\cdots&n
		\end{pmatrix}
		\in\Sigma_{n+1}.
		\]
		Since $\kappa^{(i)}=1$, we have
		\[
		(z\kappa)\partial_{n+1,\alpha}=(z\kappa)\partial_{\kappa(i),\alpha}=(z\partial_{i,\alpha})\kappa^{(i)}=z\partial_{i,\alpha}.
		\]
		Hence $x\overset{n+1}{\Rightarrow}y$. Thus the if part is proved. The only if part is obvious.
	\end{proof}

	\begin{definition}
		The \emph{signed quasisymmetric cubical category} $\Box_\mathsf{qsym}^\pm$ is defined as the category whose objects are $[1]^n$ for $n\ge 0$ and whose morphisms are freely generated by faces, degeneracies, connections and signed permutations subject to the cubical identities together with the identity \eqref{signed permutation identity 1}.
	\end{definition}

	Note that we are not imposing the identities \eqref{signed permutation identity 2} for signed quasisymmetric cubical category. By definition, there is a canonical functor
	\[
	\Box_\mathsf{qsym}^\pm\to\Box_\mathsf{sym}^\pm.
	\]

	\begin{definition}
		The category $\QSymcSet^\pm$ is the functor category $\mathsf{Set}^{(\Box_\mathsf{qsym}^\pm)^\mathrm{op}}$. The objects and morphisms of $\QSymcSet^\pm$ are referred to as \emph{signed quasisymmetric cubical sets} and \emph{signed quasisymmetric cubical maps}.
	\end{definition}
	We will see in Remark \ref{quasisymm vs symm} that only the identities \eqref{signed permutation identity 1} are necessary for the Kan condition, which is why we work with (signed) quasisymmetric cubical sets rather than (signed) symmetric ones.

	\subsection{The functor $\mathcal{S}$}
	
	We define a functor $\mathcal{S}\colon\cSet\to\QSymcSet$ and prove Theorem \ref{main 2}.

	\begin{definition}
		Define a functor $\mathcal{S}\colon\cSet\to\QSymcSet$ by setting $\mathcal{S}X_n=X_n\times\Sigma_n$ subject to the identification
		\begin{equation}
			\label{identification S}
			(x\sigma_p,\kappa_{(p,q)})=(x\sigma_1,\kappa_{(1,q)})
		\end{equation}
		for all $\kappa\in\Sigma_n$, $x\in X_n$ and $p,q=1,\ldots,n+1$, where faces, degeneracies and connections are defined by
		\begin{align*}
			(x,\kappa)\partial_{i,\alpha}&=(x\partial_{\kappa^{-1}(i),\alpha},\kappa^{(\kappa^{-1}(i))})\\
			(x,\kappa)\sigma_i&=(x\sigma_1,\kappa_{(1,i)})\\
			(x,\kappa)\gamma_{i,\alpha}&=(x\gamma_{\kappa^{-1}(i),\alpha},\kappa_{(\kappa^{-1}(i)+1,i+1)}).
		\end{align*}
	\end{definition}
	\begin{remark}
		Although we do not require \eqref{permutation identity 2} (or, equivalently, \eqref{permutation identity 2 alternative}) to hold here, the reader may notice that the definition of the connections in \(\mathcal{S}X\) is precisely the first identity in \eqref{permutation identity 2 alternative}. In fact, \(\mathcal{S}X\) carries an alternative cubical set structure in which the connections are defined by the second identity in \eqref{permutation identity 2 alternative} instead of the first. Either of these two cubical set structures is suitable for our purposes, as they are isomorphic to each other, and the identities in \eqref{permutation identity 2 alternative} play no role in the remainder of this paper.
	\end{remark}

	We denote an element of $\mathcal{S}X_n$ represented by $(x,\kappa)\in X_n\times\Sigma_n$ by the same symbol $(x,\kappa)$.

	\begin{lemma}
		The functor $\mathcal{S}\colon\cSet\to\QSymcSet$ is well defined.
	\end{lemma}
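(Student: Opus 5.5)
To prove the lemma, I would check three things in order: that the identification \eqref{identification S} is compatible with the structure maps, that the structure maps satisfy the cubical identities, and that the permutation action satisfies \eqref{permutation identity 1}.

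\textbf{Permutation action.} First I would make the action explicit. Set $(x,\kappa)\lambda=(x,\lambda\kappa)$ for $\lambda\in\Sigma_n$, which is a right action by the convention $(\kappa\lambda)(i)=\lambda(\kappa(i))$. This is compatible with \eqref{identification S}: one checks $\lambda\kappa_{(p,q)}=(\lambda'\kappa)_{(p,q')}$ for suitable $\lambda'$ and $q'$ determined by $\lambda$, using \eqref{permutation deletion}.

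\textbf{Well-definedness on representatives.} Next I would show that faces, degeneracies and connections descend to the quotient. I would describe the relation \eqref{identification S} conceptually: $(x\sigma_p,\kappa_{(p,q)})$ represents ``$x$ with a degenerate coordinate inserted at position $q$ of the output.'' Two representatives $(x\sigma_p,\kappa_{(p,q)})$ and $(x\sigma_1,\kappa_{(1,q)})$ should have the same images.
\begin{itemize}
\item For faces, I would split into the cases $i=q$ and $i\ne q$. When $i=q$, both sides reduce to $(x,\kappa)$ by $\sigma_p\partial_{p,\alpha}=1$. When $i\neq q$, I would use $\sigma_p\partial_{j,\alpha}=\partial\sigma$ together with \eqref{permutation deletion} to land on representatives related by \eqref{identification S} again.
\item For degeneracies, I would use the identity $\sigma_1\sigma_p=\sigma_{p+1}\sigma_1$-type relations and the uniqueness of the normal form in Lemma \ref{factorization}. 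The key observation is that the identification makes the class depend only on the underlying nondegenerate part and the positions of the degenerate coordinates.
\item For connections, I would distinguish whether the degenerate coordinate $q$ meets $\{i,i+1\}$. If it does, the identity $\sigma_j\gamma_{i,\alpha}=\sigma_i\sigma_i$ (case $j=i$), together with \eqref{identification S}, handles it. If it does not, the commutation identities suffice.
\end{itemize}

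\textbf{Cubical identities.} Then I would verify the cubical identities for $\mathcal{S}X$. The most efficient route is to interpret $(x,\kappa)$ as the formal composite ``$x\kappa$'': every formula in the definition is exactly what \eqref{permutation identity 1} and the first identity of \eqref{permutation identity 2 alternative} force, namely moving the structure map past $\kappa$. Since the structure maps are then given by conjugation formulas, each cubical identity on $\mathcal{S}X$ reduces to the same identity in $X$ together with identities among the permutations $\kappa^{(i)}$ and $\kappa_{(p,q)}$. The relation \eqref{identification S} is precisely what is needed to make the degeneracy formula $(x,\kappa)\sigma_i=(x\sigma_1,\kappa_{(1,i)})$ consistent with $\sigma_i\sigma_j=\sigma_j\sigma_{i+1}$ and with $\sigma_j\partial_{i,\alpha}=1$.

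\textbf{Functoriality.} Finally, for a cubical map $f$, the map $(x,\kappa)\mapsto(f(x),\kappa)$ clearly respects everything, so $\mathcal{S}$ is a functor.

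The main obstacle is the bookkeeping in the connection and degeneracy cases of the well-definedness step. There one must check identities such as $(\kappa_{(p,q)})_{(r,s)}$ against $(\kappa_{(r',s')})_{(p',q')}$, and how $(\cdot)^{(i)}$ interacts with $(\cdot)_{(p,q)}$. I would first isolate and prove these purely combinatorial permutation lemmas, by evaluating on each $j$ and comparing the induced order-preserving bijections. After that, each case becomes a short computation.
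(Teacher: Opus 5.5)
\textbf{Gap: the permutation action.} Your plan fails at the step that makes $\mathcal{S}X$ quasisymmetric rather than merely cubical. You set $(x,\kappa)\lambda=(x,\lambda\kappa)$, which is left multiplication. Left multiplication in a group is a left action whatever convention is used for composing permutations. So $((x,\kappa)\lambda)\mu=(x,\mu\lambda\kappa)$, which differs from $(x,\kappa)(\lambda\mu)=(x,\lambda\mu\kappa)$ once $n\ge 3$.

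It also does not descend through \eqref{identification S}. Writing $\lambda\kappa_{(p,q)}$ as $(\lambda'\kappa)_{(p,q')}$ forces $q'=\kappa_{(p,q)}(\lambda(p))$, which depends on $p$. For instance, take $x\in X_2$, $\kappa=1\in\Sigma_2$, $q=2$ and $\lambda=(2\,3)$. The identified representatives $(x\sigma_2,1)$ and $(x\sigma_1,(1\,2))$ are sent to $(x\sigma_2,1_{(2,3)})$ and $(x\sigma_1,(1\,2)_{(1,2)})$. These are not identified: for $X=\Box^2$ and $x$ the identity of $[1]^2$, the well-defined assignment $(x,\kappa)\mapsto x\circ\kappa$ sends them to the distinct maps $(y_1,y_2,y_3)\mapsto(y_1,y_2)$ and $(y_1,y_2,y_3)\mapsto(y_3,y_1)$.

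Finally, your action violates \eqref{permutation identity 1}. The first component of $((x,\kappa)\lambda)\partial_{\lambda(i),\alpha}$ is $x\partial_{\lambda^{-1}(\kappa^{-1}(\lambda(i))),\alpha}$. The first component of $((x,\kappa)\partial_{i,\alpha})\lambda^{(i)}$ is $x\partial_{\kappa^{-1}(i),\alpha}$. These differ whenever $\kappa$ and $\lambda$ do not commute.

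\textbf{The fix.} Your own heuristic of reading $(x,\kappa)$ as the composite $x\kappa$ gives the correct formula: $(x\kappa)\lambda=x(\kappa\lambda)$, so the action must be right multiplication, $(x,\kappa)\lambda=(x,\kappa\lambda)$. This is the paper's choice. With it, the needed checks go as follows.
\begin{itemize}
\item Compatibility with \eqref{identification S} comes from $\kappa_{(p,q)}\lambda=(\kappa\lambda^{(q)})_{(p,\lambda(q))}$, whose data $\kappa\lambda^{(q)}$ and $\lambda(q)$ do not depend on $p$.
\item The face half of \eqref{permutation identity 1} reduces to \eqref{permutation deletion}.
\item The degeneracy half, $((x,\kappa)\sigma_i)\mu=((x,\kappa)\mu^{(i)})\sigma_{\mu(i)}$, needs \eqref{identification S} twice: first to rewrite $(x,\kappa)\sigma_i$ as $(x\sigma_i,\kappa_{(i,i)})$, then to move the degeneracy to position $\mu(i)$.
\end{itemize}
You list \eqref{permutation identity 1} in your plan but never carry out either half, so these checks must be added.

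The rest of your outline matches the paper's route: checking the structure maps on both sides of each generating relation, and reducing the cubical identities to identities among the $\kappa^{(i)}$ and $\kappa_{(p,q)}$.
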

	
	\begin{proof}
		Let $X$ be a cubical set. It suffices to show that $\mathcal{S}X$ is a well defined quasisymmetric cubical set. Let $(x,\kappa)\in\mathcal{S}X_n$ and $i,p,q\in\{1,\ldots,n+1\}$. We have
		\[
		\kappa_{(p,q)}^{-1}(i)=
		\begin{cases}
			\kappa^{-1}(i)&(i<q\text{ and }\kappa^{-1}(i)<p)\\
			\kappa^{-1}(i)+1&(i<q\text{ and }\kappa^{-1}(i)\ge p)\\
			p&(i=q)\\
			\kappa^{-1}(i-1)&(i>q\text{ and }\kappa^{-1}(i-1)<p)\\
			\kappa^{-1}(i-1)+1&(i>q\text{ and }\kappa^{-1}(i-1)\ge p).
		\end{cases}
		\]
		Suppose that $i<q$ and $\kappa^{-1}(i)<p$. Then
		\begin{align*}
			(x\sigma_p,\kappa_{(p,q)})\partial_{i,\alpha}&=(x\sigma_p\partial_{\kappa^{-1}(i),\alpha},(\kappa_{(p,q)})^{(\kappa^{-1}(i))})\\
			&=(x\partial_{\kappa^{-1}(i),\alpha}\sigma_{p-1},(\kappa^{(\kappa^{-1}(i))})_{(p-1,q-1)})\\
			&=(x\partial_{\kappa^{-1}(i),\alpha}\sigma_{1},(\kappa^{(\kappa^{-1}(i))})_{(1,q-1)})\\
			&=(x\sigma_{1}\partial_{\kappa^{-1}(i)+1,\alpha},(\kappa_{(1,q)})^{(\kappa^{-1}(i)+1)})\\
			&=(x\sigma_1,\kappa_{(1,q)})\partial_{i,\alpha}.
		\end{align*}
		Similarly,
		\begin{align*}
			(x\sigma_p,\kappa_{(p,q)})\sigma_i&=(x\sigma_p\sigma_1,(\kappa_{(p,q)})_{(1,i)})\\
			&=(x\sigma_1\sigma_{p+1},(\kappa_{(1,i)})_{(p+1,q+1)})\\
			&=(x\sigma_1\sigma_{2},(\kappa_{(1,i)})_{(2,q+1)})\\
			&=(x\sigma_1\sigma_1,(\kappa_{(1,q)})_{(1,i)})\\
			&=(x\sigma_1,\kappa_{(1,q)})\sigma_i
		\end{align*}
		and
		\begin{align*}
			(x\sigma_p,\kappa_{(p,q)})\gamma_{i,\alpha}&=(x\sigma_p\gamma_{\kappa^{-1}(i),\alpha},(\kappa_{(p,q)})_{(\kappa^{-1}(i)+1,i+1)})\\
			&=(x\gamma_{\kappa^{-1}(i),\alpha}\sigma_{p+1},(\kappa_{(\kappa^{-1}(i),i)})_{(p+1,q+1)})\\
			&=(x\gamma_{\kappa^{-1}(i),\alpha}\sigma_1,(\kappa_{(\kappa^{-1}(i),i)})_{(1,q+1)})\\
			&=(x\sigma_1\gamma_{\kappa^{-1}(i)+1,\alpha},(\kappa_{(1,q)})_{(\kappa^{-1}(i)+1,i)})\\
			&=(x\sigma_1,\kappa_{(1,q)})\gamma_{i,\alpha}.
		\end{align*}
		Thus $(x,\kappa)\partial_{i,\alpha}$, $(x,\kappa)\sigma_i$ and $(x,\kappa)\gamma_{i,\alpha}$ are well defined in this case. The remaining cases are verified in the same manner. We now verify the cubical identity. For $j\le i$,
		\begin{align*}
			(x,\kappa)\sigma_i\sigma_j&=(x\sigma_1\sigma_1,(\kappa_{(1,i)})_{(1,j)})\\
			&=(x\sigma_1\sigma_2,(\kappa_{(1,j)})_{(2,i+1)})\\
			&=(x\sigma_1\sigma_1,(\kappa_{(1,j)})_{(1,i+1)})\\
			&=(x,\kappa)\sigma_{j}\sigma_{i+1}.
		\end{align*}
		The remaining identities are verified analogously. Hence $\mathcal{S}X$ is a well-defined cubical set.

		Next, define an action of $\Sigma_n$ on $\mathcal{S}X_n$ by
		\[
		(x,\kappa)\lambda=(x,\kappa\lambda)
		\]
		for $\lambda\in\Sigma_n$ and $(x,\kappa)\in\mathcal{S}X_n$. Since
		\[
		(x\sigma_1,\kappa_{(1,q)})\lambda=(x\sigma_1,(\kappa\lambda^{(q)})_{(1,\lambda(q))})=(x\sigma_p,(\kappa\lambda^{(q)})_{(p,\lambda(q))})=(x\sigma_p,\kappa_{(p,q)})\lambda
		\]
		for $(x,\kappa)\in\mathcal{S}X_n$ and $\lambda\in\Sigma_{n+1}$, this action is well defined. By \eqref{permutation deletion}, we have
		\[
		((x,\kappa)\partial_{i,\alpha})\lambda^{(i)}=(x\partial_{\kappa^{-1}(i),\alpha},\kappa^{(\kappa^{-1}(i))}\lambda^{(i)})=(x\partial_{\kappa^{-1}(i),\alpha},(\kappa\lambda)^{(\kappa^{-1}(i))})=((x,\kappa)\lambda)\partial_{\lambda(i),\alpha}
		\]
		for $(x,\kappa)\in\mathcal{S}X_n$ and $\lambda\in\Sigma_n$. We also have
		\[
		((x,\kappa)\sigma_i)\mu=(x\sigma_i,\kappa_{(i,i)})\mu=(x\sigma_i,(\kappa\mu^{(i)})_{(i,\mu(i))})=(x\sigma_{\mu(i)},(\kappa\mu^{(i)})_{(\mu(i),\mu(i))})=((x,\kappa)\mu^{(i)})\sigma_{\mu(i)}
		\]
		for $(x,\kappa)\in\mathcal{S}X_n$ and $\mu\in\Sigma_{n+1}$. Therefore $\mathcal{S}X$ is a well defined quasisymmetric cubical set.
	\end{proof}

	By definition, the functor $\mathcal{S}$ is the left adjoint of the forgetful functor $\mathcal{U}\colon\QSymcSet\to\cSet$. Moreover, the cubical maps
	\[
	X\to\mathcal{U}\mathcal{S}X,\quad x\mapsto(x,1)
	\]
	form the unit of this adjunction. We show that the unit $\eta\colon 1_\cSet\Rightarrow\mathcal{US}$ is an objectwise weak equivalence. Our method is to show that $|X|$ is a deformation retract of $|\mathcal{US}X|$ by constructing homotopies locally. For $\kappa\in\Sigma_n$, define $\bar{\kappa}\in\Sigma_{2n}$ by setting
	\[
	\bar{\kappa}(i)=
	\begin{cases}
		i&(i=2j-1)\\
		2\kappa(j)&(i=2j).
	\end{cases}
	\]
	We now construct a deformation that moves each permuted cube to its canonical representative while remaining compatible with the cubical structure. Let $X$ be a cubical set. For $(x,\kappa)\in X_n\times\Sigma_n$, let $Q(x,\kappa)=(x\gamma_{n,1}\cdots\gamma_{1,1},\bar{\kappa})$ and define a map $h(x,\kappa)\colon|(x,\kappa)|\times[0,1]\to|Q(x,\kappa)|$ by
	\begin{align*}
		&h(x,\kappa)(x_1,\ldots,x_n,t)\\
		&=
		\begin{cases}
			(x_{\kappa(1)},2(x_1-1)t+1,\ldots,x_{\kappa(n)},2(x_n-1)t+1)&(0\le t\le\frac{1}{2})\\
			(2(1-x_{\kappa(1)})t+2x_{\kappa(1)}-1,x_1,\ldots,2(1-x_{\kappa(n)})t+2x_{\kappa(n)}-1,x_n)&(\frac{1}{2}\le t\le 1).
		\end{cases}
	\end{align*}
	Set $\bar{h}(x,\kappa)=\rho\circ h(x,\kappa)\colon|(x,\kappa)|\times[0,1]\to|\mathcal{US}X|$.

	\begin{lemma}
		\label{S t=0,1}
		Let $X$ be a cubical set. For every $(x,\kappa)\in X_n\times\Sigma_n$, $\bar{h}(x,\kappa)(-,t)$ coincides with the composite
		\[
		|(x,\kappa)|\xrightarrow{\kappa}|(x,1)|\xrightarrow{\rho}|\mathcal{US}X|\quad\text{and}\quad|(x,\kappa)|\xrightarrow{\rho}|\mathcal{US}X|
		\]
		for $t=0,1$, respectively, where $\kappa\colon|(x,\kappa)|\to|(x,1)|$ denotes the self-map of $[0,1]^n$ that permutes coordinates by $\kappa$.
	\end{lemma}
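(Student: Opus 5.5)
The plan is to evaluate $h(x,\kappa)$ explicitly at $t=0$ and $t=1$. In each case the resulting point of $|Q(x,\kappa)|=[0,1]^{2n}$ has $n$ coordinates equal to $1$, so it lies on an $n$-dimensional face of the $2n$-cube. We then compute that face in $\mathcal{US}X$ using the face formula of $\mathcal{S}X$ and the cubical identities for connections. Since $\rho$ is the quotient map defining the realization and identifies $\partial_{i,\alpha}|z|$ with $|z\partial_{i,\alpha}|$, it suffices to show two things: the face of $Q(x,\kappa)$ is the expected cube, and the point sits on it at the expected coordinates.

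\emph{The case $t=0$.} Here $h(x,\kappa)(x_1,\ldots,x_n,0)=(x_{\kappa(1)},1,x_{\kappa(2)},1,\ldots,x_{\kappa(n)},1)$, so all even coordinates equal $1$. I would apply the faces $\partial_{2j,1}$ one at a time, starting from $j=n$ and going downward so that the indices of the remaining even coordinates do not shift. By the face formula $(y,\lambda)\partial_{i,\alpha}=(y\partial_{\lambda^{-1}(i),\alpha},\lambda^{(\lambda^{-1}(i))})$ with $\lambda=\bar\kappa$, we have $\bar\kappa^{-1}(2j)=2\kappa^{-1}(j)$. Deleting the even slots of $\bar\kappa$ leaves the odd slots fixed, so the final permutation is the identity. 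The cube component becomes $y=x\gamma_{n,1}\cdots\gamma_{1,1}$ with the faces $\partial_{2m,1}$ ($m=1,\ldots,n$) applied in some order. Using the identity $\gamma_{j,\beta}\partial_{i,\alpha}=1$ for $j=i-1$, $\beta=\alpha$, each face $\partial_{2m,1}$ cancels the connection $\gamma_{m,1}$ once it is commuted past the other connections using $\gamma_{j,\beta}\partial_{i,\alpha}=\partial_{i-1,\alpha}\gamma_{j,\beta}$. The face is therefore $(x,1)$. The remaining coordinates are $(x_{\kappa(1)},\ldots,x_{\kappa(n)})$, which is exactly $\kappa$ applied to $(x_1,\ldots,x_n)$ in $|(x,1)|$.

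\emph{The case $t=1$.} Here the point is $(1,x_1,1,x_2,\ldots,1,x_n)$, with all odd coordinates equal to $1$. Now $\bar\kappa^{-1}(2j-1)=2j-1$. Deleting the odd slots of $\bar\kappa$ leaves the map $2j\mapsto2\kappa(j)$, which after reindexing is $\kappa$. On the cube side, each face $\partial_{2m-1,1}$ cancels $\gamma_{m,1}$ via the identity $\gamma_{j,\beta}\partial_{i,\alpha}=1$ for $j=i$, $\beta=\alpha$. The face is therefore $(x,\kappa)$, and the point is $(x_1,\ldots,x_n)$. This gives $\rho$ on $|(x,\kappa)|$.

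The main obstacle is the index bookkeeping. One must track how successive faces shift both the positions of the remaining connections and the entries of the reduced permutation $\bar\kappa^{(\cdot)}$. To handle this I would argue by induction on $n$, peeling off the last pair of coordinates (slots $2n-1,2n$ together with $\gamma_{n,1}$) first. I would also check against the realization convention that $\partial_{i,\alpha}$ inserts $\alpha$ in the $i$-th coordinate, so that the leftover coordinates appear in the claimed order.
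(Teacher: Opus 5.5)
Your proposal is correct and is the same argument as the paper's: at $t=0$ (resp.\ $t=1$) all even (resp.\ odd) coordinates of $h(x,\kappa)$ equal $1$, and the corresponding $n$-face of $Q(x,\kappa)$ in $\mathcal{US}X$ is $(x,1)$ (resp.\ $(x,\kappa)$). The paper records this as $Q(x,\kappa)\partial_{2,1}\cdots\partial_{n+1,1}=(x,1)$ and $Q(x,\kappa)\partial_{1,1}\cdots\partial_{n,1}=(x,\kappa)$, and your tracking of $\bar\kappa^{-1}$ and the reduced permutations through the face formula of $\mathcal{S}X$ spells out the bookkeeping the paper leaves to ``the cubical identity'' (the cube-component part can even be checked pointwise, since morphisms of $\Box$ are genuine maps $[1]^m\to[1]^n$).
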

	
	\begin{proof}
		By the cubical identity,
		\[
		Q(x,\kappa)\partial_{2,1}\cdots\partial_{n+1,1}=(x,1)\quad\text{and}\quad Q(x,\kappa)\partial_{1,1}\cdots\partial_{n,1}=(x,\kappa).
		\]
		Hence the claim follows.
	\end{proof}

	\begin{lemma}
		\label{S gluing}
		Let $X$ be a cubical set, and let $(x,\kappa)\in X_n\times\Sigma_n$. For all $i$ and $\alpha$, the diagrams
		\[
		\xymatrix{
			|(x,\kappa)\partial_{i,\alpha}|\times[0,1]\ar[d]_{\partial_{i,\alpha}\times 1}\ar[rr]^(.6){\bar{h}((x,\kappa)\partial_{i,\alpha})}&&|\mathcal{US}X|\ar@{=}[d]\\
			|(x,\kappa)|\times[0,1]\ar[rr]^(.57){\bar{h}(x,\kappa)}&&|\mathcal{US}X|
		}
		\qquad
		\xymatrix{
			|(x,\kappa)\sigma_i|\times[0,1]\ar[d]_{\sigma_i\times 1}\ar[rr]^(.59){\bar{h}((x,\kappa)\sigma_i)}&&|\mathcal{US}X|\ar@{=}[d]\\
			|(x,\kappa)|\times[0,1]\ar[rr]^(.57){\bar{h}(x,\kappa)}&&|\mathcal{US}X|
		}
		\]
		commute.
	\end{lemma}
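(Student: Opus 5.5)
Both squares commute exactly when the two composites $|(x,\kappa)\partial_{i,\alpha}|\times[0,1]\to|\mathcal{US}X|$ (resp.\ $|(x,\kappa)\sigma_i|\times[0,1]\to|\mathcal{US}X|$) agree pointwise. Since $\bar h=\rho\circ h$ and $\rho$ respects the face and degeneracy identifications in $|\mathcal{US}X|$, my plan is to reduce each square to an identity between cubes of $\mathcal{US}X$ together with an identity of maps of Euclidean cubes. Concretely, I will show:
\begin{itemize}
\item $Q((x,\kappa)\partial_{i,\alpha})$ is a face of $Q(x,\kappa)$;
\item $Q((x,\kappa)\sigma_i)$ is obtained from $Q(x,\kappa)$ by a degeneracy.
\end{itemize}
In each case the coordinate formula for $h$ must transform by the corresponding map of $[0,1]^{2n}$.

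\textbf{Face square.} Write $(x,\kappa)\partial_{i,\alpha}=(x\partial_{j,\alpha},\kappa^{(j)})$ with $j=\kappa^{-1}(i)$. In the coordinates $(y_1,z_1,\ldots,y_n,z_n)$ of $|Q(x,\kappa)|$:
\begin{itemize}
\item the even coordinate $z_j$ (slot $2j$) carries $x_j$, which becomes the constant $\alpha$ on the face;
\item the odd coordinate $y_j$ (slot $2j-1$) carries the $\kappa$-permuted value $x_{\kappa(j)}$-side data.
\end{itemize}
Inspecting the formula for $h$ at $x_i=\alpha$, I expect $h(x,\kappa)\circ(\partial_{i,\alpha}\times 1)$ to land in the face $\partial_{2j-1,\alpha}\partial_{2j,\alpha}$ (suitably reindexed) of $|Q(x,\kappa)|$. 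On the first half, $\alpha=1$ gives $2(x_i-1)t+1=1$, and $\alpha=0$ gives $1-2t$. So the restriction to $t\in[0,1/2]$ sits in a face only for $\alpha=1$. For $\alpha=0$, the pair of coordinates $(0,1-2t)$ instead lies in the image of the connection $\gamma_{\cdot,1}$, i.e.\ the min-connection collapse.

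This is where the choice $Q(x,\kappa)=(x\gamma_{n,1}\cdots\gamma_{1,1},\bar\kappa)$ matters. The two coordinates $(2k-1,2k)$ of $Q$ come from a single coordinate of $x$ via $\min$. Hence on the geometric level $|x\gamma|$ maps through $\gamma$, and a pair with one coordinate equal to $0$ maps to the $0$-face regardless of the other coordinate. I would therefore verify the square via $\rho$ composed with the realization of $\gamma$ at the level of $|\mathcal{US}X|$.

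There is a subtlety: $|\cdot|$ does not impose connection identifications. So I must check that the cube $Q(x,\kappa)\partial_{2j-1,\alpha}\partial_{2j,\alpha}$ (or the appropriate single face) literally equals $Q((x,\kappa)\partial_{i,\alpha})$ in $\mathcal{US}X$, using the cubical identities $\gamma_{j,\beta}\partial_{i,\alpha}=1$ or $\partial\sigma$. For $\alpha=0$, a $\gamma_{\cdot,1}$ followed by a $0$-face yields $\partial_{\cdot,0}\sigma$, a degenerate cube. The coordinate formula then factors through the degeneracy, which is compatible in $|\cdot|$. The computation therefore splits into the cases $\alpha=0,1$ and $t\le 1/2$, $t\ge1/2$. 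In each case I use the definitions of faces in $\mathcal{S}X$ together with the identity $\bar\kappa^{(2j)}=\overline{\kappa^{(j)}}$ composed with a shift.

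\textbf{Degeneracy square.} $(x,\kappa)\sigma_i=(x\sigma_1,\kappa_{(1,i)})$, and $h$ does not depend on the new $i$-th coordinate beyond the corresponding slots. I would show that $Q(x\sigma_1,\kappa_{(1,i)})$ equals $Q(x,\kappa)\sigma_{a}\sigma_{b}$ for the appropriate pair of slots. This uses $\sigma_j\gamma_{j,\alpha}=\sigma_j\sigma_j$ and the identification \eqref{identification S} to move $\sigma$ past the $\gamma$'s and permutations. The coordinate formula then visibly drops those two coordinates.

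\textbf{Main obstacle.} I expect the main obstacle to be the permutation bookkeeping: matching $\overline{\kappa^{(j)}}$ and $\overline{\kappa_{(1,i)}}$ with the deletions and insertions of $\bar\kappa$ on the pair of slots $(2j-1,2j)$, and confirming that the face produced is exactly the one $\rho$ identifies. I would handle this by first treating $\kappa=1$, then reducing the general case via the $\Sigma$-action identities proved in the well-definedness lemma for $\mathcal{S}$.
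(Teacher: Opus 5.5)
Your outline is the paper's own argument: a double face of $Q(x,\kappa)$ for $\alpha=1$; for $\alpha=0$, a single face that becomes a degeneracy via $\gamma_{k,1}\partial_{k,0}=\gamma_{k,1}\partial_{k+1,0}=\partial_{k,0}\sigma_k$; and a double degeneracy for $\sigma_i$. The face step fails, however, and the paper's proof has the same gap.

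The problem starts with your slot count. Put $j=\kappa^{-1}(i)$. Slot $2m-1$ of $h(x,\kappa)$ depends on $x_{\kappa(m)}$ and slot $2m$ depends on $x_m$. So the face $x_i=\alpha$ freezes slots $2j-1$ and $2i$, not $2j-1$ and $2j$; slot $2j$ depends on $x_j$, which is free on that face. When the frozen pair is adjacent ($j\in\{i,i+1\}$), deleting it preserves the odd/even alternation. Otherwise every slot strictly between $2j-1$ and $2i$ changes parity. The face of $Q(x,\kappa)$ is then $Q((x,\kappa)\partial_{i,\alpha})$ twisted by a nontrivial shuffle, with the coordinates shuffled accordingly. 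Since $|\mathcal{US}X|$ identifies neither permuted cubes nor connection images, the two composites need not agree.

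Here is an explicit failure. Take $X=\Box^2$, $x$ the identity of $[1]^2$, $\kappa=(1\,2)$, $i=2$, $\alpha=1$, $0<c<1$, $0<t<\frac12$, and $a=1-2(1-c)t$.
- $\bar h(x,\kappa)(c,1,t)=\rho(1,a,c,1)$ is the point $(a,c)$ of $Q(x,\kappa)\partial_{4,1}\partial_{1,1}=(x\partial_{1,1}\gamma_{1,1},(1\,2))$.
- $\bar h((x,\kappa)\partial_{2,1})(c,t)$ is the point $(c,a)$ of $Q(x\partial_{1,1},1)=(x\partial_{1,1}\gamma_{1,1},1)$.

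These are interior points of two distinct nondegenerate $2$-cubes, so for the given $Q$ and $h$ the left square does not commute. The case $\alpha=0$ fails the same way. Your degeneracy step also fails if you compute with the representative $(x\sigma_1,\kappa_{(1,i)})$, because its two new slots are $1$ and $2i$. Only the representative $(x\sigma_i,\kappa_{(i,i)})$, which the paper uses, gives the adjacent slots $2i-1,2i$.

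In permutation terms, $(\bar\kappa^{(2)})^{(1)}=(1\,2)\neq 1=\overline{\kappa^{(1)}}$. The paper asserts $(\bar\kappa^{(2j)})^{(2j-1)}=\overline{\kappa^{(j)}}$, and your reduction to $\kappa=1$ (via $Q(x,\kappa)=Q(x,1)\bar\kappa$ and $(z\lambda)\partial_{\lambda(m),\alpha}=(z\partial_{m,\alpha})\lambda^{(m)}$) would need exactly this identity. It holds only when $\kappa(j)\in\{j-1,j\}$, so the reduction just reproduces the mismatch.

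What has to change is $Q$, not the bookkeeping. Lay out the two copies of the coordinates in blocks rather than interleaved:
- take $Q'(x,\kappa)=(x\gamma_{n,1}\cdots\gamma_{1,1},\mu_\kappa)$ with $\mu_\kappa(2k-1)=k$ and $\mu_\kappa(2k)=n+\kappa(k)$;
- let $h'$ put $x_{\kappa(k)}$ (resp.\ $2(1-x_{\kappa(k)})t+2x_{\kappa(k)}-1$) in slot $k$, and $2(x_m-1)t+1$ (resp.\ $x_m$) in slot $n+m$, for $t\le\frac12$ (resp.\ $t\ge\frac12$).

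Now the face $x_i=\alpha$ freezes slots $j$ and $n+i$. Removing one slot from each block preserves the pattern, and your outline goes through:
- $Q'(x,\kappa)\partial_{n+i,1}\partial_{j,1}=Q'((x,\kappa)\partial_{i,1})$.
- $Q'(x,\kappa)\partial_{j,0}$ (used for $t\le\frac12$) and $Q'(x,\kappa)\partial_{n+i,0}$ (used for $t\ge\frac12$) are degeneracies of $Q'((x,\kappa)\partial_{i,0})$.
- For every representative $(x\sigma_p,\kappa_{(p,i)})$ of $(x,\kappa)\sigma_i$, $Q'$ is $Q'(x,\kappa)$ degenerated in slots $p$ and $n+1+i$, and $h'$ projects to $h'(x,\kappa)\circ(\sigma_i\times 1)$.
- The endpoints $t=0,1$ behave as in the preceding lemma.
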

	
	\begin{proof}
		For $\kappa\in\Sigma_n$, one has $(\bar{\kappa}^{(2j)})^{(2j-1)}=(\bar{\kappa}^{(2j-1)})^{(2j-1)}=\overline{\kappa^{(j)}}$. Suppose that $\alpha=1$ and $\kappa^{-1}(i)\le i$. By the cubical identity,
		\begin{align*}
			Q(x,\kappa)\partial_{2i,1}\partial_{2\kappa^{-1}(i)-1,1}&=(x\gamma_{n,1}\cdots\gamma_{1,1}\partial_{2\kappa^{-1}(i),1}\partial_{2\kappa^{-1}(i)-1,1},(\bar{\kappa}^{(2\kappa^{-1}(i))})^{(2\kappa^{-1}(i)-1)})\\
			&=(x\partial_{\kappa^{-1}(i),1}\gamma_{n-1,1}\cdots\gamma_{1,1},\overline{\kappa^{(\kappa^{-1}(i))}})\\
			&=Q((x,\kappa)\partial_{i,1}).
		\end{align*}
		Hence the left diagram commutes. The case that $\alpha=1$ and $\kappa^{-1}(i)>i$ is proved analogously. Suppose that $\alpha=0$ and $0\le t\le\frac{1}{2}$. By the cubical identity,
		\begin{align*}
			Q(x,\kappa)\partial_{2\kappa^{-1}(i)-1,0}&=(x\gamma_{n,1}\cdots\gamma_{1,1}\partial_{2\kappa^{-1}(i)-1,0},\bar{\kappa}^{(2\kappa^{-1}(i)-1)})\\
			&=(x\partial_{\kappa^{-1}(i),0}\gamma_{n,1}\cdots\gamma_{1,1}\sigma_{2\kappa^{-1}(i)-1},\bar{\kappa}^{(2\kappa^{-1}(i)-1)})\\
			&=Q((x,\kappa)\partial_{i,0})\sigma_{2i-1}.
		\end{align*}
		Hence the left diagram commutes. The case that $\alpha=0$ and $\frac{1}{2}\le t\le 1$ is proved analogously.

		By the cubical identities,
		\[
		Q((x,\kappa)\sigma_i)=(x\sigma_i\gamma_{n+1,1}\cdots\gamma_{1,1},\overline{\kappa_{(i,i)}})=(x\gamma_{n,1}\cdots\gamma_{1,1}\sigma_{2i-1}\sigma_{2i},\overline{\kappa_{(i,i)}})=Q(x,\kappa)\sigma_{2i-1}\sigma_{2i}.
		\]
		Hence the right diagram commutes.
	\end{proof}
	We are ready to prove Theorem \ref{main 2}.

	
	\begin{proof}
		[Proof of Theorem \ref{main 2}]
		Let $X$ be a cubical set. By Lemmas \ref{S t=0,1} and \ref{S gluing}, the family of homotopies $\{\bar{h}(x,\kappa)\}$, where $(x,\kappa)$ ranges over all elements of $X_n\times\Sigma_n$ for $n\ge 0$, assembles to a homotopy $\bar{h}\colon|\mathcal{US}X|\times[0,1]\to|\mathcal{US}X|$ satisfying that $\bar{h}(-,1)=1_{|\mathcal{US}X|}$ and $\bar{h}(-,0)$ factors as
		\[
		|\mathcal{US}X|\xrightarrow{f}|X|\xrightarrow{\eta}|\mathcal{US}X|
		\]
		such that $f\circ\eta=1_{|X|}$. Therefore $f$ is a homotopy inverse of $\eta\colon|X|\to|\mathcal{US}X|$.
	\end{proof}

	\section{Fibrant replacement}\label{Fibrant replacement}
	
	In this section, we explicitly construct a fibrant replacement for a quasisymmetric cubical set, which is the key ingredient in proving Theorems \ref{main 1} and \ref{main 3}.

	\subsection{The functor $\mathcal{R}$}
	
	To construct a fibrant replacement, we first need a mechanism to formally invert cubes. We define a functor $\mathcal{R}\colon\cSet\to\cSet$ by reversing cube coordinates. Reversing cube coordinates plays an important role in several contexts, including cubical $\omega$-groupoids in \cite{BHS}.

	\begin{definition}
		We define a functor
		\[
		\mathcal{R}\colon\cSet\to\cSet
		\]
		by setting $\mathcal{R}X_0=X_0$ and
		\[
		\mathcal{R}X_n=\{x^{s_1\cdots s_n}\mid x\in X_n\text{ and }s_1,\ldots,s_n\in\{0,1\}\}
		\]
		for $n\ge 1$, subject to the identification
		\begin{equation}
			\label{R identification}
			(x\sigma_i)^{s_1\cdots s_{i-1}0s_{i+1}\cdots s_n}=(x\sigma_i)^{s_1\cdots s_{i-1}1s_{i+1}\cdots s_n}.
		\end{equation}
		Faces, degeneracies, and connections are defined by
		\begin{align*}
			(x^{s_1\cdots s_n})\partial_{i,\alpha}&=(x\partial_{i,s_i(1-\alpha)+(1-s_i)\alpha})^{s_1\cdots s_{i-1}s_{i+1}\cdots s_n}\\
			(x^{s_1\cdots s_n})\sigma_i&=(x\sigma_i)^{s_1\cdots s_{i-1}0s_{i}\cdots s_n}\\
			(x^{s_1\cdots s_n})\gamma_{i,\alpha}&=(x\gamma_{i,s_i(1-\alpha)+(1-s_i)\alpha})^{s_1\cdots s_is_i\cdots s_n}.
		\end{align*}
	\end{definition}

	Let $X$ be a cubical set. For any $n$-cube $x\in X_n$, the element $x^{s_1\cdots s_n}\in\mathcal{R}X_n$ may be interpreted as the $n$-cube obtained by reversing the $i$-th coordinate of $x$ for each $i$ with $s_i=1$. More precisely, the assignment
	\[
	x^{s_1\cdots s_n}\mapsto(x,(s_1,\ldots,s_n))
	\]
	yields a one-to-one correspondence between $\mathcal{R}X_n$ and the set of $n$-grids in $X$ of size $(1,\ldots,1)$ subject to the identification $x\sigma_{i,0}=x\sigma_{i,1}$.

	\begin{lemma}
		The functor $\mathcal{R}$ is well defined.
	\end{lemma}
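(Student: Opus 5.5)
We have to show three things: the structure maps on $\mathcal{R}X$ are well defined on the quotient by \eqref{R identification}; they satisfy the cubical identities; and $\mathcal{R}$ is functorial. The plan is to reduce as much as possible to the cubical identities in $X$, using the description of $\mathcal{R}X_n$ as size-$(1,\ldots,1)$ grids. Write $\epsilon_i(\alpha)=s_i(1-\alpha)+(1-s_i)\alpha$. This is just $\alpha$ if $s_i=0$ and $1-\alpha$ if $s_i=1$. The operators can then be read as follows: to apply $\partial_{i,\alpha}$ (resp. $\gamma_{i,\alpha}$) to a cube whose $i$-th coordinate is reversed, apply $\partial_{i,\epsilon_i(\alpha)}$ (resp. $\gamma_{i,\epsilon_i(\alpha)}$) to the underlying cube and keep the remaining reversal labels.

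\textbf{Step 1: well-definedness.} Take $y=x\sigma_i$. We must check that the formulas give the same result on $y^{\cdots 0\cdots}$ and $y^{\cdots 1\cdots}$, where the two labels differ only in position $i$.

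For faces $\partial_{j,\alpha}$ with $j\ne i$: since $\epsilon_j$ does not depend on $s_i$, we get $y\partial_{j,\epsilon_j(\alpha)}=x\partial_{j',\epsilon_j(\alpha)}\sigma_{i'}$ with the shifted index. The resulting labels again differ only at the degenerate position $i'$, so \eqref{R identification} identifies them.

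For $j=i$: we have $y\partial_{i,\beta}=x$ for both $\beta$, and the label $s_i$ is deleted, so both sides agree.

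For degeneracies $\sigma_j$: the cubical identity $\sigma_i\sigma_j=\sigma_j\sigma_{i+1}$ moves the degenerate slot to $i$ or $i+1$, and \eqref{R identification} applies at that slot.

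For connections $\gamma_{j,\alpha}$ with $j\notin\{i-1,i\}$: the degeneracy commutes past the connection, and we finish as for faces.

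For $j=i$ or $j=i-1$: we use $\sigma_i\gamma_{i,\alpha}=\sigma_i\sigma_i$, respectively the corresponding identity for $j=i-1$. The result is a doubly degenerate cube, and it is independent of $\alpha$, so the different values of $\epsilon_j(\alpha)$ do not matter. The duplicated label $s_is_i$ sits on degenerate coordinates, so \eqref{R identification} again identifies the outcomes.

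\textbf{Step 2: cubical identities.} Each identity $\theta=\theta'$ in $\Box$ is checked on a representative $x^{s}$. Both sides have the form $(x\tau)^{s'}$ for words $\tau,\tau'$ in $\Box$. These words are obtained from $\theta,\theta'$ by replacing each $\alpha$ attached to coordinate $i$ with $\epsilon_i(\alpha)$, and both sides carry the same transformed label $s'$, because faces delete, and degeneracies and connections insert or duplicate, the same positions.

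Faces, degeneracies, and their mixed identities are then immediate. In the case $\sigma_j\partial_{j,\alpha}=1$ the inserted $0$ is deleted again.

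For the connection identities, the substitution $\alpha\mapsto\epsilon_i(\alpha)$ is consistent because $\gamma_{i,\alpha}$ duplicates $s_i$: a later face on either copy uses the same $\epsilon_i$. The identity $\gamma_{j,\beta}\partial_{i,\alpha}=\partial_{j,\alpha}\sigma_j$ in the case $\beta=1-\alpha$ maps to the same case, since $\epsilon$ preserves the relation $\beta=1-\alpha$. It produces a degenerate cube carrying label $0$ where the input carried $s_i$, and \eqref{R identification} reconciles the two.

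\textbf{Step 3: functoriality.} A cubical map $f$ induces $x^s\mapsto f(x)^s$. This respects \eqref{R identification} because $f$ commutes with $\sigma_i$, and it commutes with the structure maps because $f$ commutes with faces, degeneracies and connections in $X$.

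The main obstacle is the bookkeeping in Steps 1 and 2 for connections interacting with degeneracies and faces, where the label convention $s_is_i$ and the degenerate-slot identifications must line up. I would handle it uniformly by the substitution principle $\alpha\mapsto\epsilon_i(\alpha)$ and check only the cases $j\in\{i-1,i\}$ explicitly.
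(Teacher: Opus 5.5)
Your proposal is correct and follows essentially the same route as the paper: check that faces, degeneracies and connections respect \eqref{R identification}, then verify the cubical identities, with \eqref{R identification} needed exactly for $\gamma_{j,1-\alpha}\partial_{i,\alpha}=\partial_{j,\alpha}\sigma_j$ ($j\in\{i-1,i\}$), where one side carries the label $s_j$ and the other carries $0$ in that degenerate slot. One harmless slip: in Step 1 the connection case $j=i-1$ is not special, since $x\sigma_i\gamma_{i-1,\beta}=x\gamma_{i-1,\beta}\sigma_{i+1}$ is neither doubly degenerate nor independent of $\beta$; your generic argument already covers it, because the twist there depends on $s_{i-1}$ and the two labels differ only in position $i+1$, where the cube is degenerate.
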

	
	\begin{proof}
		Let $X$ be a cubical set. It suffices to show that $\mathcal{R}X$ is a well-defined cubical set. Let $x\in X_{n-1}$, $i\in\{1,...,n\}$ and $s_1,\ldots,s_{i-1},s_{i+1},\ldots,s_n\in\{0,1\}$. Suppose that $j<i$. Then
		
		\begin{align*}
			(x\sigma_i)^{s_1\cdots s_{i-1}0s_{i+1}\cdots s_n}\partial_{j,\alpha}&=(x\sigma_i\partial_{j,s_j(1-\alpha)+(1-s_j)\alpha})^{s_1\cdots s_{j-1}s_{j+1}\cdots s_{i-1}0s_{i+1}\cdots s_n}\\
			&=(x\partial_{j,s_j(1-\alpha)+(1-s_j)\alpha}\sigma_{i-1})^{s_1\cdots s_{j-1}s_{j+1}\cdots s_{i-1}0s_{i+1}\cdots s_n}\\
			&=(x\partial_{j,s_j(1-\alpha)+(1-s_j)\alpha}\sigma_{i-1})^{s_1\cdots s_{j-1}s_{j+1}\cdots s_{i-1}1s_{i+1}\cdots s_n}\\
			&=(x\sigma_{i}\partial_{j,s_j(1-\alpha)+(1-s_j)\alpha})^{s_1\cdots s_{j-1}s_{j+1}\cdots s_{i-1}1s_{i+1}\cdots s_n}\\
			&=(x\sigma_i)^{s_1\cdots s_{i-1}1s_{i+1}\cdots s_n}\partial_{j,\alpha}.
		\end{align*}
		The remaining cases are verified in the same manner. Hence faces are well defined. It is analogously proved that degeneracies and connections are well defined. We now verify the cubical identity. Let $x^{s_1\cdots s_n}\in\mathcal{R}X_n$. For $j\le i$,
		\begin{align*}
			(x^{s_1\cdots s_n})\partial_{j,\beta}\partial_{i,\alpha}&=((x\partial_{j,s_j(1-\beta)+(1-s_j)\beta})^{s_1\cdots s_{j-1}s_{j+1}\cdots s_n})\partial_{i,\alpha}\\
			&=(x\partial_{j,s_j(1-\beta)+(1-s_j)\beta}\partial_{i,s_{i+1}(1-\alpha)+(1-s_{i+1})\alpha})^{s_1\cdots s_{j-1}s_{j+1}\cdots s_{i}s_{i+2}\cdots s_n}\\
			&=(x\partial_{i+1,s_{i+1}(1-\alpha)+(1-s_{i+1})\alpha}\partial_{j,s_j(1-\beta)+(1-s_j)\beta})^{s_1\cdots s_{j-1}s_{j+1}\cdots s_{i-1}s_{i+1}\cdots s_n}\\
			&=((x\partial_{i+1,s_{i+1}(1-\alpha)+(1-s_{i+1})\alpha})^{s_1\cdots s_{i-1}s_{i+1}\cdots s_n})\partial_{j,\beta}\\
			&=(x^{s_1\cdots s_n})\partial_{i+1,\alpha}\partial_{j,\beta}.
		\end{align*}
		One also has
		\begin{align*}
			(x^{s_1\cdots s_n})\gamma_{i,\alpha}\partial_{i,1-\alpha}&=((x\gamma_{i,s_i(1-\alpha)+(1-s_i)\alpha})^{s_1\cdots s_is_is_{i+1}\cdots s_n})\partial_{i,\alpha}\\
			&=(x\gamma_{i,s_i(1-\alpha)+(1-s_i)\alpha}\partial_{i,s_i\alpha+(1-s_i)(1-\alpha)})^{s_1\cdots s_n}\\
			&=(x\partial_{i,s_i\alpha+(1-s_i)(1-\alpha)}\sigma_i)^{s_1\cdots s_n}\\
			&=((x\partial_{i,s_i\alpha+(1-s_i)(1-\alpha)})^{s_1\cdots s_is_i\cdots s_n})\sigma_i\\
			&=x^{s_1\cdots s_n}\partial_{i,1-\alpha}\sigma_i,
		\end{align*}
		where we use the identification \eqref{R identification} for the fourth equality. The remaining cubical identities are verified analogously, and therefore the claim follows.
	\end{proof}

	Define the natural transformation $\iota\colon 1_{\cSet}\Rightarrow\mathcal{R}$ by
	\[
	\iota\colon X\to\mathcal{R}X,\quad x\mapsto x^{0\cdots 0}.
	\]
	It is straightforward to verify that $\iota$ is well defined. We show that $\iota$ is an objectwise weak equivalence by arguing similarly to the proof of Theorem \ref{main 2}. Let $X$ be a cubical set. For $x^{s_1\cdots s_n}\in\mathcal{R}X_n$ with $n\ge 1$, let $Q(x^{s_1\cdots s_n})=(x\gamma_{n,1}\cdots\gamma_{1,1})^{0s_10s_2\cdots0s_n}$. We define a map
	\[
	h(x^{s_1\cdots s_n})\colon|x^{s_1\cdots s_n}|\times[0,1]\to|Q(x^{s_1\cdots s_n})|
	\]
	by
	\begin{align*}
		&h(x^{s_1\cdots s_n})(x_1,\ldots,x_n,t)\\
		&=
		\begin{cases}
			(\bar{x}_1,2(x_1+s_1-1)t+1-s_1,\ldots,\bar{x}_n,2(x_n+s_n-1)t+1-s_n)&(0\le t\le\frac{1}{2})\\
			(2(1-\bar{x}_1)t+2\bar{x}_1-1,x_1,\ldots,2(1-\bar{x}_n)t+2\bar{x}_n-1,x_n)&(\frac{1}{2}\le t\le 1)
		\end{cases}
	\end{align*}
	where $\bar{x}_i=(1-s_i)x_i+s_i(1-x_i)$. For $y\in \mathcal{R}X_0=X_0$, we define $h(y)\colon|y|\times[0,1]\to|y|$ as the projection. For every $z\in\mathcal{R}X$, set
	\[
	\bar{h}(z)=\rho\circ h(z)\colon|z|\times[0,1]\to|\mathcal{R}X|.
	\]

	\begin{lemma}
		\label{R t=0,1}
		Let $X$ be a cubical set. For every $x^{s_1\cdots s_n}\in\mathcal{R}X_n$, $\bar{h}(x^{s_1\cdots s_n})(-,t)$ coincides with
		\[
		|x^{s_1\cdots s_n}|\xrightarrow{m_{s_1\cdots s_n}}|x^{0\cdots 0}|\xrightarrow{\rho}|\mathcal{R}X|\quad\text{and}\quad|x^{s_1\cdots s_n}|\xrightarrow{\rho}|\mathcal{R}X|
		\]
		for $t=0,1$, respectively, where
		\[
		m_{s_1\cdots s_n}(x_1,\ldots,x_n)=(\bar{x}_1,\ldots,\bar{x}_n).
		\]
	\end{lemma}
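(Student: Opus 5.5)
The plan mirrors Lemma \ref{S t=0,1}. I evaluate $h(x^{s_1\cdots s_n})$ at $t=0$ and $t=1$ and identify each endpoint map as the restriction of $\rho\colon|Q(x^{s_1\cdots s_n})|\to|\mathcal{R}X|$ to a specific $n$-dimensional face of $|Q(x^{s_1\cdots s_n})|=[0,1]^{2n}$. I then compute that face cubically, using the face formula of $\mathcal{R}$ and the cubical identities for connections.

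At $t=1$ the formula gives
\[
(2(1-\bar x_i)+2\bar x_i-1,\,x_i)=(1,x_i)
\]
in the coordinates $(2i-1,2i)$. So $h(-,1)$ is the inclusion of the face with all odd coordinates equal to $1$, i.e.\ the face $\partial_{1,1}\partial_{2,1}\cdots\partial_{n,1}$, parametrized by $(x_1,\ldots,x_n)$. It then suffices to show
\[
Q(x^{s_1\cdots s_n})\partial_{1,1}\cdots\partial_{n,1}=x^{s_1\cdots s_n}.
\]
The odd coordinates carry reversal index $0$, so by the face formula each $\partial_{2i-1,1}$ on $\mathcal{R}X$ becomes $\partial_{2i-1,1}$ on $X$ and the superscript becomes $s_1\cdots s_n$. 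The composite $x\gamma_{n,1}\cdots\gamma_{1,1}\partial_{1,1}\cdots\partial_{n,1}$ peels off one connection at a time. Each step uses the identity $\gamma_{j,\beta}\partial_{i,\alpha}=1$ for $j=i$ or $i-1$ and $\beta=\alpha$, giving $x$. This is exactly the first identity in the proof of Lemma \ref{S t=0,1}, with the permutation replaced by signs.

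At $t=0$ the coordinates are $(\bar x_i,1-s_i)$. So $h(-,0)$ is $m_{s_1\cdots s_n}$ followed by inclusion of the face where coordinate $2i$ equals $1-s_i$. I compute this face in $\mathcal{R}X$. The even coordinate $2i$ has reversal index $s_i$, so the face $\partial_{2i,1-s_i}$ on $\mathcal{R}X$ becomes
\[
\partial_{2i,\,s_i s_i+(1-s_i)(1-s_i)}=\partial_{2i,1}
\]
on $X$, and the superscript $s_i$ is deleted. Hence the face is $(x\gamma_{n,1}\cdots\gamma_{1,1}\partial_{2,1}\cdots\partial_{n+1,1})^{0\cdots0}$. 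By the same connection identity, the $X$-part is $x$. Therefore the face is $x^{0\cdots0}$, and composing with $\rho$ gives the first map in the statement.

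The main obstacle is index bookkeeping. I must check that the face of $[0,1]^{2n}$ in the topological description matches the iterated face operators, with indices shifting as coordinates are deleted. Deleting the odd coordinates in decreasing order, or the even ones, makes the surviving cube exactly $(x_1,\ldots,x_n)$, respectively $(\bar x_1,\ldots,\bar x_n)$, in order. I must also confirm that $\rho$ is compatible with faces, which holds by the relation $\partial_{i,\alpha}|z|\sim|z\partial_{i,\alpha}|$ defining $|\mathcal{R}X|$. The degenerate case $n=0$ is trivial since $h(y)$ is the projection.
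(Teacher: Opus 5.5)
Your proposal is correct and follows the paper's proof. You evaluate $h$ at $t=0,1$, identify the images with the faces $Q(x^{s_1\cdots s_n})\partial_{2,1-s_1}\cdots\partial_{n+1,1-s_n}$ and $Q(x^{s_1\cdots s_n})\partial_{1,1}\cdots\partial_{n,1}$, and reduce these to $x^{0\cdots 0}$ and $x^{s_1\cdots s_n}$ via the face formula of $\mathcal{R}$ and the identity $\gamma_{j,\alpha}\partial_{i,\alpha}=1$ for $j=i-1,i$; you simply spell out the $t=1$ case that the paper leaves as analogous (and the identity you borrow from Lemma \ref{S t=0,1} is the second one displayed there, not the first).
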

	
	\begin{proof}
		For $n=0$, the claim is obvious. Assume $n\ge 1$, and let $x^{s_1\cdots s_n}\in\mathcal{R}X_n$. By definition,
		\[
		h(x^{s_1\cdots s_n})(x_1,\ldots,x_n,0)=(\bar{x}_1,1-s_1,\ldots,\bar{x}_n,1-s_n)\in|Q(x^{s_1\cdots s_n})|.
		\]
		By the cubical identity,
		$(x\gamma_{n,1}\cdots\gamma_{1,1})^{0s_10s_2\cdots0s_n}\partial_{2,1-s_1}\cdots\partial_{n+1,1-s_n}=x^{0\cdots 0}$. Hence the claim for $t=0$ follows. The claim for $t=1$ is proved analogously.
	\end{proof}

	\begin{lemma}
		\label{R gluing}
		Let $X$ be a cubical set, and let $z\in\mathcal{R}X_n$. For all $i$ and $\alpha$, the diagrams
		\[
		\xymatrix{
			|z\partial_{i,\alpha}|\times[0,1]\ar[d]_{\partial_{i,\alpha}\times 1}\ar[rr]^(.6){\bar{h}(z\partial_{i,\alpha})}&&|\mathcal{R}X|\ar@{=}[d]\\
			|z|\times[0,1]\ar[rr]^(.57){\bar{h}(z)}&&|\mathcal{R}X|
		}
		\qquad
		\xymatrix{
			|z\sigma_i|\times[0,1]\ar[d]_{\sigma_i\times 1}\ar[rr]^(.59){\bar{h}(z\sigma_i)}&&|\mathcal{R}X|\ar@{=}[d]\\
			|z|\times[0,1]\ar[rr]^(.57){\bar{h}(z)}&&|\mathcal{R}X|
		}
		\]
		commute.
	\end{lemma}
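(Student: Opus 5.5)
The plan follows the proof of Lemma \ref{S gluing}: for each generator, express the cube $Q$ of a face or degeneracy of $z$ as a face or degeneracy of $Q(z)$. Compatibility of the two explicit formulas for $h$ with $\partial_{i,\alpha}\times 1$ and $\sigma_i\times 1$ then becomes an elementwise check in $[0,1]^{2n}$. The case $n=0$, and the case where $z\partial_{i,\alpha}$ is a vertex (so $h$ is the projection), are handled directly: there $Q$ degenerates to a point after applying $\rho$, using that $x\gamma_{1,1}$ restricted along the relevant faces is a degeneracy of a vertex.

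Write $z=x^{s_1\cdots s_n}$. For faces, $z\partial_{i,\alpha}=(x\partial_{i,\beta})^{s_1\cdots\widehat{s_i}\cdots s_n}$ with $\beta=s_i(1-\alpha)+(1-s_i)\alpha$. By the definition of $\mathcal{R}$, $Q(z)$ has coordinates paired as $(2j-1,2j)$ with signs $(0,s_j)$. On $|z\partial_{i,\alpha}|$ embedded via $\partial_{i,\alpha}$ we have $x_i=\alpha$, so $\bar x_i=\beta$. Split into the two time intervals.

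For $0\le t\le\frac12$, the $2i$-th coordinate of $h(z)$ is $2(\alpha+s_i-1)t+1-s_i$. If $\beta=1$, this is constantly $1-s_i$ and the $(2i-1)$-th coordinate is $1$. Using the cubical identity $\gamma_{j,1}\partial_{j,1}=\partial_{j,1}\gamma$-type relations, exactly as in Lemma \ref{S gluing}, we get $Q(z)\partial_{2i,1-s_i}\partial_{2i-1,1}=Q(z\partial_{i,\alpha})$. The face $\partial_{2i,1-s_i}$ applied to $x^{\cdots s_i\cdots}$ in $\mathcal{R}X$ translates to $\partial_{2i,1}$ on $x\gamma_{n,1}\cdots\gamma_{1,1}$. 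If $\beta=0$, the $(2i-1)$-th coordinate is $0$, and we use $x\gamma_{n,1}\cdots\gamma_{1,1}\partial_{2i-1,0}=x\partial_{i,0}\gamma\cdots\gamma\,\sigma_{2i-1}$, so that $Q(z)\partial_{2i-1,0}=Q(z\partial_{i,\alpha})\sigma_{2i-1}$, as in Lemma \ref{S gluing}. The varying $2i$-th coordinate is then killed by $\sigma_{2i-1}$, and the formulas match.

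For $\frac12\le t\le1$ the argument is symmetric. The $(2i-1)$-th coordinate $2(1-\beta)t+2\beta-1$ is constant $1$ when $\beta=1$, and the same face identity applies. When $\beta=0$, we use the $\partial_{2i,\cdot}$ face together with the connection–face identity $\gamma_{j,1}\partial_{j+1,0}=\partial_{j,0}\sigma_j$ to produce a degeneracy absorbing the $(2i-1)$-th coordinate. One must also check that the identification \eqref{R identification} lets us switch the sign label on degenerate directions; this is where the $s_i$-dependence disappears. Using the definition of $\partial_{i,\alpha}$ on $\mathcal{R}X$, it is routine to confirm that the remaining coordinates and signs $\bar x_j$ and $s_j$ for $j\neq i$ agree after reindexing.

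For degeneracies, $z\sigma_i=(x\sigma_i)^{s_1\cdots s_{i-1}0s_i\cdots s_n}$, and as in Lemma \ref{S gluing} we have $Q(z\sigma_i)=Q(z)\sigma_{2i-1}\sigma_{2i}$. The new coordinate pair $(2i-1,2i)$ is deleted by the realization of $\sigma_{2i-1}\sigma_{2i}$, and the remaining coordinates of $h$ are unchanged, so the right diagram commutes.

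The main obstacle will be the bookkeeping in the face case with $\beta=0$. There the face of $Q(z)$ is a degenerate cube rather than a face of a smaller $Q$, and one has to verify that the sign labels in $\mathcal{R}X$ on the degenerate coordinate are harmless via \eqref{R identification}. I would settle this first for $n=1$ explicitly and then reduce the general case to it coordinatewise.
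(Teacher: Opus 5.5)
Your proposal is correct and follows the paper's proof. Both use the identities $Q(z\sigma_i)=Q(z)\sigma_{2i-1}\sigma_{2i}$ and $Q(z)\partial_{2i,1-s_i}\partial_{2i-1,1}=Q(z\partial_{i,\alpha})$ for $\beta=1$ (that is, $\alpha=1-s_i$), and $Q(z)\partial_{2i-1,0}=Q(z\partial_{i,\alpha})\sigma_{2i-1}$ for $\beta=0$, $t\le\frac12$; for the remaining case, which the paper calls analogous, your use of the face $\partial_{2i,s_i}$ and $\gamma_{j,1}\partial_{j+1,0}=\partial_{j,0}\sigma_j$ is the right argument, and your indexing by $\beta$ is the careful version of the paper's, which writes $\partial_{i,1}$ and $\partial_{i,0}$ where $\partial_{i,1-s_i}$ and $\partial_{i,s_i}$ are meant.
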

	
	\begin{proof}
		Let $x^{s_1\cdots s_n}\in\mathcal{R}X_n$. By the cubical identity,
		\[
		Q(x^{s_1\cdots s_n}\sigma_i)=Q(x^{s_1\cdots s_n})\sigma_{2i-1}\sigma_{2i}\quad\text{and}\quad Q(x^{s_1\cdots s_n})\partial_{2i,1-s_i}\partial_{2i-1,1}=Q(x^{s_1\cdots s_n}\partial_{i,1}).
		\]
		Hence the right diagram and the left diagram for $\alpha=1-s_i$ commute. By the cubical identity,
		\[
		Q(x^{s_1\cdots s_n})\partial_{2i-1,0}=Q((x^{s_1\cdots s_n})\partial_{i,0})\sigma_{2i-1}.
		\]
		Hence for $\alpha=s_i$ and $0\le t\le\frac{1}{2}$
		\begin{align*}
			&\bar{h}(x^{s_1\cdots s_n})\partial_{i,0}(x_1,\ldots,x_{i-1},x_{i+1},\ldots,x_n,t)\\
			&=\bar{h}(x^{s_1\cdots s_n})(x_1,\ldots,x_{i-1},0,x_{i+1},\ldots,x_n,t)\\
			&=\rho(\bar{x}_1,\tilde{x}_1,\ldots,\bar{x}_{i-1},\tilde{x}_{i-1},0,2(2s_i-1)t+1-s_i,\bar{x}_{i+1},\tilde{x}_{i+1},\ldots,\bar{x}_n,\tilde{x}_n)\\
			&=\rho(\bar{x}_1,\tilde{x}_1,\ldots,\bar{x}_{i-1},\tilde{x}_{i-1},\bar{x}_{i+1},\tilde{x}_{i+1},\ldots,\bar{x}_n,\tilde{x}_n)\\
			&=\bar{h}((x^{s_1\cdots s_n})\partial_{i,0})(x_1,\ldots,x_{i-1},x_{i+1},\ldots,x_n,t),
		\end{align*}
		where $\tilde{x}_k=2(x_k+s_k-1)t+1-s_k$ for $k=1,\ldots,i-1,i+1,\ldots,n$. Therefore the left diagram commutes for $\alpha=s_i$ and $0\le t\le\frac{1}{2}$. The case that $\alpha=s_i$ and $\frac{1}{2}\le t\le 1$ is proved analogously.
	\end{proof}

	\begin{proposition}
		\label{R}
		The natural transformation $\iota\colon 1_{\cSet}\Rightarrow\mathcal{R}$ is an objectwise weak equivalence.
	\end{proposition}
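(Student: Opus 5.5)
The plan is to mirror the proof of Theorem \ref{main 2}: show that $|X|$ is a deformation retract of $|\mathcal{R}X|$ up to homotopy, using the local homotopies $\bar{h}(z)$ for $z\in\mathcal{R}X$. By Lemma \ref{R gluing}, the maps $\bar{h}(z)\colon|z|\times[0,1]\to|\mathcal{R}X|$ are compatible with faces and degeneracies. Since $|\mathcal{R}X|\times[0,1]$ is the colimit of the $|z|\times[0,1]$ over the same identifications (product with the compact interval commutes with the quotient), they assemble to a single homotopy
\[
\bar{h}\colon|\mathcal{R}X|\times[0,1]\to|\mathcal{R}X|.
\]
Compatibility with the identification \eqref{R identification} is already encoded in the degeneracy diagram of Lemma \ref{R gluing}.

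By Lemma \ref{R t=0,1}, we have $\bar{h}(-,1)=1_{|\mathcal{R}X|}$. Also $\bar{h}(-,0)$ sends $|x^{s_1\cdots s_n}|$ to $|x^{0\cdots0}|=|\iota(x)|$ via the reflection $m_{s_1\cdots s_n}$. I therefore define $f\colon|\mathcal{R}X|\to|X|$ cellwise by
\[
|x^{s_1\cdots s_n}|\xrightarrow{m_{s_1\cdots s_n}}|x|\to|X|.
\]
I must check that $f$ is well defined, i.e.\ compatible with faces, degeneracies and the identification \eqref{R identification}.
\begin{itemize}
\item For faces: $m_s\circ\partial_{i,\alpha}=\partial_{i,s_i(1-\alpha)+(1-s_i)\alpha}\circ m_{s'}$, where $s'$ is $s$ with $s_i$ deleted. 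This matches the face formula in $\mathcal{R}X$.
\item For degeneracies: $\sigma_i\circ m_s=m_{s'}\circ\sigma_i$. This shows independence of $s_i$ for $(x\sigma_i)^{\cdots}$.
\end{itemize}
Then $\bar{h}(-,0)=|\iota|\circ f$, because $\rho\circ m_s$ agrees with the realization of $\iota$ on $|x^{0\cdots0}|$. Moreover $f\circ|\iota|=1_{|X|}$, since $m_{0\cdots0}$ is the identity. Hence $|\iota|\circ f\simeq 1$ via $\bar{h}$, so $f$ is a homotopy inverse of $|\iota|$. It follows that $\iota$ is a weak equivalence, with naturality clear.

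The main obstacle is the bookkeeping in the gluing step, specifically the cases of Lemma \ref{R gluing} not written out. These are $\alpha=s_i$ for $\tfrac12\le t\le1$, and the $\alpha=1-s_i$ face where the coordinate order of $Q$ interleaves. There I would verify, using the connection identities $\gamma_{j,1}\partial_{j,0}=1$, $\gamma_{j,1}\partial_{j+1,0}=1$ and $\gamma_{j,1}\partial_{j,1}=\partial_{j,1}\sigma_j$, that the coordinates collapsed by $\rho$ agree on both sides.

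I also need to handle the $0$-cells, where $h$ is the projection. This requires checking that the homotopy is consistent at vertices: faces of $1$-cubes map to fixed vertices. This holds because $h(x^{s})(\alpha,t)$ lands in the vertex face of $Q$, which equals $x\partial_{1,\cdot}$ after the collapses.
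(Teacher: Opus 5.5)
Your proposal is correct and is essentially the paper's proof. The paper likewise assembles the local homotopies $\bar h(z)$ via Lemmas \ref{R t=0,1} and \ref{R gluing}, observes $\bar h(-,1)=1$ and $\bar h(-,0)=|\iota|\circ f$ with $f\circ|\iota|=1$. Your explicit check that $f$ respects faces, degeneracies and \eqref{R identification} fills in what the paper leaves implicit, and one slip in your side remark should be fixed: with the paper's convention $\gamma_{j,1}=\min$, the identities are $\gamma_{j,1}\partial_{j,1}=\gamma_{j,1}\partial_{j+1,1}=1$ and $\gamma_{j,1}\partial_{j,0}=\partial_{j,0}\sigma_j$, not the reverse. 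It is precisely the latter identity that makes $Q(x^{s_1\cdots s_n})\partial_{2i-1,0}$ degenerate in the $\alpha=s_i$ case of Lemma \ref{R gluing}.
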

	
	\begin{proof}
		Let $X$ be a cubical set. By Lemmas \ref{R t=0,1} and \ref{R gluing}, the family of homotopies $\{\bar{h}(z)\}_{z\in\mathcal{R}X}$ assembles to a homotopy $\bar{h}\colon|\mathcal{R}X|\times[0,1]\to|\mathcal{R}X|$ satisfying that $\bar{h}(-,1)=1_{|\mathcal{R}X|}$ and $\bar{h}(-,0)$ factors as
		\[
		|\mathcal{R}X|\xrightarrow{f}|X|\xrightarrow{\iota}|\mathcal{R}X|
		\]
		such that $f\circ\iota=1_{|X|}$. Therefore $f$ is the homotopy inverse of $\iota\colon|X|\to|\mathcal{R}X|$.
	\end{proof}

	\subsection{The functor $\mathcal{G}$}
	
	We define a functor $\mathcal{G}\colon\cSet\to\cSet$ by constructing the cubical set of directed grids.

	\begin{definition}
		Define a functor
		\[
		\mathcal{G}\colon\cSet\to\cSet
		\]
		by setting $\mathcal{G}X_n$ to be the set of all directed $n$-grids in $X$ subject to the identification
		\begin{equation}
			\label{G identification}
			x\sigma_{i,0}+_ix\sigma_{i,0}=x\sigma_{i,0}
		\end{equation}
		for all directed $n$-grids $x$ and $i=1,\ldots,n$, where faces and connections are as in Section \ref{Discrete homotopy groups of cubical sets} and degeneracies are defined by
		\begin{align*}
			x\sigma_i&=x\sigma_{i,0}.
		\end{align*}
		
	\end{definition}

	\begin{lemma}
		\label{G well defined}
		The functor $\mathcal{G}$ is well defined.
	\end{lemma}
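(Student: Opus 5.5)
The plan mirrors the verification given earlier for $\mathcal{R}$ and $\mathcal{S}$. It suffices to show that for each cubical set $X$, $\mathcal{G}X$ is a well-defined cubical set; functoriality is then immediate, since a cubical map $f$ acts on grids by $(x,s)\mapsto(f(x),s)$. This action preserves directedness, commutes with $\partial_{k,\alpha}$, $\sigma_{k,0}$, $\gamma_{k,\alpha}$ and $+_i$, and hence respects \eqref{G identification}. The work therefore splits into two parts: the operators are well defined on equivalence classes, and they satisfy the cubical identities.

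First I check that the operators $\partial_{k,\alpha}$, $\sigma_{k}=\sigma_{k,0}$ and $\gamma_{k,\alpha}$ of Section \ref{Discrete homotopy groups of cubical sets} send directed grids to directed grids, which is clear since all $s$-entries stay $0$. I then check that they preserve the relation generated by \eqref{G identification}. Since that relation is closed under taking subgrids and concatenations, I reduce to the case where the relation is applied to one slice inside a concatenation. Concretely, two grids are identified when one is obtained from the other by replacing a degenerate slice $y\sigma_{i,0}$ in direction $i$ with $y\sigma_{i,0}+_iy\sigma_{i,0}$. For faces $\partial_{k,\alpha}$ with $k\ne i$, the face of a doubled slice is again a doubled slice (in direction $i$ or $i-1$), because faces commute with $\sigma_{i,0}$ in the $k\ne i$ sense. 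For $k=i$, the face only sees the first or last slice in direction $i$, and both copies have the same face. For degeneracies $\sigma_{k,0}$ the doubled slice simply lifts. For connections $\gamma_{k,\alpha}$ with $k\neq i$, the identification transports slicewise. For $k=i$, the doubling in direction $k$ becomes doubling in both directions $k$ and $k+1$. The diagonal blocks are $(y\sigma_{k})\gamma_{k,\alpha}=y\sigma_k\sigma_k$ by the cubical identity $\sigma_j\gamma_{i,\alpha}$ with $j=i$. The off-diagonal blocks are $y\sigma_k\sigma_{k+\alpha}$-type degeneracies. Hence the result is obtained from the undoubled one by two applications of \eqref{G identification}, in directions $k$ and $k+1$.

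Second, I verify the cubical identities. As the paper notes, faces and connections already satisfy them on grids, and I expect the face/degeneracy identities $\sigma_j\partial_{i,\alpha}$ to be immediate from the definitions. The identities involving degeneracies that fail for grids are those like $\sigma_i\sigma_j=\sigma_j\sigma_{i+1}$ and $\sigma_j\gamma_{i,\alpha}=\sigma_i\sigma_i$ in the case $j=i$. In these cases both sides are degenerate grids that differ only in the sizes of degenerate directions, for example size $(\dots,1,\dots)$ versus $(\dots,l_k,l_k,\dots)$ with degenerate cubes. I plan to show that any grid all of whose slices in direction $i$ equal the same $y\sigma_{i,0}$ is identified with the single-slice grid $y\sigma_{i,0}$. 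This follows by repeatedly applying \eqref{G identification} together with associativity of $+_i$.

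The main obstacle is the connection case $\sigma_j\gamma_{i,\alpha}$ with $j=i$ (and the well-definedness of $\gamma$ on identified classes). There the grid $x\sigma_{i,0}\gamma_{i,\alpha}$ has size $l\times l$ in directions $i,i+1$ with $l$ arbitrary, and it must be collapsed to $x\sigma_{i,0}\sigma_{i,0}$. I would handle it by checking blockwise that every block is $y\sigma_i\sigma_i$ for the appropriate $y$, using $\sigma_i\gamma_{i,\alpha}=\sigma_i\sigma_i$ and the degeneracy identities. I would then collapse first along direction $i+1$ and then along direction $i$ using the lemma above. The remaining identities I would declare analogous, as in the proofs for $\mathcal{R}$ and $\mathcal{S}$.
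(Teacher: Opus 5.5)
Your overall route is the paper's. You check that $\partial_{k,\alpha}$, $\sigma_k=\sigma_{k,0}$ and $\gamma_{k,\alpha}$ respect \eqref{G identification}, and then obtain the cubical identities by collapsing runs of identical degenerate slices. Reading \eqref{G identification} as the congruence ``replace a degenerate slice $y\sigma_{i,0}$ by $y\sigma_{i,0}+_iy\sigma_{i,0}$'' is also right. The paper tacitly uses this reading, since the bare whole-grid identification cannot collapse three or more copies. Your in-context check of well-definedness is slightly more careful than the paper's, which only treats the generating case. One small correction there: for connections with $k=i$, compare the whole slices $j_0=p$ and $j_0=p+1$ of $z\gamma_{k,\alpha}$, not just the $2\times 2$ block. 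These slices do coincide and are of the form $u\sigma_{k,0}$, but their entries also involve $z(\ldots,j_1,\ldots)\sigma_k$ for other $j_1$.

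The real problem is that you aim the collapse argument at the wrong identities. On directed grids, $\sigma_i\sigma_j=\sigma_j\sigma_{i+1}$ and $\sigma_j\gamma_{i,\alpha}=\sigma_i\sigma_i$ (with $j=i$) hold on the nose. Indeed $x\sigma_{i,0}$ has size $1$ in direction $i$, so $x\sigma_{i,0}\gamma_{i,\alpha}$ has size $1\times 1$ (not $l\times l$) in directions $i,i+1$. Its entry is $x(\ldots)\sigma_i\gamma_{i,\alpha}=x(\ldots)\sigma_i\sigma_i$, so it simply equals $x\sigma_{i,0}\sigma_{i,0}$.

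The identity that genuinely fails on grids is the face--connection identity $\gamma_{j,1-\alpha}\partial_{i,\alpha}=\partial_{j,\alpha}\sigma_j$ for $j=i-1,i$. Your plan does not address it, and your sentence that ``faces and connections already satisfy them on grids'' would, read literally, wave it through. The grid $x\gamma_{k,\alpha}\partial_{k,1-\alpha}$ has size $l_k$ in direction $k$, and every slice is $(x\partial_{k,1-\alpha})\sigma_{k,0}$. By contrast, $x\partial_{k,1-\alpha}\sigma_k$ has size $1$ there, so the two sides differ whenever $l_k>1$. The same happens for $x\gamma_{k,\alpha}\partial_{k+1,1-\alpha}$. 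This is exactly the case the paper's proof computes explicitly.

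Your collapse lemma settles it at once: all slices in direction $k$ equal the same $y\sigma_{k,0}$, so the grid is identified with $y\sigma_{k,0}$. The fix is to apply that lemma to $\gamma_{k,\alpha}\partial_{k,1-\alpha}$ and $\gamma_{k,\alpha}\partial_{k+1,1-\alpha}$, rather than to the $\sigma\sigma$ and $\sigma\gamma$ identities.
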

	
	\begin{proof}
		Let $X$ be a cubical set. It suffices to show that faces, degeneracies and connections are well defined on $\mathcal{G}X$. For every directed $n$-grid $x$ in $X$, one has
		\begin{align*}
			x\sigma_j\partial_{i,\alpha}&=
			\begin{cases}
				x\partial_{i,\alpha}\sigma_{j-1}&(i<j)\\
				x&(i=j)\\
				x\partial_{i-1,\alpha}\sigma_j&(i>j).
			\end{cases}
		\end{align*}
		and
		\begin{align*}
			(x\sigma_j+_jx\sigma_j)\partial_{i,\alpha}&=
			\begin{cases}
				x\partial_{i,\alpha}\sigma_{j-1}+_{j-1}x\partial_{i,\alpha}\sigma_{j-1}&(i<j)\\
				x&(i=j)\\
				x\partial_{i-1,\alpha}\sigma_j+_jx\partial_{i-1,\alpha}\sigma_j&(i>j).
			\end{cases}
		\end{align*}
		Hence faces are well defined. It is analogously proved that degeneracies and connections are well defined. One also has
		\begin{align*}
			(x\gamma_{k,\alpha}\partial_{k,1-\alpha})(i_1,\ldots,i_n)&=(x\gamma_{k,\alpha})(i_1,\ldots,i_{k-1},\alpha+(1-\alpha)l_k,i_k,\ldots,i_n)\partial_{k,1-\alpha}\\
			&=x(i_1,\ldots,i_{k-1},\alpha+(1-\alpha)l_k,i_{k+1},\ldots,i_n)\sigma_{k+1}\partial_{k,1-\alpha}\\
			&=(x\partial_{k,1-\alpha})(i_1,\ldots,i_{k-1},i_{k+1},\ldots,i_n)\sigma_k,
		\end{align*}
		which implies that
		\[
		(x\gamma_{k,\alpha}\partial_{k,1-\alpha})[k,j]=x\partial_{k,1-\alpha}\sigma_k
		\]
		for $j=1,\ldots,l_k$. Using the identification \eqref{G identification}, the cubical identity is easily verified. The remaining cubical identities are verified analogously, and therefore the claim follows.
	\end{proof}

	Define a natural transformation $\phi\colon 1_{\cSet}\Rightarrow\mathcal{G}$ by
	\[
	\phi\colon X\to\mathcal{G}X,\quad x\mapsto (x,0).
	\]
	It is immediate to verify that $\phi\colon X\to\mathcal{G}X$ is a cubical map for every cubical set $X$. Hence $\phi$ is well defined. We show that \(\phi\) is an objectwise weak equivalence by arguing similarly to the proof of Theorem \ref{main 2}.

	Let $X$ be a cubical set. For a directed $n$-grid $x$ of size $(l_1,\ldots,l_n)$ in $X$, we set $\|x\|=[0,l_1]\times\cdots\times[0,l_n]$. Faces and degeneracies on $\|x\|$ are defined in the obvious manner. We then define
	\[
	\|\mathcal{G}X\|=\left(\coprod_{x\in\mathcal{G}X}\|x\|\right)/\sim
	\]
	where the equivalence relation is generated by $\partial_{i,\alpha}\|x\|\sim\|x\partial_{i,\alpha}\|$ and $\sigma_i\|x\|\sim\|x\sigma_i\|$. One has that $\|\mathcal{G}X\|$ is well defined. For every directed grid $x$ in $X$, let $\rho\colon\|x\|\to\|\mathcal{G}X\|$ denote the canonical map. The canonical linear homeomorphisms $\|x\|\to|x|$ induce a natural homeomorphism
	\begin{equation}
		\label{G homeo}
		\|\mathcal{G}X\|\xrightarrow{\cong}|\mathcal{G}X|.
	\end{equation}

	Let $x$ be a directed $n$-grid of size $(l_1,\ldots,l_n)$ in $X$. Then one has a directed $n$-grid
	\[
	(\cdots((x\gamma_{1,1}\cdots\gamma_{n,1})[2,i_1])\cdots)[2n,i_n]
	\]
	of size $(l_1,1,\ldots,l_n,1)$. These are glued together to form
	\[
	Q(x)=\bigcup_{i_1=1}^{l_1}\cdots\bigcup_{i_n=1}^{l_n}\|(\cdots((x\gamma_{1,1}\cdots\gamma_{n,1})[2,i_1])\cdots)[2n,i_n]\|
	\]
	such that the maps  $\rho\colon\|(\cdots((x\gamma_{1,1}\cdots\gamma_{n,1})[2,i_1])\cdots)[2n,i_n]\|\to\|\mathcal{G}X\|$ for all $i_1,\ldots,i_n$ assemble to a map $\rho\colon Q(x)\to\|\mathcal{G}X\|$. Define a map $h(x)\colon\|x\|\times[0,1]\to Q(x)$ by
	\[
	h(x)(x_1,\ldots,x_n,t)=
	\begin{cases}
		(x_1,2(x_1-l_1)t+l_1,\ldots,x_n,2(x_n-l_n)t+l_n)&(0\le t\le\frac{1}{2})\\
		(2(l_1-x_1)t+2x_1-l_1,x_1,\ldots,2(l_n-x_n)t+2x_n-l_n,x_n)&(\frac{1}{2}\le t\le 1).
	\end{cases}
	\]
	We then set $\bar{h}(x)=\rho\circ h(x)$. The following lemma is immediate from the definition.

	\begin{lemma}
		\label{G t=0,1}
		Let $X$ be a cubical set, and let $x$ be a directed $n$-grid $x$ in $X$. Then
		\[
		\bar{h}(x)(-,0)=\rho\colon\|x\|\to\|\mathcal{G}X\|
		\]
		and $\bar{h}(x)(-,1)$ takes values in $\phi(|X|)\subset\|\mathcal{G}X\|$. Moreover, if $x$ is of size $(1,\ldots,1)$, then $\bar{h}(x)(-,1)$ coincides with the composite
		\[
		\|x\|=|x|\xrightarrow{\rho}|X|\xrightarrow{\phi}\|\mathcal{G}X\|.
		\]
	\end{lemma}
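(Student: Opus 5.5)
The plan is to evaluate $h(x)$ at $t=0$ and at $t=1$ directly, and then identify which faces of the glued cubes $\|(\cdots((x\gamma_{1,1}\cdots\gamma_{n,1})[2,i_1])\cdots)[2n,i_n]\|$ the resulting points lie on. This is the same pattern as Lemmas \ref{S t=0,1} and \ref{R t=0,1}.

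\emph{The case $t=0$.} From the formula,
\[
h(x)(x_1,\ldots,x_n,0)=(x_1,l_1,\ldots,x_n,l_n)\in Q(x),
\]
so every even coordinate is at its maximum. I will show that the subset $\{(y_1,l_1,\ldots,y_n,l_n)\}$ of $Q(x)$ maps onto $\|x\|$ via $\rho$, in the sense that
\[
x\gamma_{1,1}\cdots\gamma_{n,1}\partial_{2n,1}\cdots\partial_{2,1}=x
\]
at the level of grids. To see this, I will use the grid connection formula: the $(j_0,j_1)$ block of $x\gamma_{k,1}$ is $x(\ldots,j_0,\ldots)\sigma_{k+1}$ for $j_0<j_1$, equals $x(\ldots,j_0,\ldots)\gamma_{k,1}$ for $j_0=j_1$, and is $x(\ldots,j_1,\ldots)\sigma_k$ for $j_0>j_1$. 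Taking the face $\partial_{k+1,1}$ means setting $j_1=l_k$ and applying $\partial_{k+1,1}$ to each block. The cubical identities then give:
\[
\sigma_{k+1}\partial_{k+1,1}=1\quad\text{for }j_0<l_k,\qquad \gamma_{k,1}\partial_{k+1,1}=1\quad\text{for }j_0=l_k.
\]
The block with $j_0>j_1$ does not occur. So the face returns $x$ slice by slice, and $\bar h(x)(-,0)=\rho$.

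\emph{The case $t=1$.} Here
\[
h(x)(x_1,\ldots,x_n,1)=(l_1,x_1,\ldots,l_n,x_n),
\]
so every odd coordinate equals $l_k$, meaning it lies on the top face in each odd direction. I will restrict to the piece of $Q(x)$ indexed by $(i_1,\ldots,i_n)$, so that $x_k\in[i_k-1,i_k]$, and compute the corresponding face of the cube
\[
(x\gamma_{1,1}\cdots\gamma_{n,1})(l_1,i_1,\ldots,l_n,i_n).
\]
For $i_k<l_k$ we are in the case $j_0>j_1$ with $j_\alpha=j_1=i_k$, which gives $x(\ldots,i_k,\ldots)\sigma_k$. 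Applying $\partial_{k,1}$ in the odd coordinate yields $x(\ldots,i_k,\ldots)$, because $\sigma_k\partial_{k,1}=1$. For $i_k=l_k$ the block is $x(\ldots)\gamma_{k,1}$, and $\gamma_{k,1}\partial_{k,1}=1$ again gives the original cube. Hence on each piece the map factors as
\[
\|x(i_1,\ldots,i_n)\|\xrightarrow{\rho}|X|\xrightarrow{\phi}\|\mathcal{G}X\|,
\]
after translating coordinates, and so $\bar h(x)(-,1)$ lands in $\phi(|X|)$. When the size is $(1,\ldots,1)$ there is a single piece, and this is exactly the stated composite $\phi\circ\rho$.

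The main obstacle is bookkeeping. After the iterated connections and slicing, the coordinate indices shift: the $k$-th original coordinate becomes positions $2k-1$ and $2k$. So I must check that the faces $\partial_{2k-1,1}$ and $\partial_{2k,1}$ for different $k$ commute into the form used above. This follows from the cubical identities for faces versus connections at distinct indices ($j<i-1$ or $j>i$), applied repeatedly, since those identities let the face operators pass through the connections at other indices.
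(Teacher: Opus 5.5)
Your proposal is correct and is the direct verification the paper has in mind when it calls this lemma immediate from the definition. You compute the top faces $\partial_{2k,1}$ (for $t=0$) and $\partial_{2k-1,1}$ (for $t=1$) of the slices of $x\gamma_{1,1}\cdots\gamma_{n,1}$ using the block formula for grid connections together with $\sigma_{k+1}\partial_{k+1,1}=\gamma_{k,1}\partial_{k+1,1}=1$ and $\sigma_k\partial_{k,1}=\gamma_{k,1}\partial_{k,1}=1$, exactly parallel to Lemmas \ref{S t=0,1} and \ref{R t=0,1}. For $t=0$, it is worth saying explicitly that the top face in each even coordinate only involves the last slice $[2k,l_k]$, so computing on the unsliced grid agrees with the slice-wise definition of $\rho$ on $Q(x)$.
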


	The following lemma is proved quite analogously to Lemma \ref{R gluing}.

	\begin{lemma}
		\label{G gluing}
		Let $X$ be a cubical set, and let $x\in\mathcal{G}X_n$. For all $i$ and $\alpha$, the diagrams
		\[
		\xymatrix{
			\|x\partial_{i,\alpha}\|\times[0,1]\ar[d]_{\partial_{i,\alpha}\times 1}\ar[rr]^(.6){\bar{h}(x\partial_{i,\alpha})}&&\|\mathcal{G}X\|\ar@{=}[d]\\
			\|x\|\times[0,1]\ar[rr]^(.57){\bar{h}(x)}&&\|\mathcal{G}X\|
		}
		\qquad
		\xymatrix{
			\|x\sigma_i\|\times[0,1]\ar[d]_{\sigma_i\times 1}\ar[rr]^(.59){\bar{h}(x\sigma_i)}&&\|\mathcal{G}X\|\ar@{=}[d]\\
			\|x\|\times[0,1]\ar[rr]^(.57){\bar{h}(x)}&&\|\mathcal{G}X\|
		}
		\]
		commute.
	\end{lemma}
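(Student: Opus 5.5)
The plan is to reduce the commutativity of both diagrams to identities between cubes of $\mathcal{G}X$, exactly as in the proof of Lemma \ref{R gluing}, but now carried out slice by slice over the pieces forming $Q(x)$. Write $x$ for a directed $n$-grid of size $(l_1,\ldots,l_n)$ and $P(x)=x\gamma_{1,1}\cdots\gamma_{n,1}$, a directed $2n$-grid of size $(l_1,l_1,\ldots,l_n,l_n)$. I use the fact that $Q(x)$ is assembled from the slices $P(x)[2,i_1]\cdots[2n,i_n]$. Every identity below is first proved at the level of the grid $P(x)$ by the grid-level cubical identities for faces and connections recorded in Section \ref{Discrete homotopy groups of cubical sets}. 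It is then restricted to the relevant slices, using the identification \eqref{G identification} to absorb repeated degenerate slices.

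For the right diagram, I would verify $P(x\sigma_i)=P(x)\sigma_{2i-1}\sigma_{2i}$ up to \eqref{G identification}. Since $x\sigma_i$ has size $1$ in the $i$-th direction, the connection $\gamma_{i,1}$ applied to a slice of size $1$ produces a single degenerate cube. This makes $Q(x\sigma_i)$ a degeneration of $Q(x)$ in coordinates $2i-1,2i$. The formula for $h$ is compatible with $\sigma_i$ because the coordinate $x_i$ simply disappears from both pairs. The commutativity then follows directly.

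For the left diagram with $\alpha=1$, the corresponding point is $x_i=l_i$. There the $i$-th coordinate pair of $h(x)$ is $(l_i,l_i)$ for $t\le\frac12$ and $(l_i,l_i)$ for $t\ge\frac12$. I would show that $P(x)\partial_{2i,1}\partial_{2i-1,1}=P(x\partial_{i,1})$, which corresponds to the identity $Q(\cdot)\partial_{2i,1}\partial_{2i-1,1}=Q(\cdot\,\partial_{i,1})$ in Lemma \ref{R gluing}. This uses $\gamma_{i,1}\partial_{i,1}=1$ together with the behaviour of the last slice. The boundary slice $[2i,l_i]$ is the one touched, so it is compatible with the gluing that defines $Q(x)$.

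For $\alpha=0$, i.e. $x_i=0$, the two halves of the homotopy are treated separately.

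\begin{itemize}
\item For $t\le\frac12$, the $i$-th pair is $(0,\,l_i-2l_it)$. This lies on the face $\partial_{2i-1,0}$ of the pieces. I would use the connection identity $\gamma_{i,1}\partial_{i,0}=\partial_{i,0}\sigma_i$, at grid level, to get $P(x)\partial_{2i-1,0}=P(x\partial_{i,0})\sigma_{2i-1}$. The second coordinate is then collapsed by the degeneracy, so the point is identified with $h(x\partial_{i,0})$.
\item For $t\ge\frac12$, the $i$-th pair is $(2l_it-l_i,\,0)$. This lies on $\partial_{2i,0}$, and the symmetric identity collapses the first coordinate instead.
\end{itemize}

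The main obstacle is this $\alpha=0$ case at grid level. The face $\partial_{2i-1,0}$ of the $l_i\times l_i$ block of $P(x)$ is not literally a degenerate grid; it is a row of degenerate cubes $x(\ldots)\sigma$ indexed by $j_1$, with the off-diagonal cases of the definition of $\gamma$. I must check that after restricting to the slices $[2i,j]$ and applying \eqref{G identification}, this row agrees with $P(x\partial_{i,0})\sigma_{2i-1}$ piecewise. I must also check that $\rho$ is constant along the collapsed direction across all $l_i$ pieces. I would first try to handle this by the computation in the proof of Lemma \ref{G well defined}, namely $(x\gamma_{k,\alpha}\partial_{k,1-\alpha})[k,j]=x\partial_{k,1-\alpha}\sigma_k$, applied slice by slice.
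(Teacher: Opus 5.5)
Your proposal is correct and is essentially the paper's proof. The paper also reduces both diagrams to the identities $Q(x\sigma_i)=Q(x)\sigma_{2i-1}\sigma_{2i}$, $Q(x)\partial_{2i,1}\partial_{2i-1,1}=Q(x\partial_{i,1})$ and $Q(x)\partial_{2i-1,0}=Q(x\partial_{i,0})\sigma_{2i-1}$, reads off commutativity from the explicit formula for $h(x)$, and dismisses the case $\alpha=0$, $t\ge\frac{1}{2}$ (the face $\partial_{2i,0}$) as ``analogous''. The obstacle you flag is resolved as you suggest: by $\gamma_{i,1}\partial_{i,0}=\partial_{i,0}\sigma_i$, each slice of the face $\partial_{2i-1,0}$ is literally a degenerate cube, and only on the face $\partial_{2i,0}$ do you need \eqref{G identification} to merge the $l_i$ identical degenerate cells.
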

	
	\begin{proof}
		Let $x\in\mathcal{G}X_n$. By the cubical identity,
		\[
		Q(x\sigma_i)=Q(x)\sigma_{2i-1}\sigma_{2i}\quad\text{and}\quad Q(x)\partial_{2i,1}\partial_{2i-1,1}=Q(x\partial_{i,1}).
		\]
		Hence the right diagram commutes. One also has
		\[
		Q(x)\partial_{2i-1,0}=Q(x\partial_{i,0})\sigma_{2i-1}.
		\]
		Hence for $\alpha=0$ and $0\le t\le\frac{1}{2}$,
		\begin{align*}
			&\bar{h}(x)(x_1,\ldots,x_{i-1},0,x_{i+1},\ldots,x_n)\\
			&=\rho(x_1,\bar{x}_1,\ldots,x_{i-1},\bar{x}_{i-1},0,l_i-2l_it,x_{i+1},\bar{x}_{i+1},\ldots,x_n,\bar{x}_n)\\
			&=\rho(x_1,\bar{x}_1,\ldots,x_{i-1},\bar{x}_{i-1},\bar{x}_{i+1},\ldots,x_n,\bar{x}_n)
		\end{align*}
		where $\bar{x}_i=2(x_i-l_i)t+l_i$. Therefore the left diagram commutes for $\alpha=0$ and $0\le t\le\frac{1}{2}$. The remaining cases are proved analogously.
	\end{proof}

	\begin{proposition}
		\label{G}
		The natural transformation $\phi\colon 1_{\cSet}\Rightarrow\mathcal{G}$ is an objectwise weak equivalence.
	\end{proposition}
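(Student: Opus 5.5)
The plan is to follow the pattern of the proofs of Theorem \ref{main 2} and Proposition \ref{R}. Let $X$ be a cubical set. Through the natural homeomorphism \eqref{G homeo}, I will regard $\phi$ as a map $|X|\to\|\mathcal{G}X\|$. Since $\phi$ is injective, it identifies $|X|$ with the subcomplex $\phi(|X|)$, namely the union of the cells $\|x\|$ for grids $x$ of size $(1,\ldots,1)$. The goal is then to show that $\phi(|X|)$ is a deformation retract of $\|\mathcal{G}X\|$.

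The first step is to assemble the local homotopies $\bar{h}(x)\colon\|x\|\times[0,1]\to\|\mathcal{G}X\|$, with $x$ ranging over all directed grids in $X$, into a single map
\[
\bar{h}\colon\|\mathcal{G}X\|\times[0,1]\to\|\mathcal{G}X\|.
\]
By Lemma \ref{G gluing}, the family is compatible with faces and degeneracies. It is also compatible with the identification \eqref{G identification}. To see this, note that $\|x\sigma_{i,0}+_ix\sigma_{i,0}\|$ and $\|x\sigma_{i,0}\|$ are identified through the degeneracy $\sigma_i$, which collapses the $i$-th coordinate. The formula for $h$ is coordinatewise, and on a collapsed coordinate it reduces to a homotopy in the remaining coordinates, so the two definitions agree. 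Since $\|\mathcal{G}X\|\times[0,1]$ carries the quotient topology (it is a CW complex times a compact interval), the family descends to a continuous map $\bar{h}$.

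The second step is to read off the endpoints from Lemma \ref{G t=0,1}. The end $\bar{h}(-,0)$ is the identity of $\|\mathcal{G}X\|$. The end $\bar{h}(-,1)$ takes values in $\phi(|X|)$, so it factors as
\[
\|\mathcal{G}X\|\xrightarrow{f}|X|\xrightarrow{\phi}\|\mathcal{G}X\|.
\]
Here $f$ is continuous because $\phi$ is a homeomorphism onto a closed subcomplex. Moreover, $\bar{h}(-,1)\circ\phi=\phi$ on the cells of size $(1,\ldots,1)$, again by the last assertion of Lemma \ref{G t=0,1}. Because $\phi$ is injective, this gives $f\circ\phi=1_{|X|}$. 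Together with $\phi\circ f\simeq 1$ via $\bar{h}$, it follows that $\phi$ is a homotopy equivalence on geometric realizations, hence a weak equivalence.

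I expect the main obstacle to be the well-definedness of $\bar{h}$ as a global map. The target of $h(x)$ is the glued space $Q(x)$, built from many slices of $x\gamma_{1,1}\cdots\gamma_{n,1}$. Two points must be checked. First, the slices agree on their overlaps, so that $\rho\colon Q(x)\to\|\mathcal{G}X\|$ is well defined. Second, the boundary cases in Lemma \ref{G gluing} are valid across the identification \eqref{G identification}. I would handle both by verifying the statements on the elementary cubes $x(i_1,\ldots,i_n)\gamma$ using the cubical identities, exactly as in the proof of Lemma \ref{R gluing}, and then using that \eqref{G identification} only adds a degenerate slice, on which the formula for $h$ is constant in the collapsed direction.
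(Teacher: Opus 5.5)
Your proposal is correct and follows essentially the same route as the paper. You assemble the local homotopies $\bar{h}(x)$ via Lemmas \ref{G t=0,1} and \ref{G gluing} into a deformation of $\|\mathcal{G}X\|$ into $\phi(|X|)$, obtain $f$ with $f\circ\phi=1_{|X|}$, and transfer the result through the homeomorphism \eqref{G homeo}; the extra points you check (compatibility with \eqref{G identification}, the quotient topology on the product with $[0,1]$, and injectivity of $\phi$) are details the paper leaves implicit.
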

	
	\begin{proof}
		Let $X$ be a cubical set. By Lemmas \ref{G t=0,1} and \ref{G gluing}, the family of homotopies $\{\bar{h}(x)\}$, where $x$ ranges over all directed grids in $X$, assembles to a homotopy $\bar{h}\colon\|\mathcal{G}X\|\times[0,1]\to\|\mathcal{G}X\|$ satisfying that $\bar{h}(-,0)=1_{\|\mathcal{G}X\|}$ and $\bar{h}(-,1)$ factors as
		\[
		\|\mathcal{G}X\|\xrightarrow{f}|X|\xrightarrow{\phi}\|\mathcal{G}X\|
		\]
		such that $f\circ\phi=1_{|X|}$. Therefore $f$ is the homotopy inverse of $\phi\colon|X|\to\|\mathcal{G}X\|$. Composing with the homeomorphism \eqref{G homeo}, the claim follows.
	\end{proof}

	\subsection{The functor $\mathcal{G}_0$}
	
	We define a functor constructing the cubical set of 0-homotopy classes of directed grids.
	
	\begin{definition}
		Define a functor
		\[
		\mathcal{G}_0\colon\cSet\to\cSet
		\]
		by setting $\mathcal{G}_0X_n$ to be the set of all 0-homotopy classes of directed grids in $X$. Faces, degeneracies and connections are defined by
		\[
		[x]\partial_{i,\alpha}=[x\partial_{i,\alpha}],\quad[x]\sigma_i=[x\sigma_i],\quad[x]\gamma_{i,\alpha}=[x\gamma_{i,\alpha}].
		\]
	\end{definition}

	Since $x\sigma_i+_ix\sigma_i$ and $x\sigma_i$ are 0-homotopic, by Lemma \ref{G well defined}, the functor $\mathcal{G}_0$ is well defined and there is the canonical natural transformation $\pi\colon\mathcal{G}\Rightarrow\mathcal{G}_0$. We show that the natural transformation $\pi$ is an objectwise weak equivalence by applying Quillen's Theorem A. We start with the proof of Proposition \ref{realization connection} by Theorem A.

	Let $X$ be a cubical set. The \emph{cube category} $\Box\downarrow X$ is the category whose objects are cubical maps $\Box^n\to X$ for $n\ge 0$ and morphisms are commutative diagrams
	\begin{equation}
		\label{cube cat morphism}
		\xymatrix{
			\Box^m\ar[r]^\theta\ar[d]&\Box^n\ar[d]\\
			X\ar@{=}[r]&X
		}
	\end{equation}
	where $\theta$ is any cubical map. 
	Let $|\Box|\colon\Box\downarrow X\to\mathsf{Top}$ be a functor that assigns to every $\Box^n\to X$ the geometric $n$-cube $[0,1]^n$. Then the standard barycentric subdivision argument shows that there is a natural homeomorphism
	\begin{equation}
		\label{cube cat homeo}
		|X|_\gamma\cong\underset{\Box\downarrow X}{\mathrm{colim}}\,|\Box|.
	\end{equation}

	To prove Proposition \ref{realization connection}, we need the following proof, which is also proved in \cite[Proposition 3.12, Corollary 3.14]{ACK}.
	\begin{lemma}
		\label{cube cat hocolim}
		Let $X$ be a cubical set. Then the canonical map
		\[
		\underset{\Box\downarrow X}{\mathrm{hocolim}}\,|\Box|\to\underset{\Box\downarrow X}{\mathrm{colim}}\,|\Box|
		\]
		is a homotopy equivalence.
	\end{lemma}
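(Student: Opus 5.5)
The plan is to compare both sides with the nerve of the cube category $\Box\downarrow X$. On the homotopy colimit side I use the standard fact (Bousfield--Kan) that for any diagram $F\colon\mathcal{C}\to\mathsf{Top}$ of contractible spaces, the map $\mathrm{hocolim}_{\mathcal{C}}F\to\mathrm{hocolim}_{\mathcal{C}}*=|N\mathcal{C}|$ is a weak equivalence. Since each $|\Box|([1]^n)=[0,1]^n$ is contractible, $\mathrm{hocolim}_{\Box\downarrow X}|\Box|\simeq|N(\Box\downarrow X)|$. It therefore suffices to show two things: the colimit $|X|_\gamma$ is also weakly equivalent to $|N(\Box\downarrow X)|$, and the canonical map is compatible with these identifications. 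Both spaces are CW complexes, so Whitehead upgrades the weak equivalence to a homotopy equivalence.

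For the colimit side, I would use the homeomorphism \eqref{cube cat homeo}, which comes from barycentric subdivision. Each geometric cube $[0,1]^n$ is triangulated as the order complex of its face poset, that is, the barycentric subdivision into simplices indexed by chains of faces. Under this triangulation, $|X|_\gamma$ becomes the realization of the simplicial set whose $k$-simplices are chains of face inclusions among cubes of $X$. I would then compare this with $N(\Box\downarrow X)$ by Quillen's Theorem A, applied to the inclusion $j$ of the subcategory $\Box_\partial\downarrow X$ whose morphisms are only faces, or rather its quotient identifying degenerate and connection images. Concretely, for each object $x\colon\Box^n\to X$, the comma category $x\downarrow j$ or $j\downarrow x$ should have an initial or terminal object: using the factorization of Lemma \ref{factorization}, any map $\theta$ factors uniquely as a face composed with a connection/degeneracy epimorphism. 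So each object of $\Box\downarrow X$ has a canonical nondegenerate representative, its Eilenberg--Zilber type representative with respect to degeneracies and connections. Theorem A then gives $|N(\Box\downarrow X)|\simeq|N(\text{nondegenerate face category})|\cong|X|_\gamma$.

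Rather than building the Theorem A comparison by hand, a cleaner alternative is to check directly that the canonical projection $\mathrm{hocolim}\to\mathrm{colim}$ is a homotopy equivalence by showing $|\Box|$ is a Reedy-cofibrant diagram. Here $\Box\downarrow X$ inherits a Reedy (EZ) structure from $\Box$ via Lemma \ref{factorization}, with degree given by dimension, the direct part given by faces, and the inverse part given by degeneracies and connections. The latching map $\mathrm{colim}_{\partial}[0,1]^n=\partial[0,1]^n\hookrightarrow[0,1]^n$ is a cofibration. For Reedy-cofibrant diagrams over categories where the nerve of each slice of the inverse part is contractible (the EZ/elegant condition), hocolim and colim agree up to weak equivalence. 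This is the route of \cite{ACK}, and I would follow it.

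The main obstacle is the Eilenberg--Zilber property in the presence of connections. I need every cube of $X$ to be uniquely of the form $y\theta$, with $y$ nondegenerate and $\theta$ a split epi composite of connections and degeneracies. I also need the category of such epis out of $[1]^n$ to have contractible relevant slices. Uniqueness uses the unique normal form of Lemma \ref{factorization} together with the fact that connections and degeneracies in $\Box$ admit sections (faces). This yields an absolute pushout argument exactly as in the simplicial EZ lemma. I expect the verification of this elegance/EZ property to be the delicate step, and I would check it carefully on $\gamma_{i,\alpha}$.
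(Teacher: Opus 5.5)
Your committed route, in the third paragraph, is the paper's proof. $\Box\downarrow X$ is Reedy with faces as the direct part and degeneracies and connections as the inverse part, and $|\Box|$ is Reedy cofibrant since the latching object at $x$ is $\partial|x|$. The unique decomposition $x=y\theta$ then gives every degenerate or connection cube a terminal object in its matching category, so $\Box\downarrow X$ has fibrant constants and \cite[Proposition 19.9.1]{H} applies. (The paper draws that uniqueness from Lemma~\ref{factorization}; you rightly single it out as the delicate point.)

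State the criterion precisely: each matching category, i.e.\ the slice of the inverse part with the identity removed, must be empty or connected \cite[Proposition 15.10.2]{H}, not contractible, since it is empty at nondegenerate cubes. Drop the Theorem~A detour of your second paragraph, whose identification of $|X|_\gamma$ with the nerve of a face category fails when faces of nondegenerate cubes are degenerate.
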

	
	\begin{proof}
		We refer to \cite{H} for Reedy categories. By Lemma \ref{factorization}, the cube category $\mathcal{C}=\Box\downarrow X$ is a Reedy category such that $\overset{\rightarrow}{\mathcal{C}}$ consists of faces and $\overset{\leftarrow}{\mathcal{C}}$ consists of degeneracies and connections. Let $x\colon\Box^n\to X$ be an object of $\mathcal{C}$. Since the latching object for $x$ is $\partial|x|$, the functor $|\Box|$ is Reedy cofibrant. If $x$ is nondegenerate and not a connection, then the matching category of $\mathcal{C}$ at $x$ is empty. Suppose $x$ is degenerate or a connection. By Lemma \ref{factorization}, there is $y\in\Box\downarrow X$ such that $y$ is nondegenerate and not a connection, and $x$ is uniquely given by $$y(\gamma_{i_1,\alpha_1}\cdots\gamma_{i_r,\alpha_r})(\sigma_{j_1}\cdots\sigma_{j_s}),$$
		where $i_1\le\cdots\le i_r$ and $j_1<\cdots<j_s$ such that $\alpha_k\ne\alpha_{k+1}$ for $i_k=i_{k+1}$. Then the morphism
		\[
		(\gamma_{i_1,\alpha_1}\cdots\gamma_{i_r,\alpha_r})(\sigma_{j_1}\cdots\sigma_{j_s})\colon x\to y
		\]
		is the terminal object in the matching category of $\mathcal{C}$ at $x$. 
		Thus the matching category of $\mathcal{C}$ at $x$ is connected. Consequently, by \cite[Proposition 15.10.2]{H}, the Reedy category $\mathcal{C}$ has fibrant constants. Therefore by \cite[Proposition 19.9.1]{H}, the claim follows.
	\end{proof}

	For a small category $\mathcal{C}$, let $B\mathcal{C}$ denote the classifying space of $\mathcal{C}$, i.e. the simplicial geometric realization of the nerve of $\mathcal{C}$.

	\begin{lemma}
		\label{cube cat classifying space}
		Let $X$ be a cubical set. Then there is a natural homotopy equivalence
		\[
		|X|_\gamma\simeq B(\Box\downarrow X).
		\]
	\end{lemma}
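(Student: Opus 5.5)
The plan is to pass from $|X|_\gamma$ to $B(\Box\downarrow X)$ through the homotopy colimit, using the natural homeomorphism \eqref{cube cat homeo} together with Lemma \ref{cube cat hocolim}. These two results give natural maps
\[
|X|_\gamma\cong\underset{\Box\downarrow X}{\mathrm{colim}}\,|\Box|\xleftarrow{\ \simeq\ }\underset{\Box\downarrow X}{\mathrm{hocolim}}\,|\Box|,
\]
where the second map is a homotopy equivalence. It therefore suffices to identify $\mathrm{hocolim}_{\Box\downarrow X}|\Box|$ with $B(\Box\downarrow X)$, naturally in $X$.

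For that identification I will use the standard fact (see \cite{H}) that the homotopy colimit of the constant functor $*$ on a small category $\mathcal{C}$ is $B\mathcal{C}$. Each value $|\Box|(x)=[0,1]^n$ is contractible, so the unique natural transformation $|\Box|\Rightarrow *$ is an objectwise weak equivalence between objectwise cofibrant diagrams of spaces. The homotopy invariance of homotopy colimits \cite{H} then shows that the induced map
\[
\underset{\Box\downarrow X}{\mathrm{hocolim}}\,|\Box|\to\underset{\Box\downarrow X}{\mathrm{hocolim}}\,*=B(\Box\downarrow X)
\]
is a weak equivalence.

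The maps obtained this way are only weak equivalences, so I will next upgrade them to homotopy equivalences. By Lemma \ref{realization CW}, which is easily adapted to $|X|_\gamma$ since it is a quotient of the same cells, $|X|_\gamma$ is a CW complex. The classifying space $B(\Box\downarrow X)$ is a CW complex, being the realization of a simplicial set. The Bousfield--Kan homotopy colimit of CW complexes is the realization of a simplicial space whose levels are disjoint unions of cubes, so it is also a CW complex, or at least of the homotopy type of one. Whitehead's theorem then turns the weak equivalences into homotopy equivalences, and composing with a homotopy inverse of the map in Lemma \ref{cube cat hocolim} gives $|X|_\gamma\simeq B(\Box\downarrow X)$.

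Naturality follows because a cubical map $X\to Y$ induces a functor $\Box\downarrow X\to\Box\downarrow Y$ compatible with $|\Box|$ and with the terminal transformation, and all the maps above are natural in the indexing diagram. The main obstacle is that the zig-zag involves a backwards arrow, so the equivalence is natural only as a zig-zag, or as a natural isomorphism in the homotopy category. I expect this is the intended meaning of the statement. If a direct map is wanted, I would first try to write the composite explicitly as $\mathrm{hocolim}\to B$ precomposed with a natural section, but I would settle for the zig-zag.
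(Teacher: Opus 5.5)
Your proposal is correct and follows the paper's proof. The paper likewise identifies $B(\Box\downarrow X)$ with the homotopy colimit of the constant diagram, uses homotopy invariance to compare it with $\mathrm{hocolim}_{\Box\downarrow X}|\Box|$, and then concludes with Lemma~\ref{cube cat hocolim} and the homeomorphism \eqref{cube cat homeo}. Your two extra remarks are accurate refinements of steps the paper passes over silently: the Whitehead argument upgrading weak equivalences to homotopy equivalences, and the point that naturality holds only for the zig-zag $|X|_\gamma\cong\mathrm{colim}\leftarrow\mathrm{hocolim}\to B(\Box\downarrow X)$.
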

	
	\begin{proof}
		By definition, $B(\Box\downarrow X)$ is the homotopy colimit of the constant functor over $\Box\downarrow X$. Hence by the homotopy invariance of homotopy colimits, there is a natural homotopy equivalence
		\[
		B(\Box\downarrow X)\simeq\underset{\Box\downarrow X}{\mathrm{hocolim}}\,|\Box|.
		\]
		The claim follows from Lemma \ref{cube cat hocolim}.
	\end{proof}

	We are ready to prove Proposition \ref{realization connection}.

	\begin{proof}
		[Proof of Proposition \ref{realization connection}]
		Let $X$ be a cubical set, and let $\overline{\Box\downarrow X}$ denote the category whose objects are cubical maps $\Box^n\to X$ and morphisms are commutative diagrams \eqref{cube cat morphism} such that $\theta$ is the composite of faces and degeneracies. Then
		\[
		\underset{\overline{\Box\downarrow X}}{\mathrm{colim}}\,|\Box|=|X|
		\]
		and analogously to Lemma \ref{cube cat classifying space}, we can show that
		\begin{equation}
			\label{realization classfying space without connections}
			|X|\simeq B(\overline{\Box\downarrow X}).
		\end{equation}
		Hence it is sufficient to show that the inclusion $\iota\colon\overline{\Box\downarrow X}\to\Box\downarrow X$ induces a homotopy equivalence on classifying spaces. For every cubical map $x\colon\Box^n\to X$, the comma category $\iota/x$ is isomorphic to $\overline{\Box\downarrow\Box^n}$ . The inclusion $\overline{\Box\downarrow\Box^n}\to\Box\downarrow\Box^n$ induces the quotient map
		\[
		|\Box^n|=\underset{\overline{\Box\downarrow \Box^n}}{\mathrm{colim}}\,|\Box|\to\underset{\Box\downarrow\Box^n}{\mathrm{colim}}\,|\Box|=|\Box^n|_\gamma.
		\]
		By \cite[Propositions 2.1 and 2.2]{A}, the quotient map induces isomorphisms in $\pi_1$ and homology. Clearly, $|\Box^n|_\gamma\cong[0,1]^n$ is contractible. Hence $|\Box^n|$ is also contractible, and by \eqref{realization classfying space without connections}, $B(\iota/x)\cong B(\overline{\Box\downarrow\Box^n})$ is also contractible. By Quillen's Theorem A, the map $\iota\colon B(\overline{\Box\downarrow X})\to B(\Box\downarrow X)$ is a homotopy equivalence.
	\end{proof}

	We characterize the 0-homotopy between directed grids as follows. For $n\ge 0$, let $[n]$ denote the poset $\{0<\cdots<n\}$. We call a surjective poset map $[m]\to[n]$ an \emph{expander}. Let $X$ be a cubical set, and let $\epsilon_i\colon[m_i]\to[l_i]$ be an expander. For a directed $n$-grid $x$ of size $(l_1,\ldots,l_n)$ in $X$, we define a directed $n$-grid $x^{\epsilon_i}$ of size $(l_1,\ldots,l_{i-1},m_i,l_{i+1},\ldots,l_n)$ by
	\[
	x^{\epsilon_i}=x_1+_i\cdots+_ix_n,
	\]
	where
	\[
	x_j=x[i,j]+_i\underbrace{(x[i,j]\partial_{i,1}\sigma_i)+_i\cdots+(x[i,j]\partial_{i,1}\sigma_i)}_{|\epsilon_i^{-1}(j)|-1}.
	\]
	We call $\epsilon=(\epsilon_1,\ldots,\epsilon_n)$ an \emph{$n$-expander}, and define
	\[
	x^\epsilon=(\cdots(x^{\epsilon_1})\cdots)^{\epsilon_n}.
	\]
	Clearly, one has $x\le x^\epsilon$. The following lemma is immediate from the definition.

	\begin{lemma}
		\label{expander}
		Let $X$ be a cubical set. If $n$-grids $x$ and $y$ in $X$ satisfy $x\le y$, then there exists an expander $\epsilon$ such that
		\[
		y=x^\epsilon.
		\]
	\end{lemma}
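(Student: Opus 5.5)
The plan is to induct on the length of the chain witnessing $x\le y$. By definition, $x\le y$ means there are grids $y=z_1\xrightarrow{k_1}z_2\xrightarrow{k_2}\cdots\xrightarrow{k_{m-1}}z_m=x$: each step $z_j\xrightarrow{k_j}z_{j+1}$ deletes a single degenerate slice $z_j[k_j,p]=w\sigma_{k_j,\alpha}$ in direction $k_j$. So I would first prove the one-step statement and then show that expanders can be composed.

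\textbf{One step.} Suppose $z\xrightarrow{k}z'$ deletes the slice at position $p$ in direction $k$. Since the grids here are directed, $\alpha=0$ and the deleted slice is $w\sigma_k$. The grid condition forces $w=z'[k,p-1]\partial_{k,1}$ when $p>1$, that is, the face shared with the preceding slice. When $p=1$, it forces $w=z'[k,1]\partial_{k,0}$.

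In the case $p>1$, the grid $z$ is exactly $z'$ with one extra copy of $z'[k,p-1]\partial_{k,1}\sigma_k$ inserted after slice $p-1$. This is $z'^{\epsilon}$, where $\epsilon$ is the identity in every coordinate except $k$. In coordinate $k$, $\epsilon$ is the surjection $[l_k+1]\to[l_k]$ that repeats one value, chosen so that $|\epsilon^{-1}(p-1)|=2$ in the slice-indexing convention of the definition.

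The case $p=1$ is the one mismatch with the stated definition of $x^{\epsilon_i}$, which only appends degenerate slices after each $x[i,j]$, using $\partial_{i,1}$. If $l_k\ge 1$, I would reindex so that the inserted degenerate slice is regarded as following slice $0$. Equivalently, I would read the expander as assigning extra copies to the initial face, where the constant slice $\partial_{k,0}\sigma_k$ agrees with $\partial_{k,1}\sigma_k$ of a ``virtual'' $0$-th slice. This is where the convention that expanders are surjections $[m]\to[l]$ on $\{0,\dots,l\}$ (with $l+1$ points, not $l$) is used: the fibre over $0$ accounts for degenerate slices before the first genuine slice.

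\textbf{Composition.} Two facts are needed. First, for a fixed direction $i$, $(x^{\epsilon_i})^{\epsilon'_i}=x^{\epsilon_i\circ\epsilon'_i}$, and a composite of surjective poset maps is again an expander. Second, expansions in different directions commute, because inserting a degenerate slice in direction $i$ and then in direction $j\neq i$ gives the same grid in either order. The second fact follows from the interchange law \eqref{interchange law} together with the observation that a degenerate slice in direction $i$, sliced in direction $j$, is still degenerate in direction $i$. Together these let me rewrite an arbitrary interleaved chain of single insertions as $x^{\epsilon}$ with $\epsilon=(\epsilon_1,\ldots,\epsilon_n)$ applied in the order of the definition.

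\textbf{Main obstacle.} The only nonroutine point is the bookkeeping of indices: matching the insertion position to the fibres of $\epsilon_i$, especially degenerate slices at the very beginning, and checking that reordering across directions yields exactly $(\cdots(x^{\epsilon_1})\cdots)^{\epsilon_n}$ rather than merely something $0$-homotopic to it. I would handle this by proving the commutation $(x^{\epsilon_i})^{\epsilon_j}=(x^{\epsilon_j})^{\epsilon_i}$ directly from the slicewise formula.
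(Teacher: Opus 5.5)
Your argument is correct. It is the unwinding of the definitions that the paper leaves implicit when it calls the lemma immediate: peel off one degenerate slice at a time, realize each insertion as a one-coordinate expander, and reassemble using $(x^{\epsilon_i})^{\epsilon_i'}=x^{\epsilon_i\circ\epsilon_i'}$ together with the commutation of expansions in different directions.

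Your treatment of a degenerate slice inserted before the first slice is also a necessary correction. As literally written, $x^{\epsilon_i}$ only appends copies of $x[i,j]\partial_{i,1}\sigma_i$ for $j\ge 1$. That formula cannot produce such grids, and it does not even give the stated size $m_i$ unless $|\epsilon_i^{-1}(0)|=1$. Letting the fibre over $0$ contribute $|\epsilon_i^{-1}(0)|-1$ initial copies of $x[i,1]\partial_{i,0}\sigma_i$ is the right reading.
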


	Let $x$ be a directed $n$-grid of size $(l_1,\ldots,l_n)$. For $i=1,\ldots,n$, we say that $x$ is \emph{$i$-minimal} if $x[i,j]$ is nondegenerate for all $j=1,\ldots,l_i$. There exists a unique sequence $1\le j_1<\cdots<j_k\le l_i$ such that for $j\ne j_1,\ldots,j_k$, the slice $x[i,j]$ is degenerate. Hence
	\[
	x[i]=x[i,j_1]+_i\cdots+_ix[i,j_k]
	\]
	is $i$-minimal and 0-homotopic to $x$. Set
	\[
	x_{\mathrm{min}}=(\cdots(x[1])\cdots)[n]
	\]
	which is minimal, i.e. $i$-minimal for all $i=1,\ldots,n$. By Lemma \ref{expander}, we may define the expander $\epsilon(x)$ by
	\[
	x=(x_{\mathrm{min}})^{\epsilon(x)}.
	\]
	By definition, $\epsilon$ is unique. Clearly, directed $n$-grids $a$ and $b$ are 0-homotopic if and only if $a_{\mathrm{min}}=b_{\mathrm{min}}$. Hence for $y\in\mathcal{G}_0X$, we may define $y_\mathrm{min}\in\mathcal{G}X$ in the obvious manner.

	Let $X$ be a cubical set. Consider the functor $\pi\colon\Box\downarrow\mathcal{G}X\to\Box\downarrow\mathcal{G}_0X$. For $x\in\mathcal{G}_0X$, objects of the comma category $\pi/x$ are pairs $(\theta,y)$ of a morphism $\theta\colon\Box^m\to\Box^n$ and an object $y\in\mathcal{G}X_m$ such that $\pi(y)=x\theta$. In particular, $\pi/x$ is the cube category of the cubical set $\pi_x$ defined by the pullback of cubical sets
	\[
	\xymatrix{
		\pi_x\ar[r]\ar[d]&\Box^n\ar[d]^x\\
		\mathcal{G}X\ar[r]^\pi&\mathcal{G}_0X.
	}
	\]
	Every $m$-cube of $\pi_x$ is a pair $(\theta,y)$ as before, and faces, degeneracies and connections are defined by
	\begin{align*}
		(\theta,y)\partial_{i,\alpha}&=(\theta\partial_{i,\alpha},y\partial_{i,\alpha}),\\
		(\theta,y)\sigma_i&=(\theta\sigma_i,y\sigma_i),\\
		(\theta,y)\gamma_{i,\alpha}&=(\theta\gamma_{i,\alpha},y\gamma_{i,\alpha}).
	\end{align*}
	We aim to show that $|\pi_x|$ is contractible for all $x\in\mathcal{G}_0X$.

	Thus minimal representatives commute well with degeneracies and connections, but taking a face may create new degenerate slices; the following formulas record the required correction. Note that for $z\in\mathcal{G}X_n$ we have
	\[
	z_{\min}\partial_{i,\alpha}=((z\partial_{i,\alpha})_{\min})^{\epsilon(z_{\min}\partial_{i,\alpha})},
	\]
	and $\epsilon(z_{\min}\partial_{i,\alpha})$ is nontrivial in general. We also have
	\[
	z_{\min}{\sigma_i}=(z\sigma_i)_{\min},\quad z_{\min}\gamma_{i,\alpha}=(z\gamma_{i,\alpha})_{\min}.
	\]
	For an expander $\epsilon=(\epsilon_1,\ldots,\epsilon_n)$, define
	\begin{align*}
		\epsilon\partial_i&=(\epsilon_1,\ldots,\epsilon_{i-1},\epsilon_{i+1},\ldots,\epsilon_n)\\
		\epsilon\sigma_i&=(\epsilon_1,\ldots,\epsilon_{i-1},1,\epsilon_i,\ldots,\epsilon_n)\\
		\epsilon\gamma_i&=(\epsilon_1,\ldots,\epsilon_i,\epsilon_i,\ldots,\epsilon_n).
	\end{align*}
	Then by the uniqueness of $\epsilon(-)$ we have
	\begin{align*}
		\epsilon(z\partial_{i,\alpha})&=\epsilon(z_{\min}\partial_{i,\alpha})\circ(\epsilon(z)\partial_i),\\
		\epsilon(z\sigma_i)&=\epsilon(z)\sigma_i,\\
		\epsilon(z\gamma_{i,\alpha})&=\epsilon(z)\gamma_i.
	\end{align*}
	Then for
	\[
	\theta=(\partial_{i_1,\alpha_1}\cdots\partial_{i_r,\alpha_r})(\gamma_{j_1,\beta_1}\cdots\gamma_{j_s,\beta_s})(\sigma_{k_1}\cdots\sigma_{k_t})
	\]
	we have
	\[
	\epsilon(x_{\min}\theta)=\epsilon((x\partial_{i_1,\alpha_1}\cdots\partial_{i_r,\alpha_r})_{\min})(\gamma_{j_1}\cdots\gamma_{j_s})(\sigma_{k_1}\cdots\sigma_{k_t}).
	\]
	For expanders $\epsilon=(\epsilon_1,\ldots,\epsilon_n)$ and $\delta=(\delta_1,\ldots,\delta_n)$, we define an expander
	\[
	\epsilon\odot\delta=(\epsilon_1,\delta_1,\ldots,\epsilon_n,\delta_n).
	\]
	For $(\theta,y)\in(\pi_x)_m$, let
	\[
	Q(\theta,y)=(\theta\gamma_{m,1}\cdots\gamma_{1,1},(y_\mathrm{min}\gamma_{m,1}\cdots\gamma_{1,1})^{\epsilon(y)\odot\epsilon(x_\mathrm{min}\theta)})
	\]
	and define a map $h(\theta,y)\colon|(\theta,y)|\times[0,1]\to|Q(\theta,y)|$ by
	\begin{align*}
		&h(\theta,y)(x_1,\ldots,x_m,t)\\
		&=
		\begin{cases}
			(x_1,2(x_1-1)t+1,\ldots,x_m,2(x_m-1)t+1)&(0\le t\le\frac{1}{2})\\
			(2(1-x_1)t+2x_1-1,x_1,\ldots,2(1-x_m)t+2x_m-1,x_m)&(\frac{1}{2}\le t\le 1).
		\end{cases}
	\end{align*}
	Set $\bar{h}(\theta,y)=\rho\circ h(\theta,y)\colon|(\theta,y)|\times[0,1]\to|\pi_x|$. The following lemma is immediate from the definition.

	\begin{lemma}
		\label{G_0 t=0,1}
		Let $X$ be a cubical set. For every $x\in\mathcal{G}_0X$ and $(\theta,y)\in\pi_x$,
		\[
		\bar{h}(\theta,y)(-,0)=\rho\colon|(\theta,y)|\to|\pi_x|
		\]
		and $\bar{h}(\theta,y)(-,1)$ takes values in $|(1,x_\mathrm{min})|\subset|\pi_x|$. Moreover,
		\[
		\bar{h}(\theta,x_\mathrm{min}\theta)(-,1)=\rho\colon|(\theta,x_\mathrm{min}\theta)|\to|\pi_x|.
		\]
	\end{lemma}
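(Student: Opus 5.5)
The plan is to evaluate the homotopy $h(\theta,y)$ at $t=0$ and $t=1$ directly, read off which face of $Q(\theta,y)$ the image lands in, and identify that face using the cubical identities together with the expander bookkeeping above. This parallels Lemmas \ref{S t=0,1}, \ref{R t=0,1} and \ref{G t=0,1}. Throughout, write $m$ for the dimension of $(\theta,y)$ and $\tilde Q=y_{\min}\gamma_{m,1}\cdots\gamma_{1,1}$.

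\emph{The value at $t=0$.} Here $h(\theta,y)(x_1,\ldots,x_m,0)=(x_1,1,\ldots,x_m,1)$, so the image lies in the face $Q(\theta,y)\partial_{2,1}\partial_{3,1}\cdots\partial_{m+1,1}$, taken successively so that indices shift as in Lemma \ref{S t=0,1}. I would compute this face in each component of $Q(\theta,y)$ separately.

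In the first component, the cubical identity $\gamma_{i,1}\partial_{i+1,1}=1$, applied repeatedly, gives $\theta\gamma_{m,1}\cdots\gamma_{1,1}\partial_{2,1}\cdots\partial_{m+1,1}=\theta$.

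In the second component, the even-indexed coordinates carry the expanders $\epsilon(x_{\min}\theta)$. Taking the faces $\partial_{2k,1}$ deletes these coordinates, leaving the expander $\epsilon(y)$ on the surviving odd coordinates. The same identity then gives $\tilde Q\partial_{2,1}\cdots\partial_{m+1,1}=y_{\min}$, so the whole face is $(y_{\min})^{\epsilon(y)}=y$.

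Hence the face is $(\theta,y)$ itself, and $\bar h(\theta,y)(-,0)=\rho$.

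\emph{The value at $t=1$.} Here $h(\theta,y)(x_1,\ldots,x_m,1)=(1,x_1,\ldots,1,x_m)$, so we take the faces $\partial_{1,1}\partial_{2,1}\cdots\partial_{m,1}$ on the odd coordinates. Using $\gamma_{i,1}\partial_{i,1}=1$, the first component becomes $\theta$. In the second component the odd coordinates are deleted together with $\epsilon(y)$, leaving $(y_{\min})^{\epsilon(x_{\min}\theta)}$.

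The key claim is $(y_{\min})^{\epsilon(x_{\min}\theta)}=x_{\min}\theta$. It holds because $\pi(y)=x\theta$, and 0-homotopic grids have equal minimal parts, so $y_{\min}=(x_{\min}\theta)_{\min}$. The definition of $\epsilon(-)$ then gives $((x_{\min}\theta)_{\min})^{\epsilon(x_{\min}\theta)}=x_{\min}\theta$. So the $t=1$ end lands in the cube $(\theta,x_{\min}\theta)$ of $\pi_x$. Since $x_{\min}\theta=(1,x_{\min})\theta$ in $\pi_x$, this cube is a face or degeneracy of $(1,x_{\min})$, and its realization lies in $|(1,x_{\min})|$ as claimed.

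\emph{The special case $y=x_{\min}\theta$.} Then $\epsilon(y)=\epsilon(x_{\min}\theta)$. The map $h$ at $t=1$ sends $(x_1,\ldots,x_m)\mapsto(1,x_1,\ldots,1,x_m)$, which is exactly the face inclusion identified above. That face equals $(\theta,x_{\min}\theta)=(\theta,y)$, so $\bar h(\theta,y)(-,1)=\rho$.

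\emph{Main obstacle.} The delicate point is the expander bookkeeping. One must check that taking faces of the expanded grid $(\tilde Q)^{\epsilon(y)\odot\epsilon(x_{\min}\theta)}$ commutes with removing the corresponding entries of the interleaved expander. That is, for $\delta=\epsilon\odot\epsilon'$, the face of $z^{\delta}$ in an even coordinate (at the end $1$) equals $(z\partial)^{\delta\partial}$, and similarly for odd coordinates. I would verify this directly from the definition of $x^{\epsilon_i}$: the inserted degenerate slices $x[i,j]\partial_{i,1}\sigma_i$ in a direction $i\ne k$ restrict compatibly under $\partial_{k,\alpha}$, while the slices in direction $k$ are simply discarded. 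The remaining identities are routine consequences of the cubical identities for connections.
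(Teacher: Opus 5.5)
Your argument is correct and is the direct endpoint computation the paper means when it calls this lemma immediate from the definition. The $t=0$ and $t=1$ ends are the faces $\partial_{2,1}\cdots\partial_{m+1,1}$ and $\partial_{1,1}\cdots\partial_{m,1}$ of $Q(\theta,y)$, and you correctly identify them as $(\theta,y)$ and $(\theta,x_{\min}\theta)=(1,x_{\min})\theta$.

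One correction: $\theta$ may involve connections, which $|\cdot|$ does not collapse, so $(1,x_{\min})\theta$ need not be a face or degeneracy of $(1,x_{\min})$. The containment in $|(1,x_{\min})|$ should therefore be read as in the proof of Proposition~\ref{G_0}, namely as lying in the image of $|\Box^n|$ under the map induced by $\iota\colon\Box^n\to\pi_x$, $\theta\mapsto(\theta,x_{\min}\theta)$. Your identification establishes exactly this.
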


	The following lemma is proved quite analogously to Lemma \ref{R gluing}.

	\begin{lemma}
		\label{G_0 gluing}
		Let $X$ be a cubical set. For every $x\in\mathcal{G}_0X$ and $(\theta,y)\in\pi_x$, the diagrams
		\[
		\xymatrix{
			|(\theta,y)\partial_{i,\alpha}|\times[0,1]\ar[d]_{\partial_{i,\alpha}\times 1}\ar[rr]^(.6){\bar{h}((\theta,y)\partial_{i,\alpha})}&&|\pi_x|\ar@{=}[d]\\
			|(\theta,y)|\times[0,1]\ar[rr]^(.57){\bar{h}(\theta,y)}&&|\pi_x|
		}
		\qquad
		\xymatrix{
			|(\theta,y)\sigma_i|\times[0,1]\ar[d]_{\sigma_i\times 1}\ar[rr]^(.59){\bar{h}((\theta,y)\sigma_i)}&&|\pi_x|\ar@{=}[d]\\
			|(\theta,y)|\times[0,1]\ar[rr]^(.57){\bar{h}(\theta,y)}&&|\pi_x|
		}
		\]
		commute.
	\end{lemma}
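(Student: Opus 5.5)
The plan follows the pattern of Lemmas \ref{R gluing} and \ref{G gluing}. Everything reduces to a few identities relating $Q$ to the cubical operators on $\pi_x$. Write $m$ for the dimension of $(\theta,y)$. I will aim to establish
\[
Q((\theta,y)\sigma_i)=Q(\theta,y)\sigma_{2i-1}\sigma_{2i},\qquad Q(\theta,y)\partial_{2i,1}\partial_{2i-1,1}=Q((\theta,y)\partial_{i,1}),
\]
\[
Q(\theta,y)\partial_{2i-1,0}=Q((\theta,y)\partial_{i,0})\sigma_{2i-1}.
\]
Granting these, commutativity of the diagrams is a coordinatewise check on the explicit formula for $h(\theta,y)$, done as in Lemma \ref{R gluing}.

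For the degeneracy identity, I treat the two components of $Q$ separately.
\begin{itemize}
\item The first component is $\theta\gamma_{m,1}\cdots\gamma_{1,1}$. It satisfies the identity by the cubical identities $\sigma_j\gamma$, which is exactly the computation used for $Q$ in the proof of Lemma \ref{S gluing}.
\item For the second component, use $(y\sigma_i)_{\min}=y_{\min}\sigma_i$ and $\epsilon(y\sigma_i)=\epsilon(y)\sigma_i$. Also use $\epsilon(x_{\min}\theta\sigma_i)=\epsilon(x_{\min}\theta)\sigma_i$, which follows from the factorization formula for $\epsilon(x_{\min}\theta)$.
\item Since $\epsilon\sigma_i$ inserts a trivial entry, the interleaving satisfies $(\epsilon(y)\sigma_i)\odot(\epsilon(x_{\min}\theta)\sigma_i)=(\epsilon(y)\odot\epsilon(x_{\min}\theta))\sigma_{2i-1}\sigma_{2i}$.
\item Expanding commutes with degenerate grids, i.e.\ $(z\sigma_j)^{\epsilon\sigma_j}=z^\epsilon\sigma_j$. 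This gives the right diagram.
\end{itemize}

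For the face identities, the first component is handled exactly as in Lemma \ref{S gluing}. The second component is where the real work lies. Taking a face of $y_{\min}$ need not give a minimal grid: $y_{\min}\partial_{i,\alpha}=((y\partial_{i,\alpha})_{\min})^{\epsilon(y_{\min}\partial_{i,\alpha})}$. So I must show that the expanders compose correctly, namely
\[
\epsilon(y\partial_{i,\alpha})=\epsilon(y_{\min}\partial_{i,\alpha})\circ(\epsilon(y)\partial_i),
\]
and that the analogous formula holds for $\epsilon(x_{\min}\theta\partial_{i,\alpha})$. One also needs that the two $\odot$-interleaved expanders restrict correctly under $\partial_{2i,1}\partial_{2i-1,1}$ and $\partial_{2i-1,0}$.

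On the connection side, the key fact is that $\gamma_{j,1}$ of a minimal grid has faces equal to the original grid or a degenerate grid, with the expander data duplicated. This follows from the computation $(x\gamma_{k,\alpha}\partial_{k,1-\alpha})[k,j]=x\partial_{k,1-\alpha}\sigma_k$ in Lemma \ref{G well defined}.

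It remains to check that the result lands in $\pi_x$ and is independent of representatives. The second component is an honest directed grid whose $0$-homotopy class is $x\theta\gamma_{m,1}\cdots\gamma_{1,1}$, because expanding does not change the class. So the pair is a cube of $\pi_x$.

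Independence of representatives is the main obstacle: whether $\epsilon(y)\odot\epsilon(x_{\min}\theta)$ behaves compatibly with faces when the face of a minimal grid acquires new degenerate slices. I would handle it by reducing to a single coordinate, since the expander decomposes coordinatewise along $\odot$. In the $i$-th pair of coordinates, I would verify directly that applying $\partial_{2i,1}\partial_{2i-1,1}$ (respectively $\partial_{2i-1,0}$) to $(y_{\min}\gamma)^{\epsilon_i(y)\odot\epsilon_i(x_{\min}\theta)}$ yields the expander recorded by $Q$ of the face. This uses the uniqueness of $\epsilon(-)$, and the $\sigma_{2i-1}$ in the third identity absorbs the degenerate coordinate.
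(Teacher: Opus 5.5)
Your plan is essentially the paper's proof. It uses the same three identities for $Q$, the same degeneracy argument via $\epsilon(y\sigma_i)=\epsilon(y)\sigma_i$ and $\epsilon(x_{\min}\theta\sigma_i)=\epsilon(x_{\min}\theta)\sigma_i$, and, for faces, the correction formula $\epsilon(y\partial_{i,\alpha})=\epsilon(y_{\min}\partial_{i,\alpha})\circ(\epsilon(y)\partial_i)$ together with its analogue for $x_{\min}\theta$. The step you should make explicit is that $(x_{\min}\theta)_{\min}=y_{\min}$ because $\pi(y)=x\theta$; this, not independence of representatives, is the real content of the face case, and it means one and the same correction $\epsilon=\epsilon(y_{\min}\partial_{i,\alpha})$ enters both $\odot$-slots of every surviving pair $j\neq i$ (not the $i$-th pair, which the face deletes or makes degenerate), so it factors out as $\epsilon\odot\epsilon$ and is absorbed via $y_{\min}\partial_{i,1}\gamma_{m-1,1}\cdots\gamma_{1,1}=((y\partial_{i,1})_{\min}\gamma_{m-1,1}\cdots\gamma_{1,1})^{\epsilon\odot\epsilon}$ --- the paper obtains the same common $\epsilon$ through the standard factorization of $\theta$.
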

	
	\begin{proof}
		By Lemma \ref{factorization}, one has the standard factorization
		\[
		\theta=(\partial_{i_1,\alpha_1}\cdots\partial_{i_r,\alpha_r})(\gamma_{j_1,\beta_1}\cdots\gamma_{j_s,\beta_s})(\sigma_{k_1}\cdots\sigma_{k_t})
		\]
		where $m=n-t+s+r$. Again by Lemma \ref{factorization}, one also has the standard factorization
		\[
		(\gamma_{j_1,\beta_1}\cdots\gamma_{j_s,\beta_s})(\sigma_{k_1}\cdots\sigma_{k_t})\partial_{i,\alpha}=\partial_{i',\alpha'}(\gamma_{j'_1,\beta'_1}\cdots\gamma_{j'_u,\beta'_u})(\sigma_{k'_1}\cdots\sigma_{k'_v})
		\]
		and
		\[
		(\partial_{i_1,\alpha_1}\cdots\partial_{i_r,\alpha_r})\partial_{i',\alpha'}=\partial_{i_1',\alpha_1'}\cdots\partial_{i_{r+1}',\alpha_{r+1}'}.
		\]
		Set
		\[
		\epsilon=\epsilon((x\partial_{i_1,\alpha_1}\cdots\partial_{i_r,\alpha_r})_\mathrm{min}\partial_{i',\alpha'})(\gamma_{j'_1,\beta'_1}\cdots\gamma_{j'_u,\beta'_u})(\sigma_{k'_1}\cdots\sigma_{k'_v}).
		\]
		Since $(x\partial_{i_1,\alpha_1}\cdots\partial_{i_r,\alpha_r})_\mathrm{min}\partial_{i',\alpha'}=((x\partial_{i_1',\alpha_1'}\cdots\partial_{i_{r+1}',\alpha_{r+1}'})_\mathrm{min})^{\epsilon((x\partial_{i_1,\alpha_1}\cdots\partial_{i_r,\alpha_r})_\mathrm{min}\partial_{i',\alpha'})}$, we have
		\[
		y_\mathrm{min}\partial_{i,\alpha}=((y\partial_{i,\alpha})_\mathrm{min})^\epsilon.
		\]
		Since $((y\partial_{i,\alpha})_\mathrm{min})^{\epsilon(y\partial_{i,\alpha})}=(y_\mathrm{min}\partial_{i,\alpha})^{\epsilon(y)\partial_i}$, we obtain
		\[
		\epsilon(y\partial_{i,\alpha})=\epsilon(((y\partial_{i,\alpha})_\mathrm{min})^\epsilon)=\epsilon\circ(\epsilon(y)\partial_i).
		\]
		Analogously, we can obtain $\epsilon(x_\mathrm{min}\theta\partial_{i,\alpha})=\epsilon\circ\epsilon((x_\mathrm{min}\theta)\partial_i)$. Hence
		\begin{align*}
			\epsilon(y\partial_{i,\alpha})\odot\epsilon(x_\mathrm{min}\theta\partial_{i,\alpha})&=(\epsilon\circ(\epsilon(y)\partial_i))\odot(\epsilon\circ\epsilon((x_\mathrm{min}\theta)\partial_i))\\
			&=(\epsilon\gamma_{m-1}\cdots\gamma_{1,1})\circ((\epsilon(y)\odot\epsilon(x_\mathrm{min}\theta))\partial_{2i}\partial_{2i-1})
		\end{align*}
		where $y\in\mathcal{G}X_m$. It is straightforward to verify that
		\[
		(y_\mathrm{min}\gamma_{m,1}\cdots\gamma_{1,1})^{\epsilon(y)\odot\epsilon(x_\mathrm{min}\theta)}\partial_{2i,1}\partial_{2i-1,1}=((y\partial_{i,1})_\mathrm{min}\gamma_{m-1,1}\cdots\gamma_{1,1})^{\epsilon(y\partial_{i,1})\odot\epsilon(x_\mathrm{min}\theta\partial_{i,1})}.
		\]
		Hence by the cubical identity,
		\[
		Q(\theta,y)\partial_{2i,1}\partial_{2i-1,1}=Q((\theta,y)\partial_{i,1}).
		\]
		Therefore the left diagram commutes for $\alpha=1$. The case that $\alpha=0$ is proved analogously.

		Clearly, $\epsilon(y\sigma_i)=\epsilon(y)\sigma_i$ and $\epsilon(x_\mathrm{min}\theta\sigma_i)=\epsilon(x_\mathrm{min}\theta)\sigma_i$. Hence we can verify that
		\[
		Q((\theta,y)\sigma_i)=Q(\theta,y)\sigma_{2i-1}\sigma_{2i}.
		\]
		Therefore the right diagram commutes.
	\end{proof}
	


	\begin{proposition}
		\label{G_0}
		The natural transformation $\pi\colon\mathcal{G}\Rightarrow\mathcal{G}_0$ is an objectwise weak equivalence.
	\end{proposition}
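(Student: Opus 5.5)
The plan is to apply Quillen's Theorem A to the functor $\pi\colon\Box\downarrow\mathcal{G}X\to\Box\downarrow\mathcal{G}_0X$. Combined with Lemma \ref{cube cat classifying space} and Proposition \ref{realization connection}, this will give the claim on geometric realizations. By Lemma \ref{cube cat classifying space}, naturally in the cubical set,
\[
|\mathcal{G}X|\simeq|\mathcal{G}X|_\gamma\simeq B(\Box\downarrow\mathcal{G}X)\quad\text{and}\quad|\mathcal{G}_0X|\simeq|\mathcal{G}_0X|_\gamma\simeq B(\Box\downarrow\mathcal{G}_0X).
\]
Under these equivalences the map $|\pi|$ corresponds to $B\pi$. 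It therefore suffices to show that $B\pi$ is a homotopy equivalence.

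By Theorem A, this reduces to showing that $B(\pi/x)$ is contractible for every object $x\colon\Box^n\to\mathcal{G}_0X$. As observed above, $\pi/x$ is the cube category $\Box\downarrow\pi_x$ of the pullback cubical set $\pi_x$. Applying Lemma \ref{cube cat classifying space} and Proposition \ref{realization connection} once more gives $B(\pi/x)\simeq|\pi_x|_\gamma\simeq|\pi_x|$. So the task is to show that $|\pi_x|$ is contractible.

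For this I will use the local homotopies $\bar{h}(\theta,y)$ built above. Lemma \ref{G_0 gluing} says they are compatible with faces and degeneracies, so they assemble into a global homotopy
\[
\bar{h}\colon|\pi_x|\times[0,1]\to|\pi_x|.
\]
By Lemma \ref{G_0 t=0,1}, $\bar{h}(-,0)$ is the identity of $|\pi_x|$, and $\bar{h}(-,1)$ lands in the subspace $|(1,x_{\min})|$, the image of the single $n$-cube $(1,x_{\min})$. The same lemma shows that $\bar{h}(-,1)$ restricts to the identity on that image, since its cubes are all of the form $(\theta,x_{\min}\theta)$. Hence $|(1,x_{\min})|$ is a deformation retract of $|\pi_x|$.

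The step I expect to need care is showing that $|(1,x_{\min})|$ is contractible. It is the image of $[0,1]^n$ under $\rho$, and $\rho$ may identify boundary points, because distinct faces of $x_{\min}$ could coincide in $\pi_x$. However, $\pi_x$ records $\theta$, and the cubes $\theta$ are distinct morphisms into $\Box^n$. So the composite $\Box^n\to\pi_x\to\Box^n$ of the cube $(1,x_{\min})$ with the projection is the identity. This makes the cubical map $\Box^n\to\pi_x$ injective on nondegenerate cubes, and $|(1,x_{\min})|\cong|\Box^n|\cong[0,1]^n$, which is contractible. Consequently $|\pi_x|$ is contractible for all $x$, Theorem A applies, and $\pi$ is an objectwise weak equivalence.
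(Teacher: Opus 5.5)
Your reduction is the same as the paper's: Theorem A for $\pi\colon\Box\downarrow\mathcal{G}X\to\Box\downarrow\mathcal{G}_0X$, the identification $\pi/x=\Box\downarrow\pi_x$, and the homotopy $\bar h$ assembled from Lemmas \ref{G_0 t=0,1} and \ref{G_0 gluing}. However, your final step fails as written.

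The subspace onto which $\bar h$ retracts $|\pi_x|$ is not the image of a single geometric cube $[0,1]^n$. It must contain the cells of all cubes $(\theta,x_{\min}\theta)$, on which $\bar h(-,1)$ is the identity. When $\theta$ involves connections, for instance $\theta=\gamma_{1,0}\colon[1]^{n+1}\to[1]^n$ with $n\ge 1$, the cube $(\theta,x_{\min}\theta)$ is nondegenerate in $\pi_x$, because its projection $\theta$ is nondegenerate in $\Box^n$. So it contributes a cell of dimension $>n$ that is not in $\rho([0,1]^n)$. Hence the retract is $|\iota|(|\Box^n|)$, where $\iota\colon\Box^n\to\pi_x$ corresponds to $(1,x_{\min})$. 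Your split-mono argument correctly identifies this retract with $|\Box^n|$.

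The claim $|\Box^n|\cong[0,1]^n$ is false for the realization used in this paper. The functor $|\cdot|$ imposes only face and degeneracy relations. By Lemma \ref{realization CW}, every nondegenerate connection cube of $\Box^n$ contributes a cell, and such cubes occur in every dimension above $n$. So $|\Box^n|$ is an infinite-dimensional CW complex; only $|\Box^n|_\gamma$ is homeomorphic to $[0,1]^n$.

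The missing ingredient is the contractibility of $|\Box^n|$, which is a genuine fact rather than a tautology. The paper obtains it in the proof of Proposition \ref{realization connection}: the quotient $|\Box^n|\to|\Box^n|_\gamma\cong[0,1]^n$ induces isomorphisms on $\pi_1$ and on homology by Antolini's result, so $|\Box^n|$ is contractible by Whitehead's theorem. Equivalently, apply Proposition \ref{realization connection} itself to $\Box^n$. With this replacement you get $|\pi_x|\simeq|\Box^n|\simeq *$, and your argument coincides with the paper's proof, which ends exactly with "$|\pi_x|\simeq|\Box^n|$, which is contractible."
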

	
	\begin{proof}
		Let $X$ be a cubical set. By Lemma \ref{cube cat classifying space}, it suffices to show that the map
		\[
		\pi\colon B(\Box\downarrow\mathcal{G}X)\to B(\Box\downarrow\mathcal{G}_0X)
		\]
		is a homotopy equivalence. By Quillen's Theorem A together with \eqref{realization classfying space without connections}, it suffices to prove that $|\pi_x|$ is contractible for all $x\in\mathcal{G}_0X$. Let $x\in\mathcal{G}_0X$. By Lemmas \ref{G_0 t=0,1} and \ref{G_0 gluing}, the family of homotopies $\{\bar{h}(\theta,y)\}_{(\theta,y)\in\pi_x}$ assembles to a homotopy $\bar{h}\colon|\pi_x|\times[0,1]\to|\pi_x|$ satisfying that $\bar{h}(-,0)=1_{|\pi_x|}$ and $\bar{h}(-,1)$ factors as
		\[
		|\pi_x|\xrightarrow{\phi}|\Box^n|\xrightarrow{\iota}|\pi_x|
		\]
		such that $\phi\circ\iota=1_{|\Box^n|}$, where $\iota\colon\Box^n\to\pi_x$ corresponds to $(1,x_\mathrm{min})\in(\pi_x)_n$. Hence $|\pi_x|\simeq|\Box^n|$, which is contractible.
	\end{proof}

	\subsection{Kan condition}
	In the previous subsections, we construct three functors 
	\[
	\mathcal{R},\mathcal{G},\mathcal{G}_0\colon\cSet\to\cSet
	\] 
	which correspond to the inverse, the multiplication and the unit in the discrete homotopy group respectively. We will show that $\mathcal{R}$ sends a quasisymmetric cubical set to a signed quasisymmetric one, and $\mathcal{G}_0$ sends a signed quasisymmetric cubical set to a signed quasisymmetric Kan complex.
	
	\begin{lemma}
		\label{R symmetric}
		Let $X$ be a quasisymmetric cubical set. Then $\mathcal{R}X$ is signed quasisymmetric.
	\end{lemma}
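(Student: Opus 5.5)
We define the action of signed permutations on $\mathcal{R}X$ explicitly and then check the defining relations of $\Box_\mathsf{qsym}^\pm$. For $(a,\kappa)\in B_n$ and $x^{s_1\cdots s_n}\in\mathcal{R}X_n$, set
\[
(x^{s_1\cdots s_n})(a,\kappa)=(x\kappa)^{t_1\cdots t_n},\qquad t_j=s_{\kappa(j)}+a_j \pmod 2.
\]
Geometrically, we first permute the coordinates of $x$ using the quasisymmetric structure of $X$, and then record the reversal pattern carried along by $\kappa$, twisted by the sign vector $a$. The precise indexing ($t_j=s_{\kappa(j)}+a_j$ versus $s_{\kappa^{-1}(j)}$) should be fixed so that this is a right action compatible with the product $(a,\kappa)(b,\lambda)=(a\kappa(b),\kappa\lambda)$. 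The check is a direct computation using $(x\kappa)\lambda=x(\kappa\lambda)$ in $X$. The restriction to $a=0$ gives the permutation action, and the restriction to $\kappa=1$ just flips the superscripts.

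The first step is \textbf{well-definedness with respect to the identification \eqref{R identification}}. If $y=x\sigma_i$, then by \eqref{permutation identity 1} we have $(x\sigma_i)\kappa=(x\kappa^{(\kappa^{-1}(i))})\sigma_{\kappa^{-1}(i)}$. So $y\kappa$ is degenerate in the coordinate $\kappa^{-1}(i)$, and this is exactly the coordinate whose superscript $t_{\kappa^{-1}(i)}$ involves the ambiguous $s_i$. Hence the two representatives have equal images.

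The second step is \textbf{verifying the relations \eqref{signed permutation identity 1}}:
\[
(a,\kappa)\partial_{\kappa(i),\alpha+a_i}=\partial_{i,\alpha}(a,\kappa)^{(i)},\qquad (a,\kappa)^{(i)}\sigma_{\kappa(i)}=\sigma_i(a,\kappa),
\]
read contravariantly on $\mathcal{R}X$.

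\emph{Degeneracy relation.} We compute both sides on $x^{s}$ using $(x^s)\sigma_i=(x\sigma_i)^{\cdots0\cdots}$ and the quasisymmetric identity $\kappa^{(i)}\sigma_{\kappa(i)}=\sigma_i\kappa$ in $X$. The superscripts agree away from the inserted coordinate. At the inserted coordinate they may differ only by $a_i$, and this difference is absorbed by \eqref{R identification} because that coordinate is degenerate.

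\emph{Face relation.} Here the face formula of $\mathcal{R}X$ turns $\partial_{j,\beta}$ into $\partial_{j,\beta+s_j}$ on $x$. Combining this with $\kappa\partial_{\kappa(i),\alpha}=\partial_{i,\alpha}\kappa^{(i)}$ in $X$, we reduce to checking that the face index shifts are consistent: the sign $a_i+s_{\kappa(i)}$ on one side must match $t_i$ on the other. This is a parity bookkeeping identity. The remaining superscripts match because $(a,\kappa)^{(i)}$ is obtained by deleting the $i$-th entry and reindexing, exactly as in the definition of $\kappa^{(i)}$.

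No connection compatibility is needed, since $\Box_\mathsf{qsym}^\pm$ does not impose \eqref{signed permutation identity 2}. This is precisely why the statement works for quasisymmetric rather than symmetric $X$.

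The main obstacle is the index bookkeeping in the face relation and in the action property. We must choose conventions, in particular the direction of $\kappa$ in the superscripts and the definition of $(a,\kappa)^{(i)}$ with signed indices, so that all three checks (action, face, degeneracy) close simultaneously. I would fix conventions by first checking the cases $\kappa=1$ (pure reversals, immediate from the face formula of $\mathcal{R}X$) and $a=0$ (pure permutations, reducing to \eqref{permutation identity 1} in $X$). I would then use that $B_n$ is generated by these two subgroups, together with the action property, to deduce the general relations.
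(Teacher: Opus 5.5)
Your strategy is exactly the paper's: write down an explicit action $x^{s}\mapsto(x\kappa)^{t}$ of $B_n$ on $\mathcal{R}X_n$, then check the two relations in \eqref{signed permutation identity 1}, with no connection relations needed. The gap is that the formula you display is wrong, and the error is not cosmetic: with $t_j=s_{\kappa(j)}+a_j$ the face relation fails.

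Consider the relation $(z(a,\kappa))\partial_{\kappa(i),\alpha+a_i}=(z\partial_{i,\alpha})(a,\kappa)^{(i)}$ for $z=x^{s}$. The right-hand side is built from $x\partial_{i,\alpha+s_i}$. On the left, the face formula of $\mathcal{R}X$ together with $(x\kappa)\partial_{\kappa(i),\beta}=(x\partial_{i,\beta})\kappa^{(i)}$ gives $x\partial_{i,\alpha+a_i+t_{\kappa(i)}}$. So you need $t_{\kappa(i)}=s_i+a_i$, that is,
\[
t_k=s_{\kappa^{-1}(k)}+a_{\kappa^{-1}(k)},
\]
which is the paper's formula. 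Both $s$ and the sign vector $a$ must be transported by $\kappa^{-1}$. Your suggested alternative $t_j=s_{\kappa^{-1}(j)}+a_j$ does not repair this. For $n=2$, $\kappa=(1\,2)$, $a=(1,0)$, $i=1$, both of your candidates give $(x\partial_{1,\alpha+s_1+1})^{s_2+1}$ on the left and $(x\partial_{1,\alpha+s_1})^{s_2}$ on the right. With $a=0$ and $\kappa$ a $3$-cycle, your first formula already fails the face relation, since it requires $s_{\kappa^2(i)}=s_i$.

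The same misindexing undermines your well-definedness step. Read on cubes, \eqref{permutation identity 1} says $(x\sigma_i)\kappa=(x\kappa^{(i)})\sigma_{\kappa(i)}$, not $(x\kappa^{(\kappa^{-1}(i))})\sigma_{\kappa^{-1}(i)}$. So the degenerate direction of $(x\sigma_i)\kappa$ is $\kappa(i)$. With your formula the ambiguous entry $s_i$ lands in position $\kappa^{-1}(i)$. Hence whenever $\kappa(i)\neq\kappa^{-1}(i)$, your map is not even well defined on $\mathcal{R}X$; the two inversions cancelled in your check. With the corrected formula, $s_i$ lands exactly in position $\kappa(i)$ and \eqref{R identification} absorbs it. Your sketches of the degeneracy and face checks then go through as in the paper's displayed computation, with the inserted coordinate being $\kappa(i)$.

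Your fallback plan would also reach the right formula. Fix $t=s+a$ on pure reversals, let the face relation force $t=s\circ\kappa^{-1}$ on pure permutations, and extend via $(a,\kappa)=(a,1)(0,\kappa)$; this gives $(x\kappa)^{(s+a)\circ\kappa^{-1}}$. If you go that route, you must additionally verify compatibility with $(0,\kappa)(b,1)=(\kappa(b),1)(0,\kappa)$ to get a genuine $B_n$-action. Stating the corrected closed formula and checking directly is shorter.
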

	
	\begin{proof}
		For $(a,\kappa)\in B_n$ and $x^{s_1\cdots s_n}\in\mathcal{R}X_n$, let
		\[
		(x^{s_1\cdots s_n})(a,\kappa)=(x\kappa)^{(a_{\kappa^{-1}(1)}+s_{\kappa^{-1}(1)})\cdots(a_{\kappa^{-1}(n)}+s_{\kappa^{-1}(n)})},
		\]
		where $a_i+s_i$ is considered modulo $2$. It is straightforward to verify that this defines an action of $B_n$ on $\mathcal{R}X_n$. We have
		\begin{align*}
			((x^{s_1\cdots s_n})(a,\kappa))\partial_{\kappa(i),\alpha+a_i}
			&=((x\kappa)^{t_1\cdots t_n})\partial_{\kappa(i),\alpha+a_i}\\
			&=((x\kappa)\partial_{\kappa(i),\alpha+s_i})^{t_1\cdots t_{\kappa(i)-1}t_{\kappa(i)+1}\cdots t_n}\\
			&=((x\partial_{i,\alpha+s_i})\kappa^{(i)})^{t_1\cdots t_{\kappa(i)-1}t_{\kappa(i)+1}\cdots t_n}\\
			&=((x\partial_{i,\alpha+s_i})^{s_1\cdots s_{i-1}s_{i+1}\cdots s_n})(a,\kappa)^{(i)}\\
			&=((x^{s_1\cdots s_n})\partial_{i,\alpha})(a,\kappa)^{(i)},
		\end{align*}
		where $t_i=s_{\kappa^{-1}(i)}+a_{\kappa^{-1}(i)}$. The identity for degeneracies is obtained analogously.
	\end{proof}
	In the similar manner of the proof above, we can show that there is a functor
	\[
	\mathcal{R}^\pm\colon\QSymcSet\to\QSymcSet^\pm
	\]
	such that the following diagram commutes
	\[
	\xymatrix{
	\QSymcSet\ar[r]^{\mathcal{U}}\ar[d]_{\mathcal{R}^\pm}&\cSet\ar[d]^{\mathcal{R}}\\
	\QSymcSet^\pm\ar[r]_{\mathcal{U}^\pm}&\cSet,
	}
	\]
	where $\mathcal{U}^\pm$ is the forgetful functor. Let $\mathcal{S}^\pm\colon\cSet\to\QSymcSet^\pm$ be the composite
	\[
	\cSet\xrightarrow{\mathcal{S}}\QSymcSet\xrightarrow{\mathcal{R}^\pm}\QSymcSet^\pm,
	\]
	then it is not hard to see that $\mathcal{S}^\pm$ is the left adjoint of $\mathcal{U}^\pm$, and the natural transformation
	\[
	1_\cSet\Rightarrow\mathcal{RUS}=\mathcal{U}^\pm\mathcal{S}^\pm,\quad X\xrightarrow{\eta_X}\mathcal{US}X\xrightarrow{\iota_{\mathcal{US}X}}\mathcal{RUS}X
	\]
	is the unit of this adjunction.

	\begin{lemma}
		\label{G_0 signed symmetric}
		Let $X$ be a signed quasisymmetric cubical set. Then $\mathcal{G}_0X$ is also signed quasisymmetric.
	\end{lemma}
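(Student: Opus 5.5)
We need to define an action of $B_n$ on $\mathcal{G}_0X_n$ and verify identities \eqref{signed permutation identity 1}. The natural plan is to first define it on directed grids, i.e. on $\mathcal{G}X_n$, and then check that it descends to $0$-homotopy classes.

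\textbf{Construction on $\mathcal{G}X_n$.} Let $x$ be a directed $n$-grid of size $(l_1,\ldots,l_n)$ and $(a,\kappa)\in B_n$. We define $x(a,\kappa)$ as the directed grid of size $(l_{\kappa(1)},\ldots,l_{\kappa(n)})$ by
\[
(x(a,\kappa))(i_1,\ldots,i_n)=x(j_1,\ldots,j_n)(a,\kappa),
\]
where the indices are chosen as follows:
\begin{itemize}
\item coordinate $k$ of the new grid corresponds to coordinate $\kappa(k)$ of the old one;
\item $j_{\kappa(k)}=i_k$ if $a_k=0$, and $j_{\kappa(k)}=l_{\kappa(k)}+1-i_k$ if $a_k=1$.
\end{itemize}
In words, we permute the grid coordinates as $\kappa$ permutes cube coordinates, and reverse the order of slices in each coordinate with $a_k=1$. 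Each elementary cube is acted on by $(a,\kappa)$ inside $X$.

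\textbf{Directedness and the grid condition.} Reversal of an individual cube together with reversal of the slice order should keep the grid directed. The matching condition between neighbouring cubes follows from \eqref{signed permutation identity 1}: $(a,\kappa)\partial_{\kappa(i),\alpha+a_i}=\partial_{i,\alpha}(a,\kappa)^{(i)}$ converts the face condition of $x$ in direction $\kappa(k)$ into that of $x(a,\kappa)$ in direction $k$. The orientation flip $\alpha\mapsto 1-\alpha$ is exactly compensated by the reversal of slice order. Functoriality, namely that this is a right action of $B_n$, is checked coordinatewise using the product formula in $B_n$.

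\textbf{Compatibility with faces and degeneracies.} For faces, $(x(a,\kappa))\partial_{\kappa(i),\alpha+a_i}$ picks the first or last slice in direction $\kappa(i)$. After the reversal bookkeeping, this slice is $x\partial_{i,\alpha}$ acted on cubewise by $(a,\kappa)^{(i)}$, which is the first identity. For degeneracies, $x\sigma_i$ has a single slice $y\sigma_i$ in direction $i$. Applying \eqref{signed permutation identity 1} cubewise gives the second identity, since reversal of a size-$1$ coordinate is trivial.

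\textbf{Descent to $\mathcal{G}_0X$.} We must show the action preserves the relation $\xrightarrow{k}$, i.e. deletion of a degenerate slice. A slice $x[k,j]=z\sigma_{k,0}$ is sent to a slice in direction $\kappa^{-1}(k)$ of the form $z'\sigma_{\kappa^{-1}(k),0}$. This holds by the degeneracy identity $(a,\kappa)^{(i)}\sigma_{\kappa(i)}=\sigma_i(a,\kappa)$; the sign coordinate is irrelevant here because of the identification $x\sigma_{i,0}=x\sigma_{i,1}$, which is implicit in $\mathcal{R}$ and harmless in $X$ since $X$ is signed quasisymmetric. Deleting that slice then commutes with the action. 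Hence $[x](a,\kappa)=[x(a,\kappa)]$ is well defined, and the identities pass to classes because faces and degeneracies on $\mathcal{G}_0X$ are induced from $\mathcal{G}X$.

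\textbf{Main obstacle.} The step I expect to be hardest is the bookkeeping in the face identity when $a_i=1$. There the first/last slice swap and the orientation change $\alpha+a_i$ must match exactly, including the slices' own sign parameters, which remain $0$ since the grids are directed. I would first verify the case $\kappa=1$, $a=e_i$ by hand and then reduce to it, using that $B_n$ is generated by pure permutations and single sign flips.
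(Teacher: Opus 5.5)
Your plan is the paper's construction in closed form. The paper defines the action on $\mathcal{G}X$ recursively: it acts cubewise in $X$ and sets $(x+_iy)(a,\kappa)=x(a,\kappa)+_{\kappa(i)}y(a,\kappa)$ if $a_i=0$ and $y(a,\kappa)+_{\kappa(i)}x(a,\kappa)$ if $a_i=1$. It then checks the face identity by cases on the concatenation. Your closed form gives independence of the chosen decomposition into concatenations, and the action property, essentially for free. Your descent to $0$-homotopy classes is a check the paper skips, since it only says it suffices to treat $\mathcal{G}X$.

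However, the reindexing in your bullets is inverted, and as written the result is not a grid. With the paper's conventions ($x\theta=x\circ\theta$ and $(a,\kappa)(x_1,\ldots,x_n)=(x_{\kappa(1)}^{a_1},\ldots,x_{\kappa(n)}^{a_n})$), the first identity in \eqref{signed permutation identity 1} reads $(x(a,\kappa))\partial_{\kappa(i),\alpha+a_i}=(x\partial_{i,\alpha})(a,\kappa)^{(i)}$. It says that old direction $i$ becomes new direction $\kappa(i)$, and the cube is reversed along it exactly when $a_i=1$.

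You instead pair new coordinate $k$ with old coordinate $\kappa(k)$ and attach $a_k$ to that pair. This contradicts your own face paragraph, which uses the correct correspondence. For instance, with $n=2$, $\kappa=(1\,2)$, $a=(1,0)$, you reverse the slice order along old direction $2$ while the cubes are flipped along old direction $1$. Neighbouring cubes then fail to match, and for a $3$-cycle the permutation part alone already fails.

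The correct definition gives $x(a,\kappa)$ size $(l_{\kappa^{-1}(1)},\ldots,l_{\kappa^{-1}(n)})$ and
\[
(x(a,\kappa))(i_1,\ldots,i_n)=x(j_1,\ldots,j_n)(a,\kappa),\qquad
j_k=\begin{cases}i_{\kappa(k)}&(a_k=0)\\ l_k+1-i_{\kappa(k)}&(a_k=1).\end{cases}
\]
With this, the grid condition in new direction $\kappa(k)$ is the grid condition of $x$ in direction $k$. For $a_k=1$, the swap $\partial_{k,0}\leftrightarrow\partial_{k,1}$ is compensated by the reversed order. The rest of your argument then goes through.

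In the descent step, the remark about the identification $x\sigma_{i,0}=x\sigma_{i,1}$ in $\mathcal{R}$ is beside the point. All grids in $\mathcal{G}X$ are directed, so a deletable slice has the form $z\sigma_{k,0}$. The degeneracy identity $(y\sigma_k)(a,\kappa)=(y(a,\kappa)^{(k)})\sigma_{\kappa(k)}$, applied cubewise, shows that its image is the degenerate slice $(z(a,\kappa)^{(k)})\sigma_{\kappa(k),0}$ in direction $\kappa(k)$.
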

	
	\begin{proof}
		It suffices to show that $\mathcal{G}X$ is signed quasisymmetric. We only prove the identity for faces; the identity for degeneracies is obtained analogously. Let $x,y\in X$ such that $x\partial_{i,1}=y\partial_{i,0}$ for some $i=1,\ldots,n$. For $(a,\kappa)\in B_n$, let
		\[
		(x+_iy)(a,\kappa)=
		\begin{cases}
			((x)(a,\kappa))+_{\kappa(i)}((y)(a,\kappa))&(a_i=0)\\
			((y)(a,\kappa))+_{\kappa(i)}((x)(a,\kappa))&(a_i=1).
		\end{cases}
		\]
		It is straightforward to verify that this defines an action of $B_n$ on $\mathcal{G}X_n$. Note that the desired identity for faces is a direct consequence of the identity $((x+_iy)\partial_{i,\alpha})(a,\kappa)^{(i)}=((x+_iy)(a,\kappa))\partial_{\kappa(i),\alpha+a_i}$. For $i=j$ and $\alpha=0$,
		\begin{align*}
			((x+_iy)\partial_{i,\alpha})(a,\kappa)^{(i)}&=(x\partial_{i,\alpha})(a,\kappa)^{(i)}\\
			&=(x(a,\kappa))\partial_{\kappa(i),\alpha+a_i}\\
			&=((x(a,\kappa))+_{\kappa(i)}(y(a,\kappa)))\partial_{\kappa(i),\alpha+a_i}\\
			&=((x+_iy)(a,\kappa))\partial_{\kappa(i),\alpha+a_i}.
		\end{align*}
		The case $i=j$ and $\alpha=1$ is identical. For $i<j$,
		\begin{align*}
			((x+_iy)\partial_{j,\alpha})(a,\kappa)^{(j)}&=((x\partial_{j,\alpha})+_i(y\partial_{j,\alpha}))(a,\kappa)^{(j)}\\
			&=
			\begin{cases}
				((x\partial_{j,\alpha})(a,\kappa)^{(j)})+_{\kappa(i)-1}((y\partial_{j,\alpha})(a,\kappa)^{(j)})&(\kappa(i)<\kappa(j))\\
				((x\partial_{j,\alpha})(a,\kappa)^{(j)})+_{\kappa(i)}((y\partial_{j,\alpha})(a,\kappa)^{(j)})&(\kappa(i)>\kappa(j))
			\end{cases}\\
			&=
			\begin{cases}
				((x(a,\kappa))\partial_{\kappa(j),\alpha+a_j})+_{\kappa(i)-1}((y(a,\kappa))\partial_{\kappa(j),\alpha+a_j})&(\kappa(i)<\kappa(j))\\
				((x(a,\kappa))\partial_{\kappa(j),\alpha+a_j})+_{\kappa(i)}((y(a,\kappa))\partial_{\kappa(j),\alpha+a_j})&(\kappa(i)>\kappa(j))
			\end{cases}\\
			&=((x(a,\kappa))+_{\kappa(i)}(y(a,\kappa)))\partial_{\kappa(j),\alpha+a_j}\\
			&=((x+_iy)(a,\kappa))\partial_{\kappa(j),\alpha+a_j}.
		\end{align*}
		The case $i>j$ is obtained exactly in the same manner.
	\end{proof}

	Let $X$ be a quasisymmetric cubical set. Suppose that $n$-cubes $x$ and $x_{i,\alpha}$ of $\mathcal{G}_0X_n$ for $i=1,\ldots,n$ and $\alpha=0,1$ satisfy
	\[
	x\partial_{i,\alpha}=x_{i,\alpha}\partial_{n,1}\quad\text{and}\quad x_{j,\beta}\partial_{i,\alpha}=x_{i,\alpha}\partial_{j-1,\beta}
	\]
	for all $i<j$ and $\alpha,\beta$. Namely, given a cubical map $\sqcap_{n+1,0}^{n+1}\to\mathcal{G}_0X$ with top face $x$ and side faces $x_{i,\alpha}$. The construction below fills the horn by first forming a directed $(n+1)$-grid in $\mathcal{G}_0X$ of size $(3,\ldots,3,1)$: the central block records the prescribed top face, while the surrounding blocks encode the side faces. Fix $1\le i_1<\cdots<i_r\le n$ and $\alpha_1,\ldots,\alpha_r\in\{0,1\}$ for $r\ge 1$. Set
	\[
	y=x\partial_{i_r,\alpha_r}\cdots\partial_{i_1,\alpha_1}.
	\]
	Define $(a_{x,y},\kappa_{x,y})\in B_{n+1}$ by
	\begin{equation}
		\label{axy}
		(a_{x,y})_k=
		\begin{cases}
			\alpha_{k-n+r}&(k=n-r+1,\ldots,n)\\
			0&(\text{otherwise})
		\end{cases}
	\end{equation}
	and
	\begin{equation}
		\label{kappaxy}
		\kappa_{x,y}(k)=
		\begin{cases}
			j_k&(k=1,\ldots,n-r)\\
			i_{k-n+r}&(k=n-r+1,\ldots,n)\\
			n+1&(k=n+1)
		\end{cases}
	\end{equation}
	where $\{j_1<\cdots j_{n-r}\}=\{1,\ldots,n\}-\{i_1,\ldots,i_r\}$. By Lemma \ref{G_0 signed symmetric}, we may define $z=x\sigma_{n+1}$ and
	\[
	z_{(i_1,\alpha_1)\cdots(i_r,\alpha_r)}=
	\begin{cases}
		(x_{i_1,\alpha_1}\gamma_{n,0})(a_{x,y},\kappa_{x,y})&(r=1)\\
		(x_{i_r,\alpha_r}\partial_{i_{r-1},\alpha_{r-1}}\cdots\partial_{i_1,\alpha_1}\gamma_{n-r+1,0}\gamma_{n-r+1,1}\cdots\gamma_{n-1,1})(a_{x,y},\kappa_{x,y})&(r\ge 2).
	\end{cases}
	\]

	\begin{lemma}
		\label{top face}
		For $i=1,\ldots,n$ and $\alpha=0,1$,
		\[
		z\partial_{i,\alpha}=z_{(i,\alpha)}\partial_{i,1-\alpha}.
		\]
	\end{lemma}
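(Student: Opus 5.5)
The plan is to compute both sides explicitly and reduce them to the same cube $x\partial_{i,\alpha}\sigma_n$. We use only the cubical identities, the compatibility identity \eqref{signed permutation identity 1} for signed permutations, and the horn hypothesis $x\partial_{i,\alpha}=x_{i,\alpha}\partial_{n,1}$. Throughout we treat $\mathcal{G}_0X$ as signed quasisymmetric, as Lemma \ref{G_0 signed symmetric} permits. For $r=1$, with $i_1=i$ and $\alpha_1=\alpha$, the definitions \eqref{axy} and \eqref{kappaxy} specialize as follows. The sign vector $a=a_{x,y}$ has $a_n=\alpha$ and all other entries $0$. The permutation $\kappa=\kappa_{x,y}$ sends $1,\ldots,n-1$ in increasing order onto $\{1,\ldots,n\}-\{i\}$, sends $n\mapsto i$, and sends $n+1\mapsto n+1$.

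\emph{Left-hand side.} Since $z=x\sigma_{n+1}$ and $i\le n$, the cubical identity for $\sigma_j\partial_{i,\alpha}$ with $j>i$ gives
\[
z\partial_{i,\alpha}=x\sigma_{n+1}\partial_{i,\alpha}=x\partial_{i,\alpha}\sigma_n .
\]

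\emph{Right-hand side.} We have $z_{(i,\alpha)}=(x_{i,\alpha}\gamma_{n,0})(a,\kappa)$, and we use \eqref{signed permutation identity 1} in the form
\[
(w(a,\kappa))\partial_{\kappa(j),\beta+a_j}=(w\partial_{j,\beta})(a,\kappa)^{(j)} .
\]
Take $j=n$, so that $\kappa(n)=i$. The condition $\beta+a_n=\beta+\alpha\equiv 1-\alpha \pmod 2$ forces $\beta=1$. This yields
\[
z_{(i,\alpha)}\partial_{i,1-\alpha}=(x_{i,\alpha}\gamma_{n,0}\partial_{n,1})(a,\kappa)^{(n)} .
\]
By the connection--face identity $\gamma_{j,\beta}\partial_{i,\alpha}=\partial_{j,\alpha}\sigma_j$ (the case $j=i$, $\beta=1-\alpha$), we get $x_{i,\alpha}\gamma_{n,0}\partial_{n,1}=x_{i,\alpha}\partial_{n,1}\sigma_n$. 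The horn compatibility then turns this into $x\partial_{i,\alpha}\sigma_n$.

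The remaining step, and the only one requiring care, is to check that $(a,\kappa)^{(n)}$ is the identity of $B_n$. Deleting position $n$ from $\kappa$ leaves the order-preserving map $\{1,\ldots,n-1\}\to\{1,\ldots,n\}-\{i\}$ together with $n+1\mapsto n+1$. After renumbering $\{1,\ldots,n+1\}-\{i\}$ as $\{1,\ldots,n\}$, this is the identity permutation. The only nonzero sign was $a_n$, which is deleted, so the remaining signs are all $0$. Hence the right-hand side equals $x\partial_{i,\alpha}\sigma_n$, which matches the left-hand side.

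The main obstacle is bookkeeping of conventions. We must apply \eqref{signed permutation identity 1} in the correct direction, since operators act on the right and $(\kappa\lambda)(i)=\lambda(\kappa(i))$. We must also read the index $\alpha+a_i$ modulo $2$, and check that the deletion $(a,\kappa)^{(n)}$ really is trivial rather than a nontrivial shift. If the orientation conventions turn out to be reversed, the fallback is to apply the identity with $\kappa^{-1}$, namely choosing $j$ with $\kappa(j)=i$. This again leads to $j=n$ and the same computation.
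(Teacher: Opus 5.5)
Your proposal is correct and follows essentially the same route as the paper. Both sides are reduced to $x\partial_{i,\alpha}\sigma_n$ using $\sigma_{n+1}\partial_{i,\alpha}=\partial_{i,\alpha}\sigma_n$, the identity \eqref{signed permutation identity 1} at position $n$ (where $\kappa_{x,y}(n)=i$ and $(a_{x,y})_n=\alpha$, so that $\beta=1$), the connection--face identity $\gamma_{n,0}\partial_{n,1}=\partial_{n,1}\sigma_n$, the horn relation $x\partial_{i,\alpha}=x_{i,\alpha}\partial_{n,1}$, and the check that $(a_{x,y},\kappa_{x,y})^{(n)}=1$, which the paper also verifies by listing $a_{x,y}$ and $\kappa_{x,y}$ explicitly.
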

	
	\begin{proof}
		By definition,
		\[
		z\partial_{i,\alpha}=x\partial_{i,\alpha}\sigma_n=x_{i,\alpha}\partial_{n,1}\sigma_n=x_{i,\alpha}\gamma_{n,0}\partial_{n,1}.
		\]
		Since
		\[
		(a_{x,x\partial_{i,\alpha}})_k=
		\begin{cases}
			\alpha&(k=n)\\
			0&(\text{otherwise})
		\end{cases}
		\quad\text{and}\quad
		\kappa_{x,x\partial_{i,\alpha}}(k)=
		\begin{cases}
			k&(k=1,\ldots,i-1,n+1)\\
			k+1&(k=i,\ldots,n-1)\\
			i&(k=n),
		\end{cases}
		\]
		one has $(a_{x,x\partial_{i,\alpha}},\kappa_{x,x\partial_{i,\alpha}})^{(n)}=1$. Hence
		\begin{align*}
			z_{(i,\alpha)}\partial_{i,1-\alpha}&=((x_{i,\alpha}\gamma_{n,0})(a_{x,x\partial_{i,\alpha}},\kappa_{x,x\partial_{i,\alpha}}))\partial_{i,1-\alpha}\\
			&=(x_{i,\alpha}\gamma_{n,0}\partial_{n,1})(a_{x,x\partial_{i,\alpha}},\kappa_{x,x\partial_{i,\alpha}})^{(n)}\\
			&=x_{i,\alpha}\partial_{n,1}\sigma_n\\
			&=x\partial_{i,\alpha}\sigma_n\\
			&=x\sigma_{n+1}\partial_{i,\alpha}\\
			&=z\partial_{i,\alpha}
		\end{align*}
		
		as stated.
	\end{proof}

	\begin{lemma}
		\label{side face}
		For $k=1,\ldots,r$ with $r\ge 2$, we have
		\[
		z_{(i_1,\alpha_1)\cdots(i_r,\alpha_r)}\partial_{i_k,1-\alpha_k}=z_{(i_1,\alpha_1)\cdots(i_{k-1},\alpha_{k-1})(i_{k+1},\alpha_{k+1})\cdots(i_r,\alpha_r)}\partial_{i_k,\alpha_k}.
		\]
	\end{lemma}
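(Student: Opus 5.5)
Write $I=((i_1,\alpha_1),\ldots,(i_r,\alpha_r))$, $I_k$ for $I$ with the $k$-th pair removed, $y=x\partial_{i_r,\alpha_r}\cdots\partial_{i_1,\alpha_1}$ and $y_k=x\partial_{i_r,\alpha_r}\cdots\widehat{\partial_{i_k,\alpha_k}}\cdots\partial_{i_1,\alpha_1}$. The plan is to move both sides of the identity to the unpermuted side, using the identity \eqref{signed permutation identity 1} in $\mathcal{G}_0X$ (available by Lemma \ref{G_0 signed symmetric}), and then compare the two resulting cubes of $\mathcal{G}_0X$ with the cubical identities and the horn compatibilities $x_{j,\beta}\partial_{i,\alpha}=x_{i,\alpha}\partial_{j-1,\beta}$.

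First I compute where $\partial_{i_k,1-\alpha_k}$ goes. Since $\kappa_{x,y}(n-r+k)=i_k$ and $(a_{x,y})_{n-r+k}=\alpha_k$, identity \eqref{signed permutation identity 1} with $i=n-r+k$ and $\alpha$ chosen so that $\alpha+\alpha_k\equiv 1-\alpha_k$, that is $\alpha=1$, gives
\[
z_I\partial_{i_k,1-\alpha_k}=\bigl(w_I\partial_{n-r+k,1}\bigr)(a_{x,y},\kappa_{x,y})^{(n-r+k)},
\]
where $w_I$ is the unpermuted cube in the definition of $z_I$. In the same way, $z_{I_k}$ has its permutation $(a_{x,y_k},\kappa_{x,y_k})$, in which $i_k$ now lies among the $j$'s at position $i_k-k+1$, with sign $0$, so
\[
z_{I_k}\partial_{i_k,\alpha_k}=\bigl(w_{I_k}\partial_{i_k-k+1,\alpha_k}\bigr)(a_{x,y_k},\kappa_{x,y_k})^{(i_k-k+1)}.
\]
The first bookkeeping check is that $(a_{x,y},\kappa_{x,y})^{(n-r+k)}=(a_{x,y_k},\kappa_{x,y_k})^{(i_k-k+1)}$. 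Both are obtained by deleting $i_k$ from the ordered list ``complementary indices, then $i_1,\ldots,i_r$ with signs, then $n+1$'' and renormalizing, so they agree. After this the claim reduces to an identity in $\mathcal{G}_0X$ with no permutations:
\[
w_I\partial_{n-r+k,1}=w_{I_k}\partial_{i_k-k+1,\alpha_k}.
\]

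Next I evaluate both sides with the cubical identities. On the left, $w_I=x_{i_r,\alpha_r}\partial_{i_{r-1},\alpha_{r-1}}\cdots\partial_{i_1,\alpha_1}\gamma_{n-r+1,0}\gamma_{n-r+1,1}\cdots\gamma_{n-1,1}$. A face $\partial_{\cdot,1}$ applied to a chain of connections either cancels a $\gamma_{\cdot,1}$ (the rule $\gamma_{j,\beta}\partial_{j,\beta}=1$) or turns it into $\partial\sigma$. For $k<r$ I expect the result to be the shorter chain for $I_k$ built on $x_{i_r,\alpha_r}\partial_{i_{r-1},\alpha_{r-1}}\cdots\partial_{i_1,\alpha_1}$ with the indices shifted. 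For $k=r$ the face hits the $\gamma_{n-r+1,0}$ through the rule $\gamma_{j,0}\partial_{j+1,1}=\partial_{j,1}\sigma_j$, which exposes $x_{i_r,\alpha_r}\partial_{n-r+1,1}$ after the earlier faces are commuted past. On the right, the face $\partial_{i_k-k+1,\alpha_k}$ passes through the connections (all of them sit at indices $\ge n-r+1>i_k-k+1$ in the reduced dimension) and lands on $x_{i_{r'},\alpha_{r'}}\partial\cdots$ as one extra face in the appropriate position.

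The last step is to match these two cubes using the horn relations. For $k=r$, I use $x_{i_r,\alpha_r}\partial_{n,1}=x\partial_{i_r,\alpha_r}$ from the horn data together with $x_{i_r,\alpha_r}\partial_{i_{r-1},\alpha_{r-1}}=x_{i_{r-1},\alpha_{r-1}}\partial_{i_r-1,\alpha_r}$ to rewrite things in terms of $x_{i_{r-1},\alpha_{r-1}}$, which is the base cube of $z_{I_r}$. For $k<r$, I commute the face indices with $\partial_{j,\beta}\partial_{i,\alpha}=\partial_{i+1,\alpha}\partial_{j,\beta}$ so that the ordering $i_1<\cdots<i_r$ gives exactly the index shifts $i_k-k+1$. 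The case $r=2$ needs separate care, because $z_{(i,\alpha)}$ has the different form $x_{i,\alpha}\gamma_{n,0}$.

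I expect the main obstacle to be the index bookkeeping: getting the shifts in the iterated connections right, and making sure the signs from $a_{x,y}$ produce exactly $\partial_{\cdot,1}$ on the $w_I$ side. If a direct computation becomes unwieldy, I would first verify the identity for $r=2$ explicitly. I would then argue by induction on $r$, noting that deleting $(i_1,\alpha_1)$ and replacing $x$ by $x\partial_{i_1,\alpha_1}$ reduces the configuration to one with $r-1$ pairs.
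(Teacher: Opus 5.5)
Your strategy is the paper's: push both faces across the signed permutations with \eqref{signed permutation identity 1}, check $(a_{x,y},\kappa_{x,y})^{(n-r+k)}=(a_{x,y_k},\kappa_{x,y_k})^{(i_k-k+1)}$, and compare unpermuted cubes. Your $k<r$ case matches the paper's computation. On the left, $\partial_{n-r+k,1}$ cancels one $\gamma_{\cdot,1}$. On the right, $\partial_{i_k-k+1,\alpha_k}$ commutes past connections whose indices are $\ge n-r+2>i_k-k+1$; that is the bound you need, since $n-r+1>i_k-k+1$ can fail with equality. Finally, $u=v\,\partial_{i_k-k+1,\alpha_k}$ by commuting faces, where $u=x_{i_r,\alpha_r}\partial_{i_{r-1},\alpha_{r-1}}\cdots\partial_{i_1,\alpha_1}$.

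The case $k=r$ is where your plan goes wrong. The face on the unpermuted side is $\partial_{n,1}$, and it never reaches $\gamma_{n-r+1,0}$. The rightmost connection of $w_I$ is $\gamma_{n-1,1}$, and $\gamma_{n-1,1}\partial_{n,1}=1$, so exactly as for $k<r$
\[
w_I\partial_{n,1}=u\,\gamma_{n-r+1,0}\gamma_{n-r+1,1}\cdots\gamma_{n-2,1}.
\]
What you describe, the rule $\gamma_{j,0}\partial_{j+1,1}=\partial_{j,1}\sigma_j$ followed by $x_{i_r,\alpha_r}\partial_{n,1}=x\partial_{i_r,\alpha_r}$, is the computation of the face $\partial_{n+1,1}$, which is what Lemma \ref{u top face} needs. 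Here it would give a degenerate cube expressed through the top face $x$. That cube in general does not equal $z_{I_r}\partial_{i_r,\alpha_r}$, and $x$ plays no role in this lemma.

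The correct finish for $k=r$ is as follows. Write $v'=x_{i_{r-1},\alpha_{r-1}}\partial_{i_{r-2},\alpha_{r-2}}\cdots\partial_{i_1,\alpha_1}$. Then
\[
w_{I_r}\partial_{i_r-r+1,\alpha_r}=v'\partial_{i_r-r+1,\alpha_r}\gamma_{n-r+1,0}\gamma_{n-r+1,1}\cdots\gamma_{n-2,1}.
\]
The identity $u=v'\partial_{i_r-r+1,\alpha_r}$ follows from the single side-face relation $x_{i_r,\alpha_r}\partial_{i_{r-1},\alpha_{r-1}}=x_{i_{r-1},\alpha_{r-1}}\partial_{i_r-1,\alpha_r}$, which you do list. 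After applying it, commute $\partial_{i_r-1,\alpha_r}$ to the right past $\partial_{i_{r-2},\alpha_{r-2}},\ldots,\partial_{i_1,\alpha_1}$. With this correction, $r=2$ is covered by the same computation (the chains reduce to $\gamma_{n-1,0}$), and the fallback induction on $r$ is unnecessary.
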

	
	\begin{proof}
		Suppose $k<r$, and set
		\[
		v=x_{i_r,\alpha_r}\partial_{i_{r-1},\alpha_{r-1}}\cdots\partial_{i_{k+1},\alpha_{k+1}}\partial_{i_{k-1},\alpha_{k-1}}\cdots\partial_{i_1,\alpha_1}\quad\text{and}\quad w=x_{i_r,\alpha_r}\partial_{i_{r-1},\alpha_{r-1}}\cdots\partial_{i_1,\alpha_1}.
		\]
		Then we have $w=v\partial_{i_k-k+1,\alpha_k}$. Set
		\[
		y_k=x\partial_{i_r,\alpha_r}\cdots\partial_{i_{k+1},\alpha_{k+1}}\partial_{i_{k-1},\alpha_{k-1}}\cdots\partial_{i_1,\alpha_1},
		\]
		then we have $y=y_k\partial_{i_k-k+1,\alpha_k}$ and
		\[
		z_{(i_1,\alpha_1)\cdots(i_{k-1},\alpha_{k-1})(i_{k+1},\alpha_{k+1})\cdots(i_r,\alpha_r)}=\begin{cases}
			(v\gamma_{n,0})(a_{x,y_k},\kappa_{x,y_k})&(r=2)\\
			(v\gamma_{n-r+2,0}\gamma_{n-r+2,1}\cdots\gamma_{n-1,1})(a_{x,y_k},\kappa_{x,y_k})&(r\ge3).
		\end{cases}
		\]
		Since
		\[
		(a_{x,y_k})_l=\begin{cases}
			\alpha_{n-r+l+1}&(l=n-r+2,\ldots,n-r+k)\\
			\alpha_{n-r+l}&(l=n-r+k+1,\ldots,n)\\
			0&(\text{otherwise})
		\end{cases}
		\]
		and
		\[
		\kappa_{x,y_k}(l)=\begin{cases}
			j_l&(l=1,\ldots,i_k-k)\\
			i_k&(l=i_k-k+1)\\
			j_l-1&(l=i_k-k+2,\ldots,n-r+1)\\
			i_{n-r+l-1}&(l=n-r+2,\ldots,n-r+k)\\
			i_{n-r+l}&(l=n-r+k+1,\ldots,n)\\
			n+1&(l=n+1),
		\end{cases}
		\]
		we have $(a_{x,y},\kappa_{x,y})^{(n-r+k)}=(a_{x,y_k},\kappa_{x,y_k})^{(i_k-k+1)}$. Hence
		\begin{align*}
			z_{(i_1,\alpha_1)\cdots(i_r,\alpha_r)}\partial_{i_k,1-\alpha_k}
			&=((w\gamma_{n-r+1,0}\gamma_{n-r+1,1}\cdots\gamma_{n-1,1})(a_{x,y},\kappa_{x,y}))\partial_{i_k,1-\alpha_k}\\
			&=(w\gamma_{n-r+1,0}\gamma_{n-r+1,1}\cdots\gamma_{n-1,1}\partial_{n-r+k,1})(a_{x,y},\kappa_{x,y})^{(n-r+k)}\\
			&=(w\gamma_{n-r+1,0}\gamma_{n-r+1,1}\cdots\gamma_{n-2,1})(a_{x,y},\kappa_{x,y})^{(n-r+k)}\\
			&=(v\partial_{i_k-k+1}\gamma_{n-r+1,0}\gamma_{n-r+1,1}\cdots\gamma_{n-2,1})(a_{x,y_k},\kappa_{x,y_k})^{(i_k-k+1)}\\
			&=(v\gamma_{n-r+2,0}\gamma_{n-r+2,1}\cdots\gamma_{n-1,1}\partial_{i_k-k+1,\alpha_k})(a_{x,y_k},\kappa_{x,y_k})^{(i_k-k+1)}\\
			&=((v\gamma_{n-r+2,0}\gamma_{n-r+2,1}\cdots\gamma_{n-1,1})(a_{x,y_k},\kappa_{x,y_k}))\partial_{i_k,\alpha_k}\\
			&=z_{(i_1,\alpha_1)\cdots(i_{k-1},\alpha_{k-1})(i_{k+1},\alpha_{k+1})\cdots(i_r,\alpha_r)}\partial_{i_k,\alpha_k}.
		\end{align*}
		The case $k=r$ is proved analogously.
	\end{proof}


	

	In order to construct an $(n+1)$-grid in $X$ from $z$ and $z_{(i_1,\alpha_1)\cdots(i_r,\alpha_r)}$, we need to choose their appropriate representatives. Choose any $u_{00\cdots0}\in\mathcal{G}X_{n+1}$ representing $z$, and any $u_{p_1,\ldots,p_n}\in\mathcal{G}X_{n+1}$ representing $z_{(i_1,\alpha_1)\cdots(i_r,\alpha_r)}$ such that $p_i=0$ if $i\in\{1,\ldots,n\}-\{i_1,\ldots,i_r\}$ and $p_{i_j}=(-1)^{\alpha_j+1}$. By Lemmas \ref{top face} and \ref{side face}, we have
	\[
	u_{p_1,\ldots,p_{i-1},0,p_{i+1},\ldots,p_n}\partial_{i,\alpha}\simeq_0u_{p_1,\ldots,p_{i-1},(-1)^{1-\alpha},p_{i+1},\ldots,p_n}\partial_{i,1-\alpha}.
	\]
	The following proposition is a common-refinement statement: although the relevant faces agree only up to 0-homotopy, the chosen representatives can be subdivided so that these agreements become strict equalities.

	\begin{proposition}
		\label{gluing grids}
		There exists a family $\{v_{p_1,\ldots,p_n}\}_{p_1,\ldots,p_n\in\{-1,0,1\}}$ in $\mathcal{G}X_{n+1}$ such that for all $p_1,\ldots,p_n$,
		\[
		v_{p_1,\ldots,p_n}\simeq_0 u_{p_1,\ldots,p_n}
		\]
		and
		\[
		v_{p_1,\ldots,p_{i-1},0,p_{i+1},\ldots,p_n}\partial_{i,\alpha}=v_{p_1,\ldots,p_{i-1},(-1)^{1-\alpha},p_{i+1},\ldots,p_n}\partial_{i,1-\alpha}.
		\]
	\end{proposition}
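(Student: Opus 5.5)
The strategy is to exploit the fact that 0-homotopy classes of directed grids have canonical minimal representatives, and that any two 0-homotopic grids have a common expansion. Recall that directed grids $a,b$ are 0-homotopic if and only if $a_{\min}=b_{\min}$, and by Lemma \ref{expander} every grid $w\ge a_{\min}$ has the form $(a_{\min})^{\epsilon}$ for a unique expander. So it suffices to choose, for every $(p_1,\ldots,p_n)\in\{-1,0,1\}^n$, a suitable expander $\epsilon^{p}=(\epsilon^{p}_1,\ldots,\epsilon^{p}_{n+1})$ and set $v_{p}=(u_{p,\min})^{\epsilon^{p}}$. Then $v_p\simeq_0u_p$ holds automatically, and the remaining work is to arrange the strict face equalities.

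The key observation is that faces of expansions are expansions of faces. For a directed grid $w$ and an expander $\epsilon$, the face $w^{\epsilon}\partial_{i,\alpha}$ equals $(w\partial_{i,\alpha})^{\epsilon\partial_i}$, since inserting degenerate slices in directions $j\ne i$ commutes with taking the $i$-th face, and degenerate slices in direction $i$ disappear. Consider the two sides of a required equality, $u_{p}\partial_{i,\alpha}$ with $p_i=0$ and $u_{p'}\partial_{i,1-\alpha}$ with $p'_i=(-1)^{1-\alpha}$. They are 0-homotopic by Lemmas \ref{top face} and \ref{side face}, so they have a common minimal form $m$. Writing $u_{p,\min}\partial_{i,\alpha}=m^{\delta}$ and $u_{p',\min}\partial_{i,1-\alpha}=m^{\delta'}$, the equality $v_p\partial_{i,\alpha}=v_{p'}\partial_{i,1-\alpha}$ becomes the equation of expanders $(\epsilon^p\partial_i)\circ\delta=(\epsilon^{p'}\partial_i)\circ\delta'$, taken coordinatewise in each $j\ne i$.

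This reduces the problem to a compatibility problem for one-dimensional surjections. For each direction $j\in\{1,\ldots,n+1\}$, the $\epsilon^p_j$ must be chosen so that all the required composites agree. The structure is that of the $3\times\cdots\times3$ array: in direction $j\le n$ the grids form "rows" indexed by $p_j$. Blocks that share a face in direction $i\ne j$ must have compatible subdivisions of their $j$-th coordinate, meaning the refinements match along the common face. The plan is to build, for each direction $j$ and each value of the coordinates on which the $j$-th side length depends, a common refinement as a pushout or "least common expansion" of finitely many finite chains under surjections. Concretely, any finite family of surjections $[a_k]\to[b]$ factoring through a common base admits a common refinement $[c]\to[a_k]$: take the fibre-wise maximum of the preimage sizes. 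The connectivity of the array then ensures these choices glue.

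I expect the main obstacle to be the consistency of the choices. Each pair of adjacent blocks imposes a constraint, but a block $u_p$ with several nonzero $p_i$ is adjacent to several neighbours. The constraints relate its $j$-th coordinate expansion to those of neighbours whose own expansions are constrained by still other blocks, so one must check there is no cycle obstruction. My first attempt would be to process the blocks in order of the number $r$ of nonzero coordinates. Start with the centre $u_{0\cdots0}$ and expand it maximally, using the fibre-wise maximum over all adjacent requirements. Then propagate the chosen subdivision outward: each face of the centre determines the subdivision of the adjacent side blocks $u_{(i,\alpha)}$ along that face, and corners with $r\ge2$ are determined by the faces they share with blocks of lower $r$. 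Lemma \ref{side face} guarantees that the two routes to a corner give 0-homotopic faces. Since for a fixed minimal face the refinement is determined by a fibre-wise count, agreement of the induced counts is then a statement about maxima of finitely many integers, which can always be enlarged to match. If propagation causes a conflict, I would restart with a global fibre-wise maximum taken over all blocks simultaneously in each direction. Because all expansions in direction $j$ ultimately refine a single minimal chain per slab, such a global maximum is well defined and satisfies every constraint at once.
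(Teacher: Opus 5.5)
Your reduction is sound. Writing $v_p=(u_{p,\min})^{\epsilon^p}$ and using $w^{\epsilon}\partial_{i,\alpha}=(w\partial_{i,\alpha})^{\epsilon\partial_i}$, the problem splits coordinatewise. For each slab in direction $j$ you need one chain $C$ and surjections $\eta_p\colon C\to A_p$ onto the $j$-chains of the blocks such that $\delta_{e,p}\circ\eta_p=\delta_{e,p'}\circ\eta_{p'}$ for every shared face $e$. Here $\delta_{e,p}\colon A_p\to B_e$ is the collapse onto the $j$-chain of the minimal form of that face.

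\textbf{The gap is the final step.} A fibre-wise maximum over one base chain $M$ only makes the composites $C\to A_p\to M$ agree. The face equalities require agreement in the intermediate quotients $B_e$, which change from face to face, and around a cycle of blocks these requirements can contradict each other. Pairwise $0$-homotopy of neighbouring faces does not exclude this.

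Here is a counterexample, easily realized in $\mathrm{Sing}\,\mathbb{R}^N$. Take four $3$-grids $u_{00},u_{10},u_{11},u_{01}$ of size $(1,1,2)$ arranged around a vertical axis, with all their edges on the axis degenerate at one vertex. Let three of the shared faces coincide exactly and be minimal with both vertical slices nondegenerate, while $u_{00}\partial_{1,1}=[d\sigma_2,Q]$ and $u_{10}\partial_{1,0}=[Q,d'\sigma_2]$.
\begin{itemize}
\item The three exact faces force all four vertical expanders to equal one and the same $\eta$.
\item The fourth face forces $f\circ\eta=g\circ\eta$, where $f,g\colon[2]\to[1]$ collapse the first and the second slice respectively.
\item No surjective $\eta$ satisfies this.
\end{itemize}
Every block here does refine a single minimal chain (that of the axis), so your last sentence is exactly what fails. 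Your ``propagation by $r$'' plan breaks the same way, because a block with larger $r$ receives incompatible requirements from different lower neighbours.

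So an argument using only the $0$-homotopies supplied by Lemmas \ref{top face} and \ref{side face} cannot work; you would have to exploit the specific shape of the blocks. I am not convinced even that suffices. Take $n=3$ and let $x_{1,1},x_{2,1},x_{3,1}$ be stacks of two cubes twisted as above around their common vertical edge. The twist sits on $x_{1,1}\partial_{1,1}\simeq_0x_{2,1}\partial_{1,1}$, and the other two shared faces are exact. Then:
\begin{itemize}
\item The blocks $(1,0,0),(1,0,1),(0,0,1),(0,1,1),(0,1,0)$ all have fourth-coordinate chains of length $2$, identified order-isomorphically through their shared faces.
\item $z_{(1,1)(2,1)}$ is built from the class $x_{2,1}\partial_{1,1}$, so its fourth-coordinate chain has length $1$.
\item It receives $f$ from $(1,0,0)$ and $g$ from $(0,1,0)$, which gives the same contradiction.
\end{itemize}
It therefore appears that the statement itself needs modification.

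The paper's proof proceeds differently. It inducts on the codimension of the shared faces, taking common expansions via Lemma \ref{common expansion} and refinements supported in the tangent directions. It then invokes Lemma \ref{stabilization} to make the direction-$i$ expander depend only on $\underline{S}$. That last step is exactly where this cycle issue would have to be resolved, and it is not addressed there either. In any case, the decisive step of your proposal is unsupported as it stands.
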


	We show some properties of expanders that we are going to use.

	\begin{lemma}
		\label{cofinality lemma}
		Let $\epsilon_1\colon[m_1]\to[n]$ and $\epsilon_2\colon[m_2]\to[n]$ be expanders. There exists an expander $\delta\colon[m_1]\to[m_2]$ such that $\epsilon_1=\epsilon_2\circ\delta$ if and only if for all $i=0,\ldots,n$,
		\[
		|\epsilon_1^{-1}(i)|\ge|\epsilon_2^{-1}(i)|.
		\]
	\end{lemma}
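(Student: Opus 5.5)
The plan is to reduce everything to the blocks of the two fibres. An expander $\epsilon\colon[m]\to[n]$ is surjective and order preserving, so it is determined by the sizes $b_i=|\epsilon^{-1}(i)|\ge1$ of its fibres. The fibres are consecutive blocks: the fibre over $i$ occupies positions $b_0+\cdots+b_{i-1}$ through $b_0+\cdots+b_i-1$ of $[m]$.

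\emph{Necessity.} Suppose $\epsilon_1=\epsilon_2\circ\delta$ with $\delta$ an expander. Then
\[
\epsilon_1^{-1}(i)=\delta^{-1}(\epsilon_2^{-1}(i)).
\]
Since $\delta$ is surjective, $\delta^{-1}(\epsilon_2^{-1}(i))$ surjects onto $\epsilon_2^{-1}(i)$. This gives $|\epsilon_1^{-1}(i)|\ge|\epsilon_2^{-1}(i)|$ directly.

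\emph{Sufficiency.} Assume $|\epsilon_1^{-1}(i)|\ge|\epsilon_2^{-1}(i)|$ for every $i$. I will build $\delta$ block by block. For each $i$, write $A_i=\epsilon_1^{-1}(i)$ and $B_i=\epsilon_2^{-1}(i)$; these are intervals in $[m_1]$ and $[m_2]$ respectively. Choose any order preserving surjection $\delta_i\colon A_i\to B_i$, which exists because $|A_i|\ge|B_i|\ge1$. For example, send the first $|B_i|-1$ elements of $A_i$ bijectively in order, and send all remaining elements to $\max B_i$. Then define $\delta$ as the union of the $\delta_i$.

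Three properties must be checked.
\begin{itemize}
\item $\delta$ is surjective, because the $B_i$ cover $[m_2]$ and each $\delta_i$ is onto $B_i$.
\item $\delta$ is order preserving. Within a block this holds by construction. Across blocks, if $a\in A_i$, $a'\in A_j$ and $i<j$, then $\delta(a)\in B_i$ lies below $\delta(a')\in B_j$, since the blocks $B_i$ are ordered increasingly along $[m_2]$.
\item $\epsilon_2\circ\delta=\epsilon_1$ holds because $\delta(A_i)\subset B_i$.
\end{itemize}

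There is no real obstacle here. The only point needing care is the cross-block monotonicity, which rests on the observation that the fibres of an order preserving surjection are consecutive intervals arranged in increasing order.
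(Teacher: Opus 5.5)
Your proof is correct and essentially matches the paper's: the paper also treats necessity as obvious, and for sufficiency it defines $\delta$ by its fibre sizes, letting one element of each block $\epsilon_2^{-1}(i)$ absorb $|\epsilon_1^{-1}(i)|-|\epsilon_2^{-1}(i)|+1$ elements with all other fibres singletons. That is your block-wise surjection, except that you collapse the excess onto $\max\epsilon_2^{-1}(i)$ rather than a different chosen element. You verify $\delta(\epsilon_1^{-1}(i))\subset\epsilon_2^{-1}(i)$ and cross-block monotonicity directly, whereas the paper concludes by comparing fibre counts of $\epsilon_2\circ\delta$ and $\epsilon_1$ with somewhat loose indexing, so your version is, if anything, more careful.
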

	
	\begin{proof}
		We only prove the if part because the only if part is obvious. If $|\epsilon_1^{-1}(i)|\ge|\epsilon_2^{-1}(i)|$ for all $i=1,\ldots,n$, then we may define an expander $\delta\colon[m_1]\to[m_2]$ by
		\[
		|\delta^{-1}(i)|=
		\begin{cases}
			|\epsilon_1^{-1}(i)|-|\epsilon_2^{-1}(i)|+1&(i=|\epsilon_1^{-1}(0)|+\cdots+|\epsilon_1^{-1}(i-1)|)\\
			1&(\text{otherwise}).
		\end{cases}
		\]
		Since $|(\epsilon_2\circ\delta)^{-1}(i)|=|\epsilon_1^{-1}(i)|$ for all $i=0,\ldots,n$, we have $\epsilon_1=\epsilon_2\circ\delta$.
	\end{proof}

	\begin{lemma}
		\label{cofinality}
		Let $\epsilon_1\colon[m_1]\to[n]$ and $\epsilon_2\colon[m_2]\to[n]$ be expanders. Then there exist expanders $\delta_1\colon[k]\to[m_1]$ and $\delta_2\colon[k]\to[m_2]$ such that
		\[
		\epsilon_1\circ\delta_1=\epsilon_2\circ\delta_2.
		\]
	\end{lemma}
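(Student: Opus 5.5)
The plan is to reduce to Lemma \ref{cofinality lemma} by comparing fibre sizes. Since $\epsilon_1$ and $\epsilon_2$ are surjective poset maps onto $[n]$, every fibre $\epsilon_j^{-1}(i)$ is a nonempty interval of consecutive integers. Put
\[
c_i=\max\{|\epsilon_1^{-1}(i)|,|\epsilon_2^{-1}(i)|\}\quad(i=0,\ldots,n),
\]
and let $k=c_0+\cdots+c_n-1$. Define $\epsilon\colon[k]\to[n]$ to be the unique expander with $|\epsilon^{-1}(i)|=c_i$ for every $i$: it sends the first $c_0$ elements of $[k]$ to $0$, the next $c_1$ to $1$, and so on. This map is order preserving and surjective because every $c_i\ge1$.

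By construction $|\epsilon^{-1}(i)|\ge|\epsilon_1^{-1}(i)|$ and $|\epsilon^{-1}(i)|\ge|\epsilon_2^{-1}(i)|$ for all $i$. Applying the \emph{if} direction of Lemma \ref{cofinality lemma} to the pair $(\epsilon,\epsilon_1)$ gives an expander $\delta_1\colon[k]\to[m_1]$ with $\epsilon=\epsilon_1\circ\delta_1$. Applying it to $(\epsilon,\epsilon_2)$ gives an expander $\delta_2\colon[k]\to[m_2]$ with $\epsilon=\epsilon_2\circ\delta_2$. Therefore $\epsilon_1\circ\delta_1=\epsilon=\epsilon_2\circ\delta_2$, which is the claim.

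There is no real obstacle; the only check is that $\epsilon$ is well defined as an expander. One should also note that Lemma \ref{cofinality lemma} is stated with the fibre comparison for $i=0,\ldots,n$. Its proof writes $i=1,\ldots,n$ in one place, but the intended hypothesis covers all fibres, and our choice of $c_i$ satisfies it for every $i$, including $i=0$.
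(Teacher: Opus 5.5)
Your proof is correct and follows the paper's argument: take the fibrewise maximum $c_i$ of the fibre sizes, build the expander $\epsilon$ with those fibre sizes, and factor it through both $\epsilon_1$ and $\epsilon_2$ using the \emph{if} direction of Lemma~\ref{cofinality lemma}. Your choice $k=c_0+\cdots+c_n-1$ is the right one, since $[k]$ has $k+1$ elements; the paper's domain $[k_0+\cdots+k_n]$ is off by one, though this does not affect the argument.
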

	
	\begin{proof}
		For $i=0,\ldots,n$, let $k_i=\max\{|\epsilon_1^{-1}(i)|,|\epsilon_2^{-1}(i)|\}$. Define an expander $\lambda\colon[k_0+\cdots+k_n]\to[n]$ by setting $|\lambda^{-1}(i)|=k_i$ for $i=0,\ldots,n$. Then by Lemma \ref{cofinality lemma}, there are surjective poset maps $\delta_1\colon[k_0+\ldots+k_n]\to[m_1]$, $\delta_2\colon[k_0+\ldots+k_n]\to[m_2]$ such that $\epsilon_1\circ\delta_1=\lambda=\epsilon_2\circ\delta_2$.
	\end{proof}

	\begin{lemma}
		\label{common expansion}
		Let $X$ be a cubical set. For $x,y\in\mathcal{G}X_n$ such that $x\simeq_0y$, there are $n$-expanders $\epsilon$ and $\delta$ such that $x^\epsilon=y^\delta$.
	\end{lemma}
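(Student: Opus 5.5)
The plan is to reduce to minimal representatives and expanders. Since $x\simeq_0y$, the discussion following Lemma \ref{expander} gives $x_{\min}=y_{\min}$; call this common minimal grid $m$. Also by that discussion, $x=m^{\epsilon(x)}$ and $y=m^{\epsilon(y)}$, where $\epsilon(x)=(\epsilon(x)_1,\ldots,\epsilon(x)_n)$ and $\epsilon(y)=(\epsilon(y)_1,\ldots,\epsilon(y)_n)$ are $n$-expanders with the same targets $[l_1],\ldots,[l_n]$, and $(l_1,\ldots,l_n)$ is the size of $m$. (I treat the size indexing loosely, as the paper does: an expander onto $[l_i]$ records how many copies of each slice occur.)

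The main step is to apply Lemma \ref{cofinality} coordinatewise. For each $i$, we obtain expanders $\delta^x_i$ and $\delta^y_i$ with
\[
\epsilon(x)_i\circ\delta^x_i=\epsilon(y)_i\circ\delta^y_i.
\]
Set $\epsilon=(\delta^x_1,\ldots,\delta^x_n)$ and $\delta=(\delta^y_1,\ldots,\delta^y_n)$. The claim $x^\epsilon=y^\delta$ then follows from the following functoriality property of expansion:
\[
(z^{\lambda})^{\mu}=z^{\lambda\circ\mu}
\]
for expanders $\lambda$ and $\mu$ composed coordinatewise. With this, both sides equal $m^{\epsilon(x)\circ\epsilon}=m^{\epsilon(y)\circ\delta}$.

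The obstacle is verifying the functoriality property, since the definition of $x^{\epsilon_i}$ is stated directly in terms of the fibre cardinalities $|\epsilon_i^{-1}(j)|$. My plan is the following.

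\begin{enumerate}
\item Observe that $z^{\lambda}$, for an expander $\lambda$ in direction $i$, is characterized by two facts. Its $k$-th slice in direction $i$ is $z[i,\lambda(k)]$ when $k$ is the first element of $\lambda^{-1}(\lambda(k))$. Otherwise, its $k$-th slice is the degenerate slice $z[i,\lambda(k)]\partial_{i,1}\sigma_i$.
\item Using the cubical identity $(w\partial_{i,1}\sigma_i)\partial_{i,1}\sigma_i=w\partial_{i,1}\sigma_i$, which follows from $\sigma_i\partial_{i,1}=1$, check that applying $\mu$ after $\lambda$ produces exactly this description for $\lambda\circ\mu$. The point is that the first element of a fibre of $\lambda\circ\mu$ maps under $\mu$ to the first element of the corresponding fibre of $\lambda$.
\item Note that expansions in different directions commute, because each one acts slicewise in its own coordinate. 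The coordinatewise composition is therefore well defined.
\end{enumerate}

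This is a routine check, but it is where care with indices is needed. In particular, the degenerate slices must also be taken with respect to the directedness: the grids are directed, so $\sigma_{i,0}$ is used throughout.
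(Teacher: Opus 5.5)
Your proof is correct and follows the paper's approach. The paper's own proof just declares the lemma a direct consequence of Lemma \ref{cofinality}, and your argument is that one-liner unpacked: you pass to the common minimal representative $x_{\min}=y_{\min}$, apply Lemma \ref{cofinality} coordinatewise to $\epsilon(x)$ and $\epsilon(y)$, and conclude via $(z^{\lambda})^{\mu}=z^{\lambda\circ\mu}$, whose verification (including that expansions in different directions commute) the paper leaves implicit.
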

	
	\begin{proof}
		This is a direct consequence of Lemma \ref{cofinality}.
	\end{proof}

	Let $\epsilon=(\epsilon_1,\ldots,\epsilon_n)$ be an $n$-expander. We say that $\epsilon$ is supported within $S\subset\{1,\ldots,n\}$ if $\epsilon_i$ is the identity map for $i\in S$. Note that for $x\in\mathcal{G}X_n$ and $n$-expanders $\epsilon$ and $\epsilon'$, $x^\epsilon=x^{\epsilon'}$ does not necessarily imply that $\epsilon=\epsilon'$. Instead, we have the following lemma.

	\begin{lemma}
		\label{stabilization}
		Let $x\in\mathcal{G}X_n$, and let $\epsilon$ and $\epsilon'$ be $n$-expanders. If $x^{\epsilon}=x^{\epsilon'}$, then there exist $n$-expanders $\delta,\delta'$ such that
		\[
		\epsilon\circ\delta=\epsilon'\circ\delta'.
		\]
		Moreover, if $\epsilon$ and $\epsilon'$ are supported within $S\subset\{1,\ldots,n\}$, then $\delta$ and $\delta'$ may be chosen to be supported within $S$.
	\end{lemma}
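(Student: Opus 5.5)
The proof reduces the $n$-dimensional statement to a one-dimensional statement about expanders, applied coordinatewise. The key structural fact is that $x^\epsilon$ for $\epsilon=(\epsilon_1,\ldots,\epsilon_n)$ is computed one coordinate at a time: $x^\epsilon=(\cdots(x^{\epsilon_1})\cdots)^{\epsilon_n}$. Moreover, inserting degenerate slices in direction $i$ does not change which slices in direction $j\neq i$ are degenerate, except in the trivial sense that degenerate slices stay degenerate. So I would first reduce to the case where $\epsilon$ and $\epsilon'$ differ in a single coordinate $i$, or more precisely treat each coordinate separately. Concretely, I would fix $i$ and look at the $i$-th direction of $x$, a sequence of slices $x[i,1],\ldots,x[i,l_i]$. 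Each of $x^\epsilon$ and $x^{\epsilon'}$ has $i$-slices obtained from this sequence by inserting, after slice $j$, some number of copies of the degenerate slice $x[i,j]\partial_{i,1}\sigma_i$; these counts are $|\epsilon_i^{-1}(j)|-1$ and $|\epsilon'^{-1}_i(j)|-1$.

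The main point is to see what the equality $x^\epsilon=x^{\epsilon'}$ forces. If the counts agree for every $j$, then $\epsilon_i=\epsilon_i'$ and nothing needs to be done. Otherwise the two insertion patterns yield the same sequence of slices only because some inserted degenerate slices coincide with adjacent slices of $x$ that are themselves degenerate. Rather than analyse this combinatorics precisely, I would use Lemma \ref{cofinality lemma} directly.

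Put $m_i$ for the common size of $x^\epsilon$ and $x^{\epsilon'}$ in direction $i$. Then $\epsilon_i,\epsilon'_i\colon[m_i]\to[l_i]$ are expanders with the same domain. By Lemma \ref{cofinality}, there are expanders $\delta_i\colon[k_i]\to[m_i]$ and $\delta'_i\colon[k_i]\to[m_i]$ with $\epsilon_i\circ\delta_i=\epsilon'_i\circ\delta'_i$. The construction there sets $k_i+1=\sum_j\max\{|\epsilon_i^{-1}(j)|,|\epsilon'^{-1}_i(j)|\}$ and takes $\delta_i,\delta'_i$ to be the maps given by Lemma \ref{cofinality lemma}. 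This already yields expanders $\delta=(\delta_i)$ and $\delta'=(\delta'_i)$ with $\epsilon\circ\delta=\epsilon'\circ\delta'$ coordinatewise, which is what the statement asks for. The hypothesis $x^\epsilon=x^{\epsilon'}$ is then used only for a consistency check: it guarantees $m_i$ is the same for both, so the composites are defined. If the intended meaning is also $x^{\epsilon\circ\delta}=x^{\epsilon'\circ\delta'}$, I would add that this follows trivially, since the two expanders are equal.

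For the support condition, if $i\in S$ then $\epsilon_i=\epsilon'_i=1_{[l_i]}$, so $m_i=l_i$. In this case I choose $\delta_i=\delta'_i=1$ instead of invoking Lemma \ref{cofinality}, so $\delta$ and $\delta'$ are supported within $S$.

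The only step I expect to need care is checking that the grid sizes of $x^\epsilon$ and $x^{\epsilon'}$ agree in every coordinate, so that $\epsilon_i$ and $\epsilon'_i$ have a common domain. This is immediate from equality of the grids, since grids carry their size.
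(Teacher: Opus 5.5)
Your proof is correct and is the paper's argument: the paper's proof is the single sentence that the claim follows from Lemma \ref{cofinality}, applied coordinatewise, and for the support statement you take $\delta_i=\delta'_i=1$ on the coordinates where $\epsilon_i=\epsilon'_i=1$. One small correction: the hypothesis $x^\epsilon=x^{\epsilon'}$ is not needed for the composites to be defined, because Lemma \ref{cofinality} already allows $\epsilon_i$ and $\epsilon'_i$ to have different domains; all that matters is their common codomain $[l_i]$, which is fixed by $x$ itself.
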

	
	\begin{proof}
		The claim directly follows from Lemma \ref{cofinality}.
	\end{proof}

	We will use the following observation for an expander with support. Let $x\in\mathcal{G}X_n$ and $y\in\mathcal{G}X_{n-k}$. For $S=\{j_1<\cdots<j_k\}$ and $\alpha_1,\ldots,\alpha_k=0,1$, suppose that there exists an $(n-k)$-expander $\epsilon$ such that
	\[
	(x\partial_{j_k,\alpha_k}\cdots\partial_{j_1,\alpha_1})^\epsilon=y.
	\]
	Then there exists an $n$-expander $\delta$ supported within $S$ such that
	\[
	(x^\delta)\partial_{j_k,\alpha_k}\cdots\partial_{j_1,\alpha_1}=y.
	\]
	We are now ready to prove Proposition \ref{gluing grids}. 

	\begin{proof}
		[Proof of Proposition \ref{gluing grids}]
		We aim to construct $(n+1)$-expanders $\epsilon_{p_1,\ldots,p_n}$ for all $p_1,\ldots,p_n$ such that
		\[
		v_{p_1,\ldots,p_n}=(u_{p_1,\ldots,p_n})^{\epsilon_{p_1,\ldots,p_n}}
		\]
		where $v_{p_1,\ldots,p_n}$ satisfy the condition in the statement. The construction is obtained by applying Lemma \ref{common expansion} iteratively such that $v_{p_1,\ldots,p_n}$ is sufficiently refined in every coordinate and compatible with $v_{q_1,\ldots,q_n}$, where $|p_i-q_i|\le1$ for $i=1,...,n$. For $0\le k\le n$, let
		\[
		\mathcal{S}(k,n)=\{(s_1,\ldots,s_k)\in\{1,\ldots,n\}^k\mid s_i\ne s_j\text{ for }i\ne j\}.
		\]
		For $S\in\mathcal{S}(k,n)$, let $\underline{S}=\{j_1<\cdots<j_k\}$ denote the underlying poset of $S$. We construct a series of $(n+1)$-expanders $\epsilon_{p_1,\ldots,p_n}^S$ for all $p_1,\ldots,p_n$ and all $S\in\mathcal{S}(k,n)$ with $0\le k\le n$. Define $\epsilon_{p_1,\ldots,p_n}^\emptyset$ as the identity expander. Suppose that we have constructed $(n+1)$-expanders $\epsilon_{p_1,\ldots,p_n}^S$ for all $S\in\bigsqcup_{l=0}^{k-1}\mathcal{S}(l,n)$. Set
		\[
		u_{p_1,\ldots,p_n}^S=(u_{p_1,\ldots,p_n})^{\epsilon_{p_1,\ldots,p_n}^S}.
		\]
		Let $S=(s_1,\ldots,s_k)\in\mathcal{S}(k,n)$ with $\underline{S}=\{j_1<\cdots<j_k\}$. Set $\widehat{S}=(s_1,\ldots,s_{k-1})\in\mathcal{S}(k-1,n)$. Fix $p_1,\ldots,p_n\in\{-1,0,1\}$ such that
		\[
		\{j\in\{1,\ldots,n\}\mid p_j=0\}=\underline{S}.
		\]
		For $p=\pm 1$, let
		\[
		\alpha(p)=
		\begin{cases}
			0&(p=-1)\\
			1&(p=1).
		\end{cases}
		\]
		Choose $p_{j_1}',\ldots,p_{j_k}'\in\{-1,1\}$. Set
		\begin{align*}
			&N(u_{p_1,\ldots,p_n}\partial_{j_k,\alpha(p_{j_k}')}\cdots\partial_{j_1,\alpha(p_{j_1}')})\\
			&=\{u_{q_1,\ldots,q_n}\mid q_{i_l}=p_{i_l}\;(l=1,\ldots,n-k+1)\text{ and }q_{j_m}\in\{0,p_{j_m}'\}\;(m=1,\ldots,k)\}.
		\end{align*}
		Then
		\[
		u_{q_1,\ldots,q_n}^T\partial_{j_k,\alpha(q_{j_k})}\cdots\partial_{j_1,\alpha(q_{j_1})}\simeq_0u_{p_1,\ldots,p_n}\partial_{j_k,\alpha(p_{j_k}')}\cdots\partial_{j_1,\alpha(p_{j_1}')}
		\]
		for all $q_1,\ldots,q_n$ and all $T\in\bigsqcup_{l=0}^{k-1}\mathcal{S}(l,n)$. By Lemma \ref{common expansion}, there exists $w_{p_1,\ldots,p_n}^{p_{j_1}',\ldots,p_{j_k}'}\in\mathcal{G}X_{n-k+1}$ such that
		\[
		w_{p_1,\ldots,p_n}^{p_{j_1}',\ldots,p_{j_k}'}\ge u_{q_1,\ldots,q_n}^T\partial_{j_k,\alpha(q_{j_k})}\cdots\partial_{j_1,\alpha(q_{j_1})}
		\]
		for all $u_{q_1,\ldots,q_n}\in N(u_{p_1,\ldots,p_n}\partial_{j_k,\alpha(p_{j_k}')}\cdots\partial_{j_1,\alpha(p_{j_1}')})$ and all $T\in\bigsqcup_{l=0}^{k-1}\mathcal{S}(l,n)$. Since
		\[
		u_{p_1,\ldots,p_n}\in\bigcap_{p_{j_1}',\ldots,p_{j_k}'\in\{-1,1\}}N(u_{p_1,\ldots,p_n}\partial_{j_k,\alpha(p_{j_k}')}\cdots\partial_{j_1,\alpha(p_{j_1}')}),
		\]
		we may assume
		\[
		w_{p_1,\ldots,p_n}^{p_{j_1}',\ldots,p_{j_k}'}=(u_{p_1,\ldots,p_n}\partial_{j_k,\alpha(p_{j_k}')}\cdots\partial_{j_1,\alpha(p_{j_1}')})^\epsilon
		\]
		for some $(n-k+1)$-expander $\epsilon$ that does not depend on the choice of $p_1',\ldots,p_k'$. Fix $p_{j_1}',\ldots,p_{j_k}'\in\{-1,1\}$ and $u_{q_1,\ldots,q_n}\in N(u_{p_1,\ldots,p_n}\partial_{j_k,\alpha(p_{j_k}')}\cdots\partial_{j_1,\alpha(p_{j_1}')})$. Then there exists an $(n+1)$-expander $\epsilon_{q_1,\ldots,q_n}^{\widehat{S},S}$ supported within $\underline{S}$, which does not depend on the choice of $p_{j_1}',\ldots,p_{j_k}'$, such that
		\[
		(u_{q_1,\ldots,q_n}^{\widehat{S}})^{\epsilon_{q_1,\ldots,q_n}^{\widehat{S},S}}\partial_{j_k,\alpha(p_{j_k}')}\cdots\partial_{j_1,\alpha(p_{j_1}')}=w_{p_1,\ldots,p_n}^{p_{j_1}',\ldots,p_{j_k}'}.
		\]
		We now define
		\[
		\epsilon_{q_1,\ldots,q_n}^{S}=\epsilon_{q_1,\ldots,q_n}^{\widehat{S}}\circ\epsilon_{q_1,\ldots,q_n}^{\widehat{S},S}.
		\]
		Then for any elements $u_{q_1,\ldots,q_n}$ and $u_{q_1',\ldots,q_n'}$ of $N(u_{p_1,\ldots,p_n}\partial_{j_k,\alpha(p_{j_k}')}\cdots\partial_{j_1,\alpha(p_{j_1}')})$ and all $T\in\bigsqcup_{l=0}^{k}\mathcal{S}(l,n)$, we have
		\[
		u_{q_1,\ldots,q_n}^S\partial_{j_k,\alpha(q_{j_k})}\cdots\partial_{j_1,\alpha(q_{j_1})}\ge u_{q_1',\ldots,q_n'}^T\partial_{j_k,\alpha(q_{j_k}')}\cdots\partial_{j_1,\alpha(q_{j_1}')}.
		\]
		In particular, if $T\in\mathcal{S}(k,n)$ satisfies $\underline{S}=\underline{T}$, then
		\begin{equation}
			\label{useful}
			u_{q_1,\ldots,q_n}^S\partial_{j_k,\alpha(q_{j_k})}\cdots\partial_{j_1,\alpha(q_{j_1})}=u_{q_1',\ldots,q_n'}^T\partial_{j_k,\alpha(q_{j_k}')}\cdots\partial_{j_1,\alpha(q_{j_1}')}.
		\end{equation}
		Let $q_1=q_1',\ldots,q_n=q_n'$ in \eqref{useful}. By Lemma \ref{stabilization}, we may choose $\epsilon_{q_1,\ldots,q_n}^S$ for $S\in\mathcal{S}(k,n)$ such that
		\[
		(\epsilon_{q_1,\ldots,q_n}^S)_i=(\epsilon_{q_1,\ldots,q_n}^T)_i
		\]
		for all $i\in\{1,\ldots,n+1\}-\underline{S}$ and all $T\in\mathcal{S}(k,n)$ with $\underline{S}=\underline{T}$, where
		$$\epsilon_{q_1,\ldots,q_n}^S=((\epsilon_{q_1,\ldots,q_n}^S)_1,\ldots,(\epsilon_{q_1,\ldots,q_n}^S)_{n+1}).$$ 
		Therefore, $(\epsilon_{q_1,\ldots,q_n}^S)_i$ for $i\in\{1,\ldots,n+1\}-\{j_1,\ldots,j_k\}$ depends only on $\underline{S}$.

		Define an $(n+1)$-expander $\epsilon_{q_1,\ldots,q_n}$ by
		\[
		(\epsilon_{q_1,\ldots,q_n})_i=
		\begin{cases}
			(\epsilon_{q_1,\ldots,q_n}^{(1,\ldots,i-1,i+1,\ldots,n)})_i&(i=1,\ldots,n)\\
			(\epsilon_{q_1,\ldots,q_n}^{(1,\ldots,n)})_{n+1}&(i=n+1)
		\end{cases}
		\]
		and set
		\[
		v_{p_1,\ldots,p_n}=(u_{p_1,\ldots,p_n})^{\epsilon_{p_1,\ldots,p_n}}.
		\]
		It remains to show that
		\begin{equation}
			\label{goal}
			v_{p_1,\ldots,p_{i-1},0,p_{i+1},\ldots,p_n}\partial_{i,\alpha}=v_{p_1,\ldots,p_{i-1},(-1)^{1-\alpha},p_{i+1},\ldots,p_n}\partial_{i,1-\alpha}.
		\end{equation}
		Let $T=(i)\in\mathcal{S}(1,n)$. Then
		\begin{equation}
			\label{useful2}
			u_{p_1,\ldots,p_{i-1},0,p_{i+1},\ldots,p_n}^T\partial_{i,\alpha}=u_{p_1,\ldots,p_{i-1},(-1)^{1-\alpha},p_{i+1},\ldots,p_n}^T\partial_{i,1-\alpha}.
		\end{equation}
		For $j=1,\ldots,n+1$, define $S_j=(s_1,\ldots,s_k)\in\mathcal{S}(k,n)$ with $k=n-1$ or $n$ by $s_1=i$ and
		\[
		\underline{S_j}=
		\begin{cases}
			\{1<\cdots<j-1<j+1<\cdots<n\}&(j=1,\ldots,n)\\
			\{1<\cdots<n\}&(j=n+1)
		\end{cases}
		\]
		Then we have
		\[
		(\epsilon_{q_1,\ldots,q_n})_j=(\epsilon_{q_1,\ldots,q_n}^{(i)}\circ\epsilon_{q_1,\ldots,q_n}^{(i),S_j})_j.
		\]
		Hence, it suffices to show that
		\begin{equation}
			\label{goal2}
			(\epsilon_{p_1,\ldots,p_{i-1},0,p_{i+1},\ldots,p_n}^{(i),S_j})_j=(\epsilon_{p_1,\ldots,p_{i-1},\pm1,p_{i+1},\ldots,p_n}^{(i),S_j})_j
		\end{equation}
		for $j\ne i$. We only prove the case that $j\le n$ because the case that $j=n+1$ is identical. By \eqref{useful},
		\begin{align*}
			&u_{p_1,\ldots,p_{i-1},0,p_{i+1},\ldots,p_n}^{S_j}\partial_{n,\alpha(p_n)}\cdots\partial_{j+1,\alpha(p_{j+1})}\partial_{j-1,\alpha(p_{j-1})}\cdots\partial_{1,\alpha(p_1)}\\
			&=u_{p_1,\ldots,p_{i-1},(-1)^{1-\alpha},p_{i+1},\ldots,p_n}^{S_j}\partial_{n,\alpha(p_n)}\cdots\partial_{j+1,\alpha(p_{j+1})}\partial_{j-1,\alpha(p_{j-1})}\cdots\partial_{1,\alpha(p_1)}
		\end{align*}
		and by \eqref{useful2},
		\begin{align*}
			&u_{p_1,\ldots,p_{i-1},0,p_{i+1},\ldots,p_n}^{(i)}\partial_{n,\alpha(p_n)}\cdots\partial_{j+1,\alpha(p_{j+1})}\partial_{j-1,\alpha(p_{j-1})}\cdots\partial_{1,\alpha(p_1)}\\
			&=u_{p_1,\ldots,p_{i-1},(-1)^{1-\alpha},p_{i+1},\ldots,p_n}^{(i)}\partial_{n,\alpha(p_n)}\cdots\partial_{j+1,\alpha(p_{j+1})}\partial_{j-1,\alpha(p_{j-1})}\cdots\partial_{1,\alpha(p_1)}.
		\end{align*}
		Hence by Lemma \ref{stabilization}, \eqref{goal} follows.
	\end{proof}

	\begin{remark}
		Analogously to Proposition \ref{gluing grids}, we can prove that for any family of $u_{p_1,\ldots,p_n}\in\mathcal{G}X_n$ with $p_i=1,\ldots,l_i$ ($i=1,\ldots,n$)
		\[
		u_{p_1,\ldots,p_n}\partial_{i,1}\simeq_0u_{p_1,\ldots,p_{i-1},p_i+1,p_{i+1},\ldots,p_n}\partial_{i,0},
		\]
		there exists a family $v_{p_1,\ldots,p_n}\in\mathcal{G}X_n$ with $p_i=1,\ldots,l_i$ ($i=1,\ldots,n$) satisfying
		\[
		v_{p_1,\ldots,p_n}\simeq_0u_{p_1,\ldots,p_n}
		\]
		and
		\[
		v_{p_1,\ldots,p_n}\partial_{i,1}=v_{p_1,\ldots,p_{i-1},p_i+1,p_{i+1},\ldots,p_n}\partial_{i,0}.
		\]
		In other words, every directed grid in $\mathcal{G}_0X$ lift through $\pi\colon\mathcal{G}X\to\mathcal{G}_0X$. Note that the proof of Proposition \ref{gluing grids} is not homotopy theoretical since $\pi$ is not a Kan fibration in general.
	\end{remark}


	By Lemmas \ref{top face} and \ref{side face} and Proposition \ref{gluing grids}, there exist $(n+1)$-grids $v_{p_1,\ldots,p_n}$ for $p_1,\ldots,p_n=-1,0,1$ that form a large $(n+1)$-grid $v$ in such a way that $v_{p_1,\ldots,p_n}$ is the $(p_1,\ldots,p_n)$-block of $v$, representing $z$ for $(p_1,\ldots,p_n)=(0,\ldots,0)$ and for $z_{(i_1,\alpha_1)\cdots(i_r,\alpha_r)}$ for $(p_1,\ldots,p_n)\ne(0,\ldots,0)$. Let $w\in(\mathcal{G}_0X)_{n+1}$ be represented by $v$, and we aim to show that $w$ is a filler we want.

	\begin{lemma}
		\label{u top face}
		We have
		\[
		w\partial_{n+1,1}=x.
		\]
	\end{lemma}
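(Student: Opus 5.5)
We compute the top face $v\partial_{n+1,1}$ of the large $(n+1)$-grid $v$ block by block, and then show it is $0$-homotopic to a representative of $x$.

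\textbf{Block structure.} The grid $v$ has blocks $v_{p_1,\ldots,p_n}$ with $p_i\in\{-1,0,1\}$ and size $1$ (in blocks) in the last direction. Hence $v\partial_{n+1,1}$ is the $n$-grid assembled from the faces $v_{p_1,\ldots,p_n}\partial_{n+1,1}$, placed in the $(p_1,\ldots,p_n)$-positions. Since $v_{p}\simeq_0 u_{p}$, and $0$-homotopy is compatible with faces, each face $v_p\partial_{n+1,1}$ represents, in $\mathcal{G}_0X$, the corresponding face $z\partial_{n+1,1}$ or $z_{(i_1,\alpha_1)\cdots(i_r,\alpha_r)}\partial_{n+1,1}$.

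\textbf{Central block.} Since $z=x\sigma_{n+1}$, we have $z\partial_{n+1,1}=x$. So the central block represents $x$.

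\textbf{Off-central blocks.} Fix $(i_1,\alpha_1),\ldots,(i_r,\alpha_r)$ and write $w_r$ for $x_{i_1,\alpha_1}$ when $r=1$, and for $x_{i_r,\alpha_r}\partial_{i_{r-1},\alpha_{r-1}}\cdots\partial_{i_1,\alpha_1}$ when $r\ge2$. By \eqref{kappaxy} we have $\kappa_{x,y}(n+1)=n+1$ and $(a_{x,y})_{n+1}=0$. The face identity \eqref{signed permutation identity 1}, valid in $\mathcal{G}_0X$ by Lemma \ref{G_0 signed symmetric}, then moves $\partial_{n+1,1}$ past the signed permutation:
\[
z_{(i_1,\alpha_1)\cdots(i_r,\alpha_r)}\partial_{n+1,1}=\bigl(w_r\gamma_{\cdots}\partial_{n+1,1}\bigr)(a_{x,y},\kappa_{x,y})^{(n+1)}.
\]
Here $\gamma_{\cdots}$ denotes the string of connections in the definition of $z_{(i_1,\alpha_1)\cdots(i_r,\alpha_r)}$, which ends in $\gamma_{n,0}$ when $r=1$ and in $\gamma_{n-1,1}$ when $r\ge2$.

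When $r=1$, the cubical identity gives $\gamma_{n,0}\partial_{n+1,1}=\partial_{n,1}\sigma_n$, so the expression becomes a degeneracy of $x_{i,\alpha}\partial_{n,1}$ in the direction corresponding to $i$. When $r\ge2$, commuting $\partial_{n+1,1}$ through the connections again produces a degenerate cube: the last connection, of type $\gamma_{\cdot,1}$, evaluated at coordinate value $1$, collapses a coordinate. In both cases the resulting $n$-cube is degenerate in a direction $i_j$ with $p_{i_j}\neq0$. The precise claim to aim for is that the block at position $p$ is degenerate in every direction $i$ with $p_i\neq0$, with nondegenerate part equal to the corresponding face of $x$. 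By the compatibility of Lemmas \ref{top face} and \ref{side face} with $\partial_{n+1,1}$, this face is $x\partial_{i_r,\alpha_r}\cdots\partial_{i_1,\alpha_1}$ suitably degenerated.

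\textbf{Conclusion.} Every slice $v\partial_{n+1,1}[i,j]$ with $j$ in an outer layer ($p_i=\pm1$) is therefore a degenerate slice of the form $y\sigma_{i,0}$. Deleting all these slices is a $0$-homotopy, so $v\partial_{n+1,1}\ge v_{0\cdots0}\partial_{n+1,1}$. The right-hand side represents $x$, so $w\partial_{n+1,1}=[v\partial_{n+1,1}]=x$.

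The main obstacle is the index bookkeeping in the off-central step: verifying that the signed permutation $(a_{x,y},\kappa_{x,y})^{(n+1)}$ sends the collapsed coordinate exactly to the direction $i_j$ (using \eqref{kappaxy}), and that the degeneracy lands as an actual degenerate slice of the grid. This needs care because grid degeneracies do not satisfy all the cubical identities.
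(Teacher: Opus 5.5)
Your argument follows the paper's proof. You compute $w\partial_{n+1,1}$ block by block and get $x$ from the central block, since $z\partial_{n+1,1}=x\sigma_{n+1}\partial_{n+1,1}=x$. You then show that the top face of every off-central block is degenerate in the directions $i_1,\ldots,i_r$, and conclude because those blocks contribute only degenerate slices; your explicit slice deletion is the paper's ``concatenating'' step. Your $r=1$ computation is exactly the paper's.

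The mechanism you give for $r\ge 2$, however, is wrong. A connection $\gamma_{\cdot,1}$ is a $\min$, and a $\min$ with an entry equal to $1$ collapses nothing (e.g.\ $\gamma_{j,1}\partial_{j,1}=1$). In fact $\partial_{n+1,1}$ never meets the $\gamma_{\cdot,1}$'s at all. Write $w_r=x_{i_r,\alpha_r}\partial_{i_{r-1},\alpha_{r-1}}\cdots\partial_{i_1,\alpha_1}$. The string $\gamma_{n-r+1,0}\gamma_{n-r+1,1}\gamma_{n-r+2,1}\cdots\gamma_{n-1,1}$ is the map $(t_1,\ldots,t_{n+1})\mapsto(t_1,\ldots,t_{n-r},\max(\min(t_{n-r+1},\ldots,t_n),t_{n+1}))$. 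So the collapse comes from the single $\max$-connection $\gamma_{n-r+1,0}$, and it kills all of $t_{n-r+1},\ldots,t_n$ at once.

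Combinatorially, the computation goes as follows.
\begin{enumerate}
\item The identity $\gamma_{j,1}\partial_{i,1}=\partial_{i-1,1}\gamma_{j,1}$ for $j<i-1$ carries $\partial_{n+1,1}$ past every $\gamma_{\cdot,1}$, turning it into $\partial_{n-r+2,1}$.
\item Then $\gamma_{n-r+1,0}\partial_{n-r+2,1}=\partial_{n-r+1,1}\sigma_{n-r+1}$.
\item Finally $\sigma_j\gamma_{j,1}=\sigma_j\sigma_j$ and $\sigma_j\gamma_{i,1}=\gamma_{i-1,1}\sigma_j$ for $j<i$ turn the remaining connections into degeneracies.
\end{enumerate}
The result is
\[
z_{(i_1,\alpha_1)\cdots(i_r,\alpha_r)}\partial_{n+1,1}=\bigl(w_r\partial_{n-r+1,1}\underbrace{\sigma_{n-r+1}\cdots\sigma_{n-r+1}}_{r}\bigr)(a_{x,y},\kappa_{x,y})^{(n+1)}.
\]
Since $\kappa_{x,y}(n-r+k)=i_k$, this face is degenerate in every direction $i_1,\ldots,i_r$. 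That is what your deletion of all outer-layer slices needs; your version (``collapses a coordinate'') gives only one such direction.

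The nondegenerate part $w_r\partial_{n-r+1,1}=x\partial_{i_r,\alpha_r}\cdots\partial_{i_1,\alpha_1}$ follows from the horn relations $x_{i,\alpha}\partial_{n,1}=x\partial_{i,\alpha}$, not from Lemmas \ref{top face} and \ref{side face}. Your deletion argument does not actually need it.

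The obstacle you flag is harmless. The property ``every cube has the form $c\sigma_i$'' is preserved by inserting or deleting degenerate slices. For $k<i$ this uses $\sigma_i\partial_{k,\beta}=\partial_{k,\beta}\sigma_{i-1}$ and $\sigma_{i-1}\sigma_k=\sigma_k\sigma_i$; the case $k>i$ is analogous. Hence the property is preserved by $0$-homotopy. So degeneracy of $[v_p\partial_{n+1,1}]$ in $\mathcal{G}_0X$ does give genuinely degenerate slices of $v\partial_{n+1,1}$.
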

	
	\begin{proof}
		We have $z\partial_{n+1,1}=x\sigma_{n+1}\partial_{n+1,1}=x$. For $r=1$,
		\[
		z_{(i_1,\alpha_1)}\partial_{n+1,1}=((x_{i_1,\alpha_1}\gamma_{n,0})(a_{x,y},\kappa_{x,y}))\partial_{n+1,1}=(x_{i_1,\alpha_1}\partial_{n,1}\sigma_n)(a_{x,y},\kappa_{x,y})^{(n+1)}=x\partial_{i_1,\alpha_1}\sigma_{i_1}.
		\]
		For $r\ge 2$, we can see in exactly the same manner that
		\[
		z_{(i_1,\alpha_1)\cdots(i_r,\alpha_r)}\partial_{n+1,1}=x\partial_{i_r,\alpha_r}\cdots\partial_{i_1,\alpha_1}\sigma_{i_r}\cdots\sigma_{i_1}.
		\]
		Thus since $w\partial_{n+1,1}$ is obtained by concatenating $z\partial_{n+1,1}$ and $z_{(i_1,\alpha_1)\cdots(i_r,\alpha_r)}\partial_{n+1,1}$, the claim follows.
	\end{proof}

	\begin{lemma}
		\label{u side face}
		For $i=1,\ldots,n$ and $\alpha=0,1$,
		\[
		w\partial_{i,\alpha}=x_{i,\alpha}.
		\]
	\end{lemma}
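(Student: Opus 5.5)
The plan is to compute $w\partial_{i,\alpha}$ block by block, following the proof of Lemma \ref{u top face}. The large grid $v$ has size $(3,\ldots,3,1)$ in blocks, indexed by $(p_1,\ldots,p_n)\in\{-1,0,1\}^n$. Its face $w\partial_{i,\alpha}$ is therefore represented by the concatenation of the faces $v_{p_1,\ldots,p_n}\partial_{i,\alpha}$ over all blocks on the outermost layer in direction $i$. That layer is $p_i=(-1)^{1-\alpha}$, i.e. $p_i=-1$ for $\alpha=0$ and $p_i=1$ for $\alpha=1$. Since $v_{p}\simeq_0 u_p$ and $u_p$ represents $z$ or $z_{(i_1,\alpha_1)\cdots(i_r,\alpha_r)}$, it suffices to compute, in $\mathcal{G}_0X$, the faces $z_{(i_1,\alpha_1)\cdots(i_r,\alpha_r)}\partial_{i,\alpha}$ with $i=i_k$ and $\alpha_k=\alpha$. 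We must then show that they assemble, up to $0$-homotopy, into the $n$-grid $x_{i,\alpha}$.

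First I would treat the central block of this layer, $r=1$, $(i_1,\alpha_1)=(i,\alpha)$. As in the proof of Lemma \ref{top face}, we have
\[
z_{(i,\alpha)}\partial_{i,\alpha}=((x_{i,\alpha}\gamma_{n,0})(a,\kappa))\partial_{i,\alpha}=(x_{i,\alpha}\gamma_{n,0}\partial_{n,0})(a,\kappa)^{(n)}=x_{i,\alpha},
\]
using \eqref{signed permutation identity 1}, the fact that $(a,\kappa)^{(n)}=1$, and $\gamma_{n,0}\partial_{n,0}=1$. Here the index bookkeeping is $\kappa(n)=i$ and $a_n=\alpha$, so $\partial_{i,\alpha}$ pulls back to $\partial_{n,\alpha+a_n}=\partial_{n,0}$.

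Next come the remaining blocks of the layer, with $r\ge2$ and $i=i_k$, $\alpha_k=\alpha$. The same manipulation pulls $\partial_{i_k,\alpha_k}$ back through $(a_{x,y},\kappa_{x,y})$ to a face $\partial_{n-r+k,0}$ applied to $\gamma_{n-r+1,0}\gamma_{n-r+1,1}\cdots\gamma_{n-1,1}$. The connection identities then reduce this to a degenerate cube, since the relevant $\gamma\partial$ pairs give $\partial\sigma$ or the identity. I expect the result to be
\[
x_{i,\alpha}\partial_{i_r',\alpha_r'}\cdots\sigma_{\cdots},
\]
that is, the corresponding face of $x_{i,\alpha}$ degenerated in the remaining directions $i_l$ with $l\ne k$. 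This degenerate cube is exactly a degenerate slice in those directions.

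Consequently the layer is the central cube $x_{i,\alpha}$ surrounded by blocks that are degenerate along the directions transverse to the boundary they touch. Deleting the degenerate slices one direction at a time is a chain of $\xrightarrow{k}$ moves, so $w\partial_{i,\alpha}\simeq_0 x_{i,\alpha}$, which is an equality in $\mathcal{G}_0X$. We also need the compatibility relations $x_{j,\beta}\partial_{i,\alpha}=x_{i,\alpha}\partial_{j-1,\beta}$, so that the surrounding degenerate blocks carry matching faces. These are automatic once the computation above is done, because the blocks glue by Proposition \ref{gluing grids}.

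The main obstacle is the index bookkeeping for $r\ge2$: tracking $(a_{x,y},\kappa_{x,y})^{(n-r+k)}$ and the commutation of $\partial_{n-r+k,0}$ past the string of connections. I would do it by induction on $r$, reusing the identity $(a_{x,y},\kappa_{x,y})^{(n-r+k)}=(a_{x,y_k},\kappa_{x,y_k})^{(i_k-k+1)}$ from the proof of Lemma \ref{side face}. That identity reduces the $r$ case to the $r-1$ case followed by one degeneracy.
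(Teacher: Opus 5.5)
Your architecture and expected end result match the paper's. You read $w\partial_{i,\alpha}$ off the outer faces of the blocks in the layer $p_i=(-1)^{1-\alpha}$, and you get $x_{i,\alpha}$ from the $r=1$ block by the same computation the paper uses. You then show that the $r\ge 2$ blocks have outer faces degenerate in the directions $i_l$ ($l\ne k$), and delete those slices by $0$-homotopy.

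The gap is in how you propose to finish the $r\ge 2$ computation, which is the only substantive part of the lemma. After pulling $\partial_{i_k,\alpha_k}$ back to $\partial_{n-r+k,0}$ and applying the $\gamma\partial$ identities, you have
\[
z_{(i_1,\alpha_1)\cdots(i_r,\alpha_r)}\partial_{i_k,\alpha_k}=(c\,\sigma_{n-r+1}\cdots\sigma_{n-1})P,\qquad c=x_{i_r,\alpha_r}\partial_{i_{r-1},\alpha_{r-1}}\cdots\partial_{i_1,\alpha_1},\quad P=(a_{x,y},\kappa_{x,y})^{(n-r+k)}.
\]
This cube is degenerate in the coordinates $n-r+1,\ldots,n-1$ only \emph{before} the signed permutation $P$ acts. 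Deleting slices requires degeneracy in the directions $i_l$ after $P$ acts. To get that, you must commute the $r-1$ degeneracies past $P$ using the degeneracy half of \eqref{signed permutation identity 1}, in the form $(c\sigma_m)(a,\kappa)=(c(a,\kappa)^{(m)})\sigma_{\kappa(m)}$. You then check that the signed permutation left after deriving at those $r-1$ positions is the identity.

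The paper does exactly this after writing $P$ as $(a_{x\partial_{i_k,\alpha_k},y},\kappa_{x\partial_{i_k,\alpha_k},y})$. That element sends positions $n-r+1,\ldots,n-1$ to the re-indexed $i_l$ ($l\ne k$), and is order-preserving with zero signs on the remaining positions. Remark \ref{quasisymm vs symm} attributes this lemma to precisely that identity. Your proposal never uses it.

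The induction you offer in its place does not supply this step. The identity $(a_{x,y},\kappa_{x,y})^{(n-r+k)}=(a_{x,y_k},\kappa_{x,y_k})^{(i_k-k+1)}$ from the proof of Lemma \ref{side face} only re-describes $P$; it is the same element of $B_n$. The block it ties $z_{(i_1,\alpha_1)\cdots(i_r,\alpha_r)}$ to is the one obtained by dropping $(i_k,\alpha_k)$, which lies in the central slab $p_{i_k}=0$. The face those two blocks share is the \emph{inner} face $\partial_{i_k,1-\alpha_k}$, a connection-type cube, not the degenerate outer face. So it gives no reduction of the outer-face computation to an instance with $r-1$.

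A smaller point: the computation produces a face of $x_{i_r,\alpha_r}$, not of $x_{i,\alpha}$. Rewriting it as a face of $x_{i_k,\alpha_k}$ uses the horn relations $x_{j,\beta}\partial_{i,\alpha}=x_{i,\alpha}\partial_{j-1,\beta}$. These are hypotheses, not consequences of Proposition \ref{gluing grids}, and only the degeneracy directions matter for the slice deletion anyway.
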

	
	\begin{proof}
		As in the proof of Lemma \ref{u top face}, it suffices to show that
		\[
		z_{(i_1,\alpha_1)\cdots(i_r,\alpha_r)}\partial_{i_k,\alpha_k}=x_{i_r,\alpha_r}\partial_{i_{r-1},\alpha_{r-1}}\cdots\partial_{i_1,\alpha_1}\sigma_{i_1}\cdots\sigma_{i_{k-1}}\sigma_{i_{k+1}}\cdots\sigma_{i_r}.
		\]
		For $r=1$,
		\[
		z_{(i_1,\alpha_1)}\partial_{i_1,\alpha_1}=((x_{i_1,\alpha_1}\gamma_{n,0})(a_{x,y},\kappa_{x,y}))\partial_{i_1,\alpha_1}=(x_{i_1,\alpha_1}\gamma_{n,0}\partial_{n,0})(a_{x,y},\kappa_{x,y})^{(n)}=x_{i_1,\alpha_1}.
		\]
		For $r\ge 2$, it is straightforward to identify $(a_{x,y},\kappa_{x,y})^{(n-r+k)}=(a_{x\partial_{i_k,\alpha_k},y},\kappa_{x\partial_{i_k,\alpha_k},y})$. Hence
		\begin{align*}
			&z_{(i_1,\alpha_1)\cdots(i_r,\alpha_r)}\partial_{i_k,\alpha_k}\\
			&=((x_{i_r,\alpha_r}\partial_{i_{r-1},\alpha_{r-1}}\cdots\partial_{i_1,\alpha_1}\gamma_{n-r+1,0}\gamma_{n-r+1,1}\cdots\gamma_{n-1,1})(a_{x,y},\kappa_{x,y}))\partial_{i_k,\alpha_k}\\
			&=(x_{i_r,\alpha_r}\partial_{i_{r-1},\alpha_{r-1}}\cdots\partial_{i_1,\alpha_1}\gamma_{n-r+1,0}\gamma_{n-r+1,1}\cdots\gamma_{n-1,1}\partial_{n-r+k,0})(a_{x,y},\kappa_{x,y})^{(n-r+k)}\\
			&=(x_{i_r,\alpha_r}\partial_{i_{r-1},\alpha_{r-1}}\cdots\partial_{i_1,\alpha_1}\sigma_{n-r+1}\cdots\sigma_{n-1})(a_{x\partial_{i_k,\alpha_k},y},\kappa_{x\partial_{i_k,\alpha_k},y})\\
			&=(x_{i_r,\alpha_r}\partial_{i_{r-1},\alpha_{r-1}}\cdots\partial_{i_1,\alpha_1})\sigma_{i_1}\cdots\sigma_{i_{k-1}}\sigma_{i_{k+1}}\cdots\sigma_{i_r}(\cdots(a_{x\partial_{i_k,\alpha_k},y},\kappa_{x\partial_{i_k,\alpha_k},y})^{(n-1)}\cdots)^{(n-r+1)}\\
			&=x_{i_r,\alpha_r}\partial_{i_{r-1},\alpha_{r-1}}\cdots\partial_{i_1,\alpha_1}\sigma_{i_1}\cdots\sigma_{i_{k-1}}\sigma_{i_{k+1}}\cdots\sigma_{i_r}
		\end{align*}
		as desired.
	\end{proof}

	\begin{theorem}
		\label{Kan complex}
		Let $X$ be a signed quasisymmetric cubical set. Then $\mathcal{G}_0X$ is a Kan complex.
	\end{theorem}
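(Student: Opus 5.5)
The plan is to reduce the Kan condition for an arbitrary horn to the single horn type $\sqcap^{n+1}_{n+1,0}$ treated in the construction preceding the statement. Then Lemmas \ref{u top face} and \ref{u side face} give the filler directly.

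First, the reduction. Let $f\colon\sqcap^{n+1}_{j,\beta}\to\mathcal{G}_0X$ be an arbitrary horn. By Lemma \ref{G_0 signed symmetric}, $\mathcal{G}_0X$ is signed quasisymmetric, so the group $B_{n+1}$ acts on $(\mathcal{G}_0X)_{n+1}$ compatibly with faces via \eqref{signed permutation identity 1}. I choose a signed permutation $(a,\kappa)\in B_{n+1}$ with $\kappa(n+1)=j$ and $a_{n+1}=1-\beta$ (and, say, $a_k=0$ otherwise). Relation \eqref{signed permutation identity 1}, $(a,\kappa)\partial_{\kappa(i),\alpha+a_i}=\partial_{i,\alpha}(a,\kappa)^{(i)}$, shows that precomposing with the corresponding automorphism of $\Box^{n+1}$ in $\Box^\pm_{\mathsf{qsym}}$ carries the horn $\sqcap^{n+1}_{n+1,0}$ onto $\sqcap^{n+1}_{j,\beta}$. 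This is because the missing face $(n+1,0)$ is sent to $(j,\beta)$, with $0+(1-\beta)$ read mod $2$ suitably; I will adjust $a_{n+1}$ so that the missing face matches exactly.

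Transporting $f$ along this identification gives a compatible family in $\mathcal{G}_0X$. It consists of a top face $x$ (the image of face $(n+1,1)$) and side faces $x_{i,\alpha}$ for $i\le n$, which satisfy $x\partial_{i,\alpha}=x_{i,\alpha}\partial_{n,1}$ and $x_{j,\beta}\partial_{i,\alpha}=x_{i,\alpha}\partial_{j-1,\beta}$ for $i<j$. Concretely, I set $x=f(\partial_{j,1-\beta})$ acted on by $(a,\kappa)^{(n+1)}$, and similarly for the sides. These identities are exactly the hypotheses of the construction before Lemma \ref{top face}. Once a filler $w$ of the transported horn is found, $w(a,\kappa)^{-1}$ fills the original horn, because the action is by automorphisms compatible with faces.

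Second, the filler for $\sqcap^{n+1}_{n+1,0}$. This is already done in the preceding discussion. I form $z=x\sigma_{n+1}$ and the cubes $z_{(i_1,\alpha_1)\cdots(i_r,\alpha_r)}$. Lemmas \ref{top face} and \ref{side face} give the matching of adjacent blocks up to $0$-homotopy. Proposition \ref{gluing grids} refines the chosen representatives into strictly compatible blocks $v_{p_1,\ldots,p_n}$, which glue into one directed $(n+1)$-grid $v$ of block-size $(3,\ldots,3,1)$. Its class is $w\in(\mathcal{G}_0X)_{n+1}$. By Lemma \ref{u top face}, $w\partial_{n+1,1}=x$, and by Lemma \ref{u side face}, $w\partial_{i,\alpha}=x_{i,\alpha}$. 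One point needs care here: concatenated degenerate blocks only agree with $x$ and $x_{i,\alpha}$ modulo insertion of degenerate slices, and this is harmless precisely because we work in $\mathcal{G}_0X$.

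The main obstacle is bookkeeping in the reduction step. The definition of Kan complex uses horns $\sqcap^{n}_{i,\alpha}$ for all $n\ge1$, so I need $B_{n+1}$ to act transitively on the faces $\{(i,\alpha)\}$ via \eqref{signed permutation identity 1}. I also need the induced reindexing of the remaining faces to preserve the horn compatibility conditions. Only the face relations \eqref{signed permutation identity 1} are used, which is why quasisymmetry suffices. A separate check is needed for the low-dimensional case $n=0$ (1-dimensional horns). There a horn is a single vertex, and a degenerate cube $x_0\sigma_1$ fills it, possibly after applying the reversal $(1,\mathrm{id})\in B_1$.
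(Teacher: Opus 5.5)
Your proposal is correct and follows the paper's route: the paper fills the horn $\sqcap^{n+1}_{n+1,0}$ by the glued $(3,\ldots,3,1)$-grid (Lemmas \ref{u top face} and \ref{u side face}) and then uses the signed quasisymmetry of $\mathcal{G}_0X$ (Lemma \ref{G_0 signed symmetric}) to handle all other horns, and your reduction step spells out the single sentence the paper gives for this. One bookkeeping correction: from $(w(a,\kappa))\partial_{\kappa(i),\alpha+a_i}=(w\partial_{i,\alpha})(a,\kappa)^{(i)}$, the cube $w(a,\kappa)$ has free face $(\kappa(n+1),a_{n+1})$, so if you take $w(a,\kappa)$ as the filler you need $\kappa(n+1)=j$ and $a_{n+1}=\beta$, not $1-\beta$; your version with $w(a,\kappa)^{-1}$ and $\kappa(n+1)=j$ is not consistent as stated.
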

	
	\begin{proof}
		By Lemmas \ref{u top face} and \ref{u side face}, $\mathcal{G}_0X$ satisfies the Kan condition for the horn $\sqcap_{n+1,0}^{n+1}$. Since $\mathcal{G}_0X$ is signed quasisymmetric, it also satisfies the Kan conditions for the remaining horns.
	\end{proof}
	\begin{remark}
		\label{quasisymm vs symm}
		We can see that proofs of Lemmas \ref{top face} and \ref{side face} are due to the first identity in \eqref{signed permutation identity 1}, and the proofs of Lemmas \ref{u top face} and \ref{u side face} are due to the second identity. Since the identities in \eqref{signed permutation identity 2} play no role here, signed quasisymmetry would be sufficient throughout this paper.
	\end{remark}
	
	\subsection{A cubical \texorpdfstring{$\mathrm{Ex}$}{Ex} functor}
	The classical $\ex$ and $\ex^\infty$ functors in simplicial homotopy theory \cite{K} may be viewed as repeatedly subdividing simplices until horn-filling data can be extended. Let $\mathsf{sSet}$ denote the category of simplicial sets and simplicial maps between them. Then $\mathsf{sSet}$ carries a model category structure which is Quillen equivalent to the category of topological spaces, and the functor 
	\[
	\ex^\infty:\mathsf{sSet}\to\mathsf{sSet}
	\]
	is a fibrant replacement.





	In the previous subsections, we have constructed the functors $\mathcal{G},\mathcal{G}_0\colon\cSet\to\cSet$ with the natural transformations $1\overset{\phi}{\Rightarrow}\mathcal{G}\overset{\pi}{\Rightarrow}\mathcal{G}_0$, and shown that $\mathcal{G}_0X$ is a Kan complex whenever $X$ is signed quasisymmetric. The idea of the proof is to construct a retract from an $(n+1)$-cube onto its $(n+1)$-horn after subdivisions, which is somewhat similar to the functor $\ex^\infty\colon\mathsf{sSet}\to\mathsf{sSet}$. In this subsection we consider a naive cubical analogue of $\ex^\infty$ and compare it with $\mathcal{G}_0$.

	For a cubical set $X$ and $n\ge0$, let $(\ex X)_n$ be the collection of directed grids of size $(3,3,\ldots,3)$. Namely, each $n$-cube in $\ex X$ corresponds to an indexed family
	\[
	\{x_{p_1,\ldots,p_n}\mid p_i=-1,0,1\quad(i=1,\ldots,n)\}\subset X_n
	\]
	such that 
	\begin{equation}
		\label{ex formula}
		x_{p_1,\ldots,p_{i-1},0,p_{i+1},\ldots,p_n}\partial_{i,\alpha}=x_{p_1,\ldots,p_{i-1},(-1)^{1-\alpha},p_{i+1},\ldots,p_n}\partial_{i,1-\alpha}
	\end{equation}
	for all $p_1,\ldots,p_{i-1},p_{i+1},\ldots,p_n\in\{-1,0,1\}$, $i=1,\ldots,n$. The faces and the connections are defined to be those of grids, and the degeneracy is defined as
	\[
	x\sigma_i=(x\sigma_{i,0})+_i(x\sigma_{i,0})+_i(x\sigma_{i,0}).
	\]
	Then $\ex X$ is a well-defined cubical set and $\ex\colon\cSet\to\cSet$ is a well-defined functor. The functor has an obvious left adjoint, and we denote it by $\sd\colon\cSet\to\cSet$. There is also a natural transformation $\eta\colon1_{\cSet}\Rightarrow\ex$ such that for a cubical set $X$ and $x\in X_n$, $\eta_X(x)$ is the grid such that $x$ is in the middle and the rest are corresponding degenerations. To be more precise, let 
	\[
	\epsilon\colon[3]\to[1]
	\]
	be an expander such that $\epsilon(0)=\epsilon(1)=0$ and $\epsilon(2)=\epsilon(3)=1$. By abuse of notation, we denote the $n$-expander $(\epsilon,\ldots,\epsilon)$ to be $\epsilon$ as well. Then we have
	\begin{equation}
		\label{ex expander}
		\eta_X(x)=x^\epsilon\in(\ex X)_n.
	\end{equation}
	
	By arguing similarly to the proof of Lemma \ref{G gluing} we have the following. 
	\begin{lemma}
		\label{ex weak equivalence}
		The natural transformation $\eta\colon1_{\cSet}\Rightarrow\ex$ is an objectwise weak equivalence.
	\end{lemma}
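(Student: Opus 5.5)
The natural transformation $\eta$ is an objectwise weak equivalence; to prove this I would follow the pattern of Propositions \ref{G} and \ref{R}. That is, I would construct an explicit deformation of $|\ex X|$ onto $|X|$, assembled cube by cube. The first step is to model the realization at the grid scale. For an $n$-cube $x=\{x_{p_1,\ldots,p_n}\}$ of $\ex X$, I set $\|x\|=[0,3]^n$, with faces given by restriction to $\{0\}$ or $\{3\}$ in a coordinate. Degeneracies collapse a coordinate, which is compatible with $x\sigma_i$ being three copies of $x\sigma_{i,0}$. Exactly as for \eqref{G homeo}, the linear homeomorphisms $\|x\|\cong|x|$ induce a natural homeomorphism $\|\ex X\|\cong|\ex X|$.

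Next I would define the local homotopy. For each $x\in(\ex X)_n$, I view it as a directed grid of size $(3,\ldots,3)$ in $X$. I form the doubled grid
\[
x\gamma_{1,1}\cdots\gamma_{n,1},
\]
and, as in the construction of $Q(x)$ before Lemma \ref{G t=0,1}, take the union of its slices $Q(x)$, which is glued from cubes of $X$. On this I define $h(x)\colon\|x\|\times[0,1]\to Q(x)$ by the same two-stage piecewise-linear formula used there, with each $l_i=3$. Composing with the map to $\|\ex X\|$, I arrange that $\bar h(x)(-,0)=\rho$. Then $\bar h(x)(-,1)$ lands in $\eta(|X|)$, interpreted through the images of the cubes $x_{p}$ under $\eta$ subdivided. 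For $x=\eta_X(y)=y^\epsilon$ as in \eqref{ex expander}, $\bar h(x)(-,1)$ should be exactly the composite $|y|\to|X|\xrightarrow{\eta}|\ex X|$ up to the canonical linear identification. This mirrors Lemma \ref{G t=0,1}, and it would give a retraction $f$ with $f\circ\eta$ equal to, or homotopic to, the identity.

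The third step is gluing. I would check, as in Lemmas \ref{G gluing} and \ref{R gluing}, the identities $Q(x\sigma_i)=Q(x)\sigma_{2i-1}\sigma_{2i}$, $Q(x)\partial_{2i,1}\partial_{2i-1,1}=Q(x\partial_{i,1})$ and $Q(x)\partial_{2i-1,0}=Q(x\partial_{i,0})\sigma_{2i-1}$. With these, the local homotopies commute with faces and degeneracies and assemble to a global homotopy $\bar h\colon\|\ex X\|\times[0,1]\to\|\ex X\|$ from the identity to $\eta\circ f$. Together with $f\circ\eta\simeq 1_{|X|}$, this shows $|\eta|$ is a homotopy equivalence.

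I expect the main obstacle to be the degeneracy compatibility. In $\ex X$, $x\sigma_i$ is the threefold concatenation rather than a single slice, so the grid $Q(x\sigma_i)$ has three slices in direction $i$ where $Q(x)\sigma_{2i-1}\sigma_{2i}$ has one. Consequently the identity $Q(x\sigma_i)=Q(x)\sigma_{2i-1}\sigma_{2i}$ holds only after identifying the degenerate slices. The realization is insensitive to this, since degenerate slices collapse, but the linear parametrization must be chosen uniformly. If the direct check becomes messy, my fallback is an abstract argument. I would verify that $\sd$ preserves monomorphisms and that $\sd\Box^n\to\Box^n$ is a weak equivalence, since $\sd\Box^n$ is a $3^n$-subdivided cube whose realization is contractible. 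A cube-category colimit argument, as in Lemma \ref{cube cat hocolim}, then gives that $|\sd X|\to|X|$ is a weak equivalence, and adjunction transfers this to $\eta$.
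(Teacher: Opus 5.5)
\paragraph{The main route has a gap.} The step that breaks is the construction of $Q(x)$ and of ``the map to $\|\ex X\|$''.

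The slices $(\cdots((x\gamma_{1,1}\cdots\gamma_{n,1})[2,i_1])\cdots)[2n,i_n]$ are grids of size $(3,1,\ldots,3,1)$. But $\ex X$ contains only grids of size $(3,\ldots,3)$. So, unlike in $\mathcal{G}X$, these slices are not cubes, and there is no map $\rho\colon Q(x)\to\|\ex X\|$. If you instead read $Q(x)$ as glued from cubes of $X$, it maps only to $|X|$ and reaches $|\ex X|$ through $|\eta|$. Then $\bar h(x)(-,0)=\rho$ is impossible for $x$ outside the image of $\eta$.

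The endpoint argument of Lemma \ref{G t=0,1} also does not transfer: it lands in cubes of size $(1,\ldots,1)$, and $\ex X$ has none. The only cube of $\ex X$ your data actually provides is the whole doubled grid $x\gamma_{1,1}\cdots\gamma_{n,1}$. Its face at the top of every odd coordinate is $x$ again, so a homotopy routed through it goes nowhere. The three-slices-versus-one issue you flag is secondary to this.

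\paragraph{The missing idea.} Pad each slice in the sliced (even) coordinates by the expander $\epsilon$ of \eqref{ex expander}. That is, replace a slice $s$ by $(s\partial_{2k,0}\sigma_{2k,0})+_{2k}s+_{2k}(s\partial_{2k,1}\sigma_{2k,0})$ for every $k$. Then:
\begin{itemize}
\item Each padded slice $S_p$, $p=(i_1,\ldots,i_n)$, is a $2n$-cube of $\ex X$.
\item Adjacent padded slices still share faces, because padding leaves outer faces unchanged.
\item The face of $S_{(3,\ldots,3)}$ at the top of every even coordinate is $x$.
\item The face of $S_p$ at the top of every odd coordinate is exactly $x_p^\epsilon=\eta_X(x_p)$.
\end{itemize}
Rescale each even coordinate by $[i_k-1,i_k]\to[0,3]$. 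The two-stage formula then gives a homotopy from the identity of $\|\ex X\|$ to $|\eta|\circ f$, where $f$ maps block $p$ of $\|x\|$ linearly onto $|x_p|$.

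Moreover, $f\circ|\eta|$ is the coordinatewise self-map of $[0,1]$ that collapses the outer thirds and stretches the middle third. This map is homotopic to the identity compatibly with faces and degeneracies. The degeneracy check also becomes clean: each of the three slices of $Q(x\sigma_i)$ equals $S\sigma_{2i-1}\sigma_{2i}$ for the corresponding padded slice $S$ of $Q(x)$.

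\paragraph{The fallback also has a gap.} The claim that ``adjunction transfers this to $\eta$'' is not automatic. Let $e\colon\sd\Rightarrow 1$ be the adjunct of $\eta$, assumed an objectwise weak equivalence. The identities $\eta_X e_X=e_{\ex X}\sd(\eta_X)$ and $e_X=\varepsilon_X\sd(\eta_X)$ show only that $\eta_X$ is a weak equivalence if and only if the counit $\varepsilon_X\colon\sd\ex X\to X$ is. That statement about the counit still needs its own argument.
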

	Next we define the functor $\ex^\infty$ in the similar manner of the simplicial homotopy theory.
	\begin{definition}
		Let $\ex^\infty\colon\cSet\to\cSet$ be the functor sending $X\in\cSet$ to 
		\[
		\mathrm{colim}(X\xrightarrow{\eta_X}\ex X\xrightarrow{\eta_{\ex X}}\ex^2 X\xrightarrow{\eta_{\ex^2X}}\cdots).
		\]
		We denote the natural transformation $1_{\cSet}\Rightarrow\ex^\infty$ by $\nu$.
	\end{definition}
	
	The functor $\ex^\infty$ has similar properties to the one in simplicial homotopy theory.
	\begin{proposition}
		\label{cubical Ex}
		The functor $\ex^\infty\colon\cSet\to\cSet$ and the natural transformation $\nu\colon1_{\cSet}\Rightarrow\ex^\infty$ satisfy the following:

		\noindent(i) $\ex^\infty$ commutes with finite limits and filtered colimits;

		\noindent(ii) $\ex^\infty$ preserves Kan fibrations;

		\noindent(iii) $\nu_X\colon X\to\ex^\infty X$ is an anodyne extension for any $X$.


	\end{proposition}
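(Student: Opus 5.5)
The plan is to follow Kan's simplicial argument for $\ex^\infty$ step by step, replacing the simplicial subdivision by the cubical subdivision $\sd$, the left adjoint of $\ex$.

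\textbf{Part (i).} $(\ex X)_n$ is the set of families $\{x_{p}\}$ indexed by $\{-1,0,1\}^n$ that satisfy the equalities \eqref{ex formula}. This makes $(\ex X)_n$ an equalizer of finite products of copies of $X_n$ and $X_{n-1}$. Equivalently, $\ex X=\mathrm{Hom}(\sd\Box^\bullet,X)$, and each $\sd\Box^n$ is a finite cubical set, because it is the $3\times\cdots\times3$ grid glued from copies of $\Box^n$. Consequently $\ex$ preserves all limits and filtered colimits. Finite limits commute with the sequential colimit defining $\ex^\infty$, and filtered colimits commute with one another. Together these give (i).

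\textbf{Part (ii).} For a Kan fibration $p\colon X\to Y$, I reduce by adjunction to a lifting problem against $\sd(\sqcap^n_{i,\alpha})\to\sd\Box^n$. The key geometric input is a cubical analogue of Kan's lemma: the inclusion $\sd\sqcap^n_{i,\alpha}\hookrightarrow\sd\Box^n$ factors through $\sqcap^n_{i,\alpha}\hookrightarrow\Box^n$ up to the last-vertex-type map $\sd\Box^n\to\Box^n$ induced by the expander $\epsilon$ of \eqref{ex expander}. Concretely, the middle block of the $3^n$ grid collapses, and the outer blocks are pushed onto the horn using connections, as in the constructions of $z_{(i_1,\alpha_1)\cdots}$ above.

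From this, a horn in $\ex^\infty X$ lies in some $\ex^kX$ by compactness of $\sqcap^n_{i,\alpha}$. Its image in $\ex^{k+1}X$ then extends after composing with $\eta$. With the fibration $p$ as a parameter, this extension is performed relative to $Y$, exactly as in Goerss--Jardine, Lemma III.4.7. Because $\ex$ preserves pullbacks, $\ex p$ is a fibration whenever $p$ is. The resulting lifts assemble in the colimit.

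\textbf{Part (iii).} I will show that each $\eta_X\colon X\to\ex X$ is an anodyne extension. Transfinite composites of anodyne maps are anodyne, so $\nu_X$ is then anodyne as well. Since $\eta$ is injective and $\cSet$ has monomorphisms as cofibrations (Theorem \ref{cSet model structure}), it suffices to show that $\eta_X$ is a weak equivalence, which is Lemma \ref{ex weak equivalence}.

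The worry is that trivial cofibrations need not be anodyne in the generating-horn sense. The fallback is to use Cisinski's result that in $\cSet$ with connections, anodyne extensions coincide with trivial cofibrations.

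\textbf{Main obstacle.} I expect the hardest step to be the cubical Kan lemma in (ii), namely the explicit retraction of $\sd\Box^n$ onto the horn after one extra subdivision. My first attempt will reuse the filler of Theorem \ref{Kan complex} with $X=\Box^n$. Its proof there relies on signed quasisymmetry, which $\Box^n$ lacks, so I would instead construct the retraction by hand.
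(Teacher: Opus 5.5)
Parts (i) and (iii) are fine. Your (i) is a more careful version of the paper's one-line ``$\ex$ is a right adjoint''. Your (iii) is exactly the paper's argument, including the same implicit reliance on injective weak equivalences being anodyne that you flag.

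The gap is in (ii). The cubical Kan lemma you plan to build by hand does not exist, even after further subdivision. That lemma is a map $g\colon\sd\Box^n\to\sqcap^n_{i,\alpha}$ whose restriction to $\sd\sqcap^n_{i,\alpha}$ is the adjoint of $\eta$. Take $n=2$ and $(i,\alpha)=(1,0)$. The left column of the $3\times 3$ grid $\sd\Box^2$ is a path of three directed $1$-cubes. Its endpoints lie on the bottom and top faces, where the adjoint of $\eta$ sends them to the vertices $(0,0)$ and $(0,1)$ of $\Box^2$. Every $1$-cube of $\Box^2$ is a monotone map $[1]\to[1]^2$. So some step of the image path must be the $1$-cube from $(0,0)$ to $(0,1)$, namely $\partial_{1,0}$, which is exactly the face missing from $\sqcap^2_{1,0}$. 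The same obstruction holds for $\sd^m\Box^2$ for every $m$.

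By naturality, such a $g$ would fill $\eta\circ u$ for every horn $u$ in every cubical set. Your argument (``its image in $\ex^{k+1}X$ then extends'') would then prove that $\ex^\infty X$ is Kan for \emph{all} $X$. The Example right after Proposition \ref{cubical Ex} refutes this with $X=\Box^1$, for exactly this orientation reason. Reversing an edge is what the signed permutations $(a_{x,y},\kappa_{x,y})$ accomplish in Lemma \ref{ex filler}. That is why the paper obtains such fillers only for signed quasisymmetric $X$; they cannot be produced by hand inside $\sqcap^n_{i,\alpha}$.

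The remaining step of your (ii) is also a non sequitur: $\ex$ preserving pullbacks does not make $\ex p$ a Kan fibration. By adjunction, $\ex$ preserves Kan fibrations exactly when each $\sd\sqcap^n_{i,\alpha}\hookrightarrow\sd\Box^n$ has the left lifting property against Kan fibrations. This is what the paper uses, and you already have the tool from (iii). The inclusion is a monomorphism between cubical sets with contractible realizations (a subdivided horn and a subdivided cube). Hence it is a weak equivalence, and hence anodyne by the same input you invoke in (iii). So $\ex$, and inductively every $\ex^k$, preserves Kan fibrations. Since $\sqcap^n_{i,\alpha}$ and $\Box^n$ are finite and the transition maps are monomorphisms, any lifting square for $\ex^\infty p$ factors through some $\ex^k p$, where a lift exists. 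No Kan-type retraction is needed for (ii), and none is available.
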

	\begin{proof}
		Since $\ex$ is a right adjoint, we have $(i)$. Note that the inclusion
		\[
		\sd\sqcap_{i,\alpha}^n\hookrightarrow\sd\Box^n
		\]
		is a weak equivalence and a monomorphism for any $i=1,\ldots,n$ and $\alpha=0,1$, $n\ge0$. Hence it is an anodyne extension, which implies that $\ex$ preserves Kan fibrations and $(ii)$ holds. Since $\nu_X\colon X\to\ex^\infty X$ is a weak equivalence by Lemma \ref{ex weak equivalence} and a monomorphism, it is an anodyne extension and $(iii)$ holds. The proof is finished.
	\end{proof}
	However, $\ex^\infty X$ is not necessarily a Kan complex. Here is an example.
	\begin{example}
		Let $X=\Box^1$ and $x\in\Box_1^1$ denote the unique nondegenerate 1-cube. Let
		\[
		u_{2,1}=\nu(x),u_{1,1}=u_{2,0}=\nu(x)\partial_{1,1}\sigma_1\in(\ex^\infty\Box^1)_1,
		\]
		then they determine a horn $u\colon\sqcap_{1,0}^2\to\ex^\infty\Box^1$. Suppose that it has a filler $w\in(\ex^\infty\Box^1)_2$, then $w\partial_{1,0}\in(\ex^\infty\Box^1)_1$ satisfies
		\[
		(w\partial_{1,0})\partial_{1,0}=\nu(x)\partial_{1,1}=x\partial_{1,1},\quad(w\partial_{1,0})\partial_{1,1}=\nu(x)\partial_{1,0}=x\partial_{1,0},
		\]
		which is impossible.
	\end{example}
	Instead, we will show that $\ex^\infty X$ is a Kan complex whenever $X$ is signed quasisymmetric. To prove this, we need the following lemma.
	\begin{lemma}
		\label{ex filler}
		If $X$ is a signed quasisymmetric cubical set, then for any cubical map $u\colon\sqcap_{n+1,0}^{n+1}\to X$, there is a cubical map $w\colon\Box^{n+1}\to\ex X$ such that the following diagram commutes.
		\[
		\xymatrix{
			\sqcap_{n+1,0}^{n+1}\ar[r]^u\ar@{^(->}[d]&X\ar[d]^{\eta}\\
			\Box^{n+1}\ar[r]_w&\ex X
		}
		\]
	\end{lemma}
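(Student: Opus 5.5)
The plan is to rerun the horn-filling construction behind Theorem \ref{Kan complex}, but directly in $X$ instead of in $\mathcal{G}_0X$. Since no 0-homotopies are used, the delicate refinement of Proposition \ref{gluing grids} should become unnecessary. Write the horn $u$ as a top face $x=u\partial_{n+1,1}\in X_n$ and side faces $x_{i,\alpha}=u\partial_{i,\alpha}\in X_n$. These satisfy
\[
x\partial_{i,\alpha}=x_{i,\alpha}\partial_{n,1}\quad\text{and}\quad x_{j,\beta}\partial_{i,\alpha}=x_{i,\alpha}\partial_{j-1,\beta}\quad(i<j).
\]
Because $X$ is signed quasisymmetric, the signed permutations $(a_{x,y},\kappa_{x,y})$ of \eqref{axy} and \eqref{kappaxy} act on $X$ itself. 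I therefore define, verbatim as before, $z=x\sigma_{n+1}\in X_{n+1}$ and
\[
z_{(i_1,\alpha_1)\cdots(i_r,\alpha_r)}=(x_{i_r,\alpha_r}\partial_{i_{r-1},\alpha_{r-1}}\cdots\partial_{i_1,\alpha_1}\gamma_{n-r+1,0}\gamma_{n-r+1,1}\cdots\gamma_{n-1,1})(a_{x,y},\kappa_{x,y}),
\]
with the $r=1$ case read as $(x_{i_1,\alpha_1}\gamma_{n,0})(a_{x,y},\kappa_{x,y})$. The proofs of Lemmas \ref{top face} and \ref{side face} only use the cubical identities and \eqref{signed permutation identity 1}, so they hold in $X$ with strict equalities. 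Hence setting $w_{p_1,\ldots,p_n,0}=z$ and $w_{p}=z_{(i_1,\alpha_1)\cdots(i_r,\alpha_r)}$, where $p_{i_j}=(-1)^{\alpha_j+1}$ and $p_i=0$ for $i\notin\{i_1,\ldots,i_r\}$, already satisfies the gluing condition \eqref{ex formula} in the first $n$ directions.

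This family has size $(3,\ldots,3,1)$, while an $(n+1)$-cube of $\ex X$ needs size $(3,\ldots,3,3)$. I will pad in the last direction. Stack each block $w_p$ between two degenerate slices, taking the bottom slice $(w_p\partial_{n+1,0})\sigma_{n+1}$ and the top slice $(w_p\partial_{n+1,1})\sigma_{n+1}$. Compatibility \eqref{ex formula} in directions $1,\ldots,n$ for the padded blocks follows from that of the $w_p$ by applying $\partial_{n+1,\alpha}\sigma_{n+1}$ and the cubical identities. This defines $w\in(\ex X)_{n+1}$.

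It remains to check $w\partial_{i,\alpha}=\eta(x_{i,\alpha})$ and $w\partial_{n+1,1}=\eta(x)$, with $\eta(-)=(-)^{\epsilon}$ as in \eqref{ex expander}. The last face consists of the blocks $w_p\partial_{n+1,1}\sigma_{n+1}\partial_{n+1,1}=w_p\partial_{n+1,1}$. The computation in Lemma \ref{u top face} gives $z\partial_{n+1,1}=x$ and $z_{(\ldots)}\partial_{n+1,1}=x\partial_{i_r,\alpha_r}\cdots\partial_{i_1,\alpha_1}\sigma_{i_r}\cdots\sigma_{i_1}$, which is exactly the grid $x^\epsilon$: $x$ in the middle and the corresponding degenerate faces around it. Similarly, Lemma \ref{u side face} identifies the $(i,\alpha)$ face blocks in the first $n+1$ rows as $x_{i,\alpha}$ in the middle with degenerate faces around it. The extra padding rows in direction $n+1$ then supply exactly the degenerate outer layer that $x_{i,\alpha}^\epsilon$ has in its last coordinate.

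The step I expect to be the main obstacle is this padding and bookkeeping. One must check that the middle/outer positions of $\eta(x_{i,\alpha})$ in direction $n$ of the face, which is direction $n+1$ of $w$, match the choice of which padded slice is the ``middle''. Two details need care. First, the $\epsilon$ of \eqref{ex expander} uses $[3]\to[1]$, so the middle block carries $x$ and the outer blocks carry the faces $x\partial$ degenerated. Second, the side faces of the $z$-blocks have the bottom face $x_{i,\alpha}\partial_{n,0}$ in direction $n+1$. If the naive padding does not align, I would instead place $z$-type blocks in the middle slice and choose the top padding as $\eta$-degenerate faces. The identities from Lemmas \ref{top face} through \ref{u side face} should make the required equalities strict, since no passage to 0-homotopy classes occurs.
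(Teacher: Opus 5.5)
Your proposal is correct and is essentially the paper's construction. The paper writes out all three slices in direction $n+1$ explicitly, and your padding slices $(w_p\partial_{n+1,0})\sigma_{n+1}$ and $(w_p\partial_{n+1,1})\sigma_{n+1}$ are exactly its blocks for $p_{n+1}=-1$ and $p_{n+1}=1$. The naive padding does align with $\eta(x_{i,\alpha})$, so the fallback in your last paragraph is not needed.
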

	\begin{proof}
		The horn $u\colon\sqcap_{n+1,0}^{n+1}\to X$ determines $x_{i,\alpha},x\in X_n$ such that
		\[
		x\partial_{i,\alpha}=x_{i,\alpha}\partial_{n,1},\ x_{j,\beta}\partial_{i,\alpha}=x_{i,\alpha}\partial_{j-1,\beta}
		\]
		for $1\le i<j\le n$ and $\alpha,\beta=0,1$. For $p\in\{-1,1\}$, let
		\[
		\alpha(p)=\begin{cases}
			0&(p=-1)\\
			1&(p=1).
		\end{cases}
		\]For $p_1,\ldots,p_{n+1}\in\{-1,0,1\}^{n+1}$, let 
		\[
		\{i_1,\ldots,i_r\}=\{i\in\{1,\ldots,n\}\mid p_i\ne0\},\quad\alpha_j=\alpha(p_{i_j}),\quad y=x\partial_{i_r,\alpha_r}\cdots\partial_{i_1,\alpha_1},
		\]
		and let $(a_{x,y},\kappa_{x,y})\in B_{n+1}$ as in \eqref{axy} and \eqref{kappaxy}. We may define 
		\[
		w_{p_1,\ldots,p_{n+1}}=
		\begin{cases}
			x\sigma_{n+1}&(r=0)\\
			(x_{i_1,\alpha_1}\gamma_{n,0})(a_{x,y},\kappa_{x,y})&(r=1,p_{n+1}=0)\\
			(x_{i_r,\alpha_r}\partial_{i_{r-1},\alpha_{r-1}}\cdots\partial_{i_1,\alpha_1}\gamma_{n-r+1,0}\gamma_{n-r+1,1}\cdots\gamma_{n-1,1})(a_{x,y},\kappa_{x,y})&(r\ge 2,p_{n+1}=0)\\
			x\partial_{i_r,\alpha_r}\cdots\partial_{i_1,\alpha_1}\sigma_{i_1}\cdots\sigma_{i_r}\sigma_{n+1}&(r\ge1,p_{n+1}=1)\\
			(x_{i_r,\alpha_r}\partial_{i_{r-1},\alpha_{r-1}}\cdots\partial_{i_1,\alpha_1}\gamma_{n-r+1,1}\cdots\gamma_{n-1,1}\sigma_{n+1})(a_{x,y},\kappa_{x,y})&(r\ge1,p_{n+1}=-1).
		\end{cases}
		\]
		By arguing similarly to Lemmas \ref{top face} and \ref{side face}, these cubes patch together to form a cubical map $w\colon\Box^{n+1}\to\ex X$. To show the commutativity of the diagram, we need to show that
		\begin{equation}
			\label{ex top face}
			w\partial_{n+1,1}=\eta(x)=x^\epsilon,
		\end{equation}
		\begin{equation}
			\label{ex side face}
			w\partial_{i,\alpha}=\eta(x_{i,\alpha})=(x_{i,\alpha})^\epsilon
		\end{equation}
		for $i=1,\ldots,n$ and $\alpha=0,1$. These can be proved by arguing similarly to Lemmas \ref{u top face} and \ref{u side face} respectively. This completes the proof.
	\end{proof}
	
	\begin{theorem}
		The cubical set $\ex^\infty X$ is a Kan complex whenever $X$ is signed quasisymmetric. 
	\end{theorem}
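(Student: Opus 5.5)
The plan follows the classical argument that $\ex^\infty$ of a simplicial set is Kan, with Lemma \ref{ex filler} playing the role of the subdivision retraction. First, each $\ex^kX$ must be signed quasisymmetric when $X$ is. A signed permutation $(a,\kappa)\in B_n$ acts on $(\ex X)_n$ exactly as in the proof of Lemma \ref{G_0 signed symmetric}. The blocks are permuted according to $\kappa$, the index $p_i$ is reversed when $a_i=1$, and each block is acted on by $(a,\kappa)$ in $X$.

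With this action, the identities \eqref{signed permutation identity 1} follow from the same calculation as in Lemma \ref{G_0 signed symmetric}. One also checks that $\eta_X\colon X\to\ex X$ is signed quasisymmetric. The reason is that $x^\epsilon$ has $x$ in the middle block and degenerate blocks elsewhere, and both the middle position and the degeneracy pattern are preserved by signed permutations, using the degeneracy identity in \eqref{signed permutation identity 1}. By induction, every $\ex^kX$ and every transition map $\eta_{\ex^kX}$ is signed quasisymmetric. Since the colimit is filtered and computed levelwise, $\ex^\infty X$ inherits a signed quasisymmetric structure.

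Next, fill horns of the form $\sqcap^{n+1}_{n+1,0}$. Since $\sqcap^{n+1}_{n+1,0}$ has finitely many nondegenerate cubes (it is a finite cubical set) and $\ex^\infty X$ is a filtered colimit of monomorphisms (or at least a filtered colimit), any map $u\colon\sqcap^{n+1}_{n+1,0}\to\ex^\infty X$ factors through some $\ex^kX\to\ex^\infty X$. Because $\ex^kX$ is signed quasisymmetric, Lemma \ref{ex filler} applied to $\ex^kX$ gives $w\colon\Box^{n+1}\to\ex^{k+1}X$ with $w|_{\sqcap}=\eta_{\ex^kX}\circ u$. Composing $w$ with $\ex^{k+1}X\to\ex^\infty X$ extends $u$.

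Finally, handle arbitrary horns $\sqcap^{n+1}_{i,\alpha}$. Choose a signed permutation $g\in B_{n+1}$ such that $g$ sends the face $(n+1,0)$ to $(i,\alpha)$ via \eqref{signed permutation identity 1}: take $\kappa(n+1)=i$ and $a_{n+1}=\alpha$. Precomposition with $g$ then identifies the horn $\sqcap^{n+1}_{i,\alpha}$ with $\sqcap^{n+1}_{n+1,0}$ as in the end of the proof of Theorem \ref{Kan complex}. Concretely, given the faces $x_{j,\beta}$ of a horn missing $(i,\alpha)$, act by $g$ on the cubes and on their faces, using $(a,\kappa)^{(j)}$, to obtain a horn missing $(n+1,0)$. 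Fill it, then act by $g^{-1}$.

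The main obstacle is the first step, namely defining the $B_n$-action on $\ex X$ so that it is well defined. The action has to be compatible with the face relations \eqref{ex formula} in all directions simultaneously, and with the degeneracy $x\sigma_i=x\sigma_{i,0}+_ix\sigma_{i,0}+_ix\sigma_{i,0}$, which is symmetric under reversal. This is where I would expend the detailed checking, mirroring Lemma \ref{G_0 signed symmetric}.
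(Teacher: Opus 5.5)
Your proposal is correct and follows the same route as the paper. You factor the horn through a finite stage $\ex^mX$, fill $\sqcap^{n+1}_{n+1,0}$ by Lemma~\ref{ex filler} applied to the signed quasisymmetric $\ex^mX$, and reduce the remaining horns using the signed quasisymmetric structure that each $\ex^mX$ and each $\eta_{\ex^mX}$ inherit as in Lemma~\ref{G_0 signed symmetric}. One small slip: with your $g$ ($\kappa(n+1)=i$, $a_{n+1}=\alpha$), the $(i,\alpha)$-face of $w'g$ is the $(n+1,0)$-face of $w'$ acted on by $g^{(n+1)}$, so you must first act on the horn data by $g^{-1}$, fill the resulting $(n+1,0)$-horn with some $w'$, and take $w'g$ as the filler, rather than the other way round.
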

	\begin{proof}
		Let $u\colon\sqcap_{i,\alpha}^{n+1}\to\ex^\infty X$ be an $(n+1)$-horn where $i=1,\ldots,n+1$, $\alpha=0,1$. Then there is $m\ge0$ such that $u$ factors through an $(n+1)$-horn in $\ex^m X$. If $(i,\alpha)=(n+1,0)$, then $u$ has a filler by Lemma \ref{ex filler}. By arguing similarly to the proof of Lemma \ref{G_0 signed symmetric}, one notes that $\ex^m X$ and the map
		\[
		\eta_{\ex^mX}\colon\ex^mX\to\ex^{m+1}X
		\]
		are signed quasisymmetric whenever $X$ is. This implies that $\ex^\infty X$ satisfies the Kan conditions for the remaining horns.
	\end{proof}
	We end this subsection by comparing the functors $\mathcal{G}_0$ and $\ex^\infty$. By the description of $\ex^\infty$ above, $\bigsqcup_{m\ge0}\ex^mX$ is a cubical subset of $\mathcal{G}X$. We provide a description of $\ex^\infty X$ by using directed grids and expanders. For $m\ge1$, let $\epsilon^m\colon[3^{m}]\to[3^{m-1}]$ such that
	\[
	\epsilon^m(i)=\begin{cases}
		0&(i\le3^{m-1})\\
		i-3^{m-1}&(3^{m-1}\le i\le2\cdot3^{m-1})\\
		3^{m-1}&(i\ge2\cdot3^{m-1}).
	\end{cases}
	\]
	Then we have $\epsilon^1=\epsilon$. By abuse of notation, we denote the $n$-expander $(\epsilon^m,\ldots,\epsilon^m)$ to be $\epsilon^m$ as well. Then we have the following.
	\begin{lemma}
		\label{ex description}
		For $n\ge0$, $(\ex^\infty X)_n$ is the set of all directed $n$-grids in $X$ of sizes $(3^m,\ldots,3^m)$ for $m\ge0$, subject to the equivalence relation $\sim$,
		where $x\sim y$ if there exist $m\ge k\ge0$ such that $x$ and $y$ are of sizes $(3^m,\ldots,3^m)$ and $(3^k,\ldots,3^k)$ respectively, such that
		\[
		x=y^{\epsilon^{k+1}\circ\cdots\circ\epsilon^m}.
		\]
	\end{lemma}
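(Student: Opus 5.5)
The plan is to compute the colimit defining $\ex^\infty X$ levelwise and identify each stage with a set of directed grids. Colimits in $\cSet$ are computed levelwise, so
\[
(\ex^\infty X)_n=\mathrm{colim}_m\,(\ex^mX)_n .
\]
The proof therefore reduces to two tasks: describing $(\ex^mX)_n$ as a set of grids, and describing the transition maps $\eta_{\ex^mX}$.

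The first step is to show by induction on $m$ that $(\ex^mX)_n$ is naturally in bijection with the set of directed $n$-grids in $X$ of size $(3^m,\ldots,3^m)$. For $m=0$ this is the identification of $X_n$ with grids of size $(1,\ldots,1)$. For the inductive step, an $n$-cube of $\ex(\ex^mX)$ is a family $\{x_{p_1,\ldots,p_n}\}_{p_i\in\{-1,0,1\}}$ of $n$-cubes of $\ex^mX$ satisfying \eqref{ex formula}. By the inductive hypothesis, each $x_{p}$ is a directed grid of size $(3^m,\ldots,3^m)$. The faces in $\ex^mX$ are the grid faces, so \eqref{ex formula} says exactly that adjacent blocks share their common boundary grids. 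Concatenating the blocks with $+_1,\ldots,+_n$ (well defined by the interchange law \eqref{interchange law}) then gives a directed grid of size $(3^{m+1},\ldots,3^{m+1})$. Conversely, every such grid splits uniquely into $3^n$ blocks. These are inverse bijections. One should also check that the resulting identification intertwines faces and connections, since $\ex$ uses the grid operations. This is what makes the inductive step legitimate, because the next application of $\ex$ uses the cubical structure of $\ex^mX$.

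The second step, which I expect to be the main obstacle because of the bookkeeping, is to identify $\eta_{\ex^mX}$ under these bijections. By \eqref{ex expander}, $\eta_{\ex^mX}(x)=x^{\epsilon}$ with the expander $\epsilon$ taken in $\ex^mX$. This places $x$ in the central block and fills the surrounding blocks with its degenerate faces. The degeneracy in $\ex^mX$ is not $\sigma_{i,0}$ but is computed recursively: for $m\ge1$ it is a threefold concatenation of degeneracies of $\ex^{m-1}X$. By induction on $m$, a degeneracy in $\ex^mX$ unfolds, as a grid in $X$, to the $\sigma_{i,0}$-degenerate slab of width $3^m$ in direction $i$, that is, the expansion of $x\partial_{i,\alpha}\sigma_{i,0}$ along the constant expander. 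Granting this, the grid of size $(3^{m+1},\ldots,3^{m+1})$ representing $\eta_{\ex^mX}(x)$ is obtained from $x$ by repeating the first boundary slice $3^m$ times, keeping the middle, and repeating the last slice $3^m$ times, in every coordinate. By the definition of $x^{\epsilon}$ for grids, this is exactly $x^{\epsilon^{m+1}}$. One has to take care with the off-by-one convention between the poset $[3^m]$ and the grid size $3^m$; I would fix this by matching cardinalities of fibres of $\epsilon^{m+1}$ with slice multiplicities.

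Finally, I would use the standard description of a sequential colimit of sets: it is the disjoint union of the stages modulo identifying $y$ at stage $k$ with its image at stage $m\ge k$. By the second step, that image is $y^{\epsilon^{k+1}\circ\cdots\circ\epsilon^m}$. The relation generated this way is exactly $\sim$, and it is already transitive, since images compose along the chain. This gives the stated description of $(\ex^\infty X)_n$.
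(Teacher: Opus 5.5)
Your argument is correct and is the reasoning the paper leaves implicit. The paper states this lemma without proof, as a consequence of identifying $(\ex^mX)_n$ with directed grids of size $(3^m,\ldots,3^m)$ together with $\eta_X(x)=x^{\epsilon}$. Your induction supplies exactly those details: the check that degeneracies of $\ex^mX$ unfold to width-$3^m$ slabs, and your reading of $x^{\epsilon}$ with $|\epsilon^{-1}(0)|-1$ degenerate slices also inserted at the start, which is what makes the size come out to $3^{m+1}$. One small tightening: the symmetric closure of ``$x$ is the image of $y$'' is transitive because each $\eta_{\ex^mX}$ is injective (the central block of $\eta(y)$ is $y$), not merely because images compose along the chain.
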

	It is easy to see that $x\sim y$ implies that $x$ and $y$ are 0-homotopic as grids, but the converse is not true, as 0-homotopy allows us to insert a degenerate slice into a grid. To justify this, we claim that the equivalence relation generated by $\sim$ and 1-homotopy is precisely the homotopy between grids. In other words, \(\sim\) and 0-homotopy yield the same equivalence relation up to 1-homotopy. To see this, note that we can always slide the degenerate slice in a grid to its boundary up to 1-homotopy by arguing similarly to the proof of Lemma \ref{grid slide}.

	Although the functors \(\ex\) and \(\ex^\infty\) admit shorter proofs of weak equivalences and Kan conditions, they see only a special cofinal family of regular grids of size $(3^m,\ldots,3^m)$, whereas the functor $\mathcal{G}_0$ allows grids of arbitrary size modulo 0-homotopy. Thus the functor $\mathcal{G}_0$ helps us understand the homotopy groups of a cubical set, which is the central purpose of this paper.

	\section{Discrete homotopy groups of quasisymmetric cubical sets}\label{Discrete homotopy groups of quasisymmetric cubical sets}
	
	In this section, we prove that the discrete homotopy groups of a quasisymmetric cubical set are naturally isomorphic to the homotopy groups of its geometric realization. Applying this result, we also prove the Hurewicz theorem for the discrete homotopy groups of a quasisymmetric cubical set.

	For a pointed cubical set $(X,x_0)$, it is clear that $(\mathcal{G}_0\mathcal{R}X)_n$ can be identified with the set of 0-homotopy classes of $n$-grids in $X$. Moreover, if $x$ is a spherical $n$-grid in $(X,x_0)$, then its 0-homotopy class determines a cubical map $(\Box^n,\partial\Box^n)\to(\mathcal{G}_0\mathcal{R}X,x_0)$, and the converse is true as well. To prove Theorem \ref{main 1}, we need the following lemma.
	\begin{lemma}
		\label{homotopy of grids vs homotopy of cubical maps}
		Let $(X,x_0)$ be a pointed quasisymmetric cubical set. For spherical $n$-grids $x,y$ in $(X,x_0)$, let $f,g\colon(\Box^n,\partial\Box^n)\to(\mathcal{G}_0\mathcal{R}X,x_0)$ be the 0-homotopy classes represented by $x,y$ respectively. Then $x,y$ are homotopic as spherical $n$-grids in $(X,x_0)$ if and only if $f$ and $g$ are homotopic as cubical maps.
	\end{lemma}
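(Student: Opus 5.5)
We prove the two implications separately, translating each elementary move on grids into a move on cubical maps into $\mathcal{G}_0\mathcal{R}X$ and conversely. Set $Y=\mathcal{G}_0\mathcal{R}X$. Since $X$ is quasisymmetric, $\mathcal{R}X$ is signed quasisymmetric by Lemma \ref{R symmetric}, so $Y$ is a signed quasisymmetric Kan complex by Lemma \ref{G_0 signed symmetric} and Theorem \ref{Kan complex}. By Proposition \ref{homotopy equivalence relation}, homotopy of maps of pairs $(\Box^n,\partial\Box^n)\to(Y,x_0)$ is therefore an equivalence relation. Also, by Proposition \ref{symmetric homotopy} (applied in $\mathcal{R}X$ and in $Y$), elementary homotopies may be taken in any coordinate direction.

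For the ``only if'' direction, note first that $0$-homotopic spherical grids define literally the same cubical map, since $f$ depends only on the $0$-homotopy class. By Definition \ref{homotopy grids def}, it therefore suffices to treat a single move $x\overset{k}{\Rightarrow}y$ witnessed by a spherical $(n+1)$-grid $z$ of size $(l_1,\ldots,l_{k-1},1,l_k,\ldots,l_n)$ with $z\partial_{k,0}=x$, $z\partial_{k,1}=y$, and whose remaining faces lie at $x_0$. The $0$-homotopy class of $z$ is an $(n+1)$-cube of $Y$. Reversing the sign $s_k$ if needed via the $\mathcal{R}$-structure, and permuting coordinate $k$ to position $n+1$ using the quasisymmetric action on $Y$ as in Proposition \ref{symmetric homotopy}, we obtain a cube $\Box^n\otimes\Box^1\cong\Box^{n+1}\to Y$ whose ends are $f$ and $g$ and whose sides are at $x_0$. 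This is a pointed elementary homotopy from $f$ to $g$.

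For the ``if'' direction, a homotopy between $f$ and $g$ is a zig-zag of elementary homotopies $h\colon\Box^{n}\otimes\Box^1\to Y$ through maps $(\Box^n,\partial\Box^n)\to(Y,x_0)$ rel boundary. The intermediate maps are single $n$-cubes of $Y$ with boundary at $x_0$, hence are represented by spherical grids. Each $h$ is a single $(n+1)$-cube $[w]$ of $Y$, represented by some grid $w$ in $X$; choose such a representative. Its faces $w\partial_{n+1,0}$ and $w\partial_{n+1,1}$ are $0$-homotopic to the chosen representatives $x'$ and $y'$ of the ends. Its side faces are $0$-homotopic to degenerate grids at $x_0$, but they need not be literally equal to them, so $w$ need not be spherical. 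We fix this by inserting degenerate slices, that is, by expanding $w$ via an expander (Lemma \ref{expander}, Lemma \ref{common expansion}) so that its sides consist of $x_0$-cubes. Such a representative exists because any grid $0$-homotopic to the constant grid, once expanded suitably, is constant. After this, $w$ exhibits $w\partial_{n+1,0}\overset{n+1}{\Rightarrow}w\partial_{n+1,1}$ as spherical grids. Combining with $x\simeq_0 w\partial_{n+1,0}$ and $w\partial_{n+1,1}\simeq_0 y$ gives $x\simeq y$, and chaining along the zig-zag finishes the argument.

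The main obstacle is this last step: making the representative $w$ of a cube of $\mathcal{G}_0\mathcal{R}X$ literally spherical. A face of a minimal representative is only a $0$-homotopy-class face, so $w\partial_{i,\alpha}$ may contain nondegenerate slices whose composite is nonetheless degenerate. I would first try to show that if $w\partial_{i,\alpha}\simeq_0 x_0$, then all of its slices are degenerate at $x_0$, using that $0$-homotopy only inserts or deletes degenerate slices, and that $x_{\min}$ of a constant grid is a single $x_0$-cube. If that fails, the fallback is the common-refinement technique of Proposition \ref{gluing grids} applied to the faces of $w$. A further subtlety is that $\mathcal{R}$ permits reversed directions, whereas $\mathcal{G}_0\mathcal{R}X$ consists of directed grids of signed cubes. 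I would check that the identification of $(\mathcal{G}_0\mathcal{R}X)_n$ with $0$-homotopy classes of $n$-grids in $X$ respects the faces and the relation $\overset{k}{\Rightarrow}$.
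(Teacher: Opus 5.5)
Your proposal is correct and follows the paper's route. The ``only if'' direction is reduced to a single move $x\overset{k}{\Rightarrow}y$, which is rotated into direction $n+1$ via Proposition~\ref{symmetric homotopy}. The ``if'' direction reads a grid homotopy off a representative of the $(n+1)$-cube witnessing the homotopy. You do this for each elementary homotopy of the zig-zag, whereas the paper first uses the Kan property to obtain one such cube; both work.

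The obstacle you single out is not real, and your first suggestion already settles it. A $0$-homotopy move only inserts or deletes a slice $z\sigma_{k,\alpha}$, and an inserted slice must glue to a neighbouring slice, so $z$ is a $k$-face of that neighbour. Hence any grid $0$-homotopic to the constant grid is already literally constant, and every representative is automatically spherical; this is what the paper's ``immediately'' tacitly uses. You should drop the expander fix and the Proposition~\ref{gluing grids} fallback: they are unnecessary, and expanding only adds degenerate slices, so it could never remove a nonconstant cube anyway.
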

	\begin{proof}
		Note that $\mathcal{G}_0\mathcal{R}X$ is a Kan complex by Theorem \ref{Kan complex}, thus the homotopy between $f,g\colon(\Box^n,\partial\Box^n)\to(\mathcal{G}_0\mathcal{R}X,x_0)$ is indeed an equivalence relation. Also note that $f$ and $g$ are homotopic as cubical maps if and only if there is $h\in(\mathcal{G}_0\mathcal{R}X)_{n+1}$ such that
		\[
		h\partial_{n+1,0}=f,\quad h\partial_{n+1,1}=g,\quad h\partial_{i,\alpha}=x_0
		\]
		for $i=1,\ldots,n$, $\alpha=0,1$. This immediately implies that $x$ is homotopic to $y$ as $n$-grids, and the if part is proved. To show the only if part, we assume that $x$ and $y$ are homotopic as spherical $n$-grids in $X$. By Lemma \ref{homotopy description}, we may assume that $x\overset{i}{\Rightarrow}y$ for some $i=1,\ldots,n+1$. Then we have $x\overset{n+1}{\Rightarrow}y$ by Lemma \ref{symmetric homotopy}, which implies that $f,g\colon(\Box^n,\partial\Box^n)\to(\mathcal{G}_0\mathcal{R}X,x_0)$ are homotopic as cubical maps. The proof is completed.
	\end{proof}
	We are ready to prove Theorem \ref{main 1}.
	
	\begin{proof}
		[Proof of Theorem \ref{main 1}]
		By Lemmas \ref{R symmetric} and \ref{G_0 signed symmetric}, $\mathcal{R}X$ is signed quasisymmetric. Hence by Theorem \ref{Kan complex}, $\mathcal{G}_0\mathcal{R}X$ is a Kan complex. By Theorem \ref{homotopy group realization}, there is a natural isomorphism
		\[
		\pi_n(\mathcal{G}_0\mathcal{R}X,x_0)\cong\pi_n(|\mathcal{G}_0\mathcal{R}X|,x_0).
		\]
		By Propositions \ref{R}, \ref{G} and \ref{G_0}, there is also a natural isomorphism
		\[
		\pi_n(|\mathcal{G}_0\mathcal{R}X|,x_0)\cong\pi_n(|X|,x_0).
		\]
		It remains to show that $\pi_n(\mathcal{G}_0\mathcal{R}X,x_0)$ is naturally isomorphic to $\pi_n^\delta(X,x_0)$. By Lemma \ref{homotopy of grids vs homotopy of cubical maps}, there is a natural isomorphism
		\begin{equation}
			\label{pi_n bijection}
			\pi_n(\mathcal{G}_0\mathcal{R}X,x_0)\cong\pi_n^\delta(X,x_0)
		\end{equation}
		as sets. Let $x$ and $y$ be spherical $n$-grids in $X$, and let $x_0$ denote any $n$-grid consisting of $x_0\sigma_1\cdots\sigma_n$ as above. Define
		\[
		z=(a\sigma_{n+1})+_n(b\gamma_{n,1}),
		\]
		where $a$ and $b$ are as in Lemma \ref{grid slide}.
		Then $z\partial_{n+1,0}=a+_nx_0$, $z\partial_{n+1,1}=a+_nb$, $z\partial_{n,1}=b$ and $z\partial_{i,\alpha}=x_0$ for other $i$ and $\alpha$. Hence by Lemma \ref{grid slide}, the homotopy class of $x\cdot y$ coincides with the product of $[x]$ and $[y]$ in $\pi_n(\mathcal{G}_0\mathcal{R}X,x_0)$, where we identify $x$ and $y$ as elements of $\mathcal{G}_0\mathcal{R}X_n$. Therefore the bijection \eqref{pi_n bijection} is an isomorphism of groups.
	\end{proof}
	Having established the isomorphism between discrete and topological homotopy groups, we now construct the discrete Hurewicz map by assigning signs to the constituent cubes of a grid. Let $X$ be a cubical set. For an $n$-grid $x=(x,s)$ in $X$, set
	\[
	\mathrm{sgn}_{i_1,\ldots,i_n}(x)=(-1)^{s_1(i_1)+\cdots+s_n(i_n)},
	\]
	where $s=(s_1,\ldots,s_n)$. The sign records the number of coordinate directions traversed in reverse. It is precisely this sign that makes the contributions from internal faces cancel in pairs. Define a map
	\[
	h\colon\mathcal{G}_0\mathcal{R}X_n\to N_nX,\quad[x]\mapsto\sum_{i_1,\ldots,i_n}\mathrm{sgn}_{i_1,\ldots,i_n}(x)x(i_1,\ldots,i_n).
	\]
	Since degenerate cubes are trivial in $N_nX$, this map is well defined.

	\begin{lemma}
		\label{Hurewicz map boundary}
		Let $X$ be a cubical set. For every $[x]\in\mathcal{G}_0\mathcal{R}X$,
		\[
		\partial h([x])=\sum_{i=1}^n\sum_{\alpha=0,1}(-1)^{i+\alpha}h(x\partial_{i,\alpha}).
		\]
	\end{lemma}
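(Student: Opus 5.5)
We verify the identity directly from the definitions; it is the discrete Stokes formula, in which interior faces cancel in pairs. The plan is to expand the left-hand side and sort the resulting face terms by whether they lie on the boundary of the grid.

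First, fix a representative $x=(x,s)$ of size $(l_1,\ldots,l_n)$. Both sides are compatible with inserting or deleting degenerate slices: a degenerate slice contributes only degenerate cubes, which vanish in $N(X)$, and faces of degenerate slices are again degenerate or coincide with faces of neighbours. So it suffices to work with one grid. Expand
\[
\partial h([x])=\sum_{i_1,\ldots,i_n}\mathrm{sgn}_{i_1,\ldots,i_n}(x)\sum_{k=1}^n\sum_{\beta=0,1}(-1)^{k+\beta}x(i_1,\ldots,i_n)\partial_{k,\beta}.
\]
For fixed $k$ and fixed indices $i_j$ with $j\neq k$, group the terms along the $k$-th row $i_k=1,\ldots,l_k$. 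Within each cube, write $\beta=s_k(i_k)$ or $\beta=1-s_k(i_k)$; these are the "incoming" and "outgoing" faces in the traversal direction of the grid.

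For an adjacent pair $i_k,i_k+1$, the grid condition says that the outgoing face $x(\ldots,i_k,\ldots)\partial_{k,1-s_k(i_k)}$ equals the incoming face $x(\ldots,i_k+1,\ldots)\partial_{k,s_k(i_k+1)}$. Their coefficients are
\[
\mathrm{sgn}_{\ldots i_k\ldots}(-1)^{k+1-s_k(i_k)}\quad\text{and}\quad\mathrm{sgn}_{\ldots i_k+1\ldots}(-1)^{k+s_k(i_k+1)}.
\]
Factoring out the common part $(-1)^{\sum_{j\ne k}s_j(i_j)}(-1)^k$, these become $(-1)^{s_k(i_k)+1-s_k(i_k)}=-1$ and $(-1)^{2s_k(i_k+1)}=+1$, so the two terms cancel.

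After this cancellation, only two terms survive in each row. The first is the incoming face at $i_k=1$, with coefficient $(-1)^{\sum_{j\ne k}s_j(i_j)}(-1)^{k}(-1)^{2s_k(1)}=(-1)^{k+0}\,\mathrm{sgn}$ of the face grid. The second is the outgoing face at $i_k=l_k$, with coefficient $(-1)^{k+1}\,\mathrm{sgn}$ of the face grid. By the definition of $(x,s)\partial_{k,\alpha}$, these are exactly the cubes of $x\partial_{k,0}$ and $x\partial_{k,1}$, and the sign of the face grid is $\mathrm{sgn}$ with the $k$-th exponent removed. Summing over the remaining indices therefore gives $(-1)^{k+\alpha}h(x\partial_{k,\alpha})$, as required.

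The main obstacle is sign bookkeeping rather than any structural difficulty. In particular one must check that the face $x\partial_{k,\beta}$ with $\beta=s_k$ really is the "$0$-side" in the grid's orientation, so that the sign $(-1)^{k+\beta}$ combined with $(-1)^{s_k}$ collapses to $(-1)^{k+\alpha}$. One must also make sure the identification in $\mathcal{R}X$, where the reversal bit on a degenerate coordinate is irrelevant, does not cause trouble. It does not, because such cubes are zero in $N(X)$.
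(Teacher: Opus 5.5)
Your proposal is correct and is essentially the paper's proof: fix a representative grid, use the gluing condition $x(i_1,\ldots,i_n)\partial_{k,1-s_k(i_k)}=x(i_1,\ldots,i_{k-1},i_k+1,i_{k+1},\ldots,i_n)\partial_{k,s_k(i_k+1)}$ to cancel each interior pair of faces (the two terms carry opposite signs), and identify the surviving terms at $i_k=1$ and $i_k=l_k$ with $(-1)^{k}h(x\partial_{k,0})$ and $(-1)^{k+1}h(x\partial_{k,1})$. Your sign bookkeeping agrees with the paper's, and your remark that degenerate slices and the identification in $\mathcal{R}X$ contribute only degenerate cubes is the same reason the paper gives for $h$ being well defined on $0$-homotopy classes.
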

	
	\begin{proof}
		Let $x=(x,s)$ be an $n$-grid of size $(l_1,\ldots,l_n)$ in $X$. Since
		\[
		x(i_1,\ldots,i_n)\partial_{k,1-s_k(i_k)}=x(i_1,\ldots,i_{k-1},i_k+1,i_{k+1},\ldots,i_n)\partial_{k,s_k(i_k+1)},
		\]
		where $s=(s_1,\ldots,s_n)$, one has
		\begin{align*}
			&(-1)^{k+1-s_k(i_k)}\mathrm{sgn}_{i_1,\ldots,i_n}(x)x(i_1,\ldots,i_n)\partial_{k,1-s_k(i_k)}\\
			+&(-1)^{k+s_k(i_k+1)}\mathrm{sgn}_{i_1,\ldots,i_{k-1},i_k+1,i_{k+1},\ldots,i_n}(x)x(i_1,\ldots,i_{k-1},i_k+1,i_{k+1},\ldots,i_n)\partial_{k,s_k(i_k+1)}=0.
		\end{align*}
		Hence
		\begin{align*}
			&\partial h([x])\\
			&=\sum_{k=1}^n\sum_{i_1,\ldots,i_{k-1},i_{k+1},\ldots,i_n}((-1)^{k+s_k(1)}\mathrm{sgn}_{i_1,\ldots,i_{k-1},1,i_{k+1},\ldots,i_n}(x)x(i_1,\ldots,i_{k-1},1,i_{k+1},\ldots,i_n)\partial_{k,s_k(1)}\\
			&\quad+(-1)^{k+1-s_k(l_k)}\mathrm{sgn}_{i_1,\ldots,i_{k-1},l_k,i_{k+1},\ldots,i_n}(x)x(i_1,\ldots,i_{k-1},l_k,i_{k+1},\ldots,i_n)\partial_{k,1-s_k(l_k)})\\
			&=\sum_{i=1}^n\sum_{\alpha=0,1}(-1)^{i+\alpha}h(x\partial_{i,\alpha}).
		\end{align*}
		Therefore the claim follows.
	\end{proof}

	Let $(X,x_0)$ be a pointed cubical set, and let $x$ be a spherical $n$-grid in $(X,x_0)$. By Lemma \ref{Hurewicz map boundary}, $h(x)$ is a cycle.

	\begin{lemma}
		\label{Hurewicz map homotopy}
		Let $(X,x_0)$ be a pointed cubical set. If spherical $n$-grids $x$ and $y$ in $X$ are 1-homotopic, then
		\[
		h([x])=h([y])
		\]
		in $H_n(X)$.
	\end{lemma}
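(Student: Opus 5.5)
It suffices to treat a single elementary step $x\overset{k}{\Rightarrow}y$, since 1-homotopy is generated by such steps and their reverses. So let $z$ be an $(n+1)$-grid of size $(l_1,\ldots,l_{k-1},1,l_k,\ldots,l_n)$ with $z\partial_{k,0}=x$ and $z\partial_{k,1}=y$. Since $x$ and $y$ are spherical, every other face $z\partial_{i,\alpha}$ with $i\ne k$ consists of cubes of the form $x_0\sigma_1\cdots\sigma_n$. Here I am using the spherical setting: a homotopy of spherical grids keeps the boundary at the basepoint. The plan is to show that $h([x])-h([y])$ is a boundary in $N(X)$, namely $\pm\partial h([z])$.

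First apply Lemma \ref{Hurewicz map boundary} to $[z]\in\mathcal{G}_0\mathcal{R}X_{n+1}$. This gives
\[
\partial h([z])=\sum_{i=1}^{n+1}\sum_{\alpha=0,1}(-1)^{i+\alpha}h(z\partial_{i,\alpha}).
\]
The lemma applies to arbitrary grids, not just directed ones, because the sign $\mathrm{sgn}$ accounts for the orientations $s$. For $i\ne k$, each face $z\partial_{i,\alpha}$ is an $n$-grid all of whose cubes are the totally degenerate cube $x_0\sigma_1\cdots\sigma_n$. For $n\ge1$ these cubes vanish in $N_n(X)$, so $h(z\partial_{i,\alpha})=0$.

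For $i=k$, one has to check that $h(z\partial_{k,\alpha})$ agrees with $h(x)$ or $h(y)$ up to the sign contributed by the $k$-th orientation $s_k(1)$ of $z$. By the definition of faces of grids, $z\partial_{k,0}$ is built from the cubes $z(\ldots,1,\ldots)\partial_{k,s_k(1)}$. Its sign data are $s$ with the $k$-th entry removed, so $\mathrm{sgn}$ of each cube of $z\partial_{k,\alpha}$ differs from $\mathrm{sgn}$ of the corresponding cube of $z$ exactly by $(-1)^{s_k(1)}$. Since $z\partial_{k,0}=x$ and $z\partial_{k,1}=y$ as grids, the two surviving terms give
\[
\partial h([z])=(-1)^{k}\bigl(h([x])-h([y])\bigr),
\]
up to a global sign. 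Hence $h([x])=h([y])$ in $H_n(X)$.

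The main obstacle I expect is bookkeeping rather than anything conceptual. One must ensure that the correspondence $[z]\mapsto h([z])$ is computed on a representative, which is harmless because $h$ is insensitive to degenerate slices. One must also confirm that the face formula of Lemma \ref{Hurewicz map boundary} produces the faces $x$ and $y$ with compatible signs, and not with signs that would yield $h(x)+h(y)$. This is checked directly from the definitions of $(x,s)\partial_{k,\alpha}$ and $\mathrm{sgn}$: both $\alpha=0$ and $\alpha=1$ faces come from the same single slice index $1=l_k$ of $z$, carrying the same orientation $s_k(1)$. The factor $(-1)^{\alpha}$ in the boundary formula then supplies the relative minus sign.

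The case $n=0$ needs a separate remark. Spherical $0$-grids are just points, and $h$ sends $x\mapsto x\in N_0(X)$. A $1$-grid $z$ from $x$ to $y$ then has $h(z)$ as a chain with $\partial h([z])=\pm(y-x)$ by the same computation. So the statement still holds in $H_0(X)$.
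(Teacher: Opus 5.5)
Your proposal is correct and follows the paper's proof: you reduce to a single step $x\overset{k}{\Rightarrow}y$ witnessed by an $(n+1)$-grid $z$ whose faces other than $z\partial_{k,\alpha}$ are basepoint grids (hence vanish in $N_nX$), and you apply Lemma \ref{Hurewicz map boundary} to get $\partial h(z)=(-1)^k(h(x)-h(y))$. Your extra bookkeeping with $s_k(1)$ is unnecessary, because that sign is already absorbed into Lemma \ref{Hurewicz map boundary}, so $h(z\partial_{k,0})=h(x)$ and $h(z\partial_{k,1})=h(y)$ hold exactly.
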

	
	\begin{proof}
		It suffices to prove the claim when $x\overset{k}{\Rightarrow}y$ for some $k$. In this case, there exists an $(n+1)$-grid $z$ such that $z\partial_{k,0}=x$, $z\partial_{k,1}=y$ and $z\partial_{i,\alpha}=x_0$ for $i\ne k$ and $\alpha=0,1$. By Lemma \ref{Hurewicz map boundary},
		\[
		\partial h(z)=(-1)^k(h(x)-h(y))
		\]
		in $N_nX$. Hence the claim follows.
	\end{proof}

	\begin{proposition}
		Let $(X,x_0)$ be a pointed cubical set. Then the map
		\begin{equation}
			\label{Hurewicz map}
			h_*\colon\pi_n^\delta(X,x_0)\to H_n(X),\quad[x]\mapsto[h(x)]
		\end{equation}
		is a well defined homomorphism.
	\end{proposition}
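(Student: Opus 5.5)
The plan is to check well-definedness first and the homomorphism property second. For well-definedness, the class $[h(x)]$ must depend only on the homotopy class of the spherical grid $x$. By Definition \ref{homotopy grids def}, homotopy is generated by $0$-homotopy and $1$-homotopy, so it is enough to handle each one separately.

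\textbf{Invariance under $0$-homotopy.} This is built into the definition of $h$ on $\mathcal{G}_0\mathcal{R}X$. Inserting or deleting a slice $z\sigma_{k,\alpha}$ only adds or removes cubes of the form $w\sigma_k$, which are degenerate and vanish in $N_nX$. The sign factor $\mathrm{sgn}$ does not matter for these terms, since they are zero regardless.

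\textbf{Invariance under $1$-homotopy.} This is exactly Lemma \ref{Hurewicz map homotopy}. One point needs checking: the intermediate grids in a $1$-homotopy of spherical grids are themselves spherical, and the witnessing $(n+1)$-grid $z$ has all its other faces equal to $x_0$. Then $h(z\partial_{i,\alpha})$ is a sum of degenerate cubes $x_0\sigma_1\cdots\sigma_{n-1}\cdots$, hence zero for $n\ge1$. With that in hand, Lemma \ref{Hurewicz map boundary} gives $\partial h(z)=\pm(h(x)-h(y))$. Lemma \ref{Hurewicz map boundary} also shows that $h(x)$ is a cycle, so $[h(x)]\in H_n(X)$ makes sense. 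The case $n=0$ is trivial.

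\textbf{Homomorphism property.} I would compute directly from the definition of the product. The grid $x\cdot y$ has block-diagonal entries $x(i)$ and $y(i)$, and every other block is $x_0\sigma_1\cdots\sigma_n$, which is degenerate. The signs satisfy
\[
\mathrm{sgn}_{i}(x\cdot y)=\mathrm{sgn}_{i}(x)
\]
on the $x$-block, since there $u_k=s_k$. On the $y$-block they agree with $\mathrm{sgn}(y)$ after the index shift, since there $u_k(i_k)=t_k(i_k-l_k)$. It follows that
\[
h(x\cdot y)=h(x)+h(y)
\]
already at the chain level in $N_nX$.

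The only real subtlety is making sure that boundary terms involving $x_0$ vanish in the normalized complex. This holds because $x_0\sigma_1\cdots\sigma_{n-1}$ is degenerate for $n\ge2$. For $n=1$ the boundary terms are $0$-cubes, but they cancel in the $1$-homotopy computation because the $z\partial_{i,\alpha}$ with $i\ne k$ are constant $1$-grids at $x_0$, and these are degenerate $1$-cubes.
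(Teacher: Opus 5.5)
Your proposal is correct and matches the paper's proof: $0$-homotopy invariance is built into $h$ on $\mathcal{G}_0\mathcal{R}X$ because degenerate cubes vanish in $N_nX$, $1$-homotopy invariance is Lemma~\ref{Hurewicz map homotopy} (with Lemma~\ref{Hurewicz map boundary} showing that $h(x)$ is a cycle), and $h(x\cdot y)=h(x)+h(y)$ holds at the chain level. One small correction to your last paragraph: the side faces of the witnessing $(n+1)$-grid $z$ consist of $x_0\sigma_1\cdots\sigma_n$, which is degenerate for every $n\ge 1$, so the $1$-homotopy step has no special case; the $n=1$ subtlety concerns only the cycle property, where $\partial h(x)=-x_0+x_0=0$ holds by cancellation rather than by degeneracy.
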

	
	\begin{proof}
		By Lemma \ref{Hurewicz map homotopy}, the map $h_*$ is well defined. Let $x$ and $y$ be spherical $n$-grids in $(X,x_0)$. By definition, one has
		\[
		h(x\cdot y)=h(x)+h(y).
		\]
		Hence $h_*$ is a homomorphism.
	\end{proof}

	Now we prove Theorem \ref{main 3}. Let $(X,x_0)$ be a pointed cubical set. 

	
	\begin{proof}
		[Proof of Theorem \ref{main 3}]
		By Propositions \ref{R}, \ref{G} and \ref{G_0}, the map $(\pi\circ\phi\circ\iota)_*\colon H_n(X)\to H_n(\mathcal{G}_0\mathcal{R}X)$ is an isomorphism. By construction, its inverse is given by
		\[
		H_n(\mathcal{G}_0\mathcal{R}X)\to H_n(X),\quad[x]\mapsto\left[\sum_{i_1,\ldots,i_n}\mathrm{sgn}_{i_1,\ldots,i_n}(x)x(i_1,\ldots,i_n)\right].
		\]
		Carranza, Kapulkin and Tonks \cite{CKT} proved that if $(Y,y_0)$ is a pointed Kan complex, then the map
		\[
		\bar{h}\colon\pi_n(Y,y_0)\to H_n(Y),\quad[y]\mapsto[y]
		\]
		is a well defined homomorphism such that if $(Y,y_0)$ is $(n-1)$-connected, then this map is the abelianization for $n=1$ and is an isomorphism for $n\ge 2$. The map $h_*$ coincides with the composite
		\[
		\pi_n^\delta(X,x_0)\xrightarrow{\cong}\pi_n(\mathcal{G}_0\mathcal{R}X,x_0)\xrightarrow{\bar{h}}H_n(\mathcal{G}_0\mathcal{R}X)\xrightarrow{(\pi\circ\phi\circ\iota)_*^{-1}}H_n(X),
		\]
		where the first map is given by \eqref{pi_n bijection}. Therefore the claim follows. 
	\end{proof}

	As shown in Section \ref{Discrete homotopy groups of cubical sets}, discrete homotopy groups can be defined for an arbitrary cubical set \(X\), and they enjoy properties analogous to those of the homotopy groups of a topological space. Their usefulness has been demonstrated in Theorems \ref{main 1} and \ref{main 3} in the case where \(X\) is quasisymmetric. However, these groups remain rather mysterious when \(X\) is not quasisymmetric. We believe that this provides a rich source of problems for future research, and we invite the reader to consider the following questions.
	
	\begin{problem}
		Find a non-quasisymmetric cubical set \(X\) with nontrivial higher discrete homotopy groups.
	\end{problem}
	
	We expect that a Seifert--van Kampen theorem for the discrete fundamental group can be proved in the same manner of the classical Seifert--van Kampen theorem. In particular, any group should be realizable as the discrete fundamental group of some non-quasisymmetric cubical set. At present, however, we do not know any examples of non-quasisymmetric cubical sets with nontrivial higher discrete homotopy groups. The following problem is also natural.
	
	\begin{problem}
		Prove or disprove that a weak equivalence between cubical sets induces isomorphisms on discrete homotopy groups. If not, describe the kernel and cokernel of the induced map.
	\end{problem}
	
	By Corollary \ref{dhg under he}, a pointed homotopy equivalence between pointed cubical sets induces isomorphisms on discrete homotopy groups. An affirmative answer to the preceding problem would therefore, together with Theorem \ref{main 2}, provide a way to understand the homotopy groups of the geometric realization of an arbitrary cubical set in terms of discrete homotopy groups. Finally, we would like to understand whether the discrete homotopy groups of an arbitrary cubical set carry additional structures analogous to those carried by the homotopy groups of a topological space.
	
	\begin{problem}
		Construct homotopy operations on discrete homotopy groups in a purely combinatorial manner.
	\end{problem}
	
	The most elementary and important such operation is the action of the fundamental group on the higher homotopy groups, which leads to the following problem.
	
	\begin{problem}
		Construct combinatorially the action of \(\pi_1^\delta(X,x_0)\) on \(\pi_*^\delta(X,x_0)\).
	\end{problem}
	
	It is straightforward to define such actions in degrees \(1\) and \(2\). However, for degrees \(*\ge 3\), it is unclear how to proceed for a non-quasisymmetric cubical set, because coordinate-permutation symmetries are not available.
	
	\section{Discrete homotopy groups of directed graphs}\label{Discrete homotopy groups of directed graphs}
	
	In this section, we recall basic notions concerning directed graphs, including maps, products, and homotopies. We then define the 1-nerve of a directed graph and define the discrete homotopy groups and the homology groups of a pointed directed graph to be those of its 1-nerve, extending the discrete homotopy groups of graphs introduced in \cite{BBdLL,BKLW}. Since the 1-nerve is symmetric, one has the Hurewicz theorem between the discrete homotopy groups and the homology groups. To understand the discrete Hurewicz map, we show that the discrete homotopy groups are naturally isomorphic to the homotopy groups of directed graphs in the sense of Lin, Wu, Yau and Zhang \cite{LWYZ}, and the homology groups coincide with the singular cubical homology groups of a directed graph introduced in \cite{GM2}. By embedding the category of undirected graphs into the category of directed graphs, we show that the Hurewicz theorem for directed graphs is a generalization of the Hurewicz theorem for undirected graphs proved in \cite{CK2}.

	\subsection{Directed graphs}
	
	We begin by defining directed graphs.

	\begin{definition}
		A \emph{directed graph} $G$ consists of a vertex set $V(G)$ and an edge set $E(G)\subset V(G)\times V(G)-\Delta$, where $\Delta$ denotes the diagonal, and an edge $(x,y)\in E(G)$ is regarded as being directed from $x$ to $y$.
	\end{definition}

	By definition, directed graphs do not admit loops or multiple edges with the same direction. However, pairs of oppositely directed edges are allowed. A \emph{subgraph} of a directed graph $G$ is a directed graph whose vertex and edge sets are subsets of those of $G$. 

	\begin{example}
		A \emph{path graph} of length $n$ is a directed graph $I$ with vertex set $V(I)=\{0,\ldots,n\}$ such that for each $i=0,\ldots,n-1$, exactly one of $(i,i+1)$ or $(i+1,i)$ belongs to $E(I)$. Hence there are $2^n$ path graphs of length $n$. Let $\mathcal{I}_n$ denote the set of all path graphs of length $n$.
	\end{example}

	\begin{definition}
		A \emph{map} $f\colon G\to H$ between directed graphs $G$ and $H$ is a map $f\colon V(G)\to V(H)$ such that  whenever $(x,y)\in E(G)$, either $f(x)=f(y)$ or $(f(x),f(y))\in E(H)$.
	\end{definition}

	The identity map of a directed graph and the composites of maps between directed graphs are again maps. Thus directed graphs together with maps between them form a category, which we denote by $\mathsf{DiGraph}$.

	\begin{definition}
		Define a functor
		\[
		\otimes\colon\mathsf{DiGraph}\times\mathsf{DiGraph}\to\mathsf{DiGraph}
		\]
		by setting $V(G\otimes H)=V(G)\times V(H)$ and $E(G\otimes H)$ to be the set of all elements $((x_0,y_0),(x_1,y_1))\in V(G\otimes H)\times V(G\otimes H)$ satisfying either $x_0=x_1$ and $(y_0,y_1)\in E(H)$, or $(x_0,x_1)\in E(G)$ and $y_0=y_1$.
	\end{definition}

	This is the box product of directed graphs. The functor $\otimes$ defines a monoidal product on the category $\mathsf{DiGraph}$.

	\begin{example}
		The directed graph $I_1\otimes\cdots\otimes I_n$, where $I_i\in\mathcal{I}_{l_i}$ ($i=1,\ldots,n$), is called an \emph{$n$-grid graph}. Its \emph{boundary} is a subgraph
		\[
		\partial(I_1\otimes\cdots\otimes I_n)=\bigcup_{i=1}^nI_1\otimes\cdots\otimes I_{i-1}\otimes\{0,l_i\}\otimes I_{i+1}\otimes\cdots\otimes I_n.
		\]
	\end{example}
	For a directed graph $G$, a map $I_1\otimes\cdots\otimes I_n\to G$ is called an \emph{$n$-grid} in $G$.

	\begin{definition}
		\label{homotopy digraph def}
		Maps $f,g\colon G\to H$ between directed graphs are said to be \emph{1-homotopic} if there exists a path graph $I\in\mathcal{I}_l$ for some $l$ and a map $h\colon G\otimes I\to H$ such that $h(-,0)=f$ and $h(-,l)=g$. In this case, we write $f\simeq_1 g$.
	\end{definition}
	It is straightforward to verify that 1-homotopies define an equivalence relation on the set of maps between directed graphs. A pair of directed graphs $(G,H)$ consists of a directed graph $G$ together with a subgraph $H\subset G$. Maps and homotopies between pairs are defined in the obvious manner. Any vertex $v\in V(G)$ may be regarded as a subgraph of a directed graph $G$. A pair $(G,v)$, where $v$ is a vertex of $G$, is called a \emph{pointed directed graph}. Pointed maps and pointed homotopies are defined as maps and homotopies of pairs.
	\subsection{1-nerve of a directed graph}
	We define the 1-nerve of a directed graph and show that it is symmetric. Let $\vec{I}\in\mathcal{I}_1$ such that $E(\vec{I})=\{(0,1)\}$.
	\begin{definition}
		\label{1-nerve}
		For a directed graph $G$, we define the \emph{1-nerve} $\mathrm{N}_1G$ to be the cubical set such that
		\[
		(\mathrm{N}_1G)_n=\mathsf{DiGraph}(\vec{I}^{\otimes n},G)
		\]
		for $n\ge0$. Thus an $n$-cube of $\mathrm{N}_1G$ is precisely a directed $n$-dimensional cube in $G$, with edges allowed to collapse to vertices. 
	\end{definition}
	It is straightforward to check that $\mathrm{N}_1G$ is indeed a cubical set. We recall the definition of the singular cubical homology groups of a directed graph $G$, which was introduced in \cite{GM2}. The undirected version of this homology theory was introduced in \cite{CK2}.
	\begin{definition}
		For a directed graph $G$ and $n\ge0$, let $Q_n(G)$ be the free abelian group generated by all maps $f\colon\vec{I}^{\otimes n}\to G$. The homomorphism
		\[
		\partial f=\sum_{k=1}^n(-1)^k(f\partial_{k,0}-f\partial_{k,1})\in Q_{n-1}G
		\]
		provides a chain complex structure on $Q_*G=\{Q_n(G)\}_{n\ge0}$. Let $B_n(G)$ denote the subgroup of $Q_n(G)$ generated by all degenerate $n$-grids. Then $B_*(G)=\{B_n(G)\}_{n\ge0}$ is a chain subcomplex of $Q_*(G)$ and the $n$-th \emph{singular cubical homology group} $\mathrm{QH}_n(G)$ is defined to be the $n$-th homology group of the chain complex $Q_*(G)/B_*(G)$.
	\end{definition}
	The following is immediate from the definition.
	\begin{lemma}
		\label{two homology groups}
		For a directed graph $G$ and $n\ge0$, we have
		\[
		H_n(\mathrm{N}_1G)=\mathrm{QH}_n(G).
		\]
	\end{lemma}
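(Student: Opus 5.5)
The plan is to unwind both sides to the same chain complex and compare them degree by degree. By definition, $H_n(\mathrm{N}_1G)$ is the homology of the normalized chain complex $N(\mathrm{N}_1G)=C(\mathrm{N}_1G)/D(\mathrm{N}_1G)$. Similarly, $\mathrm{QH}_n(G)$ is the homology of $Q_*(G)/B_*(G)$. So it suffices to exhibit an isomorphism of chain complexes $N(\mathrm{N}_1G)\cong Q_*(G)/B_*(G)$, natural in $G$.

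First, on generators: by Definition \ref{1-nerve}, $(\mathrm{N}_1G)_n=\mathsf{DiGraph}(\vec{I}^{\otimes n},G)$. This is exactly the set of maps $f\colon\vec{I}^{\otimes n}\to G$ that freely generates $Q_n(G)$. Hence $C_n(\mathrm{N}_1G)=Q_n(G)$ as free abelian groups, with the identity on bases.

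Second, the differentials. In $C(\mathrm{N}_1G)$ the boundary is $\partial x=\sum_{i,\alpha}(-1)^{i+\alpha}x\partial_{i,\alpha}$, and this equals $\sum_i(-1)^i(x\partial_{i,0}-x\partial_{i,1})$. That is precisely the formula defining $\partial$ on $Q_*(G)$. For this to be an honest identification, I will check that the face operators of $\mathrm{N}_1G$ are given by precomposition with the inclusions $\vec{I}^{\otimes(n-1)}\to\vec{I}^{\otimes n}$ inserting $\alpha$ in the $i$-th coordinate. These are the same faces $f\partial_{k,\alpha}$ used in the definition of $Q_*$. This is the content of "it is straightforward to check that $\mathrm{N}_1G$ is a cubical set": the cubical structure is induced from the maps of $\Box$ realized as graph maps between the $\vec{I}^{\otimes n}$.

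Third, the degenerate subcomplexes must match. $D_n(\mathrm{N}_1G)$ is spanned by the cubes $y\sigma_i$, where $y\sigma_i$ is the composite of $y$ with the projection $\vec{I}^{\otimes n}\to\vec{I}^{\otimes(n-1)}$ forgetting the $i$-th coordinate. The subgroup $B_n(G)$ is spanned by the "degenerate" maps, which I read in the sense of \cite{GM2}: maps $f$ with $f\partial_{i,0}=f\partial_{i,1}$ that factor through such a projection for some $i$. Under either reading these are the same generators. If the paper's convention in $B_n$ were instead "independent of some coordinate", I would note that a map $f$ independent of coordinate $i$ equals $(f\partial_{i,0})\sigma_i$, so the two spans coincide.

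The only real obstacle is pinning down this definition of "degenerate" in $B_n(G)$ and confirming that no connection-type maps are quotiented in $\mathrm{QH}$. Everything else is a verbatim match. Once the identity map $C(\mathrm{N}_1G)\to Q_*(G)$ is seen to be a chain isomorphism carrying $D$ onto $B$, it descends to an isomorphism of the quotient complexes, and hence to $H_n(\mathrm{N}_1G)=\mathrm{QH}_n(G)$.
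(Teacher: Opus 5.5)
Your proposal is correct and is exactly the unwinding of definitions the paper has in mind when it calls the lemma immediate. The generating sets coincide, the boundary $\sum_{i,\alpha}(-1)^{i+\alpha}x\partial_{i,\alpha}$ regroups to $\sum_k(-1)^k(f\partial_{k,0}-f\partial_{k,1})$, and the degenerate subcomplexes agree because a map independent of the $i$-th coordinate equals $(f\partial_{i,0})\sigma_i$.
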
 
	Moreover, we show that $\mathrm{N}_1G$ is symmetric.
	\begin{lemma}
		\label{1-nerve symmetric}
		For a directed graph $G$, the 1-nerve $\mathrm{N}_1G$ is a symmetric cubical set.
	\end{lemma}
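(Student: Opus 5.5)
The plan is to define the permutation action on $\mathrm{N}_1G$ by precomposition with the graph automorphisms of $\vec{I}^{\otimes n}$ that permute coordinates, and then check the defining relations of $\Box_\mathsf{sym}$. First observe that the vertex set of $\vec{I}^{\otimes n}$ is $[1]^n$. An edge $(u,v)$ belongs to $\vec{I}^{\otimes n}$ exactly when $u$ and $v$ agree in all coordinates but one, in which $u$ has $0$ and $v$ has $1$. This means $\vec{I}^{\otimes n}$ is the Hasse diagram of the poset $[1]^n$.

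Every morphism $\theta\colon[1]^m\to[1]^n$ of $\Box_\mathsf{sym}$ is a poset map; this covers faces, degeneracies, connections (via $\max$ and $\min$) and permutations. I claim each such $\theta$ gives a map of directed graphs $\vec{I}^{\otimes m}\to\vec{I}^{\otimes n}$. The reason is that every generator sends a covering relation $u\lessdot v$ either to an equality or to a covering relation. For faces, degeneracies and permutations this is clear. For $\gamma_{i,\alpha}$, raising one coordinate changes $\max\{x_i,x_{i+1}\}$ or $\min\{x_i,x_{i+1}\}$ by at most one step, and the other coordinates are unchanged. The property is preserved under composition, so every morphism in $\Box_\mathsf{sym}$ defines a graph map.

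Next I define $(\mathrm{N}_1G)_n\to(\mathrm{N}_1G)_m$ by $f\mapsto f\circ\theta$. This is functorial, so it gives a presheaf on the category whose objects are the $[1]^n$ and whose morphisms are these concrete poset maps. The cubical structure of $\mathrm{N}_1G$ is exactly the restriction of this presheaf along $\Box\to\Box_\mathsf{sym}$, by the definition of $\mathrm{N}_1G$.

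The main point is that this presheaf really factors through $\Box_\mathsf{sym}$. The paper describes $\Box_\mathsf{sym}$ as freely generated by faces, degeneracies, connections and permutations, modulo the cubical identities together with \eqref{permutation identity 1} and \eqref{permutation identity 2}. All of these identities hold for the actual maps between the sets $[1]^n$: the cubical identities are stated as consequences of those formulas, and the paper says \eqref{permutation identity 1} and \eqref{permutation identity 2} are straightforward to verify. So the assignment from generators to concrete maps respects every relation and induces a functor from $\Box_\mathsf{sym}$ to the concrete category of graph maps between the $\vec{I}^{\otimes n}$. Composing with $\mathsf{DiGraph}(-,G)$ makes $\mathrm{N}_1G$ a presheaf on $\Box_\mathsf{sym}$ extending its cubical structure. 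I do not need faithfulness of this functor, because a presheaf only needs a functor to be defined.

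I expect the only delicate step to be the connection check together with the edge convention. One must confirm that precomposing with a coordinate permutation $\kappa$, given by $\kappa(x_1,\ldots,x_n)=(x_{\kappa(1)},\ldots,x_{\kappa(n)})$, sends edges to edges. This holds because $\kappa$ is a graph automorphism of $\vec{I}^{\otimes n}$: it maps a vertex pair differing in one coordinate to a pair differing in one coordinate, preserving which end is $0$. The remaining identities then need no further verification.
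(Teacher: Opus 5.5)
Your proposal is correct and follows the paper's argument: both define the action by precomposition, $f\kappa=f\circ\kappa$, and both get the relations of $\Box_\mathsf{sym}$ from the fact that they already hold for the underlying maps between the $[1]^n$. You fill in details the paper calls straightforward, namely that $\vec{I}^{\otimes n}$ is the Hasse diagram of $[1]^n$, that connections and permutations are maps of directed graphs, and that the presheaf therefore factors through the presented category.
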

	\begin{proof}
		Let $f\in(\mathrm{N}_1G)_n=\mathsf{DiGraph}(\vec{I}^{\otimes n},G)$ and $\kappa\in\Sigma_n$, then we let
		\[
		f\kappa=f\circ\kappa\colon\vec{I}^{\otimes n}\to\vec{I}^{\otimes n}\to G.
		\]
		It is straightforward to verify the identities \eqref{permutation identity 1} and \eqref{permutation identity 2}, which implies that $\mathrm{N}_1G$ is symmetric.
	\end{proof}
	Thus coordinate permutations of the directed cube induce the required symmetries on the 1-nerve. If $G$ is pointed with basepoint $v$, we define the \emph{$n$-th discrete homotopy group} $\pi_n^\delta(G,v)$ to be $\pi_n^\delta(\mathrm{N}_1G,v)$. We say that $G$ is \emph{$n$-connected} if $\mathrm{N}_1G$ is $n$-connected. By applying Theorem \ref{main 3} to $\mathrm{N}_1G$, the following is immediate.
	\begin{proposition}
		\label{Hurewicz theorem discrete homotopy group of digraph}
		Let $(G,v)$ be a pointed directed graph. If $(G,v)$ is $(n-1)$-connected, then the map
		\[
		h_*\colon\pi_n^\delta(G,v)\to\mathrm{QH}_n(G)
		\]
		is the abelianization for $n=1$ and is an isomorphism for $n\ge 2$.
	\end{proposition}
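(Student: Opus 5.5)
This is a direct application of Theorem \ref{main 3} to the cubical set $\mathrm{N}_1G$, so the plan is to check that its hypotheses hold and that its conclusion is the one stated. I will carry this out in three steps.

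First, Lemma \ref{1-nerve symmetric} shows that $\mathrm{N}_1G$ is a symmetric cubical set. I will note that the canonical functor $\Box_\mathsf{qsym}\to\Box_\mathsf{sym}$ exists because the quasisymmetric category imposes only the identities \eqref{permutation identity 1}, which a symmetric cubical set satisfies. Restricting along this functor makes $\mathrm{N}_1G$ a quasisymmetric cubical set. Hence $(\mathrm{N}_1G,v)$ is a pointed quasisymmetric cubical set, where $v$ is regarded as a $0$-cube, that is, a map $\vec{I}^{\otimes 0}\to G$.

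Second, I will match the hypotheses and the targets. By definition, $(G,v)$ being $(n-1)$-connected means that $\mathrm{N}_1G$ is $(n-1)$-connected. Also by definition, $\pi_n^\delta(G,v)=\pi_n^\delta(\mathrm{N}_1G,v)$. Theorem \ref{main 3} then says that the discrete Hurewicz map
\[
\pi_n^\delta(\mathrm{N}_1G,v)\to H_n(\mathrm{N}_1G),\quad [(x,s)]\mapsto\Bigl[\sum\mathrm{sgn}_{i_1,\ldots,i_n}(x)\,x(i_1,\ldots,i_n)\Bigr]
\]
is the abelianization for $n=1$ and an isomorphism for $n\ge 2$.

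Third, I will compose this map with the identification $H_n(\mathrm{N}_1G)=\mathrm{QH}_n(G)$ from Lemma \ref{two homology groups}. The only point needing a word is this identification. On generators, $C_n(\mathrm{N}_1G)$ and $Q_n(G)$ are both free on the maps $\vec{I}^{\otimes n}\to G$. The boundary of $C_n(\mathrm{N}_1G)$ is $\sum_i\sum_\alpha(-1)^{i+\alpha}x\partial_{i,\alpha}$, and this equals the boundary $\sum_k(-1)^k(f\partial_{k,0}-f\partial_{k,1})$ of $Q_n(G)$. The degenerate subcomplexes also agree, since $D_n(\mathrm{N}_1G)$ and $B_n(G)$ are both generated by degenerate cubes. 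So the two chain complexes coincide and their homologies are equal. With this, $h_*$ on $\pi_n^\delta(G,v)$ is by definition the composite above, and the claim follows.

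I expect no real obstacle here. The only care needed is in the restriction from symmetric to quasisymmetric structure and in checking that the sign conventions of the two boundary operators agree, which they do.
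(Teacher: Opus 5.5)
Your proposal is correct and follows essentially the same route as the paper: view the symmetric cubical set $\mathrm{N}_1G$ (Lemma \ref{1-nerve symmetric}) as quasisymmetric, apply Theorem \ref{main 3} to it, and identify $H_n(\mathrm{N}_1G)$ with $\mathrm{QH}_n(G)$ via Lemma \ref{two homology groups}. Your extra checks, namely the restriction along $\Box_\mathsf{qsym}\to\Box_\mathsf{sym}$ and the agreement of the boundary signs and degenerate subcomplexes, are exactly what the paper's one-line ``immediate'' leaves implicit.
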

	
	\subsection{Discrete homotopy groups and discrete Hurewicz map of a pointed directed graph}
	In this subsection we provide a graph-theoretical description of the group $\pi_n^\delta(G,v)$ and identify it with the homotopy group of a directed graph in the sense of \cite{LWYZ}. We also prove Theorem \ref{main 4}. By definition, an $n$-grid of size $(l_1,\ldots,l_n)$ in $\mathrm{N}_1G$ is a pair $(x,s)$ of functions
	\[
	x\colon\prod_{k=1}^n\{1,\ldots,l_k\}\to (\mathrm{N}_1G)_n\quad\text{and}\quad s=(s_1,\ldots,s_n)\colon\prod_{k=1}^n\{1,\ldots,l_k\}\to\{0,1\}^n
	\]
	satisfying
	\[
	x(i_1,\ldots,i_n)\partial_{k,1-s_k(i_k)}=x(i_1,\ldots,i_{k-1},i_k+1,i_{k+1},\ldots,i_n)\partial_{k,s_k(i_k+1)}.
	\]
	for all $i_1,\ldots,i_n$ and $k$. Therefore $(x,s)$ is nothing but a grid
	\[
	x\colon I_1\otimes\cdots\otimes I_n\to G
	\]
	in $G$ such that $I_k\in\mathcal{I}_{l_k}$ and
	\[
	E(I_k)\ni\begin{cases}
		(p-1,p)&(s_k(p)=0)\\
		(p,p-1)&(s_k(p)=1)
	\end{cases}
	\]
	for $k=1,\ldots,n$.

	Next we describe the 0-homotopy and homotopy between grids in a directed graph $G$. Let $f\colon K\to G$ and $g\colon L\to G$ be maps from $n$-grid graphs $K$ and $L$ into a directed graph $G$. We write $f\xrightarrow{k}g$ if there exists $k\in\{1,\ldots,n\}$ satisfying that $K=I_1\otimes\cdots\otimes I_{k-1}\otimes I\otimes I_{k+1}\otimes\cdots\otimes I_n$ and $L=I_1\otimes\cdots\otimes I_{k-1}\otimes J\otimes I_{k+1}\otimes\cdots\otimes I_n$ such that $J$ is obtained from $I$ by collapsing a single edge and $f=g\circ p$, where $p\colon K\to L$ denotes the projection. If $f=f_1\xrightarrow{k_1}\cdots\xrightarrow{k_{m-1}}f_m=g$ for some $f_1,\ldots,f_m$ and $k_1,\ldots,k_{m-1}$, then we write $f\ge g$. Clearly, this provides a poset structure on the set of $n$-grids.

	\begin{definition}
		Let $f$ and $g$ be $n$-grids in a directed graph $G$. We say that $f$ and $g$ are \emph{0-homotopic} if there exist $n$-grids $f_1,\ldots,f_m$ in $G$ such that $f_1=f$, $f_m=g$ and for each $i=1,\ldots,m-1$, either $f_i\le f_{i+1}$ or $f_{i+1}\le f_i$. In this case, we write $f\simeq_0g$.
	\end{definition}
	This notion generalizes the $C_0$-homotopy between paths in a directed graph, which is introduced in \cite{KT}. The following is straightforward by definitions.
	\begin{lemma}
		\label{two C_0-homotopies}
		Let $f$ and $g$ be $n$-grids in a directed graph $G$. Then they are 0-homotopic as $n$-grids in $G$ if and only if they are 0-homotopic as $n$-grids in $\mathrm{N}_1G$.
	\end{lemma}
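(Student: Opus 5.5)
Both notions of $0$-homotopy are generated by elementary moves, so it suffices to match the generating relations under the dictionary described just before the statement. Recall that an $n$-grid $(x,s)$ of size $(l_1,\ldots,l_n)$ in $\mathrm{N}_1G$ is the same thing as a map $x\colon I_1\otimes\cdots\otimes I_n\to G$, where $I_k\in\mathcal{I}_{l_k}$ has its $p$-th edge oriented according to $s_k(p)$. Since both $\simeq_0$ relations are defined as the equivalence relations generated by the respective orders $\le$, it is enough to show that under this bijection $f\xrightarrow{k}g$ in the graph sense holds if and only if $x\xrightarrow{k}y$ in the cubical sense. Then the two posets coincide, and so do the generated equivalence relations.

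First take the cubical direction. Suppose $x[k,j]=z\sigma_{k,\alpha}$ for some $(n-1)$-grid $z$, and $y$ is obtained by deleting this slice. Every cube in the slice $x[k,j]$ is of the form $c\sigma_k$. As a graph map on $\vec I^{\otimes n}$, this says that each such cube is constant along the $k$-th coordinate. Hence the map $f$ on $K=\cdots\otimes I\otimes\cdots$ sends both endpoints of the $j$-th edge of $I$ to the same vertex in every fibre. Therefore $f=g\circ p$, where $J$ is $I$ with its $j$-th edge collapsed, $p$ is the projection, and $g$ corresponds to $y$. The orientations of the remaining edges of $I$ are exactly the $s_k(i)$ for $i\ne j$, which matches the slice deletion. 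The orientation $\alpha$ of the deleted edge plays no role, consistent with both $z\sigma_{k,0}$ and $z\sigma_{k,1}$ being allowed.

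For the converse, suppose $f=g\circ p$ with $J$ obtained from $I$ by collapsing its $j$-th edge. The restriction of $f$ to the $j$-th slab factors through collapsing the $k$-th coordinate. So each cube $x(\ldots,j,\ldots)$, viewed as a map $\vec I^{\otimes n}\to G$, equals $c\circ\sigma_k$, where $c$ is its restriction to a $k$-face. Taking $z$ to be the $(n-1)$-grid formed by these $c$, and $\alpha=s_k(j)$, we get $x[k,j]=z\sigma_{k,\alpha}$. The remaining slices then assemble to the grid corresponding to $g$.

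The only point requiring care, and the one I regard as the main obstacle, is the bookkeeping that identifies the $k$-th projection $p$ with the degeneracy $\sigma_k$ in $\mathrm{N}_1G$. Concretely, I will check that precomposition with the projection $\vec I^{\otimes n}\to\vec I^{\otimes(n-1)}$ forgetting coordinate $k$ is precisely the degeneracy $\sigma_k$ in the nerve, and that concatenation $+_k$ of grids corresponds to gluing path graphs end to end. Both follow directly from Definition \ref{1-nerve}.
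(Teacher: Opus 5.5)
Your proposal is correct and follows the same route the paper intends; the paper gives no argument beyond calling the lemma ``straightforward by definitions,'' and your proof supplies that unpacking. Both $\simeq_0$ relations are generated by the one-step moves $\xrightarrow{k}$, and under the grid/graph-map dictionary, deleting a slice $x[k,j]=z\sigma_{k,\alpha}$ (where necessarily $\alpha=s_k(j)$) is exactly collapsing the $j$-th edge of the $k$-th path graph, because $\sigma_k$ on $\mathrm{N}_1G$ is precomposition with the coordinate projection.
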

	\begin{definition}
		We say that $n$-grids $f$ and $g$ are \emph{homotopic} if there exist $n$-grids $f_1,\ldots,f_m$ such that $f=f_1$, $g=f_m$, and for each $i=1,\ldots,m-1$, either $f_i\simeq_0f_{i+1}$ or $f_i\simeq_1 f_{i+1}$ holds. In this case, we write $f\simeq g$.
	\end{definition}
	This definition generalizes the $C$-homotopy between paths in a directed graph $G$, which is introduced in \cite{GLMY2}. For two $n$-grids $f$ and $g$ in a directed graph $G$, we have two different notions of homotopies between $f$ and $g$, namely, as grids in $G$ (Definition \ref{homotopy digraph def}) and as grids in $\mathrm{N}_1G$ (Definition \ref{homotopy grids def}). However, we can show that these two notions are equivalent by Lemmas \ref{symmetric homotopy} and \ref{1-nerve symmetric}. 
	\begin{lemma}
		\label{two homotopies}
		Let $f$ and $g$ be $n$-grids in a directed graph $G$. Then they are homotopic as $n$-grids in $G$ if and only if they are homotopic as $n$-grids in $\mathrm{N}_1G$.
	\end{lemma}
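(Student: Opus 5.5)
The plan is to compare the two equivalence relations generator by generator, since both are generated by 0-homotopies and 1-homotopies. On the 0-homotopy side, Lemma \ref{two C_0-homotopies} already identifies 0-homotopy of $n$-grids in $G$ with 0-homotopy of the corresponding $n$-grids in $\mathrm{N}_1G$. It therefore suffices to show the following: two $n$-grids $f$ and $g$ are 1-homotopic in $G$ in the sense of Definition \ref{homotopy digraph def} if and only if they are homotopic in $\mathrm{N}_1G$. Along the way we may freely interchange the two kinds of moves on each side.

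\emph{From $G$ to $\mathrm{N}_1G$.} Let $h\colon K\otimes I\to G$ be a 1-homotopy, where $K=I_1\otimes\cdots\otimes I_n$ and $I\in\mathcal{I}_l$. Then $h$ is an $(n+1)$-grid in $G$, hence in $\mathrm{N}_1G$, with $h\partial_{n+1,0}=f$ and $h\partial_{n+1,1}=g$. One caveat concerns the sign $s_{n+1}$: the face $\partial_{n+1,\alpha}$ of the grid picks out the end indexed $1$ or $l$, and this is exactly $h(-,0)$ and $h(-,l)$. Restricting to the slices $h[n+1,j]$ gives a chain of $(n+1)$-grids of thickness $1$. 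Consecutive slice-boundaries are the grids $h(-,j)$, so we obtain a chain
\[
f=h(-,0)\overset{n+1}{\Rightarrow}h(-,1)\overset{n+1}{\Rightarrow}\cdots\overset{n+1}{\Rightarrow}h(-,l)=g.
\]
For slices with reversed orientation $s_{n+1}(j)=1$ we instead read the move as going from $h(-,j)$ to $h(-,j-1)$, which is allowed by the symmetry of Definition \ref{1homotopy}. Hence $f\simeq_1 g$ in $\mathrm{N}_1G$.

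\emph{From $\mathrm{N}_1G$ to $G$.} It suffices to treat a single generator $x\overset{k}{\Rightarrow}y$, witnessed by an $(n+1)$-grid $z$ of thickness $1$ in direction $k$. By Lemma \ref{1-nerve symmetric}, $\mathrm{N}_1G$ is symmetric, in particular quasisymmetric. Proposition \ref{symmetric homotopy} therefore lets us replace $k$ by $n+1$. Reading $z$ as a graph map $I_1\otimes\cdots\otimes I_n\otimes J\to G$ with $J\in\mathcal{I}_1$ then gives exactly a 1-homotopy from $x$ to $y$ in $G$.

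The main obstacle is a size mismatch. A 1-homotopy in $G$ between maps $f\colon K\to G$ and $g\colon L\to G$ requires the same domain $K=L$, whereas a $\overset{k}{\Rightarrow}$ move in $\mathrm{N}_1G$ also has fixed size. Meanwhile, the permutation in Proposition \ref{symmetric homotopy} reorders coordinates, so we must check that after permuting the grid sizes and orientations still match $(l_1,\ldots,l_n)$ in the first $n$ slots. This holds because $\kappa^{(k)}=1$ in that proof. The same check is needed when the grid permutation action on $\mathrm{N}_1G$ acts on multi-block grids rather than single cubes. I would verify it by applying $\kappa$ blockwise and observing that the grid indexing transforms compatibly, since all blocks in the slice share the same $\kappa$.
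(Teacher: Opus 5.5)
Your proof is correct and follows essentially the paper's route. You match 0-homotopies via Lemma \ref{two C_0-homotopies}, cut a length-$l$ 1-homotopy in $G$ into thickness-one slices to get $\overset{n+1}{\Rightarrow}$ moves, and conversely move any $\overset{k}{\Rightarrow}$ to direction $n+1$ using Proposition \ref{symmetric homotopy} and Lemma \ref{1-nerve symmetric}, then read it off as a 1-homotopy in $G$; this is exactly what the paper's one-line justification intends.

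Two minor points. Your stated reduction, ``1-homotopic in $G$ iff homotopic in $\mathrm{N}_1G$'', is not literally the right claim; what you actually prove, and what suffices, is the generator-wise implication in each direction. Also, the special handling of reversed slices is unnecessary: with the paper's conventions, $h[n+1,j]\partial_{n+1,0}$ is $h(-,j-1)$ regardless of $s_{n+1}(j)$.
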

	Let $(G,v)$ be a pointed directed graph and $f\colon I_1\otimes\cdots\otimes I_n\to G$. We say that $f$ is spherical if $f$ maps $\partial(I_1\otimes\cdots\otimes I_n)$ to $v$, and we may define homotopies between spherical $n$-grids in the obvious manner. Then for $n\ge 1$, $\pi^\delta_n(G,v)$ is the set of all homotopy classes of spherical $n$-grids in $(G,v)$. For path graphs $I\in\mathcal{I}_{l}$, $J\in\mathcal{I}_m$, let $I\cdot J\in\mathcal{I}_{l+m}$ be the path graph such that
	\[
	E(I\cdot J)=E(I)\sqcup\{(j+l,j'+l)\mid (j,j')\in E(J)\}.
	\]
	For spherical $n$-grids
	\[
	f\colon(I_1\otimes\cdots\otimes I_n,\partial(I_1\otimes\cdots\otimes I_n))\to(G,v),
	\]
	\[
	g\colon(I_1'\otimes\cdots\otimes I_n',\partial(I_1'\otimes\cdots\otimes I_n'))\to(G,v)
	\] 
	in a pointed directed graph $(G,v)$, define the multiplication $f\cdot g$ to be the grid
	\[
	f\cdot g\colon(I_1\cdot I_1')\otimes\cdots\otimes(I_n\cdot I_n')\to G
	\]
	such that
	\begin{align*}
		(f\cdot g)(i_1,\ldots,i_n)&=
		\begin{cases}
			f(i_1,\ldots,i_n)&(i_1\le l_1,\ldots,i_n\le l_n)\\
			g(i_1-l_1,\ldots,i_n-l_n)&(i_1>l_1,\ldots,i_n>l_n)\\
			v&(\text{otherwise}),
		\end{cases}
	\end{align*}
	where $I_k\in\mathcal{I}_{l_k}$ $(k=1,\ldots,n)$. Then it is clear that $f\cdot g$ is spherical.

	Now we recall the homotopy group $\bar{\pi}_n(G,v)$ for a pointed directed graph $(G,v)$ in \cite{LWYZ} and show that it is isomorphic to the $n$-th discrete homotopy group of $(G,v)$. For $k\ge0$, let $J_k\in\mathcal{I}_k$ such that $(j,j+1)\in E(J_k)$ for $j$ even and $(j+1,j)\in E(J_k)$ for $j$ odd. The following two definitions are due to \cite{LWYZ}.
	\begin{definition}
		\label{LWYZ1}
		
		Let 
		\[
		\underset{\rightarrow}{\mathrm{colim}}\,\mathsf{DiGraph}\left(\left(J_{k_1}\otimes\cdots\otimes J_{k_n},\partial\left(J_{k_1}\otimes\cdots\otimes J_{k_n}\right)\right);\left(G,v\right)\right)
		\]
		be the colimit under the partial order $\le$ between spherical grids in $G$ of the form
		\[
		\left(J_{k_1}\otimes\cdots\otimes J_{k_n},\partial\left(J_{k_1}\otimes\cdots\otimes J_{k_n}\right)\right)\to\left(G,v\right).
		\]
		Namely, for $f\colon J_{k_1}\otimes\cdots\otimes J_{k_n}\to G$ and $g\colon J_{k_1'}\otimes\cdots\otimes J_{k_n'}\to G$, we write $f\sim g$ if there is $h\colon J_{l_1}\otimes\cdots\otimes J_{l_n}\to G$ such that $f,g\le h$. Then
		\begin{align*}
			&\underset{\rightarrow}{\mathrm{colim}}\,\mathsf{DiGraph}\left(\left(J_{k_1}\otimes\cdots\otimes J_{k_n},\partial\left(J_{k_1}\otimes\cdots\otimes J_{k_n}\right)\right);\left(G,v\right)\right)\\
			&=\left(\bigsqcup_{(k_1,\ldots,k_n)\in(\Z^+)^n}\mathsf{DiGraph}\left(\left(J_{k_1}\otimes\cdots\otimes J_{k_n},\partial\left(J_{k_1}\otimes\cdots\otimes J_{k_n}\right)\right);\left(G,v\right)\right)\right)/\sim.
		\end{align*}
	\end{definition}
	In \cite{LWYZ}, $h$ was called a \emph{subdivision} of $f$ and $g$.
	\begin{definition}
		\label{LWYZ2}
		For a pointed directed graph $(G,v)$ and $n\ge1$, the $n$-th homotopy group of $(G,v)$ is defined by
		\[
		\bar{\pi}_n(G,v)=[\left(\left(J,\partial J\right)^{\otimes n}\right);\left(G,v\right)]
		\]
		where the RHS is the quotient of 
		\[
		\underset{\rightarrow}{\mathrm{colim}}\,\mathsf{DiGraph}\left(\left(J_{k_1}\otimes\cdots\otimes J_{k_n},\partial\left(J_{k_1}\otimes\cdots\otimes J_{k_n}\right)\right);\left(G,v\right)\right)
		\]
		modulo 1-homotopies. The group operation is given by $+_1$, the concatenation along the first coordinate. For $n=0$, let $\bar{\pi}_0(G,v)$ be the set of connected components of $G$.
	\end{definition}
	\begin{proposition}
		\label{two homotopy groups}
		There is a natural isomorphism 
		$$\pi_n^\delta(G,v)\cong\bar{\pi}_n(G,v).$$
	\end{proposition}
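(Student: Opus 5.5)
The plan is to build a map from the set of spherical grids indexed by the zigzag path graphs $J_{k}$ into $\pi_n^\delta(G,v)$, and then show that it descends to $\bar{\pi}_n(G,v)$ as a bijective homomorphism. As in the description preceding Lemma \ref{two C_0-homotopies}, every map $f\colon J_{k_1}\otimes\cdots\otimes J_{k_n}\to G$ sending the boundary to $v$ is a spherical $n$-grid in $\mathrm{N}_1G$. Its direction function $s_k$ alternates according to the orientation of $J_{k}$. So we get a map
\[
\Phi\colon\bigsqcup_{(k_1,\ldots,k_n)}\mathsf{DiGraph}\left(\left(J_{k_1}\otimes\cdots\otimes J_{k_n},\partial\left(J_{k_1}\otimes\cdots\otimes J_{k_n}\right)\right);\left(G,v\right)\right)\to\pi_n^\delta(G,v).
\]
If $f\le h$ (a subdivision), then $f\simeq_0 h$, since collapsing an edge of a grid graph is precisely deleting a degenerate slice; this is Lemma \ref{two C_0-homotopies}. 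Hence $\Phi$ is constant on the colimit classes of Definition \ref{LWYZ1}. A 1-homotopy in the sense of Definition \ref{LWYZ2} is a map $G\otimes I\to G$ with $I$ a path graph, which is a homotopy of grids in $G$. By Lemma \ref{two homotopies}, it is therefore a homotopy in $\mathrm{N}_1G$. So $\Phi$ induces a well-defined map $\bar\Phi\colon\bar{\pi}_n(G,v)\to\pi_n^\delta(G,v)$, and naturality in pointed maps is clear.

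For surjectivity, take an arbitrary spherical grid $x\colon I_1\otimes\cdots\otimes I_n\to G$. I will insert degenerate slices so that the orientations become alternating. Wherever two consecutive edges of $I_k$ point the same way, I insert an oppositely directed edge between them that is collapsed by the map. The result $x'$ is a grid on some $J_{k_1}\otimes\cdots\otimes J_{k_n}$ with $x'\ge x$, so $x\simeq_0 x'$. A first edge of the wrong orientation is handled the same way, by prepending a collapsed edge, which keeps the grid spherical. Thus every class of $\pi_n^\delta$ is represented by a $J$-indexed spherical grid.

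Injectivity is the main obstacle. Suppose two $J$-grids $f,g$ are homotopic in $\mathrm{N}_1G$. By Lemma \ref{homotopy description}, there are grids $z,w$ with $f\le z\simeq_1 w\ge g$, where the intermediate grids and the 1-homotopy are indexed by arbitrary path graphs. I would apply the same zigzag-normalization functorially. Given a chain $z=z_1\Rightarrow z_2\Rightarrow\cdots$ of elementary 1-homotopies, each realized by an $(n+1)$-grid of thickness $1$ in the new direction, I insert collapsed edges into all of them simultaneously. This is legitimate because inserting a degenerate slice in direction $k\le n$ commutes with the face maps $\partial_{n+1,\alpha}$ of the homotopy grid, by Proposition \ref{symmetric homotopy}, which lets the homotopy direction be $n+1$. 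The normalized chain is then a 1-homotopy through $J$-grids, possibly of a longer path graph $I$ in the homotopy direction, which Definition \ref{homotopy digraph def} allows.

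It remains to handle the relation $f\le z$ when $z$ is not itself a $J$-grid. Here $z$ normalizes to some $z'\ge z\ge f$, and $z'$ is a $J$-grid. One must check that $z'$ and $f$ agree in the colimit of Definition \ref{LWYZ1}. This means finding a common $J$-subdivision of $f$ and $z'$ while preserving alternation. I would do this using a cofinality argument in the style of Lemma \ref{cofinality} applied to expanders, inserting pairs of collapsed edges so that parity is kept. I expect this parity bookkeeping to be the delicate point.

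Finally, $\bar\Phi$ is a homomorphism. The product in $\bar{\pi}_n$ is $+_1$, which by Lemma \ref{grid slide} is homotopic to $x\cdot y$ after padding, and padding is again a 0-homotopy.
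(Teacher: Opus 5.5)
Your proposal is correct and follows the same route as the paper's proof. Both identify the colimit relation with $0$-homotopy, normalize every spherical grid to one on some $J_{k_1}\otimes\cdots\otimes J_{k_n}$ by inserting collapsed edges, and match the products via Lemma~\ref{grid slide}; you also make explicit the injectivity argument that the paper leaves implicit. The step you flag as delicate is in fact immediate: since $f\le z\le z'$ and $z'$ is itself a $J$-grid, $z'$ is already a common $J$-subdivision of $f$ and $z'$, so no parity-preserving cofinality argument is needed.
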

	\begin{proof}
		Clearly the equivalence relation $\sim$ in Definition \ref{LWYZ1} is nothing but the 0-homotopy, which implies that an element of $\bar{\pi}_n(G,v)$ is the homotopy class represented by a spherical grid of the form
		\[
		\left(J_{k_1}\otimes\cdots\otimes J_{k_n},\partial\left(J_{k_1}\otimes\cdots\otimes J_{k_n}\right)\right)\to\left(G,v\right).
		\]
		Then $\bar{\pi}_n(G,v)$ is a subset of $\pi_n^\delta(G,v)$. Conversely, for any homotopy class in $\pi_n^\delta(G,v)$, we may choose a representative of the form
		\[
		\left(J_{k_1}\otimes\cdots\otimes J_{k_n},\partial\left(J_{k_1}\otimes\cdots\otimes J_{k_n}\right)\right)\to\left(G,v\right),
		\]
		which implies that $\bar{\pi}_n(G,v)\cong\pi_n^\delta(G,v)$ as sets. By Lemma \ref{grid slide} it is also clear that the multiplications in both groups coincide. The proof is completed.
	\end{proof}
	
	It remains to describe the discrete Hurewicz map. For a pointed directed graph $(G,v)$ and $n\ge0$, we define the Hurewicz map
	\[
	h_*\colon\pi_n^\delta(G,v)\to \mathrm{QH}_n(G)
	\]
	as follows. Let $f\colon I_1\otimes\cdots\otimes I_n\to G$ be a grid in $G$ such that $I_k\in\mathcal{I}_{l_k}$ $(k=1,\ldots,n)$, and let $i_k=1,\ldots,l_k$. Let
	\[
	f_{i_1,\ldots,i_n}\colon\vec{I}^{\otimes n}\to G,\quad(p_1,\ldots,p_n)\mapsto f(i_1-1+p_1',\ldots,i_n-1+p_n')
	\]
	where
	\[
	p_k'=\begin{cases}
		p_k&((i_k-1,i_k)\in E(I_k))\\
		1-p_k&((i_k,i_k-1)\in E(I_k)).
	\end{cases}
	\]
	Also let
	\[
	\mathrm{sgn}_{i_1,\ldots,i_n}(f)=(-1)^{s_1(i_1)+\cdots+s_n(i_n)},
	\]
	where
	\[
	s_k(i_k)=\begin{cases}
		0&((i_k-1,i_k)\in E(I_k))\\
		1&((i_k,i_k-1)\in E(I_k)).
	\end{cases}
	\]
	Then if $f$ is a spherical $n$-grid representing a homotopy class in $\pi_n^\delta(G,v)$, there is a map
	\[
	h_*\colon\pi_n^\delta(G,v)\to \mathrm{QH}_n(G),\quad f\mapsto\left[\sum_{i_1,\ldots,i_n}\mathrm{sgn}_{i_1,\ldots,i_n}(f)f_{i_1,\ldots,i_n}\right],
	\]
	which is a well-defined homomorphism by Proposition \ref{Hurewicz map}. A homomorphism 
	\[
	h_*\colon\bar{\pi}_n(G,v)\to \mathrm{QH}_n(G)
	\]
	was introduced in \cite{TWYZ} in order to construct directed graphs with nontrivial higher homotopy groups, and it coincides with the Hurewicz map up to the isomorphism in Proposition \ref{two homotopy groups}. We are ready to prove Theorem \ref{main 4}.
	\begin{proof}
		[Proof of Theorem \ref{main 4}]
		By Lemmas \ref{two homotopy groups} and \ref{two homology groups}, there is a commutative diagram
		\[
		\xymatrix{
			\pi_n^\delta(G,v)\ar[r]^{h_*}\ar[d]_\cong&\mathrm{QH}_n(G)\ar@{=}[d]\\
			\bar{\pi}_n(G,v)\ar[r]_{h_*}&\mathrm{QH}_n(G),
		}
		\]
		and the proof is finished by applying Proposition \ref{Hurewicz theorem discrete homotopy group of digraph}.
	\end{proof}

	\subsection{1-nerves of directed and undirected graphs}
	\begin{definition}
		An \emph{undirected graph} $G$ consists of a vertex set $V(G)$ and an edge set $E(G)$ such that $E(G)\subset\{\{x,y\}\subset V(G)\mid x\ne y\}$, and an edge $\{x,y\}\in E(G)$ is regarded as an undirected edge between vertices $x$ and $y$.
	\end{definition}
	The maps between undirected graphs are defined similarly as those between directed ones. Thus undirected graphs and maps between them form a category, which we denote by $\mathsf{Graph}$. Similarly to the category $\mathsf{DiGraph}$, $\mathsf{Graph}$ is equipped with a monoidal product $\otimes$. Let $G$ be an undirected graph. In \cite{CK2}, Carranza and Kapulkin introduced the 1-nerve $\mathrm{N}_1G$ of an undirected graph $G$ and a fibrant replacement $\mathrm{N}G$ of $\mathrm{N}_1G$.
	The analogue of this construction for $G$ being a directed graph is developed in \cite{EIXYZ}. Let
	\[
	\overline{(-)}\colon\mathsf{Graph}\to\mathsf{DiGraph}
	\]
	be the functor by identifying an edge $\{x,y\}\in E(G)$ as two directed edges $(x,y),(y,x)\in E(\overline{G})$.
	\begin{proposition}
		\label{nerve comparison}
		Let $G$ be an undirected graph. There is a natural isomorphism
		\[
		\mathrm{N}_1G\cong\mathrm{N}_{1}\overline{G}
		\]
		of cubical sets. As a consequence, $\mathrm{N}G$ and $\mathcal{G}_0\mathcal{R}\mathrm{N}_1\overline{G}$ are homotopy equivalent.
	\end{proposition}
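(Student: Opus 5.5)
The first claim is essentially definitional, so I will write down the isomorphism explicitly and check that it commutes with the structure maps. For the second claim I will chain together weak equivalences that are already available.

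\textbf{The isomorphism $\mathrm{N}_1G\cong\mathrm{N}_1\overline{G}$.} In Carranza--Kapulkin, an $n$-cube of $\mathrm{N}_1G$ is a graph map $I^{\otimes n}\to G$, where $I$ is the undirected graph with one edge $\{0,1\}$. An $n$-cube of $\mathrm{N}_1\overline{G}$ is a digraph map $\vec{I}^{\otimes n}\to\overline{G}$.

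Both kinds of maps are functions $\{0,1\}^n\to V(G)$. The condition in each case is that adjacent vertices (differing in one coordinate) go either to the same vertex or to an edge. For $G$ this means $\{f(a),f(b)\}\in E(G)$. For $\overline{G}$ it means $(f(a),f(b))\in E(\overline{G})$, and by definition of $\overline{(-)}$ this holds if and only if $\{f(a),f(b)\}\in E(G)$.

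So identity on underlying functions gives a bijection $(\mathrm{N}_1G)_n\cong(\mathrm{N}_1\overline{G})_n$. In both nerves, faces, degeneracies and connections act by precomposition with the same vertex maps of $\{0,1\}^n$ (inclusions, projections, max/min). Hence the bijection is a cubical isomorphism, and it is natural in $G$ since both nerves act on morphisms by postcomposition.

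\textbf{The homotopy equivalence $\mathrm{N}G\simeq\mathcal{G}_0\mathcal{R}\mathrm{N}_1\overline{G}$.}
1. By Carranza--Kapulkin, $\mathrm{N}G$ is a Kan complex and the inclusion $\mathrm{N}_1G\to\mathrm{N}G$ is a weak equivalence (it is their fibrant replacement).
2. Set $X=\mathrm{N}_1\overline{G}$. By Lemma \ref{1-nerve symmetric}, $X$ is symmetric, hence quasisymmetric.
3. By Lemma \ref{R symmetric}, $\mathcal{R}X$ is signed quasisymmetric, so Theorem \ref{Kan complex} makes $\mathcal{G}_0\mathcal{R}X$ a Kan complex.
4. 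Propositions \ref{R}, \ref{G} and \ref{G_0} show that the composite $\pi\circ\phi\circ\iota\colon X\to\mathcal{G}_0\mathcal{R}X$ is a weak equivalence.
5. We now have a zigzag of weak equivalences
\[\mathrm{N}G\leftarrow\mathrm{N}_1G\cong X\to\mathcal{G}_0\mathcal{R}X\]
between Kan complexes at the two ends.

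It remains to upgrade this zigzag to an actual homotopy equivalence between $\mathrm{N}G$ and $\mathcal{G}_0\mathcal{R}X$. I will use Cisinski's model structure (Theorem \ref{cSet model structure}), in which every cubical set is cofibrant and the fibrant objects are the Kan complexes. In that structure the zigzag is an isomorphism in the homotopy category between two fibrant-cofibrant objects. Such an isomorphism is represented by a genuine map $\mathrm{N}G\to\mathcal{G}_0\mathcal{R}X$, which is then a weak equivalence. Lemma \ref{he vs we} turns it into a cubical homotopy equivalence.

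Concretely, to get the map I would extend $X\to\mathcal{G}_0\mathcal{R}X$ along the trivial cofibration $\mathrm{N}_1G\hookrightarrow\mathrm{N}G$, using that the target is fibrant. The main obstacle is exactly this step: confirming from Carranza--Kapulkin that $\mathrm{N}_1G\to\mathrm{N}G$ is a monomorphism and a weak equivalence, so that it is an anodyne extension. If it is not obviously monic, I would instead compare both sides with $\mathrm{Sing}|X|$ via Theorem \ref{homotopy group realization} and Whitehead's theorem.
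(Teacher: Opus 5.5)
Your isomorphism $\mathrm{N}_1G\cong\mathrm{N}_1\overline{G}$ matches the paper's proof. The paper defines $\phi(f)$ as the composite $\vec{I}^{\otimes n}\hookrightarrow\overline{I}^{\otimes n}\xrightarrow{\overline{f}}\overline{G}$ and $\psi(g)$ by the same vertex values, then observes that these are mutually inverse cubical maps, which is exactly your identity-on-vertex-functions bijection. The paper gives no separate argument for the ``as a consequence'' clause, and your zigzag argument (both ends are Kan complexes weakly equivalent to $\mathrm{N}_1G$, so the weak equivalence representing the homotopy-category isomorphism becomes a homotopy equivalence by Lemma \ref{he vs we}) correctly fills it in; it does not even need $\mathrm{N}_1G\to\mathrm{N}G$ to be monic.
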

	We recall the 1-nerve of an undirected graph $G$. Let $I$ be the undirected graph such that
	\[
	V(I)=\{0,1\}\quad\text{and}\quad E(I)=\{\{0,1\}\}.
	\]
	Then
	\[
	\left(\mathrm{N}_1G\right)_n=\mathsf{Graph}(I^{\otimes n},G).
	\]
	\begin{proof}
		[Proof of Proposition \ref{nerve comparison}]
		For $n\ge 0$ and $f\colon I^{\otimes n}\to G$, let
		\[
		\phi(f)\colon\vec{I}^{\otimes n}\hookrightarrow \overline{I}^{\otimes n}\stackrel{\overline{f}}{\longrightarrow}\overline{G}
		\]
		be the map satisfying
		\[
		\phi(f)(v)=f(v)\text{ for }v\in V\left(I^{\otimes n}\right)=V\left(\vec{I}^{\otimes n}\right).
		\]
		Then we have a map
		\[
		\phi\colon\mathrm{N}_1G\to\mathrm{N}_1\overline{G}
		\]
		and it is easy to show that it is cubical. Conversely, for $g\colon\vec{I}^{\otimes n}\to \overline{G}$, let
		\[
		\psi(g)\colon I^{\otimes n}\to G
		\]
		be the map satisfying
		\[
		\psi(g)(v)=g(v)\text{ for }v\in V\left(I^{\otimes n}\right)=V\left(\vec{I}^{\otimes n}\right).
		\]
		Then clearly $\phi$ and $\psi$ are cubical maps and they are mutually inverse to each other, completing the proof.
	\end{proof}
	For a pointed undirected graph $(G,v)$, the $n$-th $A$-homotopy group $A_n(G,v)$ is a classical invariant, which plays the role of homotopy groups in the discrete homotopy theory of undirected graph. On the other hand, the \emph{discrete homology groups} of an undirected graph $G$ are defined to be the homology groups $\mathrm{DH}_*(G)=H_*(\mathrm{N}_1G)$ of the 1-nerve of $G$. Carranza and Kapulkin \cite{CK2} constructed the discrete Hurewicz homomorphism
	\[
	h_*\colon A_n(G,v)\to \mathrm{DH}_n(G)
	\]
	and proved the Hurewicz theorem for a pointed undirected graph in \cite{CK2}. We give an alternative proof by applying Theorem \ref{main 4} and Proposition \ref{nerve comparison}.
	\begin{proposition}
		\label{Hurewicz theorem graph}
		Let $(G,v)$ be a pointed undirected graph and $n\ge1$. If $A_i(G,v)=0$ for $i=1,\ldots,n-1$, then the map
		\[
		h_*\colon A_n(G,v)\to \mathrm{DH}_n(G)
		\]
		is the abelianization for $n=1$ and is an isomorphism for $n\ge2$.
	\end{proposition}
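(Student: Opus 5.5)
The plan is to reduce Proposition \ref{Hurewicz theorem graph} to Theorem \ref{main 4} (or directly to Proposition \ref{Hurewicz theorem discrete homotopy group of digraph}) applied to the directed graph $\overline{G}$, transported along the isomorphism $\mathrm{N}_1G\cong\mathrm{N}_1\overline{G}$ of Proposition \ref{nerve comparison}. Three identifications are needed. The homology: $\mathrm{DH}_n(G)=H_n(\mathrm{N}_1G)\cong H_n(\mathrm{N}_1\overline{G})=\mathrm{QH}_n(\overline{G})$, by Proposition \ref{nerve comparison} and Lemma \ref{two homology groups}. The homotopy groups: a natural isomorphism $A_n(G,v)\cong\pi_n^\delta(\mathrm{N}_1G,v)\cong\pi_n^\delta(\overline{G},v)$. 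The maps: these isomorphisms intertwine the Carranza--Kapulkin Hurewicz map with our $h_*$.

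For the homotopy groups, I would recall that $A_n(G,v)$ is the set of classes of spherical maps $\Z^n\to G$ (eventually constant at $v$), modulo graph homotopy with stabilization. A spherical map of finite support is the same as a map $I_1\otimes\cdots\otimes I_n\to G$ from an undirected grid sending the boundary to $v$. Via $\overline{(-)}$, and by choosing any orientation of each path (undirected edges in $\overline{G}$ go both ways), it becomes a spherical grid in $\overline{G}$, hence in $\mathrm{N}_1\overline{G}\cong\mathrm{N}_1G$. Conversely, forgetting orientations gives the inverse on representatives. The equivalence relations also match: inserting or deleting a repeated slice (stabilization of the domain) is exactly $0$-homotopy (compare Lemma \ref{two C_0-homotopies}), and an $A$-homotopy, i.e.\ a map $\Z^n\otimes I_m\to G$, is a $1$-homotopy in the sense of Definition \ref{homotopy digraph def}. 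By Lemma \ref{two homotopies}, this agrees with homotopy in $\mathrm{N}_1\overline{G}$. Multiplication in $A_n$ is concatenation in the first coordinate, which matches $x\cdot y$ by Lemma \ref{grid slide}. The alternative route is to invoke Carranza--Kapulkin's theorem $A_n(G,v)\cong\pi_n(|\mathrm{N}_1G|,v)$ together with Theorem \ref{main 1} for the symmetric cubical set $\mathrm{N}_1G$ (Lemma \ref{1-nerve symmetric}). However, that route only yields an abstract isomorphism, so the explicit comparison is preferable for matching the Hurewicz maps.

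With these identifications, the hypothesis $A_i(G,v)=0$ for $i\le n-1$ says that $\pi_i^\delta(\mathrm{N}_1\overline{G},v)=0$ for $1\le i\le n-1$. We also need $\pi_0$ trivial and the vanishing for every basepoint, as in the definition of $(n-1)$-connectedness. I would handle this either by assuming $G$ is connected, as is implicit in \cite{CK2}, or by restricting to the component of $v$. Restricting is harmless for $\pi_n$ and for $\mathrm{DH}_n$ with $n\ge1$ up to the direct summand from the other components, so I would state connectedness explicitly. Independence of the basepoint within a component follows from Theorem \ref{main 1}, since $\pi_i(|X|)$ vanishes at one basepoint if and only if it vanishes at all basepoints in the same path component.

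The main obstacle is checking that the Carranza--Kapulkin map $h_*\colon A_n\to\mathrm{DH}_n$ equals our signed sum $\sum\mathrm{sgn}_{i_1,\ldots,i_n}(f)f_{i_1,\ldots,i_n}$ under these identifications. Their map sends a spherical map on a grid $[0,m]^n$ to the sum of its unit cubes, with the standard orientation and coefficient $+1$. If we orient all paths forward ($s\equiv0$), every sign is $+1$ and $f_{i_1,\ldots,i_n}$ is exactly the restricted unit cube, so the two formulas agree on the nose. Since $h_*$ is well defined on classes (Lemma \ref{Hurewicz map homotopy}), representatives with $s\equiv0$ suffice. With the diagram commuting, Proposition \ref{Hurewicz theorem discrete homotopy group of digraph} applied to $(\overline{G},v)$ gives the abelianization for $n=1$ and an isomorphism for $n\ge2$.
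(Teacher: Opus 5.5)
There is a genuine gap in your identification of $A_n(G,v)$ with $\pi_n^\delta(\mathrm{N}_1\overline{G},v)$. A grid in $\mathrm{N}_1\overline{G}$ carries an orientation function $s$, and an undirected spherical grid of size $(l_1,\ldots,l_n)$ has $2^{l_1+\cdots+l_n}$ lifts. Your ``choosing any orientation'' presupposes that all of these lifts are homotopic, and that is not automatic. In a $1$-homotopy $x\overset{k}{\Rightarrow}y$, both $x$ and $y$ are faces of one $(n+1)$-grid, so they have the same size and the same $s$. A $0$-homotopy only inserts or deletes degenerate slices. Hence no elementary move ever changes the orientation $s_k(j)$ of a nondegenerate slice. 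Lemma~\ref{two homotopies} does not help, because it compares two homotopy relations on grids with a fixed directed domain $I_1\otimes\cdots\otimes I_n$, orientations included.

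What your argument does establish is the following. The standard-orientation map $\Phi\colon A_n(G,v)\to\pi_n^\delta(\mathrm{N}_1\overline{G},v)$ is a well-defined homomorphism with $h_*\circ\Phi=h^{\mathrm{CK}}$, where $h^{\mathrm{CK}}$ is the Carranza--Kapulkin map; your sign computation here is fine. Forgetting orientations gives a map $\Psi$ with $\Psi\circ\Phi=1$. If you use the $\Z^n$-definition of $A_n$, this needs the usual argument that a repeated slice can be slid off to where the map is constantly $v$, since stretching is not a primitive relation there. So $\Phi$ is injective. Its surjectivity is never proved, and that is exactly what you need to conclude that $h^{\mathrm{CK}}$ is onto for $n\ge2$ and to identify its kernel for $n=1$.

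The missing fact is true, and you can prove it using that every edge of $\overline{G}$ occurs in both directions. For an $n$-cube $c$ of $\mathrm{N}_1\overline{G}$, set $C(p_1,\ldots,p_k,t,p_{k+1},\ldots,p_n)=c(p_1,\ldots,p_{k-1},p_k(1-t),p_{k+1},\ldots,p_n)$. This is an $(n+1)$-cube with $C\partial_{k+1,0}=c$, with $C\partial_{k,1}$ equal to $c$ reversed in its $k$-th coordinate, and with $C\partial_{k,0}$ and $C\partial_{k+1,1}$ degenerate. Insert a degenerate slice after a reversed slice, and fill with permuted versions of $c\gamma_{k,0}$ and $C$. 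This gives a $1$-homotopy to a grid in which the reversed slice has become degenerate and is followed by its forward copy. So every class has a representative with $s\equiv0$.

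Alternatively, the same cubes show at chain level that $h_*=h^{\mathrm{CK}}\circ\Psi$. Since $\Psi$ is a surjective homomorphism, injectivity of $h_*$ forces $\Psi$ to be bijective for $n\ge2$. For $n=1$ you get $\ker h^{\mathrm{CK}}=\Psi([\pi_1^\delta,\pi_1^\delta])=[A_1,A_1]$.

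With this added, your route is a more elementary alternative to the paper's. The paper chains abstract isomorphisms $A_n(G,v)\cong\pi_n(\mathrm{N}G,v)\cong\pi_n(\mathcal{G}_0\mathcal{R}\mathrm{N}_1\overline{G},v)\cong\pi_n^\delta(\overline{G},v)$, using Carranza--Kapulkin's theorem and the fact that both Kan complexes replace $\mathrm{N}_1G$. There orientations never need to be normalized by hand, but compatibility of the two Hurewicz maps is left implicit. Your version avoids that comparison theorem and checks the compatibility directly. Your remark that connectedness of $G$ must be added to the hypothesis is also correct.
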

	\begin{proof}
		By Theorem \ref{main 4}, the statement above holds for the homomorphism
		\[
		h_*\colon\pi_n^\delta(\overline{G},v)\to \mathrm{QH}_n(\overline{G}).
		\]
		The comparison of nerves identifies the graph-theoretic $A$-groups with the discrete homotopy groups of the directed replacement $\overline{G}$ as follows:
		\[
		A_n(G,v)\cong\pi_n(\mathrm{N}G,v)\cong\pi_n(\mathcal{G}_0\mathcal{R}\mathrm{N}_1\overline{G},v)\cong\pi_n^\delta(\mathrm{N}_1\overline{G},v)=\pi_n^\delta(\overline{G},v),
		\]
		where the first isomorphism is proved in \cite{CK2}. There is also an isomorphism
		\[
		\mathrm{DH}_n(G)\cong \mathrm{QH}_n(\overline{G})
		\]
		by Proposition \ref{nerve comparison}. The proof is finished.
	\end{proof}

\end{document}